\documentclass[a4paper, 10pt, twoside, notitlepage]{amsart}

\usepackage[utf8]{inputenc}
\usepackage[dvipsnames]{xcolor}
\usepackage{amsmath} 
\usepackage{amssymb} 
\usepackage{amsthm}
\usepackage{geometry}
\usepackage{graphicx}
\usepackage{mathtools}
\usepackage{esint}
\usepackage[shortlabels]{enumitem}
\usepackage{comment}
\usepackage[colorlinks=true,linkcolor=blue]{hyperref}

\usepackage{tikz}
\usetikzlibrary{decorations.pathreplacing}
\usetikzlibrary{hobby}
\usetikzlibrary{calc}
\usetikzlibrary{math}

\theoremstyle{plain}
\newtheorem{thm}{Theorem}[section]
\newtheorem{cor}[thm]{Corollary}
\newtheorem{prop}[thm]{Proposition}
\newtheorem{lem}[thm]{Lemma}

\newtheorem{defi}[thm]{Definition}
\newtheorem{rmk}[thm]{Remark}

\newcommand {\R} {\mathbb{R}} 
 \newcommand {\N} {\mathbb{N}}
 
\newcommand {\p} {\partial}

\newcommand {\D} {\Delta}

\newcommand {\supp} {\text{supp}}

\newcommand\restr[2]{{
  \left.\kern-\nulldelimiterspace
  #1
  \littletaller
  \right|_{#2}
  }}
\newcommand{\littletaller}{\mathchoice{\vphantom{\big|}}{}{}{}}

\numberwithin{equation}{section}
\setcounter{tocdepth}{3}

\DeclareMathOperator {\dist} {dist}

\DeclareMathOperator{\Id} {Id}


\pagestyle{headings}

\title[Stability Nonlocal-Local Reduction]{Transfer of Stability from the Classical to the Fractional Anisotropic Calder\'on Problem}

\author[H. Baers]{Hendrik Baers}
\address{Institute for Applied Mathematics, University of Bonn, Endenicher Allee 60, 53115 Bonn, Germany}
\email{hendrik.baers@uni-bonn.de}

\author[A. R\"uland]{Angkana R\"uland}
\address{Institute for Applied Mathematics and Hausdorff Center for Mathematics, University of Bonn, Endenicher Allee 60, 53115 Bonn, Germany}
\email{rueland@uni-bonn.de}

\begin{document}

\begin{abstract}
We discuss two spectral fractional anisotropic Calder\'on problems with source-to-solution measurements and their quantitative relation to the classical Calder\'on problem. Firstly, we consider the anistropic fractional Calder\'on problem from \cite{FGKU21}. In this setting, we quantify the relation between the local and nonlocal Calder\'on problems which had been deduced in \cite{R23} and provide an associated stability estimate. As a consequence, any stability result which holds on the level of the local problem with source-to-solution data has a direct nonlocal analogue (up to a logarithmic loss). Secondly, we introduce and discuss the fractional Calder\'on problem with source-to-solution measurements for the spectral fractional Dirichlet Laplacian on open, bounded, connected, Lipschitz sets on $\R^n$. Also in this context, we provide a qualitative and quantitative transfer of uniqueness from the local to the nonlocal setting. As a consequence, we infer the first stability results for the principal part for a fractional Calder\'on type problem for which no reduction of Liouville type is known. Our arguments rely on quantitative unique continuation arguments. As a result of independent interest, we also prove a quantitative relation between source-to-solution and Dirichlet-to-Neumann measurements for the classical Calder\'on problem.
\end{abstract}

\maketitle

\section{Introduction}
\label{sec:intro}

We consider the quantitative transfer of uniqueness from the local to the nonlocal setting for two inverse problems of Calder\'on type: On the one hand, we study the fractional Calder\'on problem on a closed, connected, smooth Riemannian manifold as introduced and analyzed in the seminal works \cite{F21} and \cite{FGKU21}. On the other hand, we investigate the closely related Calder\'on type problem for the spectral fractional Dirichlet Laplacian on a bounded, Lipschitz domain. In both settings we consider source-to-solution data. It is our objective to prove stability properties for the reduction of the respective fractional problem to the associated local Calder\'on problem. This provides the quantitative analogue of the qualitative correspondence between the nonlocal and local problems from \cite{R23} as well as a similar qualitative and quantitative relation for the spectral fractional problem.

\subsection{Transfer of stability for the fractional Calder\'on problem on closed, connected, smooth Riemannian manifolds}

We begin by outlining the setting and results for the fractional Calderón problem on closed, connected, smooth Riemannian manifolds.

\subsubsection{Set-up}
Let us begin by recalling the set-up of the anisotropic fractional Calder\'on problem with source-to-solution data. Let $(M,g)$ be a closed, connected, smooth $n$-dimensional Riemannian manifold. Let $(-\D_g)$ denote the Laplace-Beltrami operator on $(M,g)$. Let $O \subset M$ be an open set, assume that $g|_O$ is known and let $s\in (0,1)$. As in \cite{FGKU21} we are given the source-to-solution data 
\begin{align}\label{eq:def_nonlocal_StoSOperator_manifold}
L_{s,O}: \widetilde{H}^{-s}_{\diamond}(O):=\{f \in \widetilde{H}^{-s}(O): \ (f,1)_{L^2(M)}=0\} \rightarrow H^s(O), \quad f \mapsto \restr{u^f}{O},
\end{align}
where $u^f$ is the unique solution with vanishing mean value of the equation
\begin{align*}
(-\D_g)^s u^f = f \mbox{ in } M,
\end{align*}
and $(\cdot, \cdot)_{L^2(M)}$ denotes the $L^2$ inner product on $M$ (with respect to the volume form $d V_g$).  With slight abuse of notation, we also often use this notation for the corresponding duality pairing obtained by extension.
Here, the equation is defined spectrally. More precisely, considering an eigenbasis of $L^2$-orthonormal eigenfunctions $(\phi_k)_{k \in \N_0}$ of the Laplace-Beltrami operator and $(\lambda_k)_{k \in \N_0}$ their associated ordered  eigenvalues, i.e., $(-\D_g) \phi_k = \lambda_k \phi_k$, with $0=\lambda_0 < \lambda_1 \leq \lambda_2 \leq \cdots $, we have that for $u \in C^{\infty}(M)$
\begin{align*}
(-\D_g)^s u := \sum\limits_{k=1}^{\infty} \lambda_k^{s} (u, \phi_k)_{L^2(M )} \phi_k.
\end{align*}
In \cite{FGKU21} it is proved that the source-to-solution data from \eqref{eq:def_nonlocal_StoSOperator_manifold} allow one to uniquely recover the manifold and metric $(M,g)$. Let us emphasize, that as proven in \cite{CR25}, the condition that $g|_{O}$ is assumed to be known, does not impose a serious restriction on either the results of  \cite{FGKU21} or of the present work. On the contrary, by virtue of the results from \cite{CR25}, the given data suffice to reconstruct the metric in $O$ in a Lipschitz stable way.

Connected to the uniqueness result from \cite{FGKU21}, in \cite{R23}, using a variable-coefficient extension interpretation, it is shown that the knowledge of the fractional source-to-solution map $L_{s,O}$ allows one to recover the local source-to-solution map $L_{1,O}$. This map is defined as the operator
\begin{align}\label{eq:def_local_StoSOperator_manifold}
L_{1,O}: \widetilde{H}^{-1}_{\diamond}(O):=\{f \in \widetilde{H}^{-1}(O): \ (f,1)_{L^2(M)}=0\} \rightarrow H^1(O), \ f \mapsto v^f|_{O},
\end{align}
where the function $v^f$ is the unique solution of
\begin{align*}
(-\D_g) v^f = f \mbox{ in } M
\end{align*}
with vanishing mean value. More precisely, in \cite{R23} it is shown that there exists a constant $c_s = \frac{2^{2s-1}\Gamma(s)}{\Gamma(1-s)}$ such that for $x \in O$
\begin{align}\label{eq:from_nonlocal_to_local_manifold}
L_{1,O}(f)(x) = c_s \int_0^\infty t^{1-2s} \tilde{u}^f(x,t)dt,
\end{align}
where $\tilde{u}^f$ is the unique $\dot{H}^1(M \times \R_+, x_{n+1}^{1-2s})$-regular solution with vanishing tangential mean value of the associated variable coefficient Caffarelli-Silvestre extension type problem. More precisely, building on the seminal observations from \cite{CS07} in the constant and from \cite{ST10} in the variable coefficient setting, we have that $\tilde{u}^f \in \dot{H}^1(M \times \R_+, x_{n+1}^{1-2s})$ is a solution of
\begin{equation}\label{eq:CS}
\begin{cases}
\begin{alignedat}{2}
( x_{n+1}^{1-2s} (-\Delta_g) - \p_{n+1}x_{n+1}^{1-2s} \p_{n+1} ) \tilde{u}^f & = 0 \quad &&\text{in } M \times \R_+,\\
- \bar{c}_s \lim_{x_{n+1}\to0} x_{n+1}^{1-2s} \partial_{n+1} \tilde{u}^f & = f \quad &&\text{on } M \times \{0\},
\end{alignedat}
\end{cases}
\end{equation}
such that $(\tilde{u}^f(\cdot, x_{n+1}), 1)_{L^2(M)}=0$ for all $x_{n+1}\geq 0$. Here, $\bar{c}_s = \frac{4^s \Gamma(s)}{2s \vert \Gamma(-s) \vert}>0$.

In this article, it is our objective to prove \emph{stability} in the recovery of the local source-to-solution map $L_{1,O}$ from the nonlocal source-to-solution map $L_{s,O}$ under suitable a priori bounds (on the metric and data).

\subsubsection{Main results: Quantitative transfer of uniqueness from the local to the nonlocal inverse problem}

In what follows we discuss our main results on the quantitative transfer of information from the local to the nonlocal context. In particular, this will allow us to deduce stability properties for the nonlocal problem whenever the associated local problem enjoys a stability estimate. While our stability transfer is accompanied with a logarithmic loss, it provides the first stability results for the principal parts in fractional Calder\'on problems for which no direct Liouville transform is known (contrary to the setting in, for instance, \cite{C20}).

We will always suppose that the manifold $M$ as an open set is fixed and known and only consider the metric $g$ as variable and to be determined. Additionally, we impose the following a-priori assumptions on the underlying Riemannian manifolds.\\

\textbf{Assumption (A1):} Firstly, we assume that the metric $g$ is smooth and satisfies the following uniform bounds: There exists a constant $\theta_1\in(0,1)$ such that 
\begin{align}\label{eq:A1}
0 < \theta_1 \leq g \leq \theta_1^{-1} < \infty.
\end{align}

This is a mild condition ensuring uniform bounds for the metrics in the class of Riemannian manifolds under consideration. As a consequence of this assumption, there exists a uniform constant $\bar{\lambda} \in (0,1)$ which only depends on $\theta_1$ and the manifold $M$ such that the first non-zero eigenvalue $\lambda_1((M,g))>0$ of the Laplace-Beltrami operator on $(M,g)$ is bounded from above and below by
\begin{align}\label{eq:uniform_bounds_first_eigenvalue}
0 < \bar{\lambda} < \lambda_1((M,g)) < \bar{\lambda}^{-1} < \infty.
\end{align}
This can be seen by comparison with a fixed Laplace-Beltrami operator associated with a fixed metric $\bar{g}$.
\\

\textbf{Assumption (A2):} Secondly, we impose a uniform constraint on the gradient of the metric $g$. More precisely, we assume that there exists $\theta_2 > 0$ such that
\begin{align}\label{eq:A2}
\vert \nabla g \vert \leq \theta_2.
\end{align}

This condition, together with assumption (A1), guarantees the equivalence of the $H^s(O)$-norms for $O \subset M$, $s\in[-2,2]$, if taken with respect to different metrics $g_1$ and $g_2$, i.e. there exist constants $c,C>0$ (depending on $\theta_1$ and $\theta_2$, but otherwise independent of the metrics $g_1$ and $g_2$) such that
\begin{align*}
c \Vert u \Vert_{H^s(O;dV_{g_1})} \leq \Vert u \Vert_{H^s(O;dV_{g_2})} \leq C \Vert u \Vert_{H^s(O;dV_{g_1})}.
\end{align*}
Here $dV_g$ denotes the Riemannian volume element with respect to the metric $g$. For more details on this, see Section \ref{ssec:prel_equivalence_of_norms}. Due to this equivalence result, in the following, we will opt out from specifying the metric with respect to which we consider the norm.

Finally, we impose a third more technical assumption.\\

\textbf{Assumption (A3):} Thirdly, we impose a flatness condition on the measurement domain $O$. We assume that $O$ is a known, flat subset of $M$ (i.e. $R_{ijkl} = 0$ where $R_{ijkl}$ denotes the Riemannian curvature tensor). In particular, we assume that in local coordinates the Laplace-Beltrami operator turns into the Euclidean constant-coefficient Laplacian $\Delta$ on $O$.

This condition is of technical nature and we expect that it is possible to remove it. It primarily allows us to invoke known unique continuation results for the Caffarelli-Silvestre extension with constant coefficients. While we anticipate that these estimates remain valid for sufficiently regular, non-constant coefficients, for clarity of exposition, we do not deal with these more technical aspects in the present article, but postpone these to future work. In Assumption (A3), without loss of generality, we may always assume that $O$ is covered by a single coordinate patch and is a smooth domain. Indeed, if this is not the case, we can always decrease the size of the domain $O$ (at the price of reducing the available measurement data).\\

Under the outlined assumptions, we will prove the following main results regarding the single-measurement and the infinitely-many-measurements settings.

\begin{thm}[Transfer of single measurement stability]
\label{thm:stability_nonlocal_local_single_meas}
Let $(M,g_1)$ and $(M,g_2)$ be two closed, connected, smooth $n$-dimensional Riemannian manifolds satisfying the conditions (A1) and (A2).
Let $O', O \subset M$ be smooth, open, non-empty sets such that $O$ satisfies assumption (A3) and such that $O' \Subset O$.
Let $L_{s,O}$ and $L_{1,O}$ be the source-to-solution maps from \eqref{eq:def_nonlocal_StoSOperator_manifold} and \eqref{eq:def_local_StoSOperator_manifold}, respectively. Let $f \in \widetilde{H}^{-s}_{\diamond}(O)$. 

Then there exist constants $\beta>0$ and $C>0$, such that if for some $\varepsilon \in (0,\frac{1}{2})$
\begin{align*}
\|L_{s,O}^1(f) - L_{s,O}^2(f)\|_{H^{s}(O)} \leq \varepsilon \|f\|_{H^{-s}(M)},
\end{align*}
then it holds
\begin{align*}
\Vert L_{1,O'}^1(f) - L_{1,O'}^2(f) \Vert_{H^1(O')} \leq C \vert \log(\varepsilon) \vert^{-\beta} \Vert f \Vert_{H^{-1}(M)}.
\end{align*}
Here, the constants $\beta$ and $C$ only depend on $s$, $n$, $M$, $O'$, $O$, $\theta_1$ and $\theta_2$ but they are independent of the measurement data $f$.
\end{thm}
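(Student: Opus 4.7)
The plan is to lift the hypothesis on the nonlocal source-to-solution map to the Caffarelli--Silvestre extension, to propagate the smallness of its Cauchy data on $O \times \{0\}$ into a half-space neighbourhood of $O'$ by quantitative unique continuation, and to feed the result into the reduction formula \eqref{eq:from_nonlocal_to_local_manifold}. Assumption (A3) is crucial because it reduces the extension equation \eqref{eq:CS} on $O \times \R_+$ to its constant-coefficient version. Set $w := \tilde u^{f,1} - \tilde u^{f,2}$; since $g_1|_O = g_2|_O$, $w$ solves the weighted constant-coefficient Caffarelli--Silvestre equation on $O \times \R_+$ with vanishing weighted Neumann trace (common conormal datum $f$) and with Dirichlet trace $L_{s,O}^1(f) - L_{s,O}^2(f)$, which by hypothesis is $\varepsilon \|f\|_{H^{-s}(M)}$-small in $H^s(O)$. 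Well-posedness of the extension problem, combined with (A1) and (A2), provides the uniform a priori bound $\|w\|_{\dot H^1(M \times \R_+,\, x_{n+1}^{1-2s})} \leq C \|f\|_{H^{-s}(M)}$, and the spectral decomposition of $\tilde u^{f,i}$ in Laplace--Beltrami eigenfunctions, combined with the uniform gap \eqref{eq:uniform_bounds_first_eigenvalue}, yields exponential decay of $w(\cdot, x_{n+1})$ at rate $\sqrt{\bar\lambda}$.

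The key analytic step, and the main obstacle, is a quantitative unique continuation estimate for the constant-coefficient degenerate elliptic Caffarelli--Silvestre operator: propagating the $\varepsilon$-smallness of $w|_{O \times \{0\}}$ with vanishing Neumann trace into a weighted $H^1$-bound on $O' \times (0,T)$. I plan to apply Carleman estimates adapted to the Muckenhoupt $A_2$-weight $x_{n+1}^{1-2s}$, in the spirit of the quantitative propagation-of-smallness results already developed for the fractional Laplacian, and to interpolate between the small Cauchy data and the a priori bound from the previous step. The expected output has the form $\|w\|_{H^1(O' \times (0,T),\, x_{n+1}^{1-2s})} \leq C(1+T)^{C_1} |\log \varepsilon|^{-\beta_0} \|f\|_{H^{-s}(M)}$ for some $\beta_0 > 0$ depending only on $s, n, \theta_1, \theta_2$ and the geometry of $O', O$; the remaining steps are essentially bookkeeping.

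By \eqref{eq:from_nonlocal_to_local_manifold}, the local difference satisfies $\delta v(x) := L_{1,O'}^1(f)(x) - L_{1,O'}^2(f)(x) = c_s \int_0^\infty x_{n+1}^{1-2s} w(x, x_{n+1})\, dx_{n+1}$ on $O'$. I split this integral at $T > 0$: the bulk $\int_0^T$ is controlled by the Carleman output, while the tail $\int_T^\infty$ is bounded by an explicit Mellin-type computation on the profiles $\psi_k(x_{n+1}) \sim (\sqrt{\lambda_k}\, x_{n+1})^s K_s(\sqrt{\lambda_k}\, x_{n+1})$: the identity $c_s \int_0^\infty x_{n+1}^{1-2s}\psi_k\, dx_{n+1} = \lambda_k^{-(1-s)}$ together with the exponential decay of $K_s$ yields a tail estimate of order $C T^{C_2} e^{-\sqrt{\bar\lambda}\, T}$ directly in $\|f\|_{H^{-1}(M)}$. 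Optimising $T \sim \log\log(1/\varepsilon)$ then balances bulk and tail. Since $g_1|_O = g_2|_O$ is flat, $\delta v$ is harmonic on $O$, so interior gradient estimates for harmonic functions upgrade the resulting $L^2(O)$-bound to the required $H^1(O')$-bound for $O' \Subset O$. Finally, the discrepancy between $\|f\|_{H^{-s}(M)}$ appearing in the bulk and $\|f\|_{H^{-1}(M)}$ appearing in the target is reconciled by a frequency decomposition $f = P_N f + (I - P_N) f$: the low-frequency part satisfies $\|P_N f\|_{H^{-s}(M)} \lesssim \lambda_N^{(1-s)/2} \|P_N f\|_{H^{-1}(M)}$ and is treated through the QUC output, while the high-frequency remainder is absorbed into the trivial a priori bound $\|\delta v\|_{H^1(M)} \leq C \|f\|_{H^{-1}(M)}$ (which decays at an algebraic rate in $N$). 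Choosing $N$ as a suitable power of $|\log \varepsilon|$ yields the claimed logarithmic estimate with a possibly smaller but still positive $\beta > 0$, and all constants depend only on $s, n, M, O', O, \theta_1$ and $\theta_2$.
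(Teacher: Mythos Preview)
Your overall strategy matches the paper's: pass to the Caffarelli--Silvestre extension, exploit that on $O \times \R_+$ the equation has constant coefficients by (A3), split the vertical integral at a height $T$, control the tail via the spectral representation and Bessel asymptotics, control the bulk via quantitative unique continuation, and upgrade $L^2$-control on $\delta v$ to $H^1(O')$ using that $\delta v$ is harmonic in $O$. The choice $T \sim \log\log(1/\varepsilon)$ also coincides with the paper's.

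The substantive gap is the form you postulate for the QUC output. You expect
\[
\|w\|_{H^1(O'\times(0,T),\,x_{n+1}^{1-2s})} \le C(1+T)^{C_1}\,|\log\varepsilon|^{-\beta_0}\,\|f\|_{H^{-s}(M)},
\]
a logarithmic modulus with only polynomial deterioration in $T$. This is not what Carleman/three-balls arguments deliver. The paper first applies a boundary-to-bulk estimate (Proposition~\ref{prop:bbucp}) to obtain a \emph{H\"older} bound $\|w\|_{L^2(B^+_{c\bar r})}\lesssim \varepsilon^{1-\bar\alpha}\|f\|_{H^{-s}}$ on half-balls touching $\{x_{n+1}=0\}$, and then iterates a three-balls inequality (Proposition~\ref{prop:3ballsinequ}) $N\sim T$ times to climb to height $T$; each step multiplies the H\"older exponent by a fixed $\alpha\in(0,1)$, so the bulk bound is $\varepsilon^{(1-\bar\alpha)\alpha^{cT}}\|f\|_{H^{-s}}$. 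With $T=c_0\log\log(1/\varepsilon)$ this reads $\exp\bigl(-(1-\bar\alpha)|\log\varepsilon|^{\,1-cc_0|\log\alpha|}\bigr)$, which beats $|\log\varepsilon|^{-\beta}$ \emph{only} if $c_0$ (equivalently $\beta$) is chosen small enough to keep the exponent $1-cc_0|\log\alpha|$ positive. This constraint, recorded in the paper as $\beta<1/(C|\log\alpha|)$, is the crux of the balancing and is invisible in your formulation; with your claimed output no such constraint would arise, so the sketch hides precisely the step that determines the admissible $\beta$.

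Your final device for passing from $\|f\|_{H^{-s}}$ to $\|f\|_{H^{-1}}$ via $f=P_Nf+(I-P_N)f$ also does not close. The hypothesis gives $\varepsilon$-smallness of $L_{s,O}^1(f)-L_{s,O}^2(f)$ for the full $f$ only; linearity lets you write the low-frequency piece as the full quantity minus the high-frequency piece, but the latter is only bounded by $C\|(I-P_N)f\|_{H^{-s}}$, which carries no smallness, so you have no usable $\varepsilon'$ for $P_Nf$ to feed back into the QUC step. The paper avoids this by proving the QUC estimate in the stronger \emph{output} norm $H^{2-s}(O')$ (Proposition~\ref{prop:stability_estimate}), pairing it with the unconditional elliptic bound $\|L_{1,O}^1(f)-L_{1,O}^2(f)\|_{H^{1-s}(O')}\le C\|f\|_{H^{-1-s}(M)}$, and interpolating the output between $H^{2-s}$ and $H^{1-s}$ (and the input between $H^{-s}$ and $H^{-1-s}$) to land at $H^1(O')$ against $H^{-1}(M)$.
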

 
The above single measurement stability result immediately also entails an infinite-measurement stability result.

\begin{thm}[Transfer of infinite measurement stability]
\label{thm:stability_nonlocal_local_infinite_meas}
Let $(M,g_1)$ and $(M,g_2)$ be two closed, connected, smooth $n$-dimensional Riemannian manifolds satisfying the conditions (A1) and (A2).
Let $O',O \subset M$ be smooth, open, non-empty sets such that $O$ satisfies assumption (A3) and such that $O' \Subset O$.
Let $L_{s,O}$ and $L_{1,O}$ be the source-to-solution maps from \eqref{eq:def_nonlocal_StoSOperator_manifold} and \eqref{eq:def_local_StoSOperator_manifold}, respectively.

Then there exist constants $\beta>0$ and $C>0$, such that if for some $\varepsilon \in (0,\frac{1}{2})$
\begin{align*}
\|L_{s,O}^1 - L_{s,O}^2 \|_{\widetilde{H}^{-s}_{\diamond}(O) \to H^{s}(O)} \leq \varepsilon,
\end{align*}
then it holds
\begin{align*}
\Vert L_{1,O'}^1 - L_{1,O'}^2 \Vert_{\widetilde{H}^{-1}_{\diamond}(O') \to H^1(O')} \leq C \vert \log(\varepsilon) \vert^{-\beta},
\end{align*}
The constants $\beta$ and $C$ only depend on $s$, $n$, $M$, $O'$, $O$, $\theta_1$ and $\theta_2$.
\end{thm}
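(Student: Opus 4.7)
The plan is to derive the infinite-measurement bound from Theorem \ref{thm:stability_nonlocal_local_single_meas} by testing against arbitrary admissible sources, combined with a density argument to pass from the smooth subclass where Theorem \ref{thm:stability_nonlocal_local_single_meas} applies to all of $\widetilde{H}^{-1}_{\diamond}(O')$.

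First, I would fix $f \in C_c^{\infty}(O') \cap \widetilde{H}^{-1}_{\diamond}(O')$. Since $O' \Subset O$, after extension by zero such an $f$ lies in $\widetilde{H}^{-s}_{\diamond}(O)$ as well, with $\|f\|_{\widetilde{H}^{-s}(O)}$ controlled by $\|f\|_{H^{-s}(M)}$ up to a constant depending only on $O'$ and $O$. Plugging $f$ into the operator-norm hypothesis yields
\begin{align*}
\|L_{s,O}^1(f) - L_{s,O}^2(f)\|_{H^s(O)} \leq \varepsilon \|f\|_{\widetilde{H}^{-s}(O)} \leq C_0\, \varepsilon \|f\|_{H^{-s}(M)}.
\end{align*}
Absorbing $C_0$ into $\varepsilon$ (which merely shifts $|\log\varepsilon|$ by an $O(1)$ term) places us in the hypothesis of Theorem \ref{thm:stability_nonlocal_local_single_meas}, giving
\begin{align*}
\|L_{1,O'}^1(f) - L_{1,O'}^2(f)\|_{H^1(O')} \leq C |\log\varepsilon|^{-\beta} \|f\|_{H^{-1}(M)}.
\end{align*}

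Next, I would extend this to all $f \in \widetilde{H}^{-1}_{\diamond}(O')$ by a density and continuity argument. Given such $f$, approximate it by $f_k \in C_c^\infty(O')$ with $f_k \to f$ in $\widetilde{H}^{-1}(O')$; to preserve the mean-zero constraint, fix a bump $\chi \in C_c^\infty(O')$ with $(\chi,1)_{L^2(M)}=1$ and replace $f_k$ by $f_k - (f_k,1)_{L^2(M)}\chi$. This correction still converges to $f$ in $\widetilde{H}^{-1}(O')$, since $(f_k,1)_{L^2(M)} \to (f,1)_{L^2(M)} = 0$. Because each $L_{1,O'}^j$ is bounded from $\widetilde{H}^{-1}_{\diamond}(O')$ to $H^1(O')$ by elliptic regularity, passing to the limit yields the single-measurement estimate for every $f \in \widetilde{H}^{-1}_{\diamond}(O')$. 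Finally, taking the supremum over the unit $\widetilde{H}^{-1}(O')$-ball and using the continuous embedding $\widetilde{H}^{-1}(O') \hookrightarrow H^{-1}(M)$ delivers the claimed operator-norm bound.

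No substantive obstacle is expected; the proof is essentially a packaging step, with all the analytic content already captured by Theorem \ref{thm:stability_nonlocal_local_single_meas}. The only minor technical point is carefully reconciling the various Sobolev-type norms (the spaces $\widetilde{H}^{-s}(O)$, $H^{-s}(M)$, $\widetilde{H}^{-1}(O')$ and $H^{-1}(M)$) for distributions supported inside $O'$, but under the compact inclusion $O' \Subset O$ these are all comparable up to constants depending only on the geometric data from (A1)--(A2), which can be absorbed into $C$ and $\beta$.
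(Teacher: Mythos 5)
Your proposal is correct and follows essentially the same route as the paper, whose proof is simply to apply Theorem \ref{thm:stability_nonlocal_local_single_meas} to all admissible sources supported in $O'$ and take the supremum. The only difference is that you spell out the density/mean-zero-correction step needed to pass from smooth sources to all of $\widetilde{H}^{-1}_{\diamond}(O')$, which the paper leaves implicit; this is a harmless (indeed welcome) elaboration, not a different argument.
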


As a consequence, whenever stability properties are known for the local Calder\'on problem with source-to-solution measurements, this then also implies stability for the analogous nonlocal problem.  We state this as the following conditional stability result.

\begin{cor}[Conditional transfer of stability: from local estimates to nonlocal ones]\label{cor:transfer_of_stability_manifold_setting}
Let $(M,g_1)$ and $(M,g_2)$ be two closed, connected, smooth $n$-dimensional Riemannian manifolds satisfying the conditions (A1) and (A2).
Let $O',O \subset M$ be smooth, open, non-empty sets such that $O$ satisfies assumption (A3) and such that $O' \Subset O$.
Let $L_{s,O}^j$ and $L_{1,O}^j$, $j\in\{1,2\}$, be the source-to-solution maps from \eqref{eq:def_nonlocal_StoSOperator_manifold} and \eqref{eq:def_local_StoSOperator_manifold}, respectively.
Assume that there exists some modulus of continuity $\omega: (0,\varepsilon_0) \rightarrow (0,\infty)$ such that it holds
\begin{align}
\label{eq:assump_cor}
\|g_1 - g_2 \|_{X} \leq \omega(\Vert L_{1,O'}^1 - L_{1,O'}^2 \Vert_{\widetilde{H}^{-1}_{\diamond}(O') \to H^1(O')}) 
\end{align}
for some suitable normed space $X$.
Then, there exist constants $C>0$ and $\beta>0$ such that
\begin{align*}
\|g_1 - g_2 \|_{X} \leq \omega \left( C \vert \log( \Vert L_{s,O}^1 - L_{s,O}^2 \Vert_{\widetilde{H}^{-s}_{\diamond}(O) \to H^s(O)}) \vert^{-\beta} \right).
\end{align*}
\end{cor}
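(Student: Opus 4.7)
The proof is essentially a direct composition of Theorem \ref{thm:stability_nonlocal_local_infinite_meas} with the hypothesis \eqref{eq:assump_cor}, so the argument is short and the plan is simply to chain the two estimates while verifying that the quantitative composition is well-defined on the relevant range of $\varepsilon$.

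My plan is as follows. First, I would invoke Theorem \ref{thm:stability_nonlocal_local_infinite_meas} to obtain constants $\beta>0$ and $C_1>0$ (depending only on $s$, $n$, $M$, $O'$, $O$, $\theta_1$, $\theta_2$) such that, setting
\begin{align*}
\varepsilon := \Vert L_{s,O}^1 - L_{s,O}^2 \Vert_{\widetilde{H}^{-s}_{\diamond}(O) \to H^s(O)},
\end{align*}
one has $\Vert L_{1,O'}^1 - L_{1,O'}^2 \Vert_{\widetilde{H}^{-1}_{\diamond}(O') \to H^1(O')} \leq C_1 \vert \log(\varepsilon) \vert^{-\beta}$, at least whenever $\varepsilon \in (0,\tfrac{1}{2})$. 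Without loss of generality, the assumed modulus of continuity $\omega$ may be taken to be non-decreasing (otherwise replace it by $\tilde{\omega}(r):=\sup_{0<\rho\leq r}\omega(\rho)$, which is still finite on a possibly smaller interval $(0,\varepsilon_0')$ and dominates $\omega$).

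Next, I would restrict to the regime in which $\varepsilon$ is so small that $C_1 \vert \log(\varepsilon) \vert^{-\beta} < \varepsilon_0$; this defines an explicit threshold $\varepsilon_1 \in (0, \tfrac{1}{2})$ beyond which we may apply $\omega$ to $C_1\vert\log(\varepsilon)\vert^{-\beta}$. Combining the monotonicity of $\omega$ with \eqref{eq:assump_cor} then gives, for $\varepsilon\in (0,\varepsilon_1)$,
\begin{align*}
\|g_1 - g_2\|_X \leq \omega\bigl(\Vert L_{1,O'}^1 - L_{1,O'}^2 \Vert_{\widetilde{H}^{-1}_{\diamond}(O') \to H^1(O')}\bigr) \leq \omega\bigl(C_1 \vert \log(\varepsilon) \vert^{-\beta}\bigr),
\end{align*}
which is the asserted bound with constant $C=C_1$.

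It remains to handle the non-quantitative range $\varepsilon \in [\varepsilon_1, \tfrac{1}{2})$. In that range, $|\log(\varepsilon)|^{-\beta}$ is bounded from above and below by constants depending only on $\varepsilon_1$, so the claim reduces to producing an a priori bound for $\|g_1-g_2\|_X$ under assumptions (A1) and (A2), which holds uniformly in the allowed class of metrics by \eqref{eq:A1}--\eqref{eq:A2}; enlarging $C$ to absorb this constant yields the statement for all $\varepsilon\in(0,\tfrac{1}{2})$. The only genuine issue is really bookkeeping of constants in this last step: one must guarantee that the enlarged $C$ still depends solely on the admissible parameters $s,n,M,O',O,\theta_1,\theta_2$ and on $\omega$ itself, which it does because $\varepsilon_1$ depends only on $C_1$, $\beta$ and $\varepsilon_0$.
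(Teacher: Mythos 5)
Your proposal is correct and follows essentially the same route as the paper: the paper's proof is exactly the two-line chaining of Theorem \ref{thm:stability_nonlocal_local_infinite_meas} with the assumed local estimate \eqref{eq:assump_cor}. Your additional bookkeeping (monotonizing $\omega$ and treating the range where $C\vert\log(\varepsilon)\vert^{-\beta}$ exits the domain of $\omega$) goes beyond what the paper records but does not change the argument.
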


We remark that we are not aware of an estimate of the generality and strength of \eqref{eq:assump_cor} which is valid for general metrics and manifolds. Similar transfer results are however also applicable for more restricted classes of metrics (e.g., for suitable conformal factors etc). In the spectral setting outlined in the next section, we will illustrate an application of a result of this type in the isotropic setting (see Corollary \ref{cor:spectral_stability}).

\subsection{Transfer of uniqueness and stability for the fractional Calder\'on problem for the spectral fractional Laplacian} \label{sec:Intro_Spectral_StoSCalderon}

The arguments from the previous section can be generalized to variations of the above set-up. We discuss the Calder\'on problem for the fractional Dirichlet spectral Laplacian as one such example. We refer to \cite{BSV15,CS16,G16,AD16} and the references therein for properties of the spectral fractional Laplacian.

\subsubsection{Set-up}
Let us begin by outlining the setting of the Calder\'on problem for the Dirichlet spectral fractional Laplacian. The connection between an extension perspective and the spectral definition can be found in \cite{CS16}. Let us recall the set-up for this problem.

Consider $\Omega \subset \R^n$ open, bounded, Lipschitz and connected and let $a \in L^\infty(\Omega, \R^{n\times n})$ be symmetric, uniformly elliptic and bounded. Let $L_a:= (-\nabla \cdot a \nabla ): H^{1}_0(\Omega) \rightarrow H^{-1}(\Omega)$ denote a variable coefficient Laplacian on $\Omega$ complemented with vanishing Dirichlet data. Then, this operator can be diagonalized with a sequence of diverging eigenvalues $(\lambda_{k})_{k\in \N}$ with $0<\lambda_1 \leq \lambda_2 \leq \cdots$ and associated eigenfunctions $(\phi_k)_{k \in \N}$, i.e.,
\begin{equation*}
\begin{cases}
\begin{alignedat}{2}
L_a \phi_k &= \lambda_k \phi_k \quad &&\text{in } \Omega,\\
\phi_k & = 0 \quad &&\text{on } \partial \Omega.
\end{alignedat}
\end{cases}
\end{equation*}
Associated with this operator, we define the Dirichlet spectral fractional Laplacian by setting for $s \in (0,1)$ and $u \in C_c^{\infty}(\Omega)$
\begin{align}
\label{eq:frac_Lapl_spec}
L_a^s u:= \sum\limits_{j=1}^{\infty} \lambda_j^{s} (u,\phi_k)_{L^2(\Omega)} \phi_k,
\end{align}
which converges in $L^2(\Omega)$.

As above, for this operator, we consider an inverse problem with source-to-solution data:
Let $A \Subset \Omega$ be open, bounded, Lipschitz such that $\Omega \setminus \overline{A}$ is connected. The goal is to determine the metric $a$ in $A$. On $\Omega \setminus A$ we assume that the metric is known. Let $O \Subset \Omega \setminus \overline{A}$ open, Lipschitz, be our measurement set (cf. Figure \ref{fig:setting_spectral_inverse_problem}). We define the source-to-solution operator
\begin{align}\label{eq:def_nonlocal_StoSOperator_spectral}
\tilde{L}_{s,O}: \widetilde{\mathcal{H}}^{-s}(O) \to \mathcal{H}^{s}(O), \ f \mapsto \restr{u^f}{O},
\end{align}
where $u^f \in \mathcal{H}^{s}(\Omega)$ is the unique solution of 
\begin{equation}\label{eq:nonlocal_spectral}
\begin{cases}
\begin{alignedat}{2}
L_a^s u^f &= f \quad &&\text{in } \Omega,\\
u^f & = 0 \quad &&\text{on } \partial\Omega,
\end{alignedat}
\end{cases}
\end{equation}
in the associated energy space (we refer to Section \ref{sec:spaces_frac_Dirichlet} for details on the function spaces, the formulation of the PDE and the meaning of the imposed boundary conditions).
This problem is well-posed (see for instance \cite{CS16}). In what follows below, we study the associated inverse problem, i.e., the question whether the knowledge of $\tilde{L}_{s,O}$ allows us to recover the unknown metric $a$ in $A$.\\

In parallel to the nonlocal source-to-solution problem, we also study the local source-to-solution problem: Let $O \Subset \Omega \setminus \overline{A}$ be as before and define
\begin{align}\label{eq:def_local_StoSOperator_spectral}
\tilde{L}_{1,O}: \widetilde{H}^{-1}(O) \to H^{1}(O), \ f \mapsto \restr{v^f}{O},
\end{align}
where $v^f \in H_0^{1}(\Omega)$ is the unique solution of
\begin{equation}\label{eq:local_spectral}
\begin{cases}
\begin{alignedat}{2}
L_a v^f &= f \quad &&\text{in } \Omega,\\
v^f & = 0 \quad &&\text{on }\partial \Omega.
\end{alignedat}
\end{cases}
\end{equation}

As shown in \cite{CS16}, it is possible to realize the operator $L_a^{s}$ by means of an extension perspective: For $u \in C_c^{\infty}(\Omega)$ it holds for $\bar{c}_s = \frac{4^s\Gamma(s)}{2s\vert\Gamma(-s)\vert}$ that
\begin{align*}
L_a^s u (x) = -\bar{c}_s \lim\limits_{x_{n+1} \rightarrow 0} x_{n+1}^{1-2s} \p_{n+1} \tilde{u}(x,x_{n+1}),
\end{align*}
where $\tilde{u}(x,x_{n+1})$ is the unique solution of
\begin{equation}\label{eq:extension_spectral}
\begin{cases}
\begin{alignedat}{2}
(x_{n+1}^{1-2s} L_a - \p_{n+1} x_{n+1}^{1-2s} \p_{n+1})  \tilde{u} &= 0 \quad &&\text{in } \Omega \times \R_+,\\
\tilde{u} & = 0 \quad &&\text{on } \partial \Omega \times \R_+,\\
\tilde{u} & = u \quad &&\text{on } \Omega \times \{0\},
\end{alignedat}
\end{cases}
\end{equation}
in the energy space $H^{1}(\Omega\times \R_+, x_{n+1}^{1-2s})$ (see Section \ref{sec:preliminaries} and, in particular, Section \ref{sec:spaces_frac_Dirichlet} for the definition of this function space).
We now turn to our main results connecting the nonlocal and the local spectral inverse problems.

\begin{figure}
    	\begin{tikzpicture}[closed hobby, scale=0.5]
    	\draw[thick] plot coordinates {(1,1.5) (0.5,3) (3,6) (9,7.2) (14,6) (16,3) (13,0.2) (10,0.7)};
    	\draw[thick] plot coordinates {(3.3,3.2) (4.5,5.5) (7,6) (8.3,4.5) (6.5,2.5)};
    	\draw[thick] plot coordinates {(11,3) (12.2,4.2) (14.3, 3.6) (13.3,1.7)};
    	\node at (14,7) {\huge{$\Omega$}};
    	\node at (5.8,5) {\huge{$A$}};
    	\node at (12.8,3.4) {\huge{$O$}};
    	\node[blue] at (5.8,3.7) {\large{$a = \ ?$}};
    	\node[blue] at (11,5.5) {\large{$a = \Id_{n \times n}$}};
    	\end{tikzpicture}
\caption{Schematic set-up for our formulation of the (fractional) Calderón problem for the spectral (fractional) Laplacian.}
\label{fig:setting_spectral_inverse_problem}
\end{figure}
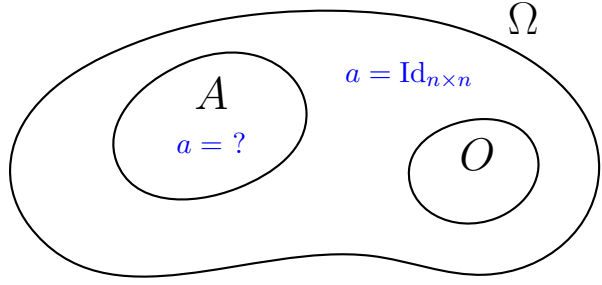

\subsubsection{Main results: Qualitative and quantitative transfer of uniqueness from the local to the nonlocal inverse problem}

Let $\Omega, A, O \subset \R^n$ be known open, bounded and connected Lipschitz sets such that $A \Subset \Omega$ and $O \Subset \Omega \setminus \overline{A}$ with $\Omega \setminus \overline{A}$ being connected (see Figure \ref{fig:setting_spectral_inverse_problem}).

Complementary to the assumptions in the manifold setting, in this subsection, we impose the following a-priori conditions:\\

\textbf{Assumption (A1').}
There exists $\theta_1 \in (0,1)$ such that for all admissible metrics $a \in C^1(\Omega, \R^{n \times n}_{sym})$ it holds 
\begin{align*}
\theta_1 |\xi|^2 \leq \xi \cdot a \xi \leq \theta_1^{-1} |\xi|^2, \mbox{ for all } \xi \in \R^n \setminus \{0\}.
\end{align*}

As a consequence, by comparison with the Laplacian, the assumption (A1') immediately entails estimates on the first eigenvalue. Moreover, as we also always assume that $a$ is symmetric, (A1') provides uniform $L^{\infty}$ bounds on the metrics.
\\

\textbf{Assumption (A2').}
There exists $\theta_2>0$ such that for all admissible metrics $a \in C^1(\Omega, \R^{n \times n}_{sym})$ it holds 
\begin{align*}
\vert \nabla a \vert \leq \theta_2.
\end{align*}

As before, this assumption together with (A1') guarantees the equivalence of the $\mathcal{H}^s(O)$-norms for $O \subset \Omega$, $s\in[-2,2]$, if taken with respect to different metrics $a_1$ and $a_2$ (for more details on this see Section \ref{ssec:prel_equivalence_of_norms}).
\\

\textbf{Assumption (A3').}
The metric $a \in C^1(\Omega,\R^{n \times n})$ satisfies $\restr{a}{\Omega \setminus A} = \Id_{n \times n}$.

As in the setting from the previous section, also in the bounded domain case, we expect that this assumption is a technical condition. We anticipate that with additional technical effort (in the quantitative unique continuation results) it should be possible to remove this assumption.
\\

With the extension perspective, \eqref{eq:extension_spectral}, in hand, analogously as in \cite{R23}, it follows that the nonlocal source-to-solution operator uniquely determines its local counterpart.

\begin{thm}[Density]
\label{thm:qualitative_spectral}
Let $n\geq 2$.
Let $\Omega, A, O \subset \R^n$ be as above and let $a \in C^1(\Omega, \R^{n \times n}_{sym})$ be a metric satisfying the assumptions (A1'), (A2') and (A3'). Let $\tilde{L}_{s,O}$, $\tilde{L}_{1,O}$ be the source-to-solution operators as in \eqref{eq:def_nonlocal_StoSOperator_spectral} and \eqref{eq:def_local_StoSOperator_spectral}, respectively.
Then the operator $\tilde{L}_{s,O}$ determines the operator $\tilde{L}_{1,O}$. More precisely, for $c_s = \frac{2^{2s-1}\Gamma(s)}{\Gamma(1-s)}$ the set
\begin{align*}
\widetilde{\mathcal{C}}_1 &:= \left\{ \Big(f, \ \restr{c_s\int\limits_{0}^{\infty} t^{1-2s} \tilde{u}^f(\cdot,t) dt}{O} \Big): \ \text{$\tilde{u}^f$ is the extension of $u^f$ as in \eqref{eq:extension_spectral}}\right\} \\
& \subset C_c^{\infty}(O) \times \mathcal{H}^{1}(O),
\end{align*}
where $u^f$ is the unique solution to \eqref{eq:nonlocal_spectral} with data $f \in C_c^{\infty}(O)$, is a dense subset of 
\begin{align*}
\mathcal{C}_1 := \left\{ (f, \restr{v^f}{O}): \ v^f \text{ is a solution to \eqref{eq:local_spectral} with data $f \in C_c^{\infty}(O) $}  \right\} \subset \widetilde{\mathcal{H}}^{-1}(O) \times \mathcal{H}^{1}(O),
\end{align*}
with respect to the $\widetilde{\mathcal{H}}^{-1}(O) \times \mathcal{H}^{1}(O)$ topology.
\end{thm}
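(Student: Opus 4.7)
The plan is to establish the pointwise identity
\[
v^f(x) = c_s \int_0^\infty t^{1-2s} \tilde{u}^f(x,t)\, dt, \qquad x \in \Omega,
\]
for every $f \in C_c^{\infty}(O)$, where $v^f$ solves \eqref{eq:local_spectral} and $\tilde{u}^f$ denotes the extension \eqref{eq:extension_spectral} of the solution $u^f$ to \eqref{eq:nonlocal_spectral}. Since this relation pairs the \emph{same} source $f$ with the same second coordinate in $\widetilde{\mathcal{C}}_1$ and $\mathcal{C}_1$, the identity in fact gives $\widetilde{\mathcal{C}}_1 = \mathcal{C}_1$ and density is immediate. The argument is therefore a computation about the spectral Caffarelli--Silvestre extension on $\Omega$; no unique continuation or Runge approximation enters at this qualitative stage.

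First I would derive the identity by an integration-by-parts argument in the $t$ variable. Dividing the PDE in \eqref{eq:extension_spectral} by $x_{n+1}^{1-2s}$ and integrating against $t^{1-2s}$ from $0$ to $\infty$, the vertical terms telescope and yield
\[
L_a \int_0^\infty t^{1-2s} \tilde{u}^f(\cdot,t)\, dt \;=\; -\lim_{t\to 0^+} t^{1-2s}\partial_t \tilde{u}^f(\cdot,t) \;=\; \bar{c}_s^{-1} L_a^s u^f \;=\; \bar{c}_s^{-1} f,
\]
using that membership of $\tilde{u}^f$ in $H^1(\Omega\times\R_+,x_{n+1}^{1-2s})$ forces the boundary contribution at infinity to vanish and, by the extension characterization of \cite{CS16}, $-\bar{c}_s \lim_{t\to 0^+} t^{1-2s}\partial_t \tilde{u}^f = L_a^s u^f = f$ since $f\in C_c^\infty(O)\subset C_c^\infty(\Omega)$. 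The function $F(x) := \int_0^\infty t^{1-2s}\tilde{u}^f(x,t)\, dt$ inherits the Dirichlet condition $F = 0$ on $\partial\Omega$ from the lateral condition $\tilde{u}^f|_{\partial\Omega\times\R_+}=0$, so by uniqueness for $L_a$ we obtain $F = \bar{c}_s^{-1} v^f$. A short computation using $\Gamma(1-s) = -s\,\Gamma(-s)$ shows $\bar{c}_s = c_s$, and the identity follows.

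To make this rigorous I would work with the spectral representation. For $f\in C_c^\infty(O)\Subset \Omega$ the coefficients $f_k = (f,\phi_k)_{L^2(\Omega)}$ with respect to the Dirichlet eigensystem of $L_a$ decay faster than any polynomial in $\lambda_k$. Writing $\tilde{u}^f(x,t) = \sum_k \lambda_k^{-s} f_k \, \vartheta_{\lambda_k}(t)\, \phi_k(x)$ reduces the extension problem to the Bessel-type ODEs
\[
\vartheta_\lambda'' + \tfrac{1-2s}{t} \vartheta_\lambda' = \lambda\, \vartheta_\lambda, \qquad \vartheta_\lambda(0)=1, \quad \vartheta_\lambda(t)\to 0 \text{ as } t\to\infty,
\]
solved by $\vartheta_\lambda(t) = \frac{2^{1-s}}{\Gamma(s)}(\sqrt{\lambda}\,t)^s K_s(\sqrt{\lambda}\,t)$. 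The classical identity $\int_0^\infty r^{1-s} K_s(r)\, dr = 2^{-s}\Gamma(1-s)$ together with a change of variables yields $\int_0^\infty t^{1-2s} \vartheta_\lambda(t)\, dt = c_s^{-1}\lambda^{s-1}$, and termwise summation gives $c_s F = \sum_k \lambda_k^{-1} f_k \phi_k = v^f$, confirming the identity rigorously.

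The main obstacle is justifying the interchange of summation and integration and controlling the boundary term at $t=\infty$ in the weighted energy space, keeping in mind that $\Omega$ is only Lipschitz and that $\phi_k$ are eigenfunctions of the variable-coefficient operator $L_a$ under assumption (A1'). These are handled by the exponential decay of $K_s$ at infinity (which provides mode-by-mode tail control), Weyl-type polynomial bounds on $\lambda_k$ and $\|\phi_k\|_{\mathcal{H}^1(O)}$, and the super-polynomial decay of $f_k$ for $f\in C_c^\infty(O)$. Once these estimates are in place, the identity — and hence the density statement — follows.
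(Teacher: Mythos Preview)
Your proof is correct and follows essentially the same route as the paper: both arguments use the spectral representation of the Caffarelli--Silvestre extension in terms of the modified Bessel functions $K_s$ and the explicit evaluation $\int_0^\infty z^{1-s}K_s(z)\,dz = 2^{-s}\Gamma(1-s)$ to obtain the identity $c_s\int_0^\infty t^{1-2s}\tilde{u}^f(\cdot,t)\,dt = L_a^{-1}f$. Your observation that this already gives $\widetilde{\mathcal{C}}_1 = \mathcal{C}_1$ (since both are indexed by the same set $C_c^\infty(O)$) is correct and makes the density conclusion immediate; the paper instead phrases the last step via density of $C_c^\infty(O)$ in $\widetilde{\mathcal{H}}^{-1}(O)$ and continuity of $L_a^{-1}$, which is a slightly more general formulation but not needed for the statement as written.
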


\begin{rmk}
We remark that as in \cite{R23}, in the density result, it would suffice to assume that $a$ is symmetric and bounded, not necessarily differentiable. For consistency with the above assumptions, we have opted to assume the higher regularity condition also already at this point.
\end{rmk}

In particular, Theorem \ref{thm:qualitative_spectral} implies that any uniqueness result on the local level transfers to a uniqueness result on the nonlocal level. As an example we consider the isotropic setting. More precisely,
for coefficients $a = \gamma \Id_{n \times n}$ with $\gamma \in C^{1}(\overline{\Omega})$ and $0<c\leq \gamma \leq C<\infty$, we deduce a qualitative determination of the coefficients in the nonlocal spectral source-to-solution problem under suitable geometric assumptions on $\Omega$, $A$ and $O$.

\begin{cor}[Uniqueness]
\label{cor:spectral_uniqueness}
Let $n\geq 3$. Let $\Omega, A, O \subset \R^n$ be as above and let $a_j = \gamma_j\Id_{n \times n}$, $j\in\{1,2\}$, with $\gamma_j \in C^{1}(\overline{\Omega})$ be metrics satisfying the assumptions (A1'), (A2') and (A3'). Let $\tilde{L}_{s,O}^j$, $\tilde{L}_{1,O}^j$ be the corresponding source-to-solution operators as in \eqref{eq:def_nonlocal_StoSOperator_spectral} and \eqref{eq:def_local_StoSOperator_spectral}, respectively.
Then, 
\begin{align*}
\tilde{L}_{s,O}^1 = \tilde{L}_{s,O}^2,
\end{align*}
implies that $\gamma_1 = \gamma_2$.
\end{cor}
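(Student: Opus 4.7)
The plan is to chain three reductions: first, use Theorem \ref{thm:qualitative_spectral} to pass from the nonlocal spectral problem to the local source-to-solution problem; second, propagate equality from $O$ to the interface $\partial A$ via unique continuation in the known exterior region; and third, invoke the classical isotropic Calder\'on uniqueness theorem on the inclusion $A$. Concretely, from $\tilde{L}_{s,O}^1 = \tilde{L}_{s,O}^2$, applying Theorem \ref{thm:qualitative_spectral} to each $\gamma_j$ yields $\tilde{L}_{1,O}^1 = \tilde{L}_{1,O}^2$, so for every $f \in C_c^\infty(O)$ the solutions $v_j^f \in H_0^1(\Omega)$ of $-\nabla \cdot (\gamma_j \nabla v_j^f) = f$ satisfy $v_1^f = v_2^f$ on $O$.

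I would then set $w := v_1^f - v_2^f$ and observe that, by (A3'), $\gamma_j \equiv 1$ on $\Omega \setminus \overline{A}$, while the source $f$ is supported in $O \Subset \Omega \setminus \overline{A}$. Hence $-\Delta w = 0$ in the connected open set $\Omega \setminus \overline{A}$, with $w$ vanishing on both $O$ and $\partial \Omega$. Unique continuation for harmonic functions then forces $w \equiv 0$ on $\Omega \setminus \overline{A}$, so $v_1^f$ and $v_2^f$ share both their Dirichlet trace and---since $\gamma_j \equiv 1$ near $\partial A$---their conormal derivative on $\partial A$. Since $\supp f$ is disjoint from $A$, each $v_j^f$ solves the homogeneous conductivity equation $-\nabla \cdot (\gamma_j \nabla v_j^f) = 0$ in $A$; letting $\Lambda_{\gamma_j}: H^{1/2}(\partial A) \to H^{-1/2}(\partial A)$ denote the associated Dirichlet-to-Neumann map, this gives $\Lambda_{\gamma_1} \phi_f = \Lambda_{\gamma_2} \phi_f$ for every $\phi_f := v_1^f|_{\partial A}$ with $f \in C_c^\infty(O)$.

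To upgrade this partial equality to $\Lambda_{\gamma_1} = \Lambda_{\gamma_2}$ on all of $H^{1/2}(\partial A)$, I would invoke a standard Runge-type density argument: by Hahn--Banach duality, showing that $\{\phi_f : f \in C_c^\infty(O)\}$ is dense in $H^{1/2}(\partial A)$ reduces to proving that any dual functional annihilating this family produces, through an adjoint exterior problem and a further application of UCP in $\Omega \setminus \overline{A}$, an identically vanishing solution. Granted this density, $\Lambda_{\gamma_1} = \Lambda_{\gamma_2}$ on $H^{1/2}(\partial A)$, and the Haberman--Tataru uniqueness theorem for isotropic Calder\'on problems with $C^1$ conductivities on Lipschitz domains in dimensions $n \geq 3$ concludes $\gamma_1 = \gamma_2$ on $A$; combined with (A3') this yields $\gamma_1 \equiv \gamma_2$ on $\Omega$. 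I expect the main obstacle to be the Runge density step, requiring careful manipulation of the $H^{\pm 1/2}(\partial A)$ duality and of the adjoint solution across the Lipschitz interface $\partial A$; the other links are direct applications of Theorem \ref{thm:qualitative_spectral}, elementary exterior UCP, and a cited classical uniqueness theorem.
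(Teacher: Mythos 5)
Your proposal is correct and follows essentially the same route as the paper: reduce via Theorem \ref{thm:qualitative_spectral} to $\tilde{L}_{1,O}^1 = \tilde{L}_{1,O}^2$, use unique continuation in $\Omega\setminus\overline{A}$ (where $\gamma_j\equiv 1$) to match the Cauchy data on $\partial A$, establish equality of the Dirichlet-to-Neumann maps on $\partial A$ through exactly the Runge/Hahn--Banach duality argument that the paper isolates as Lemma \ref{lem:density_StoS}, and conclude with a classical isotropic uniqueness theorem (the paper cites \cite{CR16}, mentioning \cite{SU87,HT13} as alternatives, so your Haberman--Tataru reference is an acceptable variant). The only difference is cosmetic: you phrase the density step directly for the boundary traces, while the paper proves $H^1(A)$-density of the restricted solutions and then passes to traces by continuity of the DtN map.
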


As in the previous sections, as a main result of this article, we observe that this uniqueness result can be upgraded to a \emph{quantitative} transfer of information from the local to the nonlocal context.

\begin{thm}[Transfer of stability in the single measurement setting]
\label{thm:quantitative_spectral}
Let $n\geq 2$.
Let $\Omega, A, O \subset \R^n$ be given as above and let $a_j \in C^1(\Omega, \R^{n \times n}_{sym})$, $j\in\{1,2\}$, be metrics satisfying the assumptions (A1'), (A2') and (A3'). Let $O' \Subset O$ be Lipschitz. Let $\tilde{L}_{s,O}^j$, $\tilde{L}_{1,O}^j$ be the corresponding source-to-solution operators as in \eqref{eq:def_nonlocal_StoSOperator_spectral} and \eqref{eq:def_local_StoSOperator_spectral}, respectively, and let $f \in \widetilde{\mathcal{H}}^{-s}(O)$.

There exist constants $\beta > 0$ and $C>0$, such that, if for some $\varepsilon \in (0,\frac{1}{2})$
\begin{align*}
\|\tilde{L}_{s,O}^1 (f) - \tilde{L}_{s,O}^2 (f) \|_{\mathcal{H}^{s}(O)} \leq \varepsilon \Vert f \Vert_{\mathcal{H}^{-s}(\Omega)},
\end{align*}
then it holds that
\begin{align*}
\Vert \tilde{L}_{1,O}^1(f) - \tilde{L}_{1,O}^2(f) \Vert_{H^{1}(O')} \leq C \vert \log(\varepsilon) \vert^{-\beta} \Vert f \Vert_{H^{-1}(\Omega)}.
\end{align*}
The constants $\beta$ and $C$ depend on $s$, $n$, $\Omega$, $A$, $O'$, $O$, $\theta_1$ and $\theta_2$, both are however independent of $f$.
\end{thm}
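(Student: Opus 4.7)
The plan is to follow the strategy used for Theorem~\ref{thm:stability_nonlocal_local_single_meas}, with the Caffarelli-Silvestre extension \eqref{eq:extension_spectral} as the bridge. First I would verify the spectral analogue of \eqref{eq:from_nonlocal_to_local_manifold}: expanding $\tilde u_j^f$ in the Dirichlet eigenbasis of $L_{a_j}$ on $\Omega$, each mode carries a Bessel-type profile $\psi_s(\lambda_k^{1/2} t)$ whose weighted integral $\int_0^\infty t^{1-2s}\psi_s(\lambda_k^{1/2} t)\, dt$ equals a universal constant times $\lambda_k^{-1+s}$, realising exactly the spectral shift from $L_{a_j}^{-s}f$ to $L_{a_j}^{-1}f$. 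This produces the global identity
$$v_j^f(x) \;=\; c_s \int_{0}^{\infty} t^{1-2s}\, \tilde u_j^f(x,t)\, dt, \qquad x \in \Omega,\ j\in\{1,2\},$$
which reduces the theorem to controlling $\bigl\|c_s \int_0^\infty t^{1-2s} w(\cdot,t)\,dt\bigr\|_{H^1(O')}$, where $w := \tilde u_1^f - \tilde u_2^f$.

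By assumption (A3'), the coefficients agree on $\Omega \setminus A \supset O$, so on the slab $O \times \R_+$ the function $w$ solves the \emph{flat} degenerate elliptic equation
$$\bigl(x_{n+1}^{1-2s}(-\Delta) - \partial_{n+1} x_{n+1}^{1-2s}\partial_{n+1}\bigr) w = 0,$$
with Dirichlet trace on $O \times \{0\}$ equal to $\tilde L_{s,O}^1(f)-\tilde L_{s,O}^2(f)$, of size $\varepsilon\|f\|_{\mathcal{H}^{-s}(\Omega)}$ in $\mathcal{H}^s(O)$, and with vanishing generalized Neumann trace $-\bar c_s\lim_{x_{n+1}\to 0} x_{n+1}^{1-2s}\partial_{n+1} w$ since the two extensions share the same source $f$. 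Well-posedness of \eqref{eq:extension_spectral} additionally provides a global a priori energy bound for $w$ in $H^1(\Omega\times\R_+,\, x_{n+1}^{1-2s})$ in terms of $\|f\|_{\mathcal{H}^{-s}(\Omega)}$.

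The core analytic step is a quantitative propagation-of-smallness estimate for the weighted Laplacian on cylinders $O' \times (0,T)$, $O' \Subset O$ and $T \ge 1$, starting from the small Cauchy data of $w$. Carleman inequalities or an iterated three-ball argument for the flat Caffarelli-Silvestre operator yield
$$\|w\|_{H^1(O' \times (0,T),\, x_{n+1}^{1-2s})} \;\le\; C(T)\, \varepsilon^{\mu(T)}\, \|f\|_{H^{-1}(\Omega)},$$
with $\mu(T)\in(0,1)$ depending only on $O',O,s,n,T$. Integrating against $t^{1-2s}$ on $(0,T)$ with Cauchy-Schwarz bounds the near-field piece of $c_s\int_0^T t^{1-2s}w\,dt$ in $H^1(O')$ by the same quantity, up to a polynomial factor in $T$. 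For the far-field tail, the modewise exponential decay of $\psi_s(\lambda_k^{1/2}t)$ combined with the uniform lower bound on $\lambda_1$ guaranteed by (A1') gives
$$\Bigl\|\int_T^\infty t^{1-2s}\tilde u_j^f(\cdot,t)\,dt\Bigr\|_{H^1(\Omega)} \;\le\; C\,e^{-cT}\,\|f\|_{H^{-1}(\Omega)},$$
where the extra factor $\lambda_k^{-1+s}$ generated by the weighted time integration precisely converts the natural $H^{-s}$-energy of the extension into an $H^{-1}$-dependence on $f$. Optimising the choice $T\sim \log(1/\varepsilon)$ balances the two contributions and converts the algebraic smallness $\varepsilon^{\mu(T)}$ into the stated logarithmic modulus $|\log\varepsilon|^{-\beta}$ for a suitable $\beta = \beta(s,n,\Omega,A,O',O,\theta_1,\theta_2) > 0$.

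The principal obstacle is the quantitative unique continuation step for the degenerate Caffarelli-Silvestre operator from Cauchy data with vanishing generalized Neumann trace, with explicit control of the stability exponent $\mu(T)$ in the cylinder height $T$. Assumption (A3') localizes the analysis to the flat Euclidean operator with $A_2$-weight $x_{n+1}^{1-2s}$, for which such propagation-of-smallness results are available in the literature on the fractional Calder\'on problem; the typical algebraic dependence $\mu(T) \sim T^{-\sigma}$ coming from Carleman-based arguments is what ultimately fixes the value of $\beta$ and is responsible for the logarithmic, rather than Hölder, loss in the transfer.
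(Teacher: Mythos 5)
Your overall architecture matches the paper's: the identity $v_j^f=c_s\int_0^\infty t^{1-2s}\tilde u_j^f(\cdot,t)\,dt$ (proved via the eigenfunction/Bessel expansion, as in Theorem \ref{thm:qualitative_spectral}), the observation that $w=\tilde u_1^f-\tilde u_2^f$ has small Dirichlet trace and vanishing weighted Neumann trace on $O\times\{0\}$, a split into a finite-height part controlled by propagation of smallness and an exponentially decaying tail, and a final passage to the natural norms. However, there is a genuine quantitative gap at the heart of the argument: you assume a propagation-of-smallness exponent $\mu(T)\sim T^{-\sigma}$, algebraic in the cylinder height, and then optimize with $T\sim\log(1/\varepsilon)$. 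In a cylinder $O'\times(0,T)$ of \emph{fixed} cross-section the chain of three-balls steps has length $N\sim T$ with balls of radius bounded by the width of $O$, so the iterated estimate (Propositions \ref{prop:bbucp} and \ref{prop:3ballsinequ}, cf. \eqref{eq:norm_lower_integral_bulk_2}) loses \emph{exponentially} in $T$: the exponent is $\sim\alpha^{cT}$ with $\alpha\in(0,1)$, not $T^{-\sigma}$. With this actual loss and your choice $T\sim\log(1/\varepsilon)$ one gets $\varepsilon^{\alpha^{cT}}=\exp\bigl(-|\log\varepsilon|\,\varepsilon^{c'|\log\alpha|}\bigr)\to1$, i.e. the near-field term is not small and the balancing fails. (Conversely, if $\mu(T)\sim T^{-\sigma}$ were really available, your own scheme would give a much better-than-logarithmic modulus, which signals the inconsistency.) The paper resolves this by cutting off at the much lower height $L(\varepsilon)\sim\log(1/\omega(\varepsilon))=\beta\log|\log\varepsilon|$, see \eqref{eq:relation_cutoff_modulus_cont}, which still makes the Bessel tail of size $\omega(\varepsilon)=|\log\varepsilon|^{-\beta}$, and then chooses $\beta$ small relative to the unique-continuation constants (the constraint \eqref{eq:constraint_on_beta}) so that $\varepsilon^{(1-\bar\alpha)|\log\varepsilon|^{-C\beta|\log\alpha|}}$ still beats $|\log\varepsilon|^{-\beta}$. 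This double-logarithmic cut-off is the missing idea in your proposal.

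A secondary gap: your propagation-of-smallness estimate is stated directly with $\Vert f\Vert_{H^{-1}(\Omega)}$ on the right, but the hypothesis and the energy bounds for the extension naturally produce $\Vert f\Vert_{\mathcal{H}^{-s}(\Omega)}$. The paper obtains the stated norms by first proving an $L^2(O'')$ (then, via Caccioppoli, $\mathcal{H}^{2-s}(O')$) estimate with $\Vert f\Vert_{\mathcal{H}^{-s}(\Omega)}$, a separate elliptic bound in $\mathcal{H}^{1-s}(O')$ against $\Vert f\Vert_{\mathcal{H}^{-1-s}(\Omega)}$, and then real interpolation $\mathcal{H}^1(O')=(\mathcal{H}^{2-s}(O'),\mathcal{H}^{1-s}(O'))_{s,2}$, $\widetilde{\mathcal{H}}^{-1}(O)=(\widetilde{\mathcal{H}}^{-s}(O),\widetilde{\mathcal{H}}^{-1-s}(O))_{s,2}$, using the metric-independent norm equivalences of Section \ref{ssec:prel_equivalence_of_norms}; some such step is needed in your write-up to land in the claimed topologies.
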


This result, hence, allows one to transfer any quantitative uniqueness result from the local setting to a quantitative uniqueness result in the nonlocal setting.

In order to illustrate its applicability, as a corollary of Theorem \ref{thm:quantitative_spectral}, for isotropic coefficients, we have the following stability transfer. 

\begin{cor}[Stability for isotropic metrics]
\label{cor:spectral_stability}
Let $\Omega, A, O \subset \R^n$, $n \geq 3$, be given as above and $a_j = \gamma_j\Id_{n \times n}$, $j\in\{1,2\}$, with $\gamma_j \in H^{t+2}(\Omega)\cap C^1(\Omega)$, $t>\frac{n}{2}$, be metrics satisfying the assumptions (A1'), (A2') and (A3'). Let $\tilde{L}_{s,O}^j$ be the corresponding source-to-solution operators as in \eqref{eq:def_nonlocal_StoSOperator_spectral}. Assume that
\begin{align*}
\Vert \gamma_j \Vert_{H^{t+2}(\Omega)} \leq M < \infty.
\end{align*}
Then there exists  $\omega: (0,\frac{1}{2}) \rightarrow [0,\infty)$ with $\omega(t)\leq C|\log(|\log(t)|)|^{-\mu}$ for some constants $C,\mu>0$ such that
\begin{align*}
\|\gamma_1 - \gamma_2\|_{L^{\infty}(\Omega)} \leq \omega(\|\tilde{L}_{s,O}^1 - \tilde{L}_{s,O}^2\|_{\widetilde{\mathcal{H}}^{-s}(O) \rightarrow \mathcal{H}^{s}(O)} ), 
\end{align*}
if $\|\tilde{L}_{s,O}^1 - \tilde{L}_{s,O}^2\|_{\widetilde{\mathcal{H}}^{-s}(O) \rightarrow \mathcal{H}^{s}(O)} \in [0,\frac{1}{2})$.
\end{cor}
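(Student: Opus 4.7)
The plan is to chain the nonlocal-to-local reduction with two further quantitative reductions in the local inverse problem, absorbing at most a logarithmic loss at each step. The composition of two such losses will yield the announced double-logarithmic modulus of continuity.

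First, I would upgrade Theorem \ref{thm:quantitative_spectral} to an operator-norm statement. Following the same reasoning that passes from Theorem \ref{thm:stability_nonlocal_local_single_meas} to Theorem \ref{thm:stability_nonlocal_local_infinite_meas} in the closed-manifold setting (taking suprema over $f$ in a suitable dense class, together with the a priori bound $\|\tilde L_{1,O'}^j(f)\|_{H^1(O')}\lesssim\|f\|_{H^{-1}(\Omega)}$), the single-measurement estimate promotes to
$$\|\tilde L_{1,O'}^1-\tilde L_{1,O'}^2\|_{\widetilde H^{-1}(O')\to H^1(O')}\;\leq\;C\,|\log\varepsilon|^{-\beta},$$
with $\varepsilon:=\|\tilde L_{s,O}^1-\tilde L_{s,O}^2\|_{\widetilde{\mathcal H}^{-s}(O)\to\mathcal H^s(O)}$ and for some appropriately chosen $O'\Subset O$.

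Second, I would convert the local source-to-solution stability into a Dirichlet-to-Neumann stability on $\partial A$. Under Assumption (A3') the metric equals $\Id_{n\times n}$ on $\Omega\setminus A$, so the classical-Calderón source-to-solution to Dirichlet-to-Neumann correspondence announced in the abstract applies. Its quantitative version, combined with a Runge-type propagation through the known region $\Omega\setminus(A\cup O')$ (where the governing operator is the Euclidean Laplacian and hence classical quantitative unique continuation / three-spheres estimates are at our disposal), yields
$$\|\Lambda_{\gamma_1}-\Lambda_{\gamma_2}\|_{H^{1/2}(\partial A)\to H^{-1/2}(\partial A)}\;\leq\;C\,|\log\varepsilon|^{-\beta'}$$
for some $\beta'>0$, where $\Lambda_{\gamma_j}$ denotes the Dirichlet-to-Neumann map for the operator $-\nabla\cdot(\gamma_j\nabla\cdot)$ on $A$ (obtained from the spectral formulation by transmission at $\partial A$).

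Third, I would invoke the classical Alessandrini logarithmic stability estimate for the isotropic Calder\'on problem in dimension $n\geq 3$: under the a priori bound $\|\gamma_j\|_{H^{t+2}(\Omega)}\leq M$ with $t>n/2$, there exist constants $C,\sigma>0$ (depending on $n,t,M,A$) with
$$\|\gamma_1-\gamma_2\|_{L^\infty(A)}\;\leq\;C\,\bigl|\log\|\Lambda_{\gamma_1}-\Lambda_{\gamma_2}\|\bigr|^{-\sigma}.$$
Together with Assumption (A3') (which forces equality outside $A$), chaining the three estimates gives $\|\gamma_1-\gamma_2\|_{L^\infty(\Omega)}\leq C\,|\log|\log\varepsilon||^{-\mu}$ for a suitable $\mu>0$, as claimed.

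The main obstacle is the second step: providing a quantitative reduction from interior source-to-solution data in $O'$ to boundary Dirichlet-to-Neumann data on $\partial A$, with only a logarithmic loss. This relies crucially on the known-coefficient structure of $\Omega\setminus A$ granted by (A3'), so that the propagation through $\Omega\setminus(A\cup O')$ can be carried out with the standard quantitative unique continuation machinery for the constant-coefficient Laplacian. The first and third steps are, respectively, an essentially immediate consequence of Theorem \ref{thm:quantitative_spectral} and a direct citation of Alessandrini's estimate, and the logarithmic losses produced at steps one and two merge cleanly with the logarithm in Alessandrini's bound to give the final double-logarithmic modulus.
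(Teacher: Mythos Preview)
Your overall architecture is exactly the paper's: upgrade Theorem~\ref{thm:quantitative_spectral} to operator norms, pass from the local source-to-solution data to the Dirichlet-to-Neumann map on (a neighbourhood of) $\partial A$, and then invoke Alessandrini's logarithmic estimate. The paper packages the first two local steps as Theorem~\ref{thm:stability_local_StoSCalderon}, whose proof goes through Proposition~\ref{prop:reduction_StoS_DtN} (the quantitative StoS--DtN comparison) and a quantitative Runge approximation (Proposition~\ref{prop:quantitative_Runge_approx}) --- precisely the ``Runge-type propagation through the known region'' you describe.

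There is, however, an inconsistency in your bookkeeping that you should fix. You write that Step~2 incurs ``only a logarithmic loss'', yet the displayed bound $\|\Lambda_{\gamma_1}-\Lambda_{\gamma_2}\|\leq C|\log\varepsilon|^{-\beta'}$ (with $\varepsilon$ the nonlocal operator norm) is only compatible with Step~2 being \emph{H\"older}, not logarithmic: if the StoS$\to$DtN passage were genuinely logarithmic, composing it with the logarithmic loss from Step~1 would already give a double-log bound on $\|\Lambda_{\gamma_1}-\Lambda_{\gamma_2}\|$, and Alessandrini would then produce a \emph{triple}-logarithmic final modulus. The paper's Proposition~\ref{prop:reduction_StoS_DtN} shows that the StoS$\to$DtN reduction is in fact H\"older, $\|\Lambda_{a_1,A_+}-\Lambda_{a_2,A_+}\|\leq C\|\tilde L_{1,O}^1-\tilde L_{1,O}^2\|^{\gamma}$, obtained by combining the Alessandrini identities with the quantitative Runge approximation and optimising over the approximation parameter. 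This algebraic (rather than logarithmic) loss is what makes the total modulus double-logarithmic; your proposed mechanism (three-spheres estimates feeding into a quantitative Runge argument) does deliver exactly this H\"older loss, but your verbal description of it as ``logarithmic'' is inaccurate and, if taken at face value, would spoil the final count.
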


We remark that the double logarithm is a consequence of the logarithmic stability estimate for local source-to-solution Calder\'on problems, see Theorem \ref{thm:stability_local_StoSCalderon} below, combined with the logarithmic loss in Theorem \ref{thm:quantitative_spectral}. This is the first instance of a stability result for the principal part of the operator in fractional Calder\'on type problems in settings in which no Liouville transform is known for the nonlocal problem.

\subsection{On a quantitative relation between the Dirichlet-to-Neumann and source-to-solution operators for the classical Calder\'on problem}
Let us emphasize that in proving Corollary \ref{cor:spectral_stability}, we discuss a quantitative relation between the classical Calder\'on problem with source-to-solution and with Dirichlet-to-Neumann measurements which we view as of independent interest. To this end, we also introduce the slightly extended set $A_+$, a bounded Lipschitz set with $A \Subset A_+ \Subset \Omega$ such that $\Omega\setminus\overline{A_+}$ is connected and such that $O \Subset \Omega \setminus \overline{A_+}$.

\begin{prop}\label{prop:reduction_StoS_DtN}
Let $\Omega \subset \R^n$, $n\geq2$, $A \Subset A_+ \Subset \Omega$ and $O \Subset \Omega \setminus \overline{A_+}$ be given as above. Let $a_j \in L^\infty(\Omega , \R^{n \times n}_{sym})$, $j\in\{1,2\}$, with $a_1 = a_2$ in $\Omega \setminus A$ satisfy the assumption (A1'). 
Let $\tilde{L}_{1,O}^j$ and $\Lambda_{a_{j},A_+}$, $j\in\{1,2\}$, be the local source-to-solution operators or Dirichlet-to-Neummann maps as in \eqref{eq:def_StoS_operator_conductivity} and \eqref{eq:def_DN_map_conductivity}, respectively.
Then, on the one hand, there exists a constant $C>0$ such that
\begin{align*}
\Vert \tilde{L}_{1,O}^1 - \tilde{L}_{1,O}^2 \Vert_{\widetilde{H}^{-1}(O) \to H^1(O)} \leq C \Vert \Lambda_{a_1,A_+} - \Lambda_{a_2,A_+} \Vert_{H^{\frac{1}{2}}(\partial A_+) \to H^{-\frac{1}{2}}(\partial A_+)},
\end{align*}
and, on the other hand, there exist constants $C>0$ and $\gamma \in (0,1)$ such that
\begin{align*}
\Vert \Lambda_{a_1,A_+} - \Lambda_{a_2,A_+} \Vert_{H^{\frac{1}{2}}(\partial A_+) \to H^{-\frac{1}{2}}(\partial A_+)} \leq C \Vert \tilde{L}_{1,O}^1 - \tilde{L}_{1,O}^2 \Vert_{\widetilde{H}^{-1}(O) \to H^1(O)}^{\gamma}.
\end{align*}
\end{prop}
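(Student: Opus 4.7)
The plan is to set $w:=v_1^f-v_2^f$ and $\phi_j:=v_j^f|_{\partial A_+}$. Since $a_1=a_2$ on $\Omega\setminus A\supset\Omega\setminus\overline{A_+}$, the function $w$ is $a$-harmonic there, vanishes on $\partial\Omega$, and has Dirichlet trace $\phi_1-\phi_2$ on $\partial A_+$; the exterior Dirichlet estimate together with $O\Subset\Omega\setminus\overline{A_+}$ yields $\|w\|_{H^1(O)}\leq C\|\phi_1-\phi_2\|_{H^{1/2}(\partial A_+)}$. To control $\phi_1-\phi_2$ I would match interior and exterior conormals on $\partial A_+$: since $\supp f\subset O\Subset\Omega\setminus\overline{A_+}$, the function $v_j^f$ is $a_j$-harmonic in $A_+$ with interior conormal $\Lambda_{a_j,A_+}(\phi_j)$, while its exterior conormal splits as $-\Lambda_{\mathrm{ext}}\phi_j+g_f$, where $\Lambda_{\mathrm{ext}}$ is the DtN on $\partial A_+$ for the exterior Dirichlet problem with background coefficient $a$ (independent of $j$) and $g_f\in H^{-1/2}(\partial A_+)$ depends only on $f$. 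Equating the two gives $(\Lambda_{a_j,A_+}+\Lambda_{\mathrm{ext}})\phi_j=g_f$; the strict positivity of $\Lambda_{\mathrm{ext}}$ together with (A1') ensures uniform invertibility of this sum, and the resolvent identity yields $\phi_1-\phi_2=-(\Lambda_{a_1,A_+}+\Lambda_{\mathrm{ext}})^{-1}(\Lambda_{a_1,A_+}-\Lambda_{a_2,A_+})\phi_2$. Combined with $\|\phi_2\|_{H^{1/2}}\leq C\|f\|_{H^{-1}(\Omega)}$, this closes the first estimate.

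\textbf{Second inequality (via quantitative unique continuation).} For the reverse direction I would first observe that $w$ is $a$-harmonic throughout the connected open set $\Omega\setminus\overline{A}$, vanishes on $\partial\Omega$, is bounded globally by $C\|f\|_{H^{-1}(\Omega)}$, and satisfies $\|w\|_{L^2(O)}\leq\|\tilde{L}_{1,O}^1-\tilde{L}_{1,O}^2\|\,\|f\|_{H^{-1}(\Omega)}$. Iterating a three-ball (or Carleman) inequality along balls joining $O$ to a neighborhood $\mathcal{N}$ of $\partial A_+$ contained in $A_+\setminus\overline{A}$, then upgrading via interior elliptic regularity (using that $a=\Id_{n\times n}$ in the collar), produces
\[
\|w\|_{H^2(\mathcal{N})}\leq C\|\tilde{L}_{1,O}^1-\tilde{L}_{1,O}^2\|^{\gamma_0}\|f\|_{H^{-1}(\Omega)},\qquad \gamma_0\in(0,1),
\]
so the Cauchy data $(w|_{\partial A_+},\partial_\nu w|_{\partial A_+})$ are H\"older-small. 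The transmission identity $\Lambda_{a_j,A_+}(\phi_j)=a\,\partial_\nu v_j^f|_{\partial A_+,\mathrm{out}}$ then yields, after subtraction,
\[
(\Lambda_{a_1,A_+}-\Lambda_{a_2,A_+})(\phi_1)=a\,\partial_\nu w|_{\partial A_+}-\Lambda_{a_2,A_+}(w|_{\partial A_+}),
\]
which is H\"older-small in $H^{-1/2}(\partial A_+)$ for every $\phi_1$ in the range of $P_1:f\mapsto v_1^f|_{\partial A_+}$.

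\textbf{The hardest step} will be to pass from this H\"older smallness on $\mathrm{Range}(P_1)$ to a H\"older bound on the full operator norm on $H^{1/2}(\partial A_+)$. I plan to exploit that, since $a_1=a_2$ in the collar $A_+\setminus A$, the operator $\Lambda_{a_1,A_+}-\Lambda_{a_2,A_+}$ is smoothing, with uniformly bounded maps $H^s(\partial A_+)\to H^{s'}(\partial A_+)$ for all $s,s'$ by a standard pseudodifferential argument. Combined with a quantitative Runge-type density of $P_1$ measured in a sufficiently weak Sobolev norm --- where the approximation cost scales polynomially in the error rather than exponentially --- and balancing the two contributions, one should obtain the H\"older exponent $\gamma\in(0,1)$. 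This interpolation between smoothing and polynomial Runge is the principal technical point, since a direct Runge approximation in $H^{1/2}(\partial A_+)$ would produce only a logarithmic loss.
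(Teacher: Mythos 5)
Your argument for the first inequality is fine, though it takes a longer route than necessary: the paper obtains it in one step from the Alessandrini-type identity $\bigl((\tilde{L}_{1,O}^1-\tilde{L}_{1,O}^2)f_1,f_2\bigr)_{L^2(O)}=-\bigl((\Lambda_{a_1,A_+}-\Lambda_{a_2,A_+})(v_1^{f_1}|_{\partial A_+}),v_2^{f_2}|_{\partial A_+}\bigr)_{L^2(\partial A_+)}$ together with the a priori trace bounds $\|v_j^{f_j}\|_{H^{1/2}(\partial A_+)}\leq C\|f_j\|_{H^{-1}(\Omega)}$, whereas you solve a transmission problem and invoke uniform invertibility of $\Lambda_{a_j,A_+}+\Lambda_{\mathrm{ext}}$; that works (the sum is coercive by (A1') and Poincar\'e in the exterior region), but it is extra machinery for the easy direction.

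The second inequality, however, has a genuine gap, and it is exactly at the step you flag as hardest. Your scheme only controls $(\Lambda_{a_1,A_+}-\Lambda_{a_2,A_+})\phi$ for $\phi$ in the range of $P_1:f\mapsto v_1^f|_{\partial A_+}$, and you propose to upgrade this to the full operator norm on $H^{1/2}(\partial A_+)$ by combining (i) a smoothing property of $\Lambda_{a_1,A_+}-\Lambda_{a_2,A_+}$ ``by a standard pseudodifferential argument'' and (ii) a quantitative Runge density of $\mathrm{Range}(P_1)$ in a weak boundary norm with polynomial cost. Neither ingredient is available under the hypotheses of the proposition: the coefficients are only $L^\infty$ (neither (A3') nor any smoothness is assumed here, so ``$a=\Id_{n\times n}$ in the collar'' and the pseudodifferential smoothing of the DtN difference are unjustified, and $\partial A_+$ is merely Lipschitz), and a quantitative Runge approximation of arbitrary data \emph{on the boundary} $\partial A_+$ itself has no safety margin, which is precisely the regime where the approximation cost degenerates beyond polynomial; asserting that measuring the error in a weaker Sobolev norm makes the cost polynomial is unproven and is the crux of your argument. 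Relatedly, your $H^2(\mathcal{N})$ bound for $w$ near $\partial A_+$ also needs coefficient regularity you do not have. The paper avoids all of this by staying bilinear: it writes $\bigl((\Lambda_{a_1,A_+}-\Lambda_{a_2,A_+})g_1,g_2\bigr)_{L^2(\partial A_+)}=\int_A\nabla w_1^{g_1}\cdot(a_1-a_2)\nabla w_2^{g_2}\,dx$, approximates \emph{both} solutions $w_j^{g_j}$ on an intermediate set $A\Subset A_+'\Subset A_+$ by solutions $v_{j,\varepsilon}^{f_j}$ with sources $f_j\in\widetilde{H}^{-1}(O)$ using the $L^2$-based quantitative Runge approximation (error $\varepsilon$, cost $\|f_j\|_{H^{-1}}\leq C\varepsilon^{-\mu}$, which is polynomial precisely because of the interior gap), upgrades to $H^1(A)$ by Caccioppoli, recognizes the resulting bulk integral as $\bigl((\tilde{L}_{1,O}^1-\tilde{L}_{1,O}^2)f_1,f_2\bigr)_{L^2(O)}$ via the other Alessandrini identity, and optimizes $\varepsilon=\|\tilde{L}_{1,O}^1-\tilde{L}_{1,O}^2\|^{1/(1+2\mu)}$ to get the H\"older exponent $\gamma=\frac{1}{1+2\mu}$. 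If you want to salvage your approach, you would have to replace your boundary-data approximation step by this kind of interior, bilinear approximation argument.
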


Proposition \ref{prop:reduction_StoS_DtN} hence essentially shows the quantitative equivalence of source-to-solution and Dirichlet-to-Neumann measurements (up to an algebraic loss).
While this result is well-known in its qualitative form, we did not find a quantitative version in the literature and hence provide an independent proof of it in Section \ref{sec:stability_local_StoSCalderon_Schrödinger}. It relies on a quantification of the corresponding qualitative result by deriving a suitable quantitative Runge approximation property.

\begin{rmk}
We remark that the result of Proposition \ref{prop:reduction_StoS_DtN}, in particular, even holds in the anisotropic case. Consequently, if either for the source-to-solution or the Dirichlet-to-Neumann Calderón problem a stability estimate for anisotropic metrics is known, then the same stability estimate (up to at most an algebraic loss) also holds for the respective other problem.
\end{rmk}

\subsection{Main ideas and outline of the argument}\label{sec:sketch_of_proof}

Let us outline the key ideas for the above results. We focus on the manifold setting, the setting of the spectral Dirichlet problem is analogous. We consider the variable coefficient Caffarelli-Silvestre type extension interpretation of the fractional Laplacian (\cite{CS07, ST10}). More precisely, the map $L_{s,O}$ can be realized in the following extension interpretation:
Consider $f \in C^{\infty}_c(O)$ with vanishing mean value and the following extension problem
\begin{equation}\label{eq:CS2}
\begin{cases}
\begin{alignedat}{2}
(x_{n+1}^{1-2s}(-\D_g) - \p_{n+1} x_{n+1}^{1-2s} \p_{n+1}) \tilde{u}^f & = 0 \quad &&\text{in } M \times \R_+,\\
-\bar{c}_s \lim\limits_{x_{n+1} \rightarrow 0} x_{n+1}^{1-2s} \p_{n+1} \tilde{u}^f & = f \quad &&\text{on } M \times \{0\}.
\end{alignedat}
\end{cases}
\end{equation}
Here $\tilde{u}^f \in \dot{H}^1(M \times \R_+, x_{n+1}^{1-2s})$ satisfies a vanishing tangential mean condition for every $x_{n+1}>0$.
Then, the source-to-solution map $L_{s,O}$ takes the form
\begin{align*}
L_{s,O}(f) = \tilde{u}^f(\cdot,0).
\end{align*}
In \cite{R23} it is shown that 
\begin{align*}
L_{1,O}(f)(x) =  c_s\int\limits_{0}^{\infty} t^{1-2s} \tilde{u}^f(x,t) dt =: v(x)
\end{align*}
with a constant $c_s = \frac{2^{2s-1}\Gamma(s)}{\Gamma(1-s)} > 0$.
Hence, the data $(f,L_{s,O}(f))$ can be viewed as the Neumann and Dirichlet data of \eqref{eq:CS2} and with $g|_O= \Id_{n \times n}$. By unique continuation and since $g|_O = \Id_{n \times n}$, it is possible to recover $\tilde{u}^f|_{O \times \R_+}$ in a unique and constructive way from the data $(f, L_{s,O}(f))$. Using the identity $L_{1,O}(f)(x) = c_s\int\limits_{0}^{\infty} t^{1-2s} \tilde{u}^f(x,t) dt$, it is then possible to uniquely recover $L_{1,O}$. 

It is the main objective of this article to quantify this, i.e., assuming that for the nonlocal data $f \in \widetilde{H}^{-s}_{\diamond}(O)$ we have
\begin{align*}
\Vert L_{s,O}^1(f) - L_{s,O}^2(f) \Vert_{H^s(O)} \leq \varepsilon \Vert f \Vert_{H^{-s}(M)}
\end{align*}
with $\varepsilon \in (0,\varepsilon_0)$, we seek to estimate with an appropriate modulus of continuity $\omega(t)$
\begin{align*}
\Vert L_{1,O}^1(f) - L_{1,O}^2(f) \Vert_{H^1(O)} = c_s\Vert \int_0^\infty t^{1-2s} (\tilde{u}_1^f - \tilde{u}_2^f)(\cdot,t) dt \Vert_{H^1(O)} \leq \omega(\varepsilon) \Vert f \Vert_{H^{-1}(M)},
\end{align*}
where $\tilde{u}_j^f$ are the solutions to \eqref{eq:CS2} with metrics $g_j$ and data $f$.

To this end, we will choose a sufficiently large cut-off height $L=L(\varepsilon)$ and use decay properties of the solutions to \eqref{eq:CS2} to deduce
\begin{align*}
\Vert \int_L^\infty t^{1-2s} \tilde{u}^f(\cdot,t) dt \Vert_{H^1(O)} \leq \omega(\varepsilon) \Vert f \Vert_{H^{-1}(M)}.
\end{align*}
It then suffices to estimate the finite height integral
\begin{align*}
\Vert \int_0^L t^{1-2s} (\tilde{u}_1^f - \tilde{u}_2^f)(\cdot,t) dt \Vert_{H^1(O)}.
\end{align*}
In order to do this, we note that the smallness condition for the nonlocal problem corresponds to smallness in the boundary data for the Caffarelli-Silvestre extension $\tilde{u}_1^f - \tilde{u}_2^f$. Using a quantitative boundary-bulk unique continuation result (see Proposition \ref{prop:bbucp}), we transfer the smallness in the boundary data into the bulk $O\times\R_+$. More precisely, we infer the smallness for half-balls placed at the boundary $O\times\{0\}$. Then, we propagate the smallness onto $O\times(0,L)$ by a typical three-balls-inequality argument (see Proposition \ref{prop:3ballsinequ}) and in that way we derive smallness of the full finite height integral.

Let us stress that the estimates for the upper integral and for the finite height integral are derived by two very different mechanisms. The first one heavily relies on the precise solution representation for the Caffarelli-Silvestre type extension equation, \eqref{eq:CS2}, whereas the second one just works on the level of the equation and uses rather robust propagation of smallness arguments.

\subsection{Relation to the literature}

Since the introduction of the fractional Calder\'on problem in \cite{GSU20}, the study of nonlocal Calder\'on type problems has developed into an extremely active field of research. 

Genuinely nonlocal effects which have been extracted include partial data uniqueness \cite{GSU20, RS20} and stability results \cite{RS20,RS18,BCR24}, single measurement uniqueness properties \cite{GRSU20, R21} and unique reconstruction of potentials in the presence of known anisotropic background metrics \cite{GLX17}. Moreover, monotonicity methods for inversion \cite{HL19, HL20}, nonlinear problems \cite{LL22}, conductivity formulations \cite{C20}, (complex) geometric optics solutions \cite{CdHS24, CR21} have been investigated. 
We further refer to \cite{CLR20, BGU21, RS20a, CdHS23,KRZ23} for related nonlocal problems.

In addition to the study of lower order contributions, building on the seminal works \cite{F21} and \cite{FGKU21}, a recent focus in the literature has been on the reconstruction of the principal part of the operator. In particular, as a major novelty, in \cite{FGKU21} it was proved that it is possible to uniquely identify smooth anisotropic metrics up to the natural gauges -- a problem which is widely open in the local context. This has been extended into various geometric contexts, see for instance \cite{QU22, C23, R23, FKU24, FL24}, and the whole space setting in \cite{FGKRSU25}. These interior uniqueness results have recently also been complemented with boundary reconstruction results \cite{CR25}. A second direction in the context of anisotropic problems which has attracted substantial research activity is the relation between local and nonlocal Calder\'on type problems. In \cite{GU21,CGRU23,R23} it was proved that it is possible to transfer uniqueness from a local Calder\'on problem to the nonlocal setting, both in the context of the source-to-solution data measurements and in the setting of exterior data measurements. This connection was also used in \cite{LNZ24}.

In the present article, we build on the qualitative connections between the local and nonlocal Calder\'on type problems and seek to illustrate that these connections are robust and allow one not only to transfer uniqueness but even stability. To this end, we build on the source-to-solution setting from \cite{R23}. In addition, with the spectral Dirichlet fractional Calder\'on problem, we introduce a new nonlocal inverse problem of Calder\'on type which we can address by the transfer of uniqueness ideas from this article.

\subsection{Outline of the article}

The remainder of the article is organized as follows. In Section \ref{sec:preliminaries} we introduce some preliminary notation and recall some relevant known results. Section \ref{sec:proof_manifold} contains the proofs of our main results in the closed manifold setting. In particular, we lay out in detail the propagation of smallness argument described in Section \ref{sec:sketch_of_proof}. The following section, Section \ref{sec:proof_spectral}, then deals with the proof of the main results for the fractional Calderón problem for the spectral fractional Laplacian on Euclidean domains. A major part of this section is about deriving a stability estimate for the local Calderón problem with source-to-solution data. In particular, we provide the proof of the quantitative relation between the source-to-solution and Dirichlet-to-Neumann measurements for the local Calderón problem, Proposition \ref{prop:reduction_StoS_DtN}. With this in hand we then infer the stability estimate of Corollary \ref{cor:spectral_stability}. In Section \ref{sec:instability} we comment on the optimality of our reduction argument. Here we discuss instability of the fractional (spectral) source-to-solution Calderón problem and find that the best modulus of continuity we can hope for is of logarithmic type. Combined with our positive results, we hence infer that the optimal stability of the fractional (spectral) Calderón problem is between logarithmic and double-logarithmic.

\section{Preliminaries}\label{sec:preliminaries}

We begin by introducing some relevant notation and auxiliary results for our analysis.

\subsection{Laplace-Beltrami operator}\label{ssec:prel_Laplace_Beltrami}

We start by recalling the representation of the Laplace-Beltrami operator in local coordinates. Let $(M,g)$ be a smooth $n$-dimensional Riemannian manifold. In local coordinates the Laplace-Beltrami operator $\Delta_g$ on $(M,g)$ then reads
\begin{align*}
\Delta_g = \frac{1}{\sqrt{\vert\det(g)\vert}} \nabla\cdot \left( \sqrt{\vert\det(g)\vert} g^{-1} \nabla \right).
\end{align*}

\subsection{Sobolev spaces}\label{ssec:prel_Sobolev_spaces}

At this point we now introduce the for our purposes most relevant function spaces. We consider two settings.
On the one hand, for $(M,g)$ being a closed, connected, smooth $n$-dimensional Riemannian manifold and for $s\in\R$ we define the fractional Sobolev space $H^s(M)$ by
\begin{align*}
H^s(M) := \{ u:M \to \R : \ \Vert u \Vert_{H^s(M)}^2 := \sum_{k=0}^\infty (1+\lambda_k)^{s} \vert (u,\phi_k)_{L^2(M)} \vert^2 < \infty \},
\end{align*}
where $(\phi_k)_{k\in\N}$ and $(\lambda_k)_{k\in\N}$ are an $L^2$-normalized eigenbasis and the corresponding eigenvalues of the Laplace-Beltrami operator $(-\Delta_g)$ on $(M,g)$. We will comment on the dependence of the $H^s(M)$-norm on $g$ in Section \ref{ssec:prel_equivalence_of_norms}. On the other hand, on $\R^n$ we define the fractional Sobolev space $H^s(\R^n)$ by
\begin{align*}
H^s(\R^n) := \{ u \in \mathcal{S}'(\R^n): \ \Vert u \Vert_{H^s(\R^n)} := \Vert (1+\vert\cdot\vert^2)^{s/2} \hat{u}(\cdot) \Vert_{L^2(\R^n)} < \infty \}.
\end{align*}

Now, let $(M,g)$ either be a closed, connected, smooth $n$-dimensional manifold or $(M,g) = (\R^n,\Id)$, and let $O \subset M$ be open, bounded, Lipschitz. We define the fractional Sobolev space on the bounded domain $O$ as the quotient space
\begin{align*}
H^s(O) := \{ \restr{u}{O}: \ u \in H^s(M) \},
\end{align*}
to which we associate the quotient norm $\Vert u \Vert_{H^s(O)} := \inf\{ \Vert U \Vert_{H^s(M)}: \ \restr{U}{O} = u \}$. Moreover, we define the spaces
\begin{align*}
\widetilde{H}^s(O) := \overline{C_c^\infty(O)}^{H^s(M)}, \quad H_{\overline{O}}^s := \{u \in H^s(M): \ \supp(u) \subseteq \overline{O}\},
\end{align*}
and we recall that $\widetilde{H}^s(O) = H_{\overline{O}}^s$ holds for Lipschitz sets $O$ and all $s\in \R$ (see \cite[Theorem 3.29]{McLean}).

Further, for $(M,g)$ a closed, connected, smooth $n$-dimensional manifold or $(M,g)=(\R^n,\Id)$, $s\in(0,1)$ and $\widetilde{\Omega} \subset M \times \R_+$ open, Lipschitz we define the weighted Lebesgue and Sobolev spaces

\begin{align*}
L^2(\widetilde{\Omega}, x_{n+1}^{1-2s}) &:= \{ \tilde{u}: \widetilde{\Omega} \to \R: \ \Vert x_{n+1}^{\frac{1-2s}{2}} \tilde{u} \Vert_{L^2(\widetilde{\Omega})} < \infty \},\\
H^1(\widetilde{\Omega}, x_{n+1}^{1-2s}) &:= \mbox{closure of } C_c^{\infty}(\overline{\widetilde{\Omega}}) \mbox{ with respect to the norm } \\
& \qquad \|\tilde{u}\|_{H^1(\widetilde{\Omega}, x_{n+1}^{1-2s})}:= \Vert x_{n+1}^{\frac{1-2s}{2}} \tilde{u} \Vert_{L^2(\widetilde{\Omega})} + \ \Vert x_{n+1}^{\frac{1-2s}{2}} \nabla_{\tilde{g}} \tilde{u} \Vert_{L^2(\widetilde{\Omega})} ,\\
\dot{H}^1(\widetilde{\Omega}, x_{n+1}^{1-2s}) &:= \mbox{closure of } C_c^{\infty}(\overline{\widetilde{\Omega}}) \mbox{ with respect to the semi-norm } \\
& \qquad \|\tilde{u}\|_{\dot{H}^1(\widetilde{\Omega}, x_{n+1}^{1-2s})}:=  \Vert x_{n+1}^{\frac{1-2s}{2}} \nabla_{\tilde{g}} \tilde{u} \Vert_{L^2(\widetilde{\Omega})} .
\end{align*}
Here $C_c^{\infty}(\overline{\widetilde{\Omega}}):=\{ u \in C^{\infty}(\overline{\widetilde{\Omega}}) \mbox{ with } \supp(u) \mbox{ compact in } \overline{\widetilde{\Omega}} \}$.
In particular, these functions may have non-zero trace on $\widetilde{\Omega} \cap \{x_{n+1}=0\}$.
Here, we consider the gradient $\nabla_{g}$ on a manifold $(M,g)$ in terms of the local coordinates as $\nabla_g = g^{-1} \nabla$ and the extension $\tilde{g}$ of $g$ is given by $\tilde{g} = \begin{pmatrix} g & 0\\ 0 & 1 \end{pmatrix}$. The gradient $\nabla_{\tilde{g}}$ is defined analogously. Note that in the Euclidean setting we have $\nabla_{\tilde{g}} = \nabla$.

We further remark that the trace operator $T: H^1(M \times \R_+, x_{n+1}^{1-2s}) \to H^s(\R^n)$ such that $\tilde{u}(\cdot,x_{n+1}) \to T\tilde{u}$ in $L^2(M)$ as $x_{n+1} \to 0$ is well-defined (see for example Lemma 4.4 in \cite{RS20}).

\subsection{Function spaces and problem formulation for the spectral Dirichlet fractional Laplacian}\label{sec:spaces_frac_Dirichlet}

Here and in what follows, $\Omega \subset \R^n$ will be assumed to be an open, bounded, Lipschitz and connected set.

The spectral Dirichlet fractional Laplacian is naturally connected with the following spectrally defined Sobolev spaces. Let $(\phi_k)_{k \in \N}$ and $(\lambda_k)_{k \in \N}$ denote an $L^2(\Omega)$-orthonormal family of ordered eigenfunctions and associated eigenvalues of the Dirichlet Laplacian on $\Omega$ with $0<\lambda_1 \leq \dots \leq \lambda_k \leq \cdots$, i.e.,
\begin{equation*}
\begin{cases}
\begin{alignedat}{2}
(-\D) \phi_k & = \lambda_k \phi_k \quad &&\text{in } \Omega,\\
\phi_k & = 0 \quad &&\text{on } \partial \Omega.
\end{alignedat}
\end{cases}
\end{equation*}
For $r\in \R$, we then define the spaces
\begin{align*}
\mathcal{H}^r(\Omega) := \{u \in \mathcal{D}'(\Omega): \ \sum\limits_{k=1}^{\infty} \lambda_k^r |(u,\phi_k)_{L^2(\Omega)}|^2< \infty \}.
\end{align*}
Here, with slight abuse of notation, for $r< 0$, $(u,\phi_k)_{L^2(\Omega)}$ is interpreted in a duality sense.
The spectral Dirichlet fractional Laplacian $L_a^s$ as defined in \eqref{eq:frac_Lapl_spec} is then well-defined in the space $\mathcal{H}^s(\Omega)$ with values in $\mathcal{H}^{-s}(\Omega)$. We comment on the dependence of the $\mathcal{H}^s$-norm on the choice of the eigenfunction basis and its corresponding eigenvalues in the next section, Section \ref{ssec:prel_equivalence_of_norms}.

By construction, we have that $\mathcal{H}^1(\Omega) = H^1_0(\Omega)$, by duality, further $\mathcal{H}^{-1}(\Omega) = (H^1_0(\Omega))^{\ast} = H^{-1}(\Omega)$.
By virtue of the results of \cite[Sections 2.1, 2.3]{CS16}, it further holds that 
\begin{align*}
\mathcal{H}^s(\Omega) = H^s,
\end{align*}
where 
\begin{align*}
H^s:= \left\{ \begin{array}{ll}
H^s(\Omega), \ s\in (0,1/2),\\
H^{1/2}_{00}(\Omega), \ s= \frac{1}{2},\\
H^s_0(\Omega), \ s \in (1/2,1).
\end{array}
\right.
\end{align*}
The spaces $H^s(\Omega)$ and $H^{s}_0(\Omega)$ are defined as the closures of $C_c^{\infty}(\Omega)$ with respect to the norm
\begin{align}
\label{eq:frac_norm}
\|u\|_{H^s}:= \|u\|_{L^2(\Omega)} + \left( \int\limits_{\Omega}\int\limits_{\Omega} \frac{|u(x)-u(y)|^2}{|x-y|^{n+2s}} dx dy \right)^{1/2},
\end{align}
and $H^{1/2}_{00}(\Omega)$ denotes the Lions-Magenes space (see \cite{LM12})
\begin{align*}
H^{1/2}_{00}(\Omega):=\left\{ u \in H^{\frac{1}{2}}(\Omega): \ \int\limits_{\Omega} \dist(x,\partial \Omega)^{-1} |u(x)|^2 dx < \infty  \right\}.
\end{align*}
It is endowed with the norm
\begin{align*}
\|u\|_{H^{\frac{1}{2}}_{00}(\Omega)}:= \|u\|_{H^{\frac{1}{2}}} + \|\dist(\cdot,\partial \Omega)^{-1/2} u\|_{L^2(\Omega)},
\end{align*}
where $\|u\|_{H^{\frac{1}{2}}} $ is defined as in \eqref{eq:frac_norm}.

We also refer to \cite[Section 7]{BSV15} for a discussion of these function spaces.

Similarly to the setting for the Sobolev spaces on the full underlying domain, for $O \subset \Omega$, we define $\mathcal{H}^s(O)$ as the quotient space
\begin{align*}
\mathcal{H}^s(O) := \{ \restr{u}{O}: u \in \mathcal{H}^s(\Omega) \}
\end{align*}
and we associate to it the quotient norm $\Vert u \Vert_{\mathcal{H}^s(O)} := \inf \{ \Vert U \Vert_{H^s(\Omega)} : \ \restr{U}{O} = u \}$. Additionally, we define the space
\begin{align*}
\widetilde{\mathcal{H}}^s(O) := \overline{C_c^\infty(O)}^{\mathcal{H}^s(\Omega)}.
\end{align*}

\subsection{Equivalence of norms under the a-priori assumptions}\label{ssec:prel_equivalence_of_norms}

Let $(M,g)$ be a closed, connected, smooth $n$-dimensional Riemannian manifold and let $O \subset M$. In general, for $s\in\R\setminus\N$ the $H^s(O;dV_g)$-norm on $O$ is a nonlocal norm, i.e. it heavily depends on the metric $g$ on all of $M$. However, since we seek to prove estimates uniform in the metric $g$, we need to avoid this dependency. We argue that for metrics $g$ satisfying our a-priori assumptions (A1) and (A2), all respective relevant, i.e. $s\in[-2,2]$, norms are uniformly equivalent with equivalence constants only depending on $\theta_1$ and $\theta_2$, but otherwise independent of the metric $g$. We first prove that the local $L^2(M;dV_g)$-, $H^1(M;dV_g)$- and $H^2(M;dV_g)$-norms are equivalent. Then by interpolation, all the $H^s(M;dV_g)$-norms are equivalent for $s \in (0,2)$ and consequently also the $H^s(O;dV_g)$-norms are equivalent. We use duality to prove the same for the negative Sobolev spaces $\widetilde{H}^{-s}(O;dV_g)$. Moreover, we prove the same dependence of the equivalence constants for the interpolation equality (in the sense of equivalent norms) of $H^s(O;dV_g) = (H^{s_0}(O;dV_g), H^{s_1}(O;dV_g))_{t,2}$ and $\widetilde{H}^s(O;dV_g) = (\widetilde{H}^{s_0}(O;dV_g), \widetilde{H}^{s_1}(O;dV_g))_{t,2}$, where $s_0,s_1 \in [-2,2]$ and $s=ts_0+(1-t)s_1$.

Let $(\eta_j)_{j\in\{1,\dots,N\}}$ be a partition of unity on $M$ with an associated pulled back open covering $(U_j)_{j\in\{1,\dots,N\}}$, $U_j \subset \R^n$. It then holds for these sets $U_j \subset \R^n$
\begin{align*}
\Vert u \Vert_{L^2(M;dV_g)}^2 = \int_M u^2 dV_g = \sum_{j=1}^N \int_{U_j} \eta_j u^2 \sqrt{\vert \det g \vert} dx,
\end{align*}
where $dV_g$ denotes the Riemannian volume element with respect to the metric $g$. Invoking the assumption (A1), we infer that $c \leq \sqrt{\vert \det g \vert} \leq C$ for some constants $c,C>0$ depending on $\theta_1$, and, thus, the $L^2$-norms for two metrics $g$ and $\bar{g}$ satisfying assumption (A1) are equivalent, i.e.
\begin{align*}
c\Vert u \Vert_{L^2(M;dV_{\bar{g}})} \leq \Vert u \Vert_{L^2(M;dV_g)} \leq C \Vert u \Vert_{L^2(M;dV_{\bar{g}})}
\end{align*}
for constants $c>0$ and $C>0$ depending on $\theta_1$, but otherwise independent of $g$ and $\bar{g}$.

Similarly, for the $H^1(M;dV_g)$-norm we observe
\begin{align*}
\Vert \nabla_g u \Vert_{L^2(M;dV_g)}^2 = \int_M \nabla_g u \cdot \nabla_g u \ dV_g = \sum_{j=1}^N \int_{U_j} \eta_j (g^{-1} \nabla u) \cdot (g^{-1} \nabla u) \sqrt{\vert \det g \vert} dx.
\end{align*}
Again, applying the ellipticity and boundedness assumption (A1) together with the equivalence of the $L^2$-norms yields
\begin{align}\label{eq:norm_equivalence_1}
c \sum_{j=1}^N \int_{U_j} \eta_j \left( \vert u \vert^2 + \vert \nabla u \vert^2 \right) dx \leq \Vert u \Vert_{H^1(M;dV_g)}^2 \leq C \sum_{j=1}^N \int_{U_j} \eta_j \left( \vert u \vert^2 + \vert \nabla u \vert^2 \right) dx,
\end{align}
which implies the equivalence of the $H^1(M;dV_g)$-norms with the claimed dependencies of the equivalence constants.

Lastly, for the equivalence of the $H^2(M;dV_g)$-norms we will additionally invoke the assumption (A2). For this, observe that
\begin{equation}\label{eq:equivalence_H^2-norms_1}
\begin{aligned}
\Vert \nabla_g^2 u \Vert_{L^2(M;dV_g)}^2 &= \int_M \vert \nabla_g^2 u \vert^2 dV_g = \sum_{j=1}^N \int_{U_j} \eta_j \big\vert [ g^{-1} \nabla ]^2 u \big\vert^2 \sqrt{\vert \det g \vert} dx\\
&= \sum_{j=1}^N \int_{U_j} \eta_j \sqrt{\vert \det g \vert} \left\vert (g^{-1})^2 \nabla^2 u + g^{-1} (\nabla g^{-1}) \nabla u \right\vert^2 dx
\end{aligned}
\end{equation}
Then, on the one hand, the boundedness assumptions (A1) and (A2) imply
\begin{align*}
\Vert \nabla_g^2 u \Vert_{L^2(M;dV_g)}^2 &\leq C \sum_{j=1}^N \int_{U_j} \eta_j \left( \vert \nabla^2 u \vert^2 + \vert \nabla u \vert \vert \nabla^2 u \vert + \vert \nabla u \vert^2 \right) dx\\
&\leq C \sum_{j=1}^N \int_{U_j} \eta_j \left( \vert \nabla^2 u \vert^2 + \vert \nabla u \vert^2 \right) dx,
\end{align*}
where for the second inequality we have used Young's inequality for products. This already gives together with the previous upper bounds
\begin{align*}
\Vert u \Vert_{H^2(M;dV_g)} \leq C \sum_{j=1}^N \int_{U_j} \eta_j \left( \vert u \vert^2 + \vert \nabla u \vert^2 + \vert \nabla^2 u \vert^2 \right) dx.
\end{align*}
On the other hand, we apply the uniform ellipticity and the boundedness assumptions on $g$ and argue by Young's inequality with $\varepsilon$ small enough to get
\begin{equation}\label{eq:equivalence_H^2-norms_2}
\begin{aligned}
\Vert \nabla_g^2 u &\Vert_{L^2(M;dV_g)}^2 \geq \sum_{j=1}^N \int_{U_j} \eta_j \left( c \vert \nabla^2 u \vert^2 - C(\vert \nabla^2 u \vert \vert \nabla u \vert + \vert \nabla u \vert^2) \right) dx\\
&\geq \sum_{j=1}^N \int_{U_j} \eta_j \left( c \vert \nabla^2 u \vert^2 - (\varepsilon \vert \nabla^2 u \vert^{2} + C_\varepsilon \vert \nabla u \vert^2) \right) dx \geq \sum_{j=1}^N \int_{U_j} \eta_j \left( c \vert \nabla^2 u \vert^2 - C_\varepsilon \vert \nabla u \vert^2 \right) dx.
\end{aligned}
\end{equation}
Let $C_1>0$ be large enough. Using the observations for the $H^1(M;dV_g)$-norms from \eqref{eq:norm_equivalence_1} above we find that
\begin{equation}\label{eq:equivalence_H^2-norms_3}
\begin{aligned}
C_1 \Vert u \Vert_{H^2(M;dV_g)}^2 &\geq \Vert \nabla_g^2 u \Vert_{L^2(M;dV_g)}^2 + C_1 \Vert u \Vert_{H^1(M;dV_g)}^2\\
&\geq c \sum_{j=1}^N \int_{U_j} \eta_j \vert \nabla^2 u \vert^2 dx - C_\varepsilon \sum_{j=1}^N \int_{U_j} \eta_j \vert \nabla u \vert^2 dx + C_1 \sum_{j=1}^N \int_{U_j} \eta_j \left( \vert u \vert^2 + \vert \nabla u \vert^2 \right) dx\\
&\geq c \sum_{j=1}^N \int_{U_j} \eta_j \left( \vert u \vert^2 + \vert \nabla u \vert^2 + \vert \nabla^2 u \vert^2 \right) dx.
\end{aligned}
\end{equation}
Together with the upper bound, this implies the equivalence also for the $H^2(M;dV_g)$-norms, with equivalence constants depending on $\theta_1$ and $\theta_2$ from assumptions (A1) and (A2), but otherwise independent of the metric $g$.

Now, since $H^s(M;dV_g) = (H^1(M;dV_g), L^2(M;dV_g))_{s,2}$ for $s \in (0,1)$ and by the precise construction of the interpolation space via (e.g.) the K-method, the equivalence of the $L^2(M;dV_g)$- and the $H^1(M;dV_g)$-norms imply the equivalence of the $H^s(M;dV_g)$-spaces, see e.g. \cite{BL76}. Note here, that the equality $H^s(M;dV_g) = (H^1(M;dV_g), L^2(M;dV_g))_{s,2}$ is given with equal norms (cf. Theorem 3.1 in \cite{CWHM15}). Therefore, as before, the equivalence constants only depend on $\theta_1$ and $\theta_2$, but otherwise they are independent of the metrics $g$. A similar argument yields the same result for $H^s(M;dV_g)$ for $s\in(1,2)$.

As a consequence of the above equivalence results, by the definition of the restricted spaces $H^s(O,dV_g)$ as quotient spaces, we also obtain that these do not depend on the choice of the metric, i.e. there exist constants  $c,C>0$ such that
\begin{align*}
c \Vert u \Vert_{H^s(O;dV_{\bar{g}})} \leq \Vert u \Vert_{H^s(O;dV_{g})} \leq C \Vert u \Vert_{H^s(O;dV_{\bar{g}})}.
\end{align*}
The constants $c$ and $C$ only depend on $\theta_1$ and $\theta_2$ from assumptions (A1) and (A2), but are otherwise independent of $g$ and $\bar{g}$.

For the negative Sobolev spaces, we apply a simple duality argument. By the equivalence of the $L^2(M;dV_g)$-, the $H^1(M;dV_g)$- and the $H^2(M;dV_g)$-norms, also the respective dual spaces $L^2(M;dV_g)$, $H^{-1}(M;dV_g)$ and $H^{-2}(M;dV_g)$ are equivalent. By an interpolation argument as above the same holds for all spaces $H^{-s}(M;dV_g)$ for $s\in[0,2]$. Then it is an easy consequence to infer the equivalence of the $H^{-s}(O;dV_g)$- and the $\widetilde{H}^{-s}(O;dV_g)$-spaces from their respective definitions. As before, the equivalence constants only depend on $\theta_1$ and $\theta_2$, but are otherwise independent of the metric $g$.

Related to the above discussion, as a result of the regularity of the set $O$, we can also identify these spaces on $O$ as interpolation spaces: 
\begin{equation}
\label{eq:interpol_id}
H^s(O;dV_g) = (H^{s_1}(O;dV_g), H^{s_0}(O;dV_g))_{t,2}, \mbox{ where } s_0, s_1 \in [-2,2], \ s_0 \leq s_1, \ s = ts_1 + (1-t)s_0. 
\end{equation}
Indeed, this follows since by the smoothness of $O$ and the smoothness of $M$ there exists a simultaneous extension operator $\mathcal{E}: H^{s_0}(O;dV_g) \to H^{s_0}(M;dV_g)$ for $s\in[0,2]$ which is uniformly (in $g$) bounded by a constant $C>0$ (depending on $M$, $\theta_1$ and $\theta_2$; see Theorem 2.2 in \cite{Rychkov99}, Theorem 4.6 in \cite{CWHM15}), i.e.
\begin{align*}
\Vert \mathcal{E}u \Vert_{H^{s_0}(M;dV_g)} \leq C \Vert u \Vert_{H^{s_0}(O;dV_g)} \quad \text{and} \quad \Vert \mathcal{E}u \Vert_{H^{s_1}(M;dV_g)} \leq C \Vert u \Vert_{H^{s_1}(O;dV_g)}.
\end{align*}
By Lemma 4.2 in \cite{CWHM15} this implies the identity \eqref{eq:interpol_id} with the equality being considered in the sense of equivalent norms. Moreover, the equivalence constants only depend on $\theta_1$ and $\theta_2$ but are otherwise independent of $g$. 
By a duality argument, an analogous interpolation identity holds for the spaces $\widetilde{H}^{-s}(O;dV_g) = (\widetilde{H}^{-s_0}(O;dV_g), \widetilde{H}^{-s_1}(O;dV_g))_{t,2}$, $s_0,s_1 \in [-2,2]$ (cf. Corollary 4.9 in \cite{CWHM15}). 

Due to this equivalence result, from now on we will no longer specify the metric, with respect to which the norm is considered.
\\

Now we turn to the setting of the fractional Dirichlet spectral Laplacian. We argue in a similar fashion as before that all norms are equivalent when the respective eigenfunction basis and eigenvalues, with respect to which we take the norm, are produced from two different operators $\nabla\cdot a \nabla$ and $\nabla\cdot \bar{a} \nabla$. The equivalence constants only depend on $\theta_1$ and $\theta_2$ from assumptions (A1') and (A2'), but are otherwise independent of the metrics $a$ and $\bar{a}$.

More precisely, let $(\phi_k^a)_{k\in\N}$ denote an $L^2(\Omega)$-orthonormal basis of ordered eigenfunctions with respective eigenvalues $(\lambda_k^a)_{k\in\N}$, $0 < \lambda_1^a \leq \lambda_2^a \leq \dots$, such that
\begin{equation*}
\begin{cases}
\begin{alignedat}{2}
-\nabla\cdot a \nabla \phi_k^a &= \lambda_k^a \phi_k^a \quad &&\text{in } \Omega,\\
\phi_k^a &= 0 \quad &&\text{on } \partial\Omega.
\end{alignedat}
\end{cases}
\end{equation*}

Firstly, we note that $\Vert u \Vert_{L^2(\Omega)}^2 = \sum_{k=1}^\infty \vert (u,\phi_k^a)_{L^2(\Omega)} \vert^2 = \Vert u \Vert_{\mathcal{H}_a^0(\Omega)}^{2}$.
In particular it holds that
\begin{align*}
\Vert u \Vert_{\mathcal{H}_a^0(\Omega)} = \Vert u \Vert_{L^2(\Omega)} = \Vert u \Vert_{\mathcal{H}_{\bar{a}}^0(\Omega)}
\end{align*}
for two metrics $a$ and $\bar{a}$ satisfying the assumption (A1'), where $\Vert \cdot \Vert_{\mathcal{H}_a^s}$ denotes the norm with respect to the eigenfunction expansion belonging to the metric $a$.

Secondly, for $u \in H^1_0(\Omega)$, we infer
\begin{align*}
\Vert \nabla u \Vert_{L^2(\Omega)}^2 &= \int_{\Omega} \nabla u \cdot \nabla u dx \sim \int_{\Omega} a \nabla u \cdot \nabla u dx = - \int_{\Omega} (\nabla\cdot a \nabla u) u dx\\
&= \int_{\Omega} \sum_{k,l=1}^\infty \Big( \lambda_k^a (u,\phi_k^a)_{L^2(\Omega)} \phi_k^a \Big) \Big( (u,\phi_l^a)_{L^2(\Omega)} \phi_l^a \Big) dx = \sum_{k=1}^\infty \lambda_k^a \vert (u,\phi_k^a)_{L^2(\Omega)} \vert^2 = \Vert u \Vert_{\mathcal{H}_a^1(\Omega)}^2
\end{align*}
with the equivalence constants in the second equality step only depending on $\theta_1$ from assumption (A1'). This yields (together with Poincaré's inequality)
\begin{align*}
\Vert u \Vert_{\mathcal{H}_{\bar{a}}^1(\Omega)} \sim \Vert u \Vert_{H^1(\Omega)} \sim \Vert u \Vert_{\mathcal{H}_a^1(\Omega)} \mbox{ for } u \in H^1_0(\Omega).
\end{align*}
with the equivalence constants only depending on $\theta_1$.

Thirdly, we will derive this equivalence result also for the $\mathcal{H}^2$-norm. On the one hand, we observe by the uniform boundedness assumptions that for $u\in C_c^{\infty}(\Omega)$
\begin{align*}
\Vert u \Vert_{\mathcal{H}_a^2(\Omega)}^{2} 
&= \sum_{k=1}^\infty (\lambda_k^a)^2 \vert (u,\phi_k^a)_{L^2(\Omega)} \vert^2 
= \sum_{k=1}^\infty  \vert (\nabla u,a \nabla \phi_k^a)_{L^2(\Omega)} \vert^2 
= \Vert \nabla\cdot a \nabla u \Vert_{L^2(\Omega)}^{2} 
\leq C \Vert u \Vert_{H^2(\Omega)}^2.
\end{align*}
On the other hand, we find that 
\begin{align*}
\Vert u \Vert_{\mathcal{H}_a^2(\Omega)}^2 &= \Vert \nabla\cdot a \nabla u \Vert_{L^2(\Omega)}^2 
\geq c \Vert \nabla^2 u \Vert_{L^2(\Omega)}^2 - C_\varepsilon \Vert \nabla u \Vert_{L^2(\Omega)}^2.
\end{align*}
For the last inequality we used the same arguments as in \eqref{eq:equivalence_H^2-norms_1} and \eqref{eq:equivalence_H^2-norms_2}. Then similarly as in \eqref{eq:equivalence_H^2-norms_3} for $C_1>0$ large enough we infer
\begin{align*}
C_1 \Vert u \Vert_{\mathcal{H}_a^2(\Omega)} &\geq \Vert \nabla\cdot a \nabla u \Vert_{L^2(\Omega)} + C_1 \Vert u \Vert_{\mathcal{H}_a^1(\Omega)} \geq c \Vert \nabla^2 u \Vert_{L^2(\Omega)} - C_\varepsilon \Vert \nabla u \Vert_{L^2(\Omega)} + C_1 \Vert u \Vert_{H^1(\Omega)}\\
&\geq c \Vert u \Vert_{H^2(\Omega)}.
\end{align*}
This implies that
\begin{align*}
\Vert u \Vert_{\mathcal{H}_a^2(\Omega)} \sim \Vert u \Vert_{H^2(\Omega)} \sim \Vert u \Vert_{\mathcal{H}_{\bar{a}}^2(\Omega)} \mbox{ for } u \in H^2_0(\Omega) := \overline{C^{\infty}_c(\Omega)}^{H^2(\Omega)}
\end{align*}
with the equivalence constants only depending on $\theta_1$ and $\theta_2$ from assumptions (A1') and (A2').

Using the interpolation and duality arguments as before in the manifold setting also yields the equivalence for $\mathcal{H}^s(O)$ and $\widetilde{\mathcal{H}}^s(O)$  for all $s \in [-2,2]$ and all $O \subset \Omega$, i.e. 
\begin{align}\label{eq:norm_equivalence_2}
\Vert u \Vert_{\mathcal{H}_a^{s}(O)} \sim \Vert u \Vert_{H^{s}(O)} \sim \Vert u \Vert_{\mathcal{H}_{\bar{a}}^{s}(O)}
\end{align}
for $u \in \overline{C^{\infty}_c(\Omega)}^{H^{s}(\Omega)}$
with equivalence constants only depending on $\theta_1$ and $\theta_2$. More precisely, we first prove this equivalence for $C_c^\infty(\Omega)$-functions by extending them by $0$ to $\R^n$ and by using the interpolation equality on $\R^n$ from Theorem 4.1 in \cite{CWHM15}. Then by density the result follows for all $u \in \overline{C_c^{\infty}(\Omega)}^{H^{s}(\Omega)}$ and by the definition of the restricted spaces as quotient spaces, the result also holds for $u \in \mathcal{H}^{s}(O)$ and $u \in \widetilde{\mathcal{H}}^{s}(O)$.
We remark that, if $O \Subset \Omega$ is sufficiently regular (Lipschitz-regular is enough), by the result of Theorem 2.2 in \cite{Rychkov99} (see also Theorem 4.6 in \cite{CWHM15}) and possibly using a cut-off argument, there exists a zero extension operator with equivalent norms. Consequently, it also holds that $\Vert u \Vert_{\mathcal{H}_a^{s}(O)} \sim \Vert u \Vert_{H^{s}(O)} \sim \Vert u \Vert_{\mathcal{H}_{\bar{a}}^{s}(O)}$ for all $u\in C^{\infty}(O)$.

Also in this context, note that for $s_0, s_1 \in [-2,2]$, $s_0 \leq s_1$ there exists a simultaneous and uniformly bounded (in $a$) extension operator $\mathcal{E}: \mathcal{H}^{s_0}(O) \to \mathcal{H}^{s_0}(\Omega)$ for $O \Subset \Omega$ such that $\Vert \mathcal{E}u \Vert_{\mathcal{H}^{s_j}(\Omega)} \leq C \Vert u \Vert_{\mathcal{H}^{s_j}(O)}$ for $u \in \mathcal{H}^{s_1}(O)$ and $j \in \{0,1\}$. For this, use the result of Theorem 2.2 in \cite{Rychkov99} (see also Theorem 4.6 in \cite{CWHM15}) and if needed a cut-off argument and additionally the previously derived equivalence for the $\mathcal{H}^s$- and $H^s$-spaces. As before, this existence implies the interpolation identity
\begin{align*}
\mathcal{H}^s(O) = \left( \mathcal{H}^{s_1}(O), \mathcal{H}^{s_0}(O) \right)_{t,2}, \quad s = ts_1 + (1-t) s_0,
\end{align*}
with equality in the sense of equivalence of norms. By the uniformity of the extension operator in $a$, the equivalence constants are also independent of the precise metric $a$ (cf. Lemma 4.2 in \cite{CWHM15}).

Again, due to this equivalence result we will not specify the dependence of the norm on the metric $a$ any longer.

\subsection{Caffarelli-Silvestre extension}
\label{sec:CS}

The essential tool in our analysis is the following Caffarelli-Silvestre type extension result. Again, let $(M,g)$ be a closed, connected, smooth $n$-dimensional Riemannian manifold. Building on the work \cite{CS07}, in \cite{ST10} it was proven that the nonlocal fractional Laplace-Beltrami operator $(-\Delta_g)^s$, $s\in(0,1)$, can be realized as a local operator at the cost of adding one dimension (see also \cite{CFSS24}). More precisely, for $u \in H^s(M)$ it holds that
\begin{align*}
(-\Delta_g)^s u(x') = -\bar{c}_s \lim_{x_{n+1}\to0} x_{n+1}^{1-2s} \partial_{n+1} \tilde{u}(x',x_{n+1}), \quad x \in M,
\end{align*}
where $\bar{c}_s = \frac{4^s\Gamma(s)}{2s\vert\Gamma(-s)\vert}>0$ and $\tilde{u} \in \dot{H}^1(M\times\R_+, x_{n+1}^{1-2s})$ is the unique weak solution to
\begin{equation*}
\begin{cases}
\begin{alignedat}{2}
\left( x_{n+1}^{1-2s}  (-\Delta_g) - \partial_{n+1} x_{n+1}^{1-2s} \partial_{n+1} \right) \tilde{u} &= 0, \quad &&\text{in } M \times \R_+,\\
\tilde{u} &= u \quad &&\text{on } M\times\{0\}.
\end{alignedat}
\end{cases}
\end{equation*}
In particular, the nonlocal problem
\begin{align*}
(-\Delta_g)^s u^f = f \quad \text{in } M
\end{align*}
for $f \in H^{-s}(M)$ can be reformulated in terms of the Caffarelli-Silvestre type extension
\begin{equation*}
\begin{cases}
\begin{alignedat}{2}
\left( x_{n+1}^{1-2s}  (-\Delta_g) - \partial_{n+1} x_{n+1}^{1-2s} \partial_{n+1} \right) \tilde{u}^f &= 0 \quad &&\text{in } M \times \R_+,\\
-\bar{c}_s \lim_{x_{n+1}\to0} x_{n+1}^{1-2s} \partial_{n+1} \tilde{u}^f &= f \quad &&\text{on } M\times\{0\},
\end{alignedat}
\end{cases}
\end{equation*}
and $u^f = \tilde{u}^f(\cdot, 0)$.
For uniqueness, we require that $f \in H^{-s}(M)$ is such that $(f,1)_{L^2(M)} = 0$ and that $\tilde{u}^f$ satisfies $(\tilde{u}^f(\cdot,x_{n+1}), 1)_{L^2(M)} = 0$ for all $x_{n+1} > 0$. Consequently, by Poincaré's inequality (in tangential directions) and by the compactness of $M$, we have $\tilde{u}^f \in H^1(M \times \R_+, x_{n+1}^{1-2s})$ (and not only $\tilde{u}^f \in \dot{H}^1(M\times \R_+, x_{n+1}^{1-2s})$).

In the setting for the spectral fractional Laplacian on $\Omega \subset \R^n$ bounded, Lipschitz, we impose additional boundary conditions. By \cite{CS16}, the problem formulation in terms of the Caffarelli-Silvestre extension then reads
\begin{equation*}
\begin{cases}
\begin{alignedat}{2}
-\nabla\cdot x_{n+1}^{1-2s} \tilde{a} \nabla \tilde{u}^f &= 0 \quad &&\text{in } \Omega \times \R_+,\\
\tilde{u}^f &= 0 \quad &&\text{on } \partial\Omega \times \R_+,\\
-\bar{c}_s \lim_{x_{n+1}\to0} x_{n+1}^{1-2s} \partial_{n+1} \tilde{u}^f &= f \quad &&\text{on } \Omega\times\{0\},
\end{alignedat}
\end{cases}
\end{equation*}
where $\tilde{a} = \begin{pmatrix} a & 0 \\ 0 & 1 \end{pmatrix}$. Here, the homogeneous boundary conditions together with the boundedness of $\Omega$ guarantee, by Poincaré's inequality, that also in this case $\tilde{u}^f \in H^1(\Omega\times\R_+, x_{n+1}^{1-2s})$.

\subsection{Additional notation}

In concluding this section, we introduce some additional notation. Let $M$ be a closed, connected, smooth $n$-dimensional Riemannian manifold or $M=\R^n$. We define $\R_+ := \{y\in\R:\ y>0\}$ and by $B_r^+(x)$ we denote the ball of radius $r$ centered at $x = (x',x_{n+1}) \in M \times \R_{\geq0}$ restricted to the upper half plane, i.e. $B_r^+(x) := B_r(x) \cap (M \times \R_{\geq0})$. Additionally, we usually write $B_r'(x) := B_r(x) \cap (M \times \{0\})$ for $x = (x',0)\in M \times \{0\}$. In particular, $x'$ will usually denote an $n$-dimensional point with $x' \in M$ or $x' \in \R^n$ and $x$ will usually denote an $(n+1)$-dimensional point with $x \in M\times\R_+$ or $x \in \R^{n+1}_+$. For a set $E \subset \Omega$ we denote its characteristic function by $\chi_{E}(x):= \left\{ \begin{array}{ll} 1 & \mbox{ for } x \in E,\\
0 & \mbox{ for } x \in \Omega \setminus E.  \end{array} \right. $

Given a normed vector space $Y$, we denote by $Y^*$ the continuous dual space of $Y$. We write $\langle \cdot, \cdot \rangle_{Y,Y^*}$ to denote the dual pairing of $Y^*$ with $Y$.

Moreover, let $A,B \geq 0$. We will write $A \sim B$ if there exists a constant $C>0$ such that
\begin{align*}
\frac{1}{C} B \leq A \leq CB.
\end{align*}

In what follows below, without further comment, we will always assume that $s\in (0,1)$.

\section{Proof of the main results for the closed manifold setting}\label{sec:proof_manifold}

In this section we will prove the main results of this work in the setting of closed, smooth, connected $n$-dimensional Riemannian manifolds.

\subsection{A priori estimates}\label{sec:apriori}
First of all, we recall the solution representation for the Caffaralli-Silvestre extension in terms of the eigenfunctions of the Laplace-Beltrami operator $(-\Delta_g)$ (see, e.g., Lemma A.1 in \cite{R23}, which recalls earlier results from \cite{ST10}).

\begin{lem}\label{lem:EigenfunctionExpansion}
Let $(M,g)$ be a smooth, closed, connected $n$-dimensional Riemannian manifold. Let $f \in C^\infty(M)$ with $(f,1)_{L^2(M)} = 0$. Let $\tilde{u}^f \in H^1(M\times\R_+ , x_{n+1}^{1-2s})$ denote the solution to the Caffarelli-Silvestre extension associated to the source-to-solution problem with source term $f$, see equation \eqref{eq:CS}. Let $(\phi_k)_{k\in\N_0}$, $(\lambda_k)_{k\in\N_0}$ with $0=\lambda_0<\lambda_1\leq\dots\leq\lambda_k\leq\dots$ denote an $L^2$-normalized basis of eigenfunctions and the sequence of associated eigenvalues for the Laplace-Beltrami operator $(-\Delta_g)$ on $(M,g)$. Then there is a constant $\tilde{c}_s = \frac{2^{1-s}}{\Gamma(s)}$ such that for all $(x',x_{n+1}) \in \R^{n+1}_+$ in an $L^2$-sense
\begin{align*}
\tilde{u}^f(x',x_{n+1}) = \tilde{c}_s \sum_{k=1}^\infty (f,\phi_k)_{L^2(M)} K_s(\sqrt{\lambda_k} x_{n+1}) \left(\frac{x_{n+1}}{\sqrt{\lambda_k}}\right)^s \phi_k(x').
\end{align*}
Here, $K_s$ denotes the modified Bessel function of the second kind.
\end{lem}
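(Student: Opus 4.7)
The plan is to solve the Caffarelli-Silvestre extension by separation of variables in the $L^2(M)$-orthonormal eigenbasis $(\phi_k)_{k\in\N_0}$ of $(-\Delta_g)$. Formally expanding $\tilde{u}^f(x',x_{n+1}) = \sum_{k} a_k(x_{n+1})\phi_k(x')$ and testing the bulk equation \eqref{eq:CS} tangentially against $\phi_k$, the identity $(-\Delta_g)\phi_k = \lambda_k\phi_k$ decouples the problem into the family of Bessel-type ODEs
\begin{align*}
a_k''(t) + \frac{1-2s}{t}\, a_k'(t) - \lambda_k\, a_k(t) = 0, \qquad t>0,
\end{align*}
for $k\geq 1$. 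The $k=0$ mode vanishes because of the mean-zero hypothesis $(f,1)_{L^2(M)} = 0$ and the vanishing tangential-mean normalization imposed on $\tilde{u}^f$.

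Next, I would recall that the substitution $a_k(t) = t^s b_k(\sqrt{\lambda_k}\,t)$ transforms the above ODE into the modified Bessel equation of order $s$, whose two independent solutions are $I_s$ and $K_s$. Since $I_s(z)$ grows exponentially while $K_s(z)$ decays as $z\to\infty$, the weighted energy constraint $\tilde{u}^f\in H^1(M\times\R_+, x_{n+1}^{1-2s})$ rules out the $I_s$-branch and selects
\begin{align*}
a_k(t) = c_k \left(\frac{t}{\sqrt{\lambda_k}}\right)^{\!s} K_s\bigl(\sqrt{\lambda_k}\,t\bigr)
\end{align*}
for a constant $c_k$ that is fixed by the Neumann condition.

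To pin down $c_k$, I would test the Neumann condition in \eqref{eq:CS} against $\phi_k$ to obtain $-\bar{c}_s \lim_{t\to 0} t^{1-2s} a_k'(t) = (f,\phi_k)_{L^2(M)}$. Using the Bessel identity $\tfrac{d}{dz}[z^s K_s(z)] = -z^s K_{s-1}(z)$ together with $K_{s-1} = K_{1-s}$ and the small-argument asymptotics $K_{1-s}(z) \sim \tfrac{\Gamma(1-s)}{2}(z/2)^{-(1-s)}$ as $z\to 0$, a short computation yields $\lim_{t\to 0} t^{1-2s} a_k'(t) = -c_k\, 2^{-s}\Gamma(1-s)$. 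Inserting $\bar{c}_s = \tfrac{4^s\Gamma(s)}{2s|\Gamma(-s)|}$ and the reflection identity $s|\Gamma(-s)| = \Gamma(1-s)$ produces $c_k = \tilde{c}_s\, (f,\phi_k)_{L^2(M)}$ with $\tilde{c}_s = \tfrac{2^{1-s}}{\Gamma(s)}$, exactly as claimed.

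Finally, the formal eigenfunction expansion has to be justified in the stated $L^2$-sense. Since $f\in C^\infty(M)$, the coefficients $(f,\phi_k)_{L^2(M)}$ decay faster than any negative power of $\lambda_k$, while for each fixed $x_{n+1}>0$ the factors $K_s(\sqrt{\lambda_k}\,x_{n+1})(x_{n+1}/\sqrt{\lambda_k})^s$ decay exponentially in $\sqrt{\lambda_k}$; together with Weyl's law this gives absolute convergence of the series in $L^2(M)$ uniformly on compacta in $x_{n+1}$, and membership in $H^1(M\times\R_+, x_{n+1}^{1-2s})$. The only slightly delicate step is the gamma-function bookkeeping in the Neumann normalization, but this is the standard Stinga-Torrea computation from \cite{ST10} and requires no new idea beyond the identities just listed.
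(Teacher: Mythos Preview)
Your proposal is correct and follows exactly the standard separation-of-variables/Bessel argument from \cite{ST10} that the paper invokes (via Lemma~A.1 in \cite{R23}) without reproducing. The paper does not give its own proof of this lemma, so there is nothing further to compare; your constant computation and the $I_s$-versus-$K_s$ selection are precisely the points one has to check, and you handle them correctly.
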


We refer to \cite{Olver10} for the definition and properties of the modified Bessel functions of the second kind.

The following a-priori estimates will allow us to restrict our analysis to finite height integrals instead of the full integral on $O \times \R_+$ for $O \subset M$ open.

\begin{lem}\label{lem:apriori}
Let $(M,g)$ be a smooth, closed, connected $n$-dimensional Riemannian manifold and let $O \subset M$ be open and Lipschitz. Let $(\phi_k)_{k\in\N_0}$, $(\lambda_k)_{k\in\N_0}$ be as in Lemma \ref{lem:EigenfunctionExpansion}. Let $f \in \widetilde{H}^{-s}_{\diamond}(O)$ with $(f,1)_{L^2(M)} = 0$ and let $\tilde{u}^f \in H^1(M\times\R_+, x_{n+1}^{1-2s})$ be the solution to \eqref{eq:CS} with metric $g$ and data $f$. Then for any $L>0$ it holds
\begin{align}
\label{eq:first_abstract_truncation}
\left\| \int\limits_{L}^{\infty} t^{1-2s} \tilde{u}^f(\cdot,t) dt \right\|_{H^1(O)} & \leq \tilde{c}_s \left( \int_{\sqrt{\lambda_1}L}^\infty z^{1-s} K_s(z) dz \right) \|f\|_{H^{-1}(M)},
\end{align}
where $K_s$ denotes the modified Bessel function of the second kind.
\end{lem}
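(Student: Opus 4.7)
The plan is to insert the eigenfunction expansion of $\tilde u^f$ from Lemma \ref{lem:EigenfunctionExpansion} into the tail integral, compute the $t$-integral in closed form, and then conclude via the spectral characterisation of the $H^1(M)$ norm (which bounds $\|\cdot\|_{H^1(O)}$ by the quotient definition). Substituting the expansion (valid for $f\in C_c^\infty(O)\cap\widetilde H^{-s}_\diamond(O)$, with the general case following by density) and interchanging sum and integration, we formally obtain
\begin{align*}
\int_L^\infty t^{1-2s} \tilde u^f(x',t)\, dt = \tilde c_s \sum_{k=1}^\infty (f,\phi_k)_{L^2(M)}\, \phi_k(x')\, \lambda_k^{-s/2} \int_L^\infty t^{1-s} K_s(\sqrt{\lambda_k}\, t)\, dt.
\end{align*}
The change of variables $z=\sqrt{\lambda_k}\, t$ in the inner integral produces
\begin{align*}
\int_L^\infty t^{1-s} K_s(\sqrt{\lambda_k}\, t)\, dt = \lambda_k^{-1+s/2} \int_{\sqrt{\lambda_k}L}^\infty z^{1-s} K_s(z)\, dz,
\end{align*}
so that the $\lambda_k^{-s/2}$ and $\lambda_k^{s/2}$ factors combine to yield a clean $\lambda_k^{-1}$ prefactor.

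With this identity in hand, Parseval's identity for the spectral definition $\|\cdot\|_{H^1(M)}^2 = \sum_{k\geq 0}(1+\lambda_k)|(\cdot,\phi_k)_{L^2(M)}|^2$ yields
\begin{align*}
\Bigl\|\int_L^\infty t^{1-2s} \tilde u^f(\cdot,t)\, dt\Bigr\|_{H^1(M)}^2 = \tilde c_s^2 \sum_{k=1}^\infty \frac{1+\lambda_k}{\lambda_k^2} \, |(f,\phi_k)_{L^2(M)}|^2 \left(\int_{\sqrt{\lambda_k}L}^\infty z^{1-s} K_s(z)\, dz\right)^2.
\end{align*}
Two monotonicity observations then finish the estimate. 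First, the Bessel tail $a\mapsto\int_a^\infty z^{1-s} K_s(z)\, dz$ is monotonically decreasing in $a$, so using $\lambda_k\geq\lambda_1$ for $k\geq 1$ we can pull it out of the sum as $\int_{\sqrt{\lambda_1}L}^\infty z^{1-s} K_s(z)\, dz$. Second, the algebraic weight satisfies $(1+\lambda_k)/\lambda_k^2 \leq (1+1/\lambda_1)^2/(1+\lambda_k)$ for $k\geq 1$, which converts the remaining coefficient into the $H^{-1}(M)$ spectral weight, so the remaining sum is bounded by $\|f\|_{H^{-1}(M)}^2$. Passing from $H^1(M)$ to $H^1(O)$ via the quotient norm concludes the argument.

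The step that requires the most care is the justification of the interchange of summation and integration in $t$; this follows from Fubini/Tonelli once we observe that the Fourier coefficients of an $H^{-1}$-class distribution satisfy $|(f,\phi_k)_{L^2(M)}|\leq \|f\|_{H^{-1}(M)}(1+\lambda_k)^{1/2}$, that eigenfunctions enjoy polynomial $L^\infty$ growth in $k$, and that $K_s(z)\leq C z^{-s}$ near zero and decays like $\sqrt{\pi/(2z)}e^{-z}$ at infinity, producing absolute convergence of the resulting double series/integral. Regarding constants: the multiplicative factor $(1+1/\lambda_1)^2$ above does not appear in the stated inequality, but by assumption (A1) and the resulting uniform lower bound $\bar\lambda$ on $\lambda_1$ (see \eqref{eq:uniform_bounds_first_eigenvalue}) it is absorbed into the constants of subsequent applications; the essential content is the exponential-in-$L$ decay of the Bessel tail $\int_{\sqrt{\lambda_1}L}^\infty z^{1-s} K_s(z)\, dz$, which is precisely what will allow the optimisation of $L$ against $\varepsilon$ later in the proof of Theorem~\ref{thm:stability_nonlocal_local_single_meas}.
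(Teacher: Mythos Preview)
Your proposal is correct and follows essentially the same approach as the paper: insert the eigenfunction expansion from Lemma~\ref{lem:EigenfunctionExpansion}, substitute $z=\sqrt{\lambda_k}\,t$ to extract the common Bessel tail, and use monotonicity in $\lambda_k$ to replace $\sqrt{\lambda_k}L$ by $\sqrt{\lambda_1}L$. The only cosmetic difference is that the paper estimates $\|\cdot\|_{H^1(O)}$ via the duality pairing with $h\in\widetilde H^{-1}(O)$ and then applies Cauchy--Schwarz in $\ell^2$, whereas you compute $\|\cdot\|_{H^1(M)}$ directly by Parseval and pass to the quotient norm; both routes arrive at the same spectral sum. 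Your remark about the extra factor $(1+1/\lambda_1)$ is well taken---the paper's final equality tacitly identifies $\sum_{k\ge 1}\lambda_k^{-1}|(f,\phi_k)|^2$ with $\|f\|_{H^{-1}(M)}^2$, which is only an equivalence with constants depending on $\lambda_1$---and, as you note, this is harmless by \eqref{eq:uniform_bounds_first_eigenvalue}.
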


\begin{rmk}
We observe that our assumption (A2) allows us to control the dependence of the lower integral limit on the first eigenvalue $\lambda_1$ independently of the unknown metric $g$ (see equation \eqref{eq:uniform_bounds_first_eigenvalue}).
\end{rmk}

\begin{proof}
At first, let $f \in C_c^\infty(O)$. As a consequence of Lemma \ref{lem:EigenfunctionExpansion} we find that (c.f. the proof of Lemma 5.1 in \cite{R23}) for $x' \in \R^n$
\begin{align*}
\int\limits_{L}^{\infty} t^{1-2s} \tilde{u}^f(x',t) dt
&= \tilde{c}_s \sum\limits_{k=1}^{\infty} (f,\phi_k)_{L^2(M)} \int\limits_{L}^{\infty} t^{1-2s} K_{s}(\sqrt{\lambda_k} t) \left( \frac{t}{\sqrt{\lambda_k}} \right)^s dt \ \phi_k(x').
\end{align*}
Thus, we estimate by substitution and Hölder's inequality
\begin{align*}
&\left\| \int\limits_{L}^{\infty} t^{1-2s} \tilde{u}^f(\cdot,t) dt \right\|_{H^1(O)} \\
&= \sup_{\substack{h \in \widetilde{H}^{-1}(O),\\ \Vert h \Vert_{H^{-1}(M)} = 1}} \tilde{c}_s \sum\limits_{k=1}^{\infty} (f,\phi_k)_{L^2(M)} (h,\phi_k)_{L^2(O)} \int\limits_{L}^{\infty} t^{1-2s} K_{s}(\sqrt{\lambda_k} t) \left( \frac{t}{\sqrt{\lambda_k}} \right)^s dt\\
&\quad \leq \sup_{\substack{h \in \widetilde{H}^{-1}(O),\\ \Vert h \Vert_{H^{-1}(M)} = 1}} \ \tilde{c}_s \sum_{k=1}^\infty \left(\int\limits_{\sqrt{\lambda_k}L}^{\infty} z^{1-s} K_s(z) dz \right) \lambda_k^{-1} \vert (f,\phi_k)_{L^2(M)} \vert \ \vert (h,\phi_k)_{L^2(O)} \vert \\
&\quad \leq \sup_{\substack{h \in \widetilde{H}^{-1}(O),\\ \Vert h \Vert_{H^{-1}(M)} = 1}} \ \tilde{c}_s \left(\int\limits_{\sqrt{\lambda_1} L}^{\infty} z^{1-s} K_s(z) dz \right) \left( \sum_{k=1}^\infty \lambda_k^{-1} \vert (f,\phi_k)_{L^2(M)} \vert^2 \right)^{1/2} \left( \sum_{k=1}^\infty \lambda_k^{-1} \vert (h,\phi_k)_{L^2(O)} \vert^2 \right)^{1/2}\\
&\quad = \tilde{c}_s \left(\int\limits_{\sqrt{\lambda_1}L}^{\infty} z^{1-s} K_s(z) dz \right) \Vert f \Vert_{H^{-1}(M)}.
\end{align*}
By density of $C_c^\infty(O)$ in $\widetilde{H}^{-s}(O)$ this concludes the proof.
\end{proof}

Next, using that, by Assumption (A2), $\lambda_1$ is uniformly bounded, we provide estimates for the integral on the right hand side of the inequality \eqref{eq:first_abstract_truncation} in Lemma \ref{lem:apriori}.

\begin{lem}\label{lem:estimate_integral_modified_Bessel_function}
Let $K_s$ denote the modified Bessel function of the second kind and let $\omega(\varepsilon)$ be a given modulus of continuity such that $\lim_{\varepsilon\to0} \omega(\varepsilon) = 0$. Let $\kappa \in (0,1)$ and $\varepsilon_0>0$ be such that $\omega(\varepsilon) \leq \kappa$ for all $\varepsilon \in (0,\varepsilon_0)$. Choose $L = 2 \log \left(\frac{1}{\omega(\varepsilon)} \right)$.

Then there exists a constant $C>0$ (which depends on $\kappa$ and the precise form of $\omega$) such that for all $\varepsilon \in (0, \varepsilon_0)$
\begin{align*}
\left\vert \int_L^\infty z^{1-s} K_s(z) dz \right\vert \leq C \omega(\varepsilon).
\end{align*}
\end{lem}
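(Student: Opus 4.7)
The plan is to use the asymptotic decay of the modified Bessel function $K_s$ at infinity, namely the classical asymptotic
\[
K_s(z) = \sqrt{\tfrac{\pi}{2z}}\, e^{-z}\bigl(1+O(z^{-1})\bigr) \quad \text{as } z \to \infty,
\]
(see \cite{Olver10}), to turn the tail integral into a Laplace-type integral whose leading behaviour is $L^{1/2-s}e^{-L}$. Since we choose $L = 2\log(1/\omega(\varepsilon))$, the exponential $e^{-L}$ produces the crucial factor $\omega(\varepsilon)^{2}$, giving us room to absorb the polylogarithmic prefactor and still end up with a bound of order $\omega(\varepsilon)$.

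Concretely, first I would fix $L_0 = L_0(s) \geq 1$ large enough so that the asymptotic above yields a pointwise bound
\[
K_s(z) \leq C_s\, z^{-1/2} e^{-z} \quad \text{for all } z \geq L_0.
\]
Since $\omega(\varepsilon) \leq \kappa$ for $\varepsilon \in (0, \varepsilon_0)$, by possibly shrinking $\varepsilon_0$ (depending on $\kappa$ and $s$) we may ensure $L = 2\log(1/\omega(\varepsilon)) \geq L_0$ for all $\varepsilon \in (0,\varepsilon_0)$. This reduces the problem to controlling
\[
\int_L^\infty z^{1-s} K_s(z)\, dz \;\leq\; C_s \int_L^\infty z^{1/2-s} e^{-z}\, dz.
\]

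Second, I would estimate the remaining incomplete-Gamma-type integral. A single integration by parts (or a direct comparison, using that for $z \geq L \geq L_0 \geq 1$ one has $z^{1/2-s} \leq L^{1/2-s}$ if $s \geq 1/2$, or $z^{1/2-s}e^{-z/2} \leq C$ if $s<1/2$) gives
\[
\int_L^\infty z^{1/2-s} e^{-z}\, dz \;\leq\; C_s\, L^{1/2-s}\, e^{-L} \quad \text{for all } L \geq L_0.
\]

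Third, substituting $L = 2 \log(1/\omega(\varepsilon))$ yields $e^{-L} = \omega(\varepsilon)^{2}$ and hence
\[
\int_L^\infty z^{1-s} K_s(z)\, dz \;\leq\; C_s\, \bigl(2\log(1/\omega(\varepsilon))\bigr)^{1/2-s} \omega(\varepsilon)^{2}.
\]
It remains only to observe that the map $t \mapsto t \cdot |\log t|^{1/2-s}$ is continuous and bounded on $(0,\kappa]$, with bound depending on $\kappa$ and $s$ alone. Indeed, if $s \geq 1/2$ the log factor is itself bounded by $|\log \kappa|^{1/2-s}$ when $t\leq \kappa$ (it decreases as $t\to 0$); if $s<1/2$, then $t |\log t|^{1/2-s} \to 0$ as $t \to 0^+$, so the function is bounded on $(0,\kappa]$ by continuity. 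Applying this with $t = \omega(\varepsilon)$ absorbs one factor of $\omega(\varepsilon)$ and the polylogarithm into a constant $C$, leaving the desired bound $C\,\omega(\varepsilon)$.

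The only non-routine step is keeping track of the dependence of $C$ on $\kappa$ and the precise form of $\omega$ in the last absorption, but since the lemma allows $C$ to depend on both, no uniformity issue arises. The rest is straightforward estimation using standard Bessel-function asymptotics.
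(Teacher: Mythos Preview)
Your proposal is correct and follows essentially the same approach as the paper's proof: both use the asymptotic $K_s(z)\sim z^{-1/2}e^{-z}$ to reduce to an incomplete-Gamma-type integral bounded by $CL^{1/2-s}e^{-L}$, then substitute $L=2\log(1/\omega(\varepsilon))$ to obtain the factor $\omega(\varepsilon)^2$ and absorb the polylogarithmic prefactor into the constant. Your version is slightly more detailed in justifying the incomplete-Gamma bound and the absorption of the log factor, but the argument is the same.
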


\begin{proof}
We first of all note that by 10.25.3 in \cite{Olver10} we have
\begin{align*}
K_s(z) \sim z^{-1/2} e^{-z} \quad \text{as } z \to \infty.
\end{align*}
With this at hand we estimate for $L$ large enough
\begin{align*}
\left\vert \int_L^\infty z^{1-s} K_s(z) dz \right\vert \leq C \left\vert \int_L^\infty z^{\frac{1}{2}-s} e^{-z} dz \right\vert \leq C L^{\frac{1}{2}-s} e^{-L}.
\end{align*}
Now, choosing $L$ as in the statement, we infer
\begin{align*}
\left\vert \int_L^\infty z^{1-s} K_s(z) dz \right\vert \leq C \left( 2 \log \left( \frac{1}{\omega(\varepsilon)}\right) \right)^{\frac{1}{2}-s} \omega(\varepsilon)^2 \leq C \omega(\varepsilon),
\end{align*}
where we have used that $\omega(\varepsilon)$ will be small in the limit as $\varepsilon \to 0$.
\end{proof}

\subsection{Quantitative unique continuation}\label{sec:quant_UCP}

The proof of our main proposition, Proposition \ref{prop:stability_estimate}, is based on two propagation of smallness results, Propositions \ref{prop:bbucp} and \ref{prop:3ballsinequ}. The first proposition is a quantitative boundary-bulk unique continuation result from \cite{RS20} to propagate smallness on the boundary $\R^n\times\{0\}$ to an open half-ball in the upper half-space. The second one is a three-balls-inequality in the bulk to propagate the inferred smallness upwards in the $x_{n+1}$-direction. For the proof of the latter we use a Carleman estimate, Lemma \ref{lem:Carleman_estimate_bulk}.

\subsubsection{Two propagation of smallness results}

We start by recalling the following boundary-bulk unique continuation estimate from \cite{RS20}. In order to apply it in the setting of our Riemannian manifolds, we invoke the condition (A3). As outlined in the introduction, we expect that this condition can be relaxed and that our argument remains valid in more general settings. In order to emphasize the main ideas without additional technicalities, in the present article, we however restrict to the setting outlined in assumption (A3).

\begin{prop}[Quantitative boundary-bulk unique continuation, Proposition 5.13 in \cite{RS20}]\label{prop:bbucp}
Let $(M,g)$ be a smooth, closed, connected $n$-dimensional Riemannian manifold. Let $O', O \subset M$ be open and Lipschitz such that $O' \Subset O$ and that $O$ satisfies the flatness condition from assumption (A3), in particular, assume that, in local coordinates, $(O,\restr{g}{O}) = (O, \Id_{n \times n})$. Let $\tilde{w} \in H^1(M\times\R_+, x_{n+1}^{1-2s})$ satisfy
\begin{align*}
\left( x_{n+1}^{1-2s} (-\Delta_g) - \partial_{n+1} (x_{n+1}^{1-2s} \partial_{n+1}) \right) \tilde{w} = 0 \quad \text{in } O \times \R_+,
\end{align*}
with $\tilde{w}(\cdot, 0) \in H^s(M)$. Let $x^0 \in O' \times \{0\}$ and let $r>0$ be such that $B_{4r}^+(x^0) \subset O \times \R_+$. Then, there exist constants $\alpha = \alpha(s,n) \in (0,1)$, $c = c(s,n) \in (0,\frac{1}{2})$ and $C = C(s,n) > 0$ such that
\begin{align*}
&\Vert \tilde{w} \Vert_{L^2(B_{cr}^+(x^0), x_{n+1}^{1-2s})}\\
&\qquad \leq C \left( \Vert \tilde{w}(\cdot,0) \Vert_{H^s(B_{3r}'(x^0))} + \Vert \lim_{x_{n+1}\to0} x_{n+1}^{1-2s} \partial_{n+1} \tilde{w} \Vert_{H^{-s}(B_{3r}'(x^0))} \right)^{1-\alpha}\\
&\qquad\qquad \times \left( \Vert \tilde{w} \Vert_{L^2(B_r^+(x^0), x_{n+1}^{1-2s})} + \Vert \tilde{w}(\cdot,0) \Vert_{H^s(B_{3r}'(x^0))} + \Vert \lim_{x_{n+1}\to0} x_{n+1}^{1-2s} \partial_{n+1} \tilde{w} \Vert_{H^{-s}(B_{3r}'(x^0))} \right)^{\alpha}.
\end{align*}
\end{prop}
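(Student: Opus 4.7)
Since the result is quoted from \cite{RS20}, my plan is to outline how such a quantitative boundary-bulk unique continuation estimate is typically obtained, adapting the general Carleman strategy to the present degenerate elliptic setting. The overall strategy is a three-step procedure: reduction to a constant-coefficient Caffarelli--Silvestre operator, a Carleman estimate for the resulting degenerate operator, and an optimization argument converting exponential estimates into a H\"older-type interpolation inequality with exponent $\alpha\in(0,1)$.

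First, I would exploit the flatness assumption (A3). Since $\restr{g}{O}=\mathrm{Id}_{n\times n}$ in local coordinates, on the cylinder $O\times\R_+$ the extension equation reduces to the constant-coefficient Caffarelli--Silvestre problem
\[
\nabla\cdot\bigl(x_{n+1}^{1-2s}\nabla\tilde{w}\bigr)=0 \quad\text{in } O\times\R_+ ,
\]
so all computations can be carried out in the Euclidean half-space, and only the boundary data $\tilde{w}(\cdot,0)$ and $\lim_{x_{n+1}\to 0}x_{n+1}^{1-2s}\partial_{n+1}\tilde{w}$ encode the information from $\tilde{w}$. By translating, I may assume $x^0=0$.

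Second, the main technical ingredient would be a Carleman inequality of the form
\[
\tau^{3}\|e^{\tau\varphi}x_{n+1}^{\frac{1-2s}{2}}v\|_{L^{2}(B_{4r}^{+})}^{2}
\leq C\|e^{\tau\varphi}x_{n+1}^{\frac{1-2s}{2}-1}\nabla\cdot(x_{n+1}^{1-2s}\nabla v)\|_{L^{2}(B_{4r}^{+})}^{2}+\mathrm{BT}(v),
\]
valid for all $v$ vanishing suitably near $\partial B_{4r}^{+}\setminus(B_{4r}'\times\{0\})$ and all sufficiently large $\tau$, with a radial pseudoconvex weight $\varphi=\varphi(|x|)$ (e.g.\ a Koch--Tataru/Rüland type weight adapted to the $A_{2}$-weight $x_{n+1}^{1-2s}$), and where $\mathrm{BT}(v)$ denotes explicit boundary contributions at $\{x_{n+1}=0\}$ involving $v(\cdot,0)$ and $\lim_{x_{n+1}\to 0}x_{n+1}^{1-2s}\partial_{n+1}v$ in the appropriate $H^{s}$ and $H^{-s}$ norms. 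Such Carleman estimates, together with a trace/interpolation lemma bounding the boundary terms by $\|v(\cdot,0)\|_{H^{s}(B_{3r}')}$ and $\|\lim x_{n+1}^{1-2s}\partial_{n+1}v\|_{H^{-s}(B_{3r}')}$, are exactly the content of \cite{RS20}.

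Third, I would apply this Carleman inequality to $v:=\eta\,\tilde{w}$, where $\eta$ is a smooth radial cutoff equal to $1$ on $B_{2r}^{+}$ and vanishing on $B_{4r}^{+}\setminus B_{3r}^{+}$. Since $\tilde{w}$ is a solution of the homogeneous equation, the bulk source term $\nabla\cdot(x_{n+1}^{1-2s}\nabla v)$ is supported in the annular region $(B_{3r}^{+}\setminus B_{2r}^{+})$, where the weight $e^{\tau\varphi}$ is controlled by $e^{\tau\varphi(2r)}$. Using monotonicity of $\varphi$ on the left side to estimate from below by $e^{\tau\varphi(cr)}$ on $B_{cr}^{+}$, one obtains an inequality of the schematic form
\[
e^{\tau(\varphi(cr)-\varphi(2r))}\|\tilde{w}\|_{L^{2}(B_{cr}^{+},x_{n+1}^{1-2s})}
\leq C\bigl(\|\tilde{w}\|_{L^{2}(B_{r}^{+},x_{n+1}^{1-2s})}+e^{C\tau}D\bigr),
\]
where $D:=\|\tilde{w}(\cdot,0)\|_{H^{s}(B_{3r}')}+\|\lim x_{n+1}^{1-2s}\partial_{n+1}\tilde{w}\|_{H^{-s}(B_{3r}')}$ collects the boundary data, with $c<\tfrac{1}{2}$ chosen so that $\varphi(cr)>\varphi(2r)$.

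Finally, I would balance the free Carleman parameter $\tau$. Writing $E:=\|\tilde{w}\|_{L^{2}(B_{r}^{+},x_{n+1}^{1-2s})}+D$ and optimizing $\tau$ in the inequality above yields
\[
\|\tilde{w}\|_{L^{2}(B_{cr}^{+},x_{n+1}^{1-2s})}\leq C\,D^{1-\alpha}E^{\alpha}
\]
for some $\alpha=\alpha(s,n)\in(0,1)$, which is the claimed estimate. The main obstacles that I would expect to encounter are (i) producing a Carleman weight compatible with the $A_{2}$-degenerate structure at $\{x_{n+1}=0\}$ whose boundary contributions close up in precisely the $H^{s}\times H^{-s}$ duality, and (ii) handling the commutator of the cutoff $\eta$ with the weighted gradient so that the annular error is subcritical in $\tau$; both are exactly the technical points resolved in \cite[Prop.\ 5.13]{RS20}.
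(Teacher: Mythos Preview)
The paper does not give its own proof of this proposition; it is simply recalled from \cite[Proposition~5.13]{RS20} without argument. Your Carleman-based sketch (reduce to the constant-coefficient Caffarelli--Silvestre operator via the flatness assumption, apply a boundary Carleman estimate with $A_2$-weighted radial pseudoconvex weight, localize with a cutoff, and optimize in $\tau$) is indeed the mechanism behind the cited result, so your outline is consistent with the intended approach.
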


The following three-balls-inequality is similar to the result of Proposition 5.4 in \cite{RS20}. However, we cannot directly rely on the result in \cite{RS20} since in our case the radius $r$ of the balls centered at $x^0 \in O'\times\R_+$ must be independent of the height $x_{n+1}^0$, it cannot become arbitrarily large. Thus, we cannot directly carry out the rescaling argument as in \cite{RS20} and apply the three-balls-inequality for uniformly elliptic equations. Instead, we will use the assumed fixed boundary distance and will reduce it to a constant coefficient Carleman estimate for the Laplacian.

\begin{prop}[Three-balls-inequality]\label{prop:3ballsinequ}
Let $(M,g)$, $O$ and $O'$ be as in Proposition \ref{prop:bbucp}. Let $\tilde{w} \in H^1(M\times\R_+, x_{n+1}^{1-2s})$ satisfy
\begin{align*}
\left( x_{n+1}^{1-2s} (-\Delta_g) - \partial_{n+1} (x_{n+1}^{1-2s} \partial_{n+1}) \right) \tilde{w} = 0 \quad \text{in } O \times \R_+.
\end{align*}
Let $x^0 \in O' \times \R_+$ and let $\delta_0>0$ and $r>0$ be such that $B_{4r}(x^0) \subset O \times (\delta_0,\infty)$. Then, there exist constants $C$ and $\alpha$, with $\alpha \in (0,1)$, such that
\begin{align*}
\|\tilde{w}\|_{L^2(B_{2r}(x^0), x_{n+1}^{1-2s})}
\leq C \|\tilde{w}\|_{L^2(B_{r}(x^0), x_{n+1}^{1-2s})}^{\alpha} \Vert \tilde{w} \Vert_{L^2(B_{4r}(x^0), x_{n+1}^{1-2s})}^{1-\alpha}.
\end{align*}
Here, the constant $C$ depends on $n$, $s$, $\delta_0$ and $r$ and the parameter $\alpha$ depends (mildly) on $r$, more precisely $\alpha \in (a,b)$ for some $0<a<b<1$, which are independent of $r$.
\end{prop}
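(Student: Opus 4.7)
The plan is to exploit the flatness condition (A3) together with the strict positivity of $x_{n+1}$ on $B_{4r}(x^0)$ in order to recast the degenerate weighted equation as a \emph{uniformly} elliptic Euclidean equation with a bounded first-order coefficient, and then to conclude by a classical propagation-of-smallness argument based on the Carleman estimate referenced as Lemma \ref{lem:Carleman_estimate_bulk}.

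First, by (A3) we have $-\Delta_g = -\Delta_{x'}$ in local coordinates on $O$, and expanding the $x_{n+1}$-derivative in the extension equation yields
\begin{equation*}
-\Delta_{x',x_{n+1}}\tilde{w} - \frac{1-2s}{x_{n+1}}\, \partial_{n+1}\tilde{w} = 0 \quad \text{in } B_{4r}(x^0).
\end{equation*}
Since $B_{4r}(x^0) \subset O \times (\delta_0,\infty)$, we have $x_{n+1} \geq \delta_0$ on $B_{4r}(x^0)$, so the drift coefficient is bounded in $L^\infty$ by $|1-2s|/\delta_0$. Moreover, the weight $x_{n+1}^{1-2s}$ is uniformly comparable to a positive constant on $B_{4r}(x^0)$, the ratio of its maximum to its minimum being controlled by $(1+8r/\delta_0)^{|1-2s|}$ and thus depending only on $s$, $\delta_0$ and $r$. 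Consequently, up to constants of the same kind, the weighted $L^2(B_\rho(x^0), x_{n+1}^{1-2s})$-norms for $\rho \in \{r,2r,4r\}$ appearing in the inequality may be replaced by the unweighted Euclidean $L^2(B_\rho(x^0))$-norms.

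It then suffices to prove the standard three-balls interpolation inequality
\begin{equation*}
\|\tilde w\|_{L^2(B_{2r}(x^0))} \leq C \|\tilde w\|_{L^2(B_r(x^0))}^{\alpha}\, \|\tilde w\|_{L^2(B_{4r}(x^0))}^{1-\alpha}
\end{equation*}
for solutions of $-\Delta \tilde w + b\, \partial_{n+1}\tilde w = 0$ with $\|b\|_{L^\infty} \leq |1-2s|/\delta_0$. This is a well-known consequence of Lemma \ref{lem:Carleman_estimate_bulk}: one applies the Carleman estimate to $\chi\tilde w$, where $\chi$ is a radial cut-off equal to one on $B_{2r}(x^0)$ and supported in $B_{3r}(x^0)$, uses the equation to rewrite $-\Delta(\chi\tilde w) = -2\nabla\chi\cdot\nabla\tilde w - (\Delta\chi)\tilde w + \chi\, b\, \partial_{n+1}\tilde w$, absorbs the bounded drift term on the left-hand side by taking the Carleman parameter $\tau$ sufficiently large, and handles the two remaining commutator terms, which are supported in the annulus $B_{3r}\setminus B_{2r}(x^0)$, via Caccioppoli's inequality on a slightly enlarged ball. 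Optimizing $\tau$ through Young's inequality between the inner-ball and outer-ball contributions then produces the interpolation bound.

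The main technical point is to ensure that the H\"older exponent $\alpha$ lies in a fixed interval $(a,b) \subset (0,1)$ independent of $r$. After rescaling $B_{4r}(x^0)$ to unit size the rescaled drift is bounded by $|1-2s|r/\delta_0$, which remains controlled uniformly since $r$ is confined to a bounded range (as $B_{4r}(x^0) \subset O\times\R_+$); the standard Carleman optimization then yields $\alpha$ as a continuous function of this bounded quantity and of the fixed radius ratios $1:2:3:4$, thereby giving the claimed uniform bound.
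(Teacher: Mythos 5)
Your opening reduction is fine: on $B_{4r}(x^0)\subset O\times(\delta_0,\infty)$ the weight $x_{n+1}^{1-2s}$ is comparable to a constant with ratio controlled by $(1+8r/\delta_0)^{|1-2s|}$, and the equation becomes $-\Delta\tilde w-\frac{1-2s}{x_{n+1}}\partial_{n+1}\tilde w=0$ with drift bounded by $|1-2s|/\delta_0$, uniformly in the height $x^0_{n+1}$. This is a legitimate alternative to the paper's route, which instead keeps the weighted operator and, inside the proof of Lemma \ref{lem:Carleman_estimate_bulk}, conjugates by $x_{n+1}^{(2s-1)/2}$ to reduce to the constant-coefficient Laplacian plus a potential $\frac{1-4s^2}{4}x_{n+1}^{-2}$ absorbed thanks to $x_{n+1}\geq\delta_0$.

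The central step, however, is flawed as written. First, Lemma \ref{lem:Carleman_estimate_bulk} requires $\supp(\overline w)\subset B_R(z)\setminus B_r(z)$: the weight $e^{\tau\phi}$ with $\phi=\widetilde\phi(\ln|x-z|)$ behaves like $|x-z|^{-\tau}$ near $z$, so the estimate simply cannot be applied to $\chi\tilde w$ with $\chi\equiv1$ on $B_{2r}(x^0)$ (the natural choice $z=x^0$ lies in the region where $\chi\tilde w\neq0$). Second, and more fundamentally, even ignoring this, your cut-off produces commutator terms supported only in $B_{3r}\setminus B_{2r}(x^0)$; the small ball $B_r(x^0)$ never enters the inequality, so there is no ``inner-ball contribution'' to balance against and no interpolation between $B_r$ and $B_{4r}$ can be extracted, by Young's inequality or otherwise. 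The standard fix — and what the paper does — is to take the cut-off supported in an \emph{annulus}: $\eta\equiv1$ on $B_{3r}\setminus B_{r/2}$, vanishing on $B_{r/4}$ and outside $B_{7r/2}$. Then the inner commutator terms sit inside $B_r$ and carry the large weight value $e^{\tau\widetilde\phi(\ln(r/4))}$, the outer ones sit inside $B_{4r}$ and carry the small value $e^{\tau\widetilde\phi(\ln(3r))}$ (after controlling gradients by Caccioppoli and the singular coefficient by $x_{n+1}\geq\delta_0$); one then adds $\|x_{n+1}^{(1-2s)/2}\tilde w\|_{L^2(B_{r/2})}$ back to fill the hole and chooses $\tau\geq\tau_0$ as an explicit function of the ratio of the $B_{2r}$ and $B_{4r}$ norms, which yields the inequality with $\alpha=Q/(P+Q)$, where $P=\widetilde\phi(\ln(r/4))-\widetilde\phi(\ln(2r))$ and $Q=\widetilde\phi(\ln(2r))-\widetilde\phi(\ln(3r))$ are bounded above and below uniformly in $r$ — this is exactly where the claimed uniform interval for $\alpha$ comes from. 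Your final rescaling remark about the uniformity of $\alpha$ is reasonable in spirit, but it cannot rescue the argument until the cut-off geometry and the support hypothesis of the Carleman estimate are corrected; also note that if you insist on the drift formulation, Lemma \ref{lem:Carleman_estimate_bulk} does not directly apply to $-\Delta+b\,\partial_{n+1}$, so you would additionally have to absorb the drift term into the left-hand side of the underlying constant-coefficient estimate for large $\tau$.
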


The to us central aspect of this quantitative unique continuation result is that while the estimate depends on the distance to the boundary through the parameter $\delta_0$, up to this dependence, it is \emph{independent} of the vertical position of the ball under consideration. In particular, it does not diverge as $x^0_{n+1}\rightarrow \infty$.

We derive the three-balls-inequality as a consequence of the following Carleman estimate.

\begin{lem}\label{lem:Carleman_estimate_bulk}
Let $s \in (0,1)$ and let $O \subset \R^n$ be an open, smooth set. 
Let $z = (z',z_{n+1}) \in O \times \R_+$ be such that $B_{R}(z) \subset O \times (\delta_0,\infty)$ for some $R>0$ and $\delta_0>0$. Set $\phi(x) = \widetilde{\phi}(\ln(\vert x-z \vert))$ with
\begin{align*}
\widetilde{\phi}(t) = - t + \frac{1}{10} \left( t \arctan(t) - \frac{1}{2} \ln( 1 + t^2) \right).
\end{align*}  
Assume that $\overline{w} \in H^1(\R^{n+1}_+, x_{n+1}^{1-2s})$ satisfies $\supp(\overline{w}) \subset B_{R}(z) \setminus B_r(z)$ for some $0 < r < R$.

Then there exists $\tau_0 > 0$ such that for any $\tau \geq \tau_0$ we have
\begin{equation}\label{eq:Carleman_estimate}
\begin{aligned}
&\tau \left\Vert e^{\tau\phi} \left( 1 + \ln(\vert x-z \vert)^2 \right)^{-\frac{1}{2}} \vert x-z \vert^{-1} x_{n+1}^{\frac{1-2s}{2}} \overline{w} \right\Vert_{L^2(O \times \R_+)}\\
&\qquad + \left\Vert e^{\tau\phi} \left( 1 + \ln(\vert x-z \vert)^2 \right)^{-\frac{1}{2}} x_{n+1}^{\frac{1-2s}{2}} \nabla \overline{w} \right\Vert_{L^2(O \times \R_+)}\\
&\quad \leq C \tau^{-\frac{1}{2}} \left\Vert e^{\tau\phi} \vert x-z \vert x_{n+1}^{\frac{2s-1}{2}} \Big( x_{n+1}^{1-2s} (-\Delta) - \partial_{n+1} (x_{n+1}^{1-2s} \partial_{n+1}) \Big) \overline{w} \right\Vert_{L^2(O \times \R_+)}
\end{aligned}
\end{equation}
for some constant $C>0$. Here, $C$ and $\tau_0$ only depend on $R$, $n$, $s$, $\delta_0$ and the precise form of $\widetilde{\phi}$.
\end{lem}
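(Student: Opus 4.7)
The plan is to exploit the fact that $\mathrm{supp}(\overline{w})\subset B_R(z)$ with $z_{n+1}>\delta_0$, so on the support we have $\delta_0 \leq x_{n+1}\leq z_{n+1}+R$. Consequently the degenerate weight $x_{n+1}^{1-2s}$ is smooth and comparable to $1$ with constants depending only on $\delta_0$, $R$ and $s$. Hence both the weighted $L^2$-norms appearing in the Carleman estimate and the operator itself can be replaced by their non-degenerate analogues (up to harmless multiplicative constants), and we may argue as if we were working with a uniformly elliptic operator.

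More concretely, I would factor
\[
\bigl( x_{n+1}^{1-2s} (-\Delta) - \partial_{n+1} (x_{n+1}^{1-2s} \partial_{n+1}) \bigr) \overline{w} = x_{n+1}^{1-2s}\bigl( -\Delta \overline{w} - \tfrac{1-2s}{x_{n+1}}\partial_{n+1}\overline{w}\bigr),
\]
multiply the desired inequality through by $x_{n+1}^{(2s-1)/2}$ on both sides, and use $x_{n+1}\sim 1$ on $\mathrm{supp}(\overline{w})$ to reduce \eqref{eq:Carleman_estimate} to the unweighted estimate
\[
\tau \bigl\| e^{\tau\phi}\bigl(1+\ln^2|x-z|\bigr)^{-1/2}|x-z|^{-1}\overline{w}\bigr\|_{L^2} + \bigl\| e^{\tau\phi}\bigl(1+\ln^2|x-z|\bigr)^{-1/2}\nabla \overline{w}\bigr\|_{L^2} \leq C\tau^{-1/2}\bigl\| e^{\tau\phi}|x-z|\, \tilde L\overline{w}\bigr\|_{L^2},
\]
where $\tilde L := -\Delta - \tfrac{1-2s}{x_{n+1}}\partial_{n+1}$ has smooth, uniformly bounded coefficients on the annular support.

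The next step is to invoke the standard radial Carleman estimate for the Laplacian with the convex logarithmic weight $\widetilde{\phi}$ (of the Koch--Tataru / Rüland--Salo type), which states that
\[
\tau^{3/2}\bigl\|e^{\tau\phi}\bigl(1+\ln^2|x-z|\bigr)^{-1/2}|x-z|^{-1}w\bigr\|_{L^2} + \tau^{1/2}\bigl\|e^{\tau\phi}\bigl(1+\ln^2|x-z|\bigr)^{-1/2}\nabla w\bigr\|_{L^2} \leq C\bigl\|e^{\tau\phi}|x-z|\,\Delta w\bigr\|_{L^2}
\]
for all $w$ with the required support properties and $\tau\geq\tau_0$. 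Pseudo-convexity of $\phi$ with respect to the Laplacian follows from $\widetilde{\phi}''(t)=\tfrac{1}{10(1+t^2)}>0$ and the fact that $\widetilde{\phi}$ is a small convex perturbation of $-t$ (the limit weight). An approximation argument via $C^\infty_c$-functions (supported in $B_R(z)\setminus B_r(z)$) supplies the required regularity to run the usual conjugation-and-commutator computation; this can be carried out in polar coordinates centered at $z$, after which the estimate becomes a one-dimensional spectral gap computation in the variable $t=\ln|x-z|$.

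The remaining task is to absorb the first-order perturbation $\tfrac{1-2s}{x_{n+1}}\partial_{n+1}\overline{w}$ coming from $\tilde L - (-\Delta)$ into the left-hand side. On the support of $\overline{w}$ we have $|x-z|\cdot\tfrac{|1-2s|}{x_{n+1}}\leq C(R,\delta_0,s)$, and $(1+\ln^2|x-z|)^{1/2}$ is bounded on the annulus $r\leq |x-z|\leq R$, so
\[
\bigl\| e^{\tau\phi}|x-z|\cdot \tfrac{1-2s}{x_{n+1}}\partial_{n+1}\overline{w}\bigr\|_{L^2} \leq C \bigl\| e^{\tau\phi}\bigl(1+\ln^2|x-z|\bigr)^{-1/2}\nabla \overline{w}\bigr\|_{L^2},
\]
which, after multiplication by $\tau^{-1/2}$, is strictly dominated by the $\nabla\overline{w}$-term on the left once $\tau\geq\tau_0$ is chosen sufficiently large. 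This closes the argument. I expect the only delicate point to be bookkeeping the dependence of the constant $C$ and the threshold $\tau_0$ on $R$, $\delta_0$, $n$, $s$ in a way that is uniform across the annular support; the pseudo-convexity and commutator estimates themselves are classical and can be quoted essentially verbatim from the radial Carleman estimate in the Rüland--Salo framework.
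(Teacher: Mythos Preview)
Your approach is correct and close in spirit to the paper's, but the perturbation is handled differently. The paper does not expand the degenerate operator as a drift perturbation of the Laplacian; instead it conjugates by $x_{n+1}^{(1-2s)/2}$, setting $\overline{u} = x_{n+1}^{(1-2s)/2}\overline{w}$, which yields
\[
x_{n+1}^{\frac{2s-1}{2}} \Big( x_{n+1}^{1-2s} (-\Delta) - \partial_{n+1} (x_{n+1}^{1-2s} \partial_{n+1}) \Big) \overline{w}
= \bigl((-\Delta) - \partial_{n+1}^2\bigr) \overline{u} - \tfrac{1-4s^2}{4}\, x_{n+1}^{-2}\overline{u}.
\]
The Carleman estimate for the flat Laplacian is applied to $\overline{u}$, and the \emph{zeroth-order} inverse-square potential is absorbed into the $\tau\,|x-z|^{-1}$ term on the left (using $x_{n+1}\geq\delta_0$). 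A final step undoes the conjugation in the gradient term. Your route---apply the Laplacian Carleman estimate directly to $\overline{w}$ and absorb the \emph{first-order} drift $\tfrac{1-2s}{x_{n+1}}\partial_{n+1}\overline{w}$ into the gradient term---is equally valid; the paper's conjugation trick simply trades a derivative for a potential, which is a matter of taste here since both perturbations are lower-order and bounded on the support.

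One point to tighten: you justify the absorption by saying that $(1+\ln^2|x-z|)^{1/2}$ is bounded on the annulus $r\leq|x-z|\leq R$. That bound depends on $r$, whereas the lemma demands $C$ and $\tau_0$ be independent of $r$. What you actually need is that $|x-z|\,(1+\ln^2|x-z|)^{1/2}$ is bounded on $(0,R]$ (it tends to $0$ as $|x-z|\to 0$), which gives the correct $r$-free constant. With this adjustment your absorption step goes through exactly as in the paper.
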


We highlight that the for our purposes main contribution of this proposition is the independence of the estimate \eqref{eq:Carleman_estimate} of the vertical variable as $x_{n+1} \rightarrow \infty$. By the smoothness and growth behaviour of the weight, we deduce this Carleman estimate from a weighted estimate for the Laplacian.

\begin{proof}
We reduce the problem to a Carleman estimate for the constant coefficient Laplacian. Indeed, for the constant coefficient Laplacian, we have (see, for instance, \cite{KT01} and references therein)
\begin{equation}\label{eq:Carleman_estimate_Laplace}
\begin{aligned}
&\tau \left\Vert e^{\tau\phi} \left( 1 + \ln(\vert x-z \vert)^2 \right)^{-\frac{1}{2}} \vert x-z \vert^{-1}  \overline{u} \right\Vert_{L^2(O \times \R_+)} + \left\Vert e^{\tau\phi} \left( 1 + \ln(\vert x-z \vert)^2 \right)^{-\frac{1}{2}}  \nabla \overline{u} \right\Vert_{L^2(O \times \R_+)}\\
&\qquad \leq C \tau^{-\frac{1}{2}} \left\Vert e^{\tau\phi} \vert x-z \vert \Big(  (-\Delta) - \partial_{n+1}^2 ) \Big) \overline{u} \right\Vert_{L^2(O \times \R_+)}.
\end{aligned}
\end{equation}
Now, we rewrite our equation with the weight as follows: For $\overline{w} = x_{n+1}^{\frac{2s-1}{2}} \overline{u}$ (note that by the safety distance to the boundary there are no singularities), we obtain by expanding the equation
\begin{align*}
x_{n+1}^{\frac{2s-1}{2}} \Big( x_{n+1}^{1-2s} (-\Delta) - \partial_{n+1} (x_{n+1}^{1-2s} \partial_{n+1}) \Big) \overline{w} 
&= x_{n+1}^{\frac{2s-1}{2}} \Big( x_{n+1}^{1-2s} (-\Delta) - \partial_{n+1} (x_{n+1}^{1-2s} \partial_{n+1}) \Big) x_{n+1}^{\frac{2s-1}{2}} \overline{u} \\
&=  ((-\Delta) - \partial_{n+1}^2) \overline{u} - \frac{1-4s^2}{4} x_{n+1}^{-2} \overline{u}.
\end{align*}
Thus, it holds $((-\Delta) - \partial_{n+1}^2) \overline{u} = x_{n+1}^{\frac{2s-1}{2}} \Big( x_{n+1}^{1-2s} (-\Delta) - \partial_{n+1} (x_{n+1}^{1-2s} \partial_{n+1}) \Big) \overline{w} + \frac{1-4s^2}{4} x_{n+1}^{-2} \overline{u}$. We plug this into our Carleman estimate \eqref{eq:Carleman_estimate_Laplace} for the Laplacian and obtain
\begin{equation}\label{eq:Carleman_estimate_Laplace1}
\begin{aligned}
&\tau \left\Vert e^{\tau\phi} \left( 1 + \ln(\vert x-z \vert)^2 \right)^{-\frac{1}{2}} \vert x-z \vert^{-1}  \overline{u} \right\Vert_{L^2(O \times \R_+)} + \left\Vert e^{\tau\phi} \left( 1 + \ln(\vert x-z \vert)^2 \right)^{-\frac{1}{2}}  \nabla \overline{u} \right\Vert_{L^2(O \times \R_+)}\\
&\qquad \leq C \tau^{-\frac{1}{2}} \left\Vert e^{\tau\phi} \vert x-z \vert \Big(  (-\Delta) - \partial_{n+1}^2 ) \Big) \overline{u} \right\Vert_{L^2(O \times \R_+)} \\
&\qquad \leq C\tau^{-\frac{1}{2}} \left\Vert e^{\tau\phi} \vert x-z \vert \left( x_{n+1}^{\frac{2s-1}{2}} \Big( x_{n+1}^{1-2s} (-\Delta) - \partial_{n+1} (x_{n+1}^{1-2s} \partial_{n+1}) \Big) \overline{w} + \frac{1-4s^2}{4} x_{n+1}^{-2} \overline{u} \right) \right\Vert_{L^2(O \times \R_+)}\\
&\qquad \leq C\tau^{-\frac{1}{2}} \left\Vert e^{\tau\phi} \vert x-z \vert x_{n+1}^{\frac{2s-1}{2}} \Big( x_{n+1}^{1-2s} (-\Delta) - \partial_{n+1} (x_{n+1}^{1-2s} \partial_{n+1}) \Big) \overline{w} \right\Vert_{L^2(O \times \R_+)}\\
&\qquad\qquad + C\tau^{-\frac{1}{2}} \delta_0^{-2} \left\Vert e^{\tau\phi} \vert x-z \vert \overline{u} \right\Vert_{L^2(O \times \R_+)}.
\end{aligned}
\end{equation}
Choosing $\tau_0 >0$ sufficiently large (depending on $R>0$ and $\delta_0>0$, e.g. $\tau_0 \geq \left( \frac{2CR^{3}}{\delta_0^2} \right)^2$), we can absorb the second right hand side contribution into the left hand side. Also plugging back in $\overline{u} = x_{n+1}^{\frac{1-2s}{2}} \overline{w}$, this leads to 
\begin{equation}\label{eq:Carleman_estimate_Laplace2}
\begin{aligned}
&\tau \left\Vert e^{\tau\phi} \left( 1 + \ln(\vert x-z \vert)^2 \right)^{-\frac{1}{2}} \vert x-z \vert^{-1} x_{n+1}^{\frac{1-2s}{2}} \overline{w} \right\Vert_{L^2(O \times \R_+)}\\
&\qquad\qquad + \left\Vert e^{\tau\phi} \left( 1 + \ln(\vert x-z \vert)^2 \right)^{-\frac{1}{2}}  \nabla ( x_{n+1}^{\frac{1-2s}{2}} \overline{w} ) \right\Vert_{L^2(O \times \R_+)}\\
&\qquad \leq C\tau^{-\frac{1}{2}} \left\Vert e^{\tau\phi} \vert x-z \vert x_{n+1}^{\frac{2s-1}{2}} \Big( x_{n+1}^{1-2s} (-\Delta) - \partial_{n+1} (x_{n+1}^{1-2s} \partial_{n+1}) \Big) \overline{w} \right\Vert_{L^2(O \times \R_+)}.
\end{aligned}
\end{equation}
We note that
\begin{equation}\label{eq:Carleman_estimate_Laplace3}
\begin{aligned}
\Vert \nabla (x_{n+1}^{\frac{1-2s}{2}} \overline{w}) \Vert_{L^2(O \times \R_+)} &\geq \Vert x_{n+1}^{\frac{1-2s}{2}} \nabla \overline{w} \Vert_{L^2(O \times \R_+)} - C \Vert x_{n+1}^{\frac{-1-2s}{2}} \overline{w} \Vert_{L^2(O \times \R_+)}\\
&\geq \Vert x_{n+1}^{\frac{1-2s}{2}} \nabla \overline{w} \Vert_{L^2(O \times \R_+)} - C \delta_0^{-1} \Vert x_{n+1}^{\frac{1-2s}{2}} \overline{w} \Vert_{L^2(O \times \R_+)}.
\end{aligned}
\end{equation}
Finally, we can modify the estimate \eqref{eq:Carleman_estimate_Laplace2} by choosing $\tau_0$ sufficiently large (again, depending on $\delta_0$ and $R$, e.g. $\tau_0 \geq \frac{2CR^{2}}{\delta_0}$) and absorbing the error term in \eqref{eq:Carleman_estimate_Laplace3} into the first contribution on the left hand side to arrive at the desired inequality.
\end{proof}

With the Carleman inequality at our availability, the proof of the three-balls-inequality is now almost identical to the proof of Proposition 5.3 in \cite{RS20}. We complement the outline of proof given there by some more details which are based on the proof of Proposition 3.1 in \cite{B13}. In what follows, we will work in local coordinates.

\begin{proof}[Proof of Proposition \ref{prop:3ballsinequ}]
In the following we will denote by $A_{r_1,r_2} := B_{r_2} \setminus \overline{B}_{r_1}$ the annuli with outer radius $r_2$ and inner radius $r_1$ centered at $x^0$.\\
Consider $\overline{w} := \tilde{w}\eta$, where $\eta(x) = \widetilde{\eta}(\vert x-x^0 \vert)$ is a radially symmetric (centred at $x^0$) cut-off function such that
\begin{align*}
\widetilde{\eta}(t) = 0 \text{ for } t \in (0,\frac{r}{4}) \cup (\frac{7r}{2}, \infty), \quad \widetilde{\eta}(t) = 1 \text{ for } t \in (\frac{r}{2},3r), \quad \vert \widetilde{\eta}' \vert \leq \frac{C}{r}, \quad \vert \widetilde{\eta}'' \vert \leq \frac{C}{r^2}
\end{align*}
for some $C>0$. It is clear that $\supp(\overline{w}) \subset B_{7r/2}(x^0) \setminus B_{r/4}(x^0)$. Hence, we can apply the Carleman estimate from Lemma \ref{lem:Carleman_estimate_bulk} to $\overline{w}$.

We expand the term on the right hand side of the Carleman estimate \eqref{eq:Carleman_estimate}
\begin{align*}
x_{n+1}^{\frac{2s-1}{2}} &\nabla \cdot \left( x_{n+1}^{1-2s} \nabla (\tilde{w}\eta) \right) = x_{n+1}^{\frac{2s-1}{2}} \nabla \cdot \left( x_{n+1}^{1-2s} \eta \nabla \tilde{w} \right) + x_{n+1}^{\frac{2s-1}{2}} \nabla \cdot \left( x_{n+1}^{1-2s} \tilde{w} \nabla \eta \right)\\
&= x_{n+1}^{\frac{2s-1}{2}} \eta \left( \nabla \cdot \left( x_{n+1}^{1-2s} \nabla \tilde{w} \right) \right) + x_{n+1}^{\frac{1-2s}{2}} \nabla \eta \cdot \nabla \tilde{w} + x_{n+1}^{\frac{2s-1}{2}} \tilde{w} \nabla \cdot \left( x_{n+1}^{1-2s} \nabla	\eta \right) + x_{n+1}^{\frac{1-2s}{2}} \nabla \tilde{w} \cdot \nabla \eta\\
&= x_{n+1}^{\frac{1-2s}{2}} \tilde{w} \left( x_{n+1}^{2s-1} \nabla \cdot (x_{n+1}^{1-2s} \nabla \eta) \right) + 2 x_{n+1}^{\frac{1-2s}{2}} \nabla \tilde{w} \cdot \nabla \eta,
\end{align*}
where for the last inequality we used the equation for $\tilde{w}$. Inserting this into the Carleman inequality \eqref{eq:Carleman_estimate} (only keeping the first term on the left hand side of the inequality) and using that $\widetilde{\phi}$ is monotonically decreasing yields for $\tau \geq \tau_0$
\begin{align*}
e^{\tau \widetilde{\phi}(\ln(2r))}& \Vert x_{n+1}^{\frac{1-2s}{2}} \tilde{w} \Vert_{L^2(A_{\frac{r}{2}, 2r})}\\
&\leq C \bigg( e^{\tau\widetilde{\phi}(\ln(\frac{r}{4}))} \Vert x_{n+1}^{\frac{1-2s}{2}} \tilde{w} \left( x_{n+1}^{2s-1} \nabla \cdot (x_{n+1}^{1-2s} \nabla \eta) \right) \Vert_{L^2(A_{\frac{r}{4},\frac{r}{2}})}\\
&\hspace{30pt} + e^{\tau\widetilde{\phi}(\ln(3r))} \Vert x_{n+1}^{\frac{1-2s}{2}} \tilde{w} \left( x_{n+1}^{2s-1} \nabla \cdot (x_{n+1}^{1-2s} \nabla \eta) \right) \Vert_{L^2(A_{3r,\frac{7r}{2}})}\\
&\hspace{30pt} + e^{\tau	\widetilde{\phi}(\ln(\frac{r}{4}))} \Vert x_{n+1}^{\frac{1-2s}{2}} \nabla \tilde{w} \Vert_{L^2(A_{\frac{r}{4},\frac{r}{2}})} + e^{\tau \widetilde{\phi}(\ln(3r))} \Vert x_{n+1}^{\frac{1-2s}{2}} \nabla \tilde{w} \Vert_{L^2(A_{3r,\frac{7r}{2}})} \bigg)\\
&\leq C \bigg( e^{\tau	\widetilde{\phi}(\ln(\frac{r}{4}))} \Vert x_{n+1}^{\frac{1-2s}{2}} \tilde{w} \Vert_{L^2(A_{\frac{r}{8},r})} + e^{\tau	\widetilde{\phi}(\ln(3r))} \Vert x_{n+1}^{\frac{1-2s}{2}} \tilde{w} \Vert_{L^2(A_{2r,4r})} \bigg)\\
&\leq C \bigg( e^{\tau	\widetilde{\phi}(\ln(\frac{r}{4}))} \Vert x_{n+1}^{\frac{1-2s}{2}} \tilde{w} \Vert_{L^2(B_r)} + e^{\tau	\widetilde{\phi}(\ln(3r))} \Vert x_{n+1}^{\frac{1-2s}{2}} \tilde{w} \Vert_{L^2(B_{4r})} \bigg),
\end{align*}
where for the second inequality we used Caccioppoli's inequality to bound the gradient contributions and that we are bounded away from $x_{n+1} = 0$ to bound $ x_{n+1}^{2s-1} \partial_{n+1} (x_{n+1}^{1-2s} \partial_{n+1} \eta)$. We observe that here the constant $C$ depends on $\delta_0$ and $r$. Dividing by $e^{\tau\widetilde{\phi}(\ln(2r))}$ gives
\begin{align*}
\Vert x_{n+1}^{\frac{1-2s}{2}} \tilde{w} \Vert_{L^2(A_{\frac{r}{2}, 2r})} \leq C \left( e^{\tau P} \Vert x_{n+1}^{\frac{1-2s}{2}} \tilde{w} \Vert_{L^2(B_r)} + e^{-\tau Q} \Vert x_{n+1}^{\frac{1-2s}{2}} \tilde{w} \Vert_{L^2(B_{4r})} \right),
\end{align*}
where $P:= \widetilde{\phi}(\ln(\frac{r}{4})) - \widetilde{\phi}(\ln(2r))$ and $Q := -(\widetilde{\phi}(\ln(3r)) - \widetilde{\phi}(\ln(2r)))$. We note that $c^{-1} \leq P,Q \leq c$ for some constant $c > 0$ uniformly in $r$. In particular, since $P$ is uniformly bounded away from $0$, we can add $\Vert x_{n+1}^{\frac{1-2s}{2}} \tilde{w} \Vert_{L^2(B_{\frac{r}{2}})}$ to both sides to fill up the annuli. By possibly changing $C$, we thus infer
\begin{align}\label{eq:preCarleman}
\Vert x_{n+1}^{\frac{1-2s}{2}} \tilde{w} \Vert_{L^2(B_{2r})} \leq C \left( e^{\tau P} \Vert x_{n+1}^{\frac{1-2s}{2}} \tilde{w} \Vert_{L^2(B_r)} + e^{-\tau Q} \Vert x_{n+1}^{\frac{1-2s}{2}} \tilde{w} \Vert_{L^2(B_{4r})} \right).
\end{align}
We seek to find $\tau \geq \tau_0$ (recall that $\tau_0$ depended on $R$, $n$, $s$ and $\delta_0$) such that
\begin{align*}
Ce^{-\tau Q} \Vert x_{n+1}^{\frac{1-2s}{2}} \tilde{w} \Vert_{L^2(B_{4r})} \leq \frac{1}{2} \Vert x_{n+1}^{\frac{1-2s}{2}} \tilde{w} \Vert_{L^2(B_{2r})}.
\end{align*}
We achieve this by choosing
\begin{align*}
\tau = \tau_0 - \frac{1}{Q} \ln \left( \frac{\Vert x_{n+1}^{\frac{1-2s}{2}} \tilde{w} \Vert_{L^2(B_{2r})}}{ 2C\Vert x_{n+1}^{\frac{1-2s}{2}} \tilde{w} \Vert_{L^2(B_{4r})}} \right).
\end{align*}
From \eqref{eq:preCarleman} we then deduce
\begin{align*}
\Vert x_{n+1}^{\frac{1-2s}{2}} \tilde{w} \Vert_{L^2(B_{2r})} \leq C e^{\tau_0} \frac{\Vert x_{n+1}^{\frac{1-2s}{2}} \tilde{w} \Vert_{L^2(B_{4r})}^{P/Q}}{\Vert x_{n+1}^{\frac{1-2s}{2}} \tilde{w} \Vert_{L^2(B_{2r})}^{P/Q}} \Vert x_{n+1}^{\frac{1-2s}{2}} \tilde{w} \Vert_{L^2(B_r)}.
\end{align*}
Rearranging terms and choosing $\alpha := \frac{Q}{P+Q}$ finally yields the three-balls-inequality
\begin{align*}
\Vert x_{n+1}^{\frac{1-2s}{2}} \tilde{w} \Vert_{L^2(B_{2r})} \leq C \Vert x_{n+1}^{\frac{1-2s}{2}} \tilde{w} \Vert_{L^2(B_r)}^\alpha \Vert x_{n+1}^{\frac{1-2s}{2}} \tilde{w} \Vert_{L^2(B_{4r})}^{1-\alpha}
\end{align*}
and the proof is finished.
\end{proof}

\subsubsection{Main Proposition}

We now proceed to the main proposition, which eventually yields the main results of this article. Here, we will apply the propagation of smallness arguments from the previous two propositions to estimate $v_1^f - v_2^f := \int_0^\infty t^{1-2s} \left( \tilde{u}_1^f (\cdot,t) - \tilde{u}_2^f (\cdot,t) \right) dt$.

\begin{prop}\label{prop:stability_estimate}
Let $(M,g_j)$ be two smooth, closed, connected $n$-dimensional Riemannian manifolds satisfying assumptions (A1) and (A2), $j \in \{1,2\}$. Let $O', O \subset M$ be smooth, open non-empty sets such that $O$ satisfies assumption (A3) and such that $O' \Subset O$. Let $L_{s,O}^j$ and $L_{1,O}^j$, $j \in \{1,2\}$, be the source-to-solution maps as in \eqref{eq:def_nonlocal_StoSOperator_manifold} and \eqref{eq:def_local_StoSOperator_manifold}, respectively. Let $f \in \widetilde{H}^{-s}_{\diamond}(O)$ with $(f,1)_{L^2(M)} = 0$.

There exist constants $\beta>0$ and $C>0$ such that, if for some $\varepsilon\in(0,\frac{1}{2})$
\begin{align*}
\|L_{s,O}^1(f)- L_{s,O}^2(f)\|_{H^{s}(O)} \leq \varepsilon \|f\|_{H^{-s}(M)},
\end{align*}
then it holds
\begin{align*}
\Vert L_{1,O}^1(f)- L_{1,O}^2(f) \Vert_{H^{2-s}(O')} \leq C \vert \log(\varepsilon) \vert^{-\beta} \Vert f \Vert_{H^{-s}(M)}.
\end{align*}
Here, $\beta$ and $C$ only depend on $s$, $n$, $M$, $O'$, $O$, $\theta_1$ and $\theta_2$ (from assumptions (A1) and (A2)).
\end{prop}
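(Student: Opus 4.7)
The plan is to follow the strategy outlined in Section~\ref{sec:sketch_of_proof}: realize the local source-to-solution difference as a weighted integral of the Caffarelli-Silvestre extension $w := \tilde{u}_1^f - \tilde{u}_2^f$ in the $x_{n+1}$-variable, split the integral at an $\varepsilon$-dependent height $L=L(\varepsilon)$, and bound the two pieces by completely different mechanisms. Writing
\begin{align*}
L_{1,O'}^1(f)-L_{1,O'}^2(f) = c_s \int_0^L t^{1-2s} w(\cdot,t)\, dt + c_s \int_L^\infty t^{1-2s} w(\cdot,t)\, dt,
\end{align*}
the tail integral is estimated by applying Lemma~\ref{lem:apriori} to each $\tilde{u}_j^f$ individually together with Lemma~\ref{lem:estimate_integral_modified_Bessel_function} and the uniform lower bound on $\lambda_1$ from assumption (A1). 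This gives, for $L = 2\log(\varepsilon^{-1})$, a bound on the tail in $H^1(O')$ (and hence by interpolation and a priori energy estimates in $H^{2-s}(O')$) of order $\varepsilon \|f\|_{H^{-1}(M)} \lesssim \varepsilon \|f\|_{H^{-s}(M)}$.

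For the finite-height piece, I would first use the hypothesis, which says that $w(\cdot,0) = L^1_{s,O}(f) - L^2_{s,O}(f)$ is $\varepsilon$-small in $H^s(O)$, together with the fact that $w$ satisfies the Caffarelli-Silvestre equation in $O\times\R_+$ with vanishing generalized Neumann datum on $O\times\{0\}$ (the source $f$ cancels). Combining assumption (A3) with a covering of $O'$ by boundary balls $B_{3r}'(x^0)\Subset O$ and applying the quantitative boundary-bulk unique continuation estimate of Proposition~\ref{prop:bbucp}, I can transfer this boundary smallness into $L^2(B_{cr}^+(x^0), x_{n+1}^{1-2s})$-smallness, at the cost of one Hölder exponent $\alpha\in(0,1)$. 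The outcome is an estimate of the form $\|w\|_{L^2(U_0, x_{n+1}^{1-2s})} \le C \varepsilon^{1-\alpha} \|f\|_{H^{-s}(M)}^{\alpha}$ for a tubular neighborhood $U_0$ of $O'\times\{0\}$, where I also use the uniform a priori energy bound $\|w\|_{L^2(M\times\R_+,x_{n+1}^{1-2s})} \lesssim \|f\|_{H^{-s}(M)}$.

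Next, I would iterate the interior three-balls inequality of Proposition~\ref{prop:3ballsinequ} along vertical chains of balls inside $O'\times(\delta_0,L)$ with fixed radius $r$ and safety distance $\delta_0 \sim r$ to the boundary $\{x_{n+1}=0\}$; the crucial feature, emphasized after the statement of Proposition~\ref{prop:3ballsinequ}, is that the three-balls constants do not blow up as $x_{n+1}^0 \to \infty$, so $N\sim L/r$ iterations of the Hölder inequality yield a composite exponent $\alpha^N$ with a comparable constant. Propagating smallness from $U_0$ upward, this produces
\begin{align*}
\|w\|_{L^2(O'\times(0,L),x_{n+1}^{1-2s})} \le C\, \varepsilon^{\alpha^{cL}} \|f\|_{H^{-s}(M)}^{1-\alpha^{cL}}.
\end{align*}
Multiplying by $t^{1-2s}$ and integrating in $t\in(0,L)$ then bounds the bulk integral in $L^2(O')$ by $C L^{c_s}\varepsilon^{\alpha^{cL}}\|f\|_{H^{-s}(M)}$, which, through interior Caccioppoli/elliptic regularity and interpolation (using uniform a priori bounds for the local solutions $v_j^f$ in $H^{2-s}(O'')$ for some intermediate $O'\Subset O''\Subset O$), upgrades to the desired $H^{2-s}(O')$-norm.

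The main obstacle, and the place requiring the careful optimization, is the competition between the two integrals: the tail estimate forces $L$ to be at least of order $|\log\varepsilon|$, while each vertical hop in the three-balls chain eats an additional factor of $\alpha$ in the exponent. Choosing $L = c_0 |\log\varepsilon|$ sufficiently large for the tail, the composite exponent becomes $\alpha^{cL} \sim \varepsilon^{\kappa}$ for some tiny $\kappa>0$, and the bulk bound reduces to $\exp(-c|\log\varepsilon|^{1+\kappa})^{\varepsilon^{-\kappa}}$-type quantities; balancing $L$ so that $\alpha^{cL}|\log\varepsilon|$ is bounded away from zero forces $L \lesssim \log|\log\varepsilon|$, which conflicts with the tail demand unless we accept a logarithmic modulus. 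The optimal choice, which gives $L$ of the form $L\sim |\log\varepsilon|^{\mu}$ for some small $\mu\in(0,1)$, yields the final rate $|\log\varepsilon|^{-\beta}$ with $\beta$ depending on $\alpha$, $\mu$, $s$, and the geometry of $O'\Subset O$ — all controlled uniformly in the metric by assumptions (A1)–(A3) and the norm equivalence of Section~\ref{ssec:prel_equivalence_of_norms}.
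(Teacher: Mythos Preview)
Your overall architecture matches the paper's proof exactly: express $L_{1,O'}^1(f)-L_{1,O'}^2(f)$ via the Caffarelli--Silvestre extension, split at a height $L=L(\varepsilon)$, control the tail via Lemmas~\ref{lem:apriori}--\ref{lem:estimate_integral_modified_Bessel_function}, and control the finite-height piece by Proposition~\ref{prop:bbucp} followed by $N\sim L$ iterations of Proposition~\ref{prop:3ballsinequ}. You also correctly note that the constants in the three-balls inequality do not degenerate as $x_{n+1}\to\infty$, which is the key point.

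The genuine gap is in the optimization. You first set $L=2\log(\varepsilon^{-1})$, then correctly observe that this forces $\alpha^{cL}\sim\varepsilon^{\kappa}$ so the bulk bound $\varepsilon^{\alpha^{cL}}\to 1$, and that one actually needs $L\lesssim\log|\log\varepsilon|$. But then your stated ``optimal choice'' $L\sim|\log\varepsilon|^{\mu}$ with $\mu\in(0,1)$ contradicts this constraint: for any $\mu>0$ one has $|\log\varepsilon|^{\mu}\gg\log|\log\varepsilon|$, so $\alpha^{cL}=\exp(-c|\log\varepsilon|^{\mu})$ decays super-polynomially in $|\log\varepsilon|$, and hence
\[
\varepsilon^{(1-\bar\alpha)\alpha^{cL}}=\exp\big(-(1-\bar\alpha)|\log\varepsilon|\,e^{-c|\log\varepsilon|^{\mu}}\big)\longrightarrow 1
\]
as $\varepsilon\to 0$. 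The bulk estimate does not even tend to zero with this choice, let alone beat $|\log\varepsilon|^{-\beta}$.

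The paper's resolution is to \emph{fix the target modulus first}: set $\omega(\varepsilon)=|\log\varepsilon|^{-\beta}$ and choose $L(\varepsilon)=\tfrac{2}{\bar\lambda}|\log(\omega(\varepsilon))|\sim\beta\log|\log\varepsilon|$. Then the tail is bounded by $C\omega(\varepsilon)\|f\|_{H^{-s}(M)}$ (Lemma~\ref{lem:estimate_integral_modified_Bessel_function}), while $N\leq C|\log(\omega(\varepsilon))|$ gives
\[
\alpha^{N}=\omega(\varepsilon)^{-C\log\alpha}=|\log\varepsilon|^{-C\beta|\log\alpha|},
\]
so the bulk contribution behaves like $\exp\big(-c|\log\varepsilon|^{\,1-C\beta|\log\alpha|}\big)$ times polylogarithmic factors. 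The crucial step is to take $\beta>0$ \emph{small enough} that $1-C\beta|\log\alpha|>0$ (this is exactly \eqref{eq:constraint_on_beta}); then the bulk term decays faster than $|\log\varepsilon|^{-\beta}$ and can be absorbed into $C\omega(\varepsilon)$. Your sketch never isolates this smallness condition on $\beta$, which is what makes the argument close.
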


Compared to the result of Theorem \ref{thm:stability_nonlocal_local_single_meas} the proposition is not yet set in the most natural measurement topology. As a consequence, we will upgrade this by interpolation in a subsequent step. Proposition \ref{prop:stability_estimate} however already contains the essential information in the transference of stability.

\begin{proof}
In the following $C$ will denote a generic constant which may only depend on $n$, $s$, $O'$, $O$, $\theta_1$ and $\theta_2$, whereas the constants $C_1$ and $C_2$ may additionally depend on $\beta$. They may change from line to line.

Moreover, for notational convenience and to point out when the logarithmic estimate is needed, we will initially write
\begin{align*}
\omega(\varepsilon) := \vert\log(\varepsilon)\vert^{-\beta} < 1.
\end{align*}

Let $\tilde{u}_j^f \in H^1(M \times \R_+, x_{n+1}^{1-2s})$, $j \in \{1,2\}$, be the solutions to the Caffarelli-Silvestre extension equation with metric $g_j$ and source term $f$, see equation \eqref{eq:CS}. Since $L_{s,O}^j(f) = \restr{\tilde{u}_j^f(\cdot,0)}{O}$ it holds that
\begin{align}\label{eq:smallness_boundary_data_CSExt}
\Vert \tilde{u}_1^f(\cdot,0)- \tilde{u}_2^f(\cdot,0) \Vert_{H^{s}(O)} = \Vert L_{s,O}^1(f)- L_{s,O}^2(f) \Vert_{H^{s}(O)} \leq \varepsilon \Vert f \Vert_{H^{-s}(M)}.
\end{align}
Additionally let us recall that by \cite{R23} we have
\begin{align*}
L_{1,O}^j(f)(x) = c_s \int_0^\infty t^{1-2s} \tilde{u}_j^f(x,t) dt =: v_j^f(x),
\end{align*}
and thus
\begin{align}\label{eq:norm_local_maps_in_CS_terms}
\Vert L_{1,O}^1(f)- L_{1,O}^2(f) \Vert_{H^{2-s}(O')} = c_s \left\Vert \int_0^\infty t^{1-2s} \left( \tilde{u}_1^f (\cdot,t) - \tilde{u}_2^f (\cdot,t) \right) dt \right\Vert_{H^{2-s}(O')}.
\end{align}
Since $f$ has vanishing mean, we know that the functions $\tilde{u}_j^f(\cdot,x_{n+1})$ for $j \in \{1,2\}$ have vanishing mean for each $x_{n+1}>0$. Thus, by Poincaré's inequality (in tangential directions) and compactness of $M$, by assumption (A1) and the a-priori estimates for $\tilde{u}_j^f$ we have
\begin{align}\label{eq:Poincare}
\Vert \tilde{u}_j^f \Vert_{L^2(M \times \R_+, x_{n+1}^{1-2s})} \leq C \Vert \tilde{u}_j^f \Vert_{\dot{H}^1(M\times\R_+, x_{n+1}^{1-2s})} \leq C \Vert f \Vert_{H^{-s}(M)}.
\end{align}
We note that here the constant $C$ in particular depends on $\theta_1$ (from assumption (A1)).

Let $O'' \subset M$ be open, smooth such that $O' \Subset O'' \Subset O$. Since $v_1^f - v_2^f$ solves $(-\Delta_g) (v_1^f - v_2^f) = 0$ in $O$, we have by Caccioppoli's inequality, using that $g = \Id_{n \times n}$ in $O$, and the equation for $\nabla' \tilde{u}_j^f$ (where $\nabla'$ denotes the gradient taken with respect to the tangential variables)
\begin{align}\label{eq:Caccioppoli}
\begin{split}
\left\Vert \int_0^\infty t^{1-2s} \left( \tilde{u}_1^f(\cdot,t) - \tilde{u}_2^f(\cdot,t) \right) dt \right\Vert_{H^{2-s}(O')}
&\leq C \left\Vert \int_0^\infty t^{1-2s} \left( \tilde{u}_1^f(\cdot,t) - \tilde{u}_2^f(\cdot,t) \right) dt \right\Vert_{H^{2}(O')} \\
& \leq C \left\Vert \int_0^\infty t^{1-2s} \left( \tilde{u}_1^f(\cdot,t) - \tilde{u}_2^f(\cdot,t) \right) dt \right\Vert_{L^2(O'')},
\end{split}
\end{align}
where $C$ here depends on $O'$ and $O$ (and the precise choice of $O''$).

\textit{Step 1: Reduction to finite height in $t$.} We choose (for $\bar{\lambda}$ as in \eqref{eq:uniform_bounds_first_eigenvalue}, depending on $M$ and $\theta_1$)
\begin{align}\label{eq:relation_cutoff_modulus_cont}
L(\varepsilon) = \frac{2}{\bar{\lambda}} \vert \log(\omega(\varepsilon)) \vert.
\end{align}
Then, by Lemma \ref{lem:estimate_integral_modified_Bessel_function} we have
\begin{align}\label{eq:upper_integral}
\int_{\bar{\lambda} L(\varepsilon)}^\infty z^{1-s} K_s(z) dz \leq C_1 \omega(\varepsilon),
\end{align}
for some constant $C_1$ depending on $s$ and the precise form of the modulus of continuity $\omega$, in particular, it depends on $\beta$. By Lemma \ref{lem:apriori} and using \eqref{eq:upper_integral} we thus deduce
\begin{align}\label{eq:norm_upper_integral}
\begin{split}
&\left\Vert \int_{L(\varepsilon)}^\infty t^{1-2s} \left( \tilde{u}_1^f (\cdot,t) - \tilde{u}_2^f (\cdot,t) \right) dt \right\Vert_{L^2(O'')} \leq \left\Vert \int_{L(\varepsilon)}^\infty t^{1-2s} \left( \tilde{u}_1^f (\cdot,t) - \tilde{u}_2^f (\cdot,t) \right) dt \right\Vert_{H^1(O'')} \\
&\qquad \leq \left\Vert \int_{L(\varepsilon)}^\infty t^{1-2s} \tilde{u}_1^f (\cdot,t) dt \right\Vert_{H^{1}(O'')} + \left\Vert \int_{L(\varepsilon)}^\infty t^{1-2s} \tilde{u}_2^f (\cdot,t) dt \right\Vert_{H^{1}(O'')}\\
&\qquad \leq 2c_s \left( \int_{\bar{\lambda} L(\varepsilon)}^\infty z^{1-s} K_s(z) dz \right) \Vert f \Vert_{H^{-1}(M)} \leq C_1 \omega(\varepsilon) \Vert f \Vert_{H^{-s}(M)}.
\end{split}
\end{align}
Then, by \eqref{eq:norm_local_maps_in_CS_terms}, \eqref{eq:Caccioppoli} and \eqref{eq:norm_upper_integral} we infer
\begin{align*}
&\Vert L_{1,O}^1(f)- L_{1,O}^2(f) \Vert_{H^{2-s}(O')} = c_s \left\Vert \int_0^\infty t^{1-2s} \left( \tilde{u}_1^f(\cdot,t) - \tilde{u}_2^f(\cdot,t) \right) dt \right\Vert_{H^{2-s}(O')}\\
&\qquad \leq C \left( \left\Vert \int_0^{L(\varepsilon)} t^{1-2s} \left( \tilde{u}_1^f (\cdot,t) - \tilde{u}_2^f (\cdot,t) \right) dt \right\Vert_{L^2(O'')} + \left\Vert \int_{L(\varepsilon)}^\infty t^{1-2s} \left( \tilde{u}_1^f (\cdot,t) - \tilde{u}_2^f (\cdot,t) \right) dt \right\Vert_{L^2(O'')} \right)\\
&\qquad \leq C \left\Vert \int_0^{L(\varepsilon)} t^{1-2s} \left( \tilde{u}_1^f (\cdot,t) - \tilde{u}_2^f (\cdot,t) \right) dt \right\Vert_{L^2(O'')} + C_1 \omega(\varepsilon) \Vert f \Vert_{H^{-s}(M)}.
\end{align*}
Thus, it only remains to show that there exists some constant $C_2$ (which may additionally depend on $\beta$) such that for all $\varepsilon\in(0,\frac{1}{2})$ it holds that
\begin{align}\label{eq:claim_norm_lower_integral}
\left\Vert \int_0^{L(\varepsilon)} t^{1-2s} \left( \tilde{u}_1^f (\cdot,t) - \tilde{u}_2^f (\cdot,t) \right) dt \right\Vert_{L^2(O'')} \leq C_2 \omega(\varepsilon) \Vert f \Vert_{H^{-s}(M)}.
\end{align}

\textit{Step 2: Estimate of the finite height integral \eqref{eq:claim_norm_lower_integral}.} This step itself is divided into several substeps. For a visualization and an outline of the proof see Figure \ref{fig:schematic_argument_stability}.

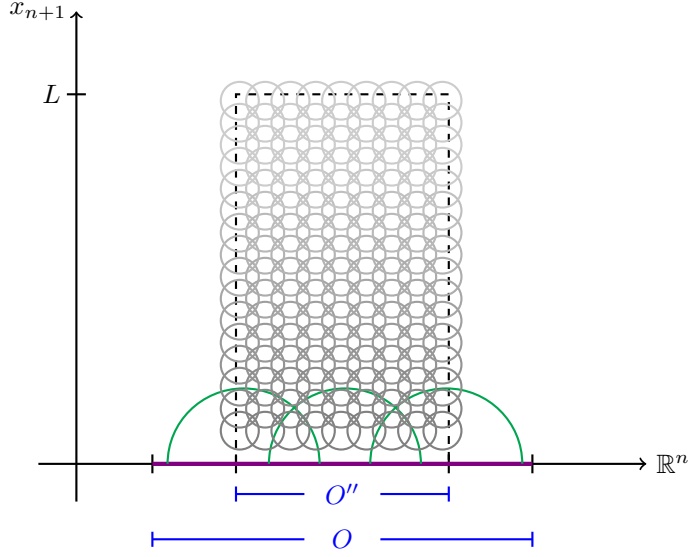
\begin{figure}
	\begin{center}
  	\begin{tikzpicture}[thick,scale=1,every node/.style={scale=1}]
  	\def\r{1};
  	\draw[->] (-0.5,0) -- (7.5,0) node[right] {$\R^n$};
  	\draw[->] (0,-0.5) -- (0,6) node[left] {$x_{n+1}$};
  	\draw (-1/8,4.9) -- (1/8,4.9) node[left=2mm] {$L$};
  	\draw[dashed] (2.1,0) -- (2.1,4.9) -- (4.9,4.9) -- (4.9,0);
  	\draw (2.1,-1/8) -- (2.1,1/8); \draw (4.9,-1/8) -- (4.9,1/8);
  	\draw (1,-1/8) -- (1,1/8); \draw (6,-1/8) -- (6,1/8);
  	\draw[ultra thick,violet] (1,0) -- (6,0);
  	\draw[thick,blue] (2.1,-0.4) -- (3,-0.4); \draw[thick,blue] (4,-0.4) -- (4.9,-0.4); \draw[thick,blue] (2.1,-0.5) -- (2.1,-0.3); \draw[thick,blue] (4.9,-0.5) -- (4.9,-0.3);
  	\node[blue] at (3.5,-0.4) {$O''$};
  	\draw[thick,blue] (1,-1) -- (3,-1); \draw[thick,blue] (4,-1) -- (6,-1); \draw[thick,blue] (1,-1.1) -- (1,-0.9); \draw[thick,blue] (6,-1.1) -- (6,-0.9);
  	\node[blue] at (3.5,-1) {$O$};
  	
  	\foreach \i in {0,...,2}{
  		\draw[Green] (3.2+4/3*\i,0) arc(0:180:\r);
  	}

  	\foreach \j in {0,...,15}{
  		\foreach \i in {0,...,8}{
  			\tikzmath{
  				\opacity = max(40, 100-5*\j);
  			}
  			\draw[gray!\opacity] (2.15+\i*\r/3, 11/25*\r + 7/24*\j*\r) circle(\r/4);
		}
	}
  	\end{tikzpicture}
	\end{center}
	\caption{Schematic visualization of Step 2 in the proof of Proposition \ref{prop:stability_estimate}. We have smallness of the data on the boundary $O \times \{0\}$ (violet part). By Proposition \ref{prop:bbucp} we transfer the smallness into the bulk, more precisely, to the $(n+1)$-dimensional half-balls $B_{c\bar{r}}^+(x_j)$ (green half-balls). Then by an iterative application of Proposition \ref{prop:3ballsinequ} we are able to propagate the smallness upwards in the bulk along a sequence of balls (gray balls) to a certain cut-off height $L$. For visualization, sizes and positions of balls are not true-to-scale.}
\label{fig:schematic_argument_stability}
\end{figure}

\textit{Step 2.1: Set-up.} The following set-up is depicted in more detail in Figure \ref{fig:detailed_setup_balls}. Let $\left\{ B_{c\bar{r}}'(x_j') \right\}_{j=1,\dots,\bar{N}_O}$ be a covering of $O''$ with $n$-dimensional balls of radius $c\bar{r}$ centered at $x_j' \in O''$, where $c = c(s,n) \in (0,\frac{1}{2})$ is the constant from Proposition \ref{prop:bbucp}, such that
\begin{align*}
O'' \subset \bigcup_{j=1}^{\bar{N}_O} B_{c\bar{r}}'(x_j') \subset \bigcup_{j=1}^{\bar{N}_O} B_{4\bar{r}}'(x_j') \subset O.
\end{align*}
The number of balls, $\bar{N}_O$, only depends on $s$, $n$, $O''$ and $O$. Let $B_{c\bar{r}}^{+}(x_j) := B_{c\bar{r}}(x_j) \cap \R^{n+1}_+$ be the (restricted to the upper half-plane) $(n+1)$-dimensional half-balls of radius $c\bar{r}$ centered at $x_j := (x_j', 0)$. There exists $0 < \delta_0 \leq 1$ such that
\begin{align*}
O'' \times [0, \delta_0) \subset \bigcup_{j=1}^{\bar{N}_O} \overline{B_{c\bar{r}}^+(x_j)}.
\end{align*}
Adding balls if necessary, one can achieve that $\delta_0 > \frac{c\bar{r}}{2}$. Here, $\delta_0$ also only depends on $s$, $n$, $O''$ and $O$.

Now, let $\left\{ B_{2r}'((x_i',\frac{\delta_0}{2})) \right\}_{i=1,\dots,N_O}$ be a covering of $O'' \times \{\frac{\delta_0}{2}\}$ with $n$-dimensional balls of radius $2r$ with $r \in (0,\frac{\delta_0}{16})$ centered at $x_i'$ such that
\begin{align*}
O'' \times \left\{\frac{\delta_0}{2}\right\} \ \subset \ \bigcup_{i=1}^{N_O} B_{2r}'((x_i', \frac{\delta_0}{2})) \ \subset \ O \times \left\{ \frac{\delta_0}{2} \right\}.
\end{align*}
The number of balls $N_O$ only depends on $s$, $n$, $O''$ and $O$. Let $B_{2r}(x_i^{(k)})$ be the $(n+1)$-dimensional balls (in the upper half plane) of radius $2r$ centered at $x_i^{(k)} := (x_i', y_k)$, where $y_k$ is such that
\begin{align*}
O'' \times [t_{k-1},t_k) \subset \bigcup_{i=1}^{N_O} B_{2r}(x_i^{(k-1)})
\end{align*}
for an appropriately chosen partition $t_0, t_1, \dots, t_N$ of the interval $[\frac{\delta_0}{2},L(\varepsilon))$ and such that $B_r(x_i^{(k)}) \subset B_{2r}(x_i^{(k-1)})$ for $k \in \{1,\dots,N\}$. Let $B_{2r}^k$ denote the ball for which the quantity $\Vert \tilde{u}_1^f - \tilde{u}_2^f \Vert_{L^2(B_{2r}^+(x_i^{(k)}), x_{n+1}^{1-2s})}$ is maximal over $i \in \{1,\dots, N_O\}$.

We remark that for all $i \in \{1,\dots,N_O\}$ there exists $j \in \{1,\dots,\bar{N}_O\}$ such that $B_r(x_i^{(0)}) \subset B_{c\bar{r}}^+(x_j')$ and that $B_{4r}(x_i^{(k)}) \subset O \times (\frac{\delta_0}{4}, \infty)$ for all $i \in \{1,\dots,N_O\}$ and all $k \in \{0,\dots,N\}$. Hence, Propositions \ref{prop:bbucp} and \ref{prop:3ballsinequ} will be applicable.

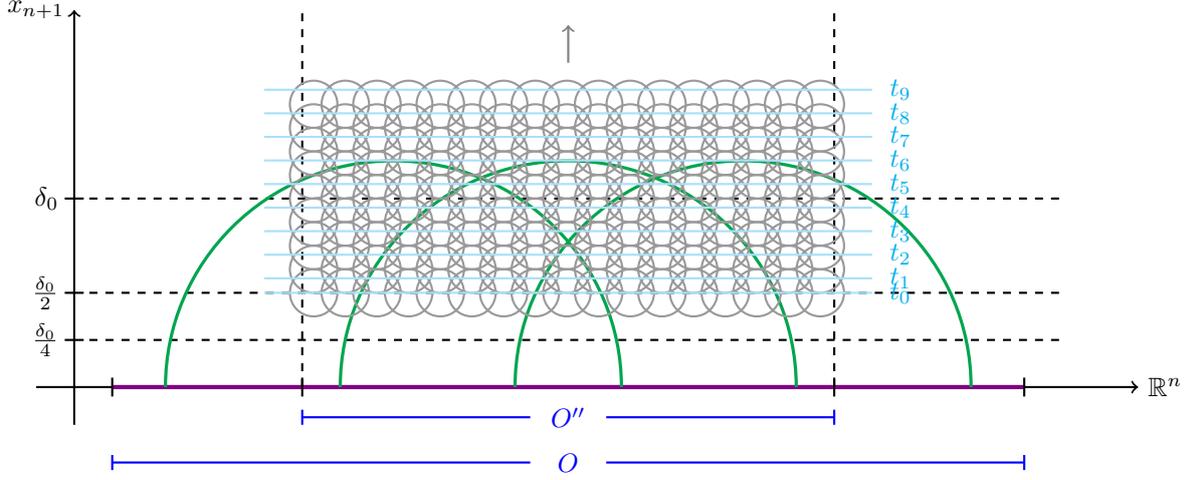
\begin{figure}
	\begin{center}
  	\begin{tikzpicture}[thick,scale=1,every node/.style={scale=1}]
  	\def\r{3}; \def\d0{2.5};
  	\draw[->] (-0.5,0) -- (14,0) node[right] {$\R^n$};
  	\draw[->] (0,-0.5) -- (0,5) node[left] {$x_{n+1}$};
  	\draw[ultra thick,violet] (0.5,0) -- (12.5,0);
  	\draw (-1/8,\d0) -- (1/8,\d0) node[left=2mm] {$\delta_0$}; \draw[dashed] (0.2,\d0) -- (13,\d0);
  	\draw (-1/8,\d0/2) -- (1/8,\d0/2) node[left=2mm] {$\frac{\delta_0}{2}$}; \draw[dashed] (0.2,\d0/2) -- (13,\d0/2);
  	\draw (-1/8,\d0/4) -- (1/8,\d0/4) node[left=2mm] {$\frac{\delta_0}{4}$}; \draw[dashed] (0.2,\d0/4) -- (13,\d0/4);
  	\draw[dashed] (3,0) -- (3,5); \draw[dashed] (10,0) -- (10,5);
  	\draw (3,-1/8) -- (3,1/8); \draw (10,-1/8) -- (10,1/8);
  	\draw (0.5,-1/8) -- (0.5,1/8); \draw (12.5,-1/8) -- (12.5,1/8);
  	\draw[thick,blue] (3,-0.4) -- (6,-0.4); \draw[thick,blue] (7,-0.4) -- (10,-0.4); \draw[thick,blue] (3,-0.5) -- (3,-0.3); \draw[thick,blue] (10,-0.5) -- (10,-0.3);
  	\node[blue] at (6.5,-0.4) {$O''$};
  	\draw[thick,blue] (0.5,-1) -- (6,-1); \draw[thick,blue] (7,-1) -- (12.5,-1); \draw[thick,blue] (0.5,-1.1) -- (0.5,-0.9); \draw[thick,blue] (12.5,-1.1) -- (12.5,-0.9);
  	\node[blue] at (6.5,-1) {$O$};
  	
  	\draw[very thick,Green] (7.2,0) arc(0:180:\r); \draw[very thick,Green] (9.5,0) arc(0:180:\r); \draw[very thick,Green] (11.8,0) arc(0:180:\r);
  	
  	\draw[->,black!50] (6.5,4.3) -- (6.5,4.8);
  	
  	\foreach \j in {0,...,8}{
  		\foreach \i in {0,...,16}{
  			\draw[black!40] (3.15 + \i*\d0/6, \d0/2 + \j*\d0/8) circle(\d0/8);
		}
	}
	
	\draw[cyan!30] (2.5,\d0/2) -- (10.5,\d0/2) node[cyan,right=1mm] {$t_0$};
	\foreach \j in {0,...,8}{
		\tikzmath{
			int \index;
			\index = \j+1;
		}
		\draw[cyan!30] (2.5,\d0/2+\d0/13+\j*\d0/8) -- (10.5,\d0/2+\d0/13+\j*\d0/8) node[cyan,right=1mm] {$t_{\index}$};
	}
  	\end{tikzpicture}
	\end{center}
	\caption{More precise visualization of the set-up of balls in Step 2 of the proof of Proposition \ref{prop:stability_estimate}. The green coloured half-balls represent the $(n+1)$-dimensional half-balls $B_{c\bar{r}}^+(x_j)$ and the gray coloured balls are the $(n+1)$-dimensional balls $B_{2r}(x_i^{(k)})$. For visualization, sizes or positions of the balls may not be perfectly true-to-scale.}
\label{fig:detailed_setup_balls}
\end{figure}

We will split the estimate into two terms, one term near the boundary and one for the bulk,
\begin{equation}\label{eq:norm_lower_integral_1}
\begin{aligned}
&\left\Vert \int_0^{L(\varepsilon)} t^{1-2s} \left( \tilde{u}_1^f (\cdot,t) - \tilde{u}_2^f (\cdot,t) \right) dt \right\Vert_{L^2(O'')}^2 = \int_{O''} \left\vert \int_0^{L(\varepsilon)} t^{1-2s} \left( \tilde{u}_1^f(x,t) - \tilde{u}_2^f(x,t) \right) dt \right\vert^2 dx\\
&\qquad \leq 2 \int_{O''} \left\vert \int_0^{\delta_0} t^{1-2s} \left( \tilde{u}_1^f(x,t) - \tilde{u}_2^f(x,t) \right) dt \right\vert^2 dx \\
& \qquad  \qquad  + 2 \int_{O''} \left\vert \int_{\delta_0}^{L(\varepsilon)} t^{1-2s} \left( \tilde{u}_1^f(x,t) - \tilde{u}_2^f(x,t) \right) dt \right\vert^2 dx.
\end{aligned}
\end{equation}

\textit{Step 2.2: Estimate near the boundary.} We estimate the first term on the right hand side of \eqref{eq:norm_lower_integral_1}, i.e., the term near the boundary, by observing that
\begin{equation}\label{eq:norm_lower_integral_boundary_1}
\begin{aligned}
\int_{O''} &\left\vert \int_0^{\delta_0} t^{1-2s} \left( \tilde{u}_1^f(x,t) - \tilde{u}_2^f(x,t) \right) dt \right\vert^2 dx\\
&\qquad \leq  \Vert 1 \Vert_{L^2((0,\delta_0),x_{n+1}^{1-2s})}^2 \int_{O''} \int_0^{\delta_0} t^{1-2s} \vert \tilde{u}_1^f(x,t) - \tilde{u}_2^f(x,t) \vert^2 dt dx\\
&\qquad \leq \frac{1}{2-2s} \delta_0^{2-2s} \Vert \tilde{u}_1^f - \tilde{u}_2^f \Vert_{L^2(O'' \times [0,\delta_0), x_{n+1}^{1-2s})}^2\\
&\qquad \leq C \sum_{j=1}^{\bar{N}_O} \Vert \tilde{u}_1^f - \tilde{u}_2^f \Vert_{L^2(B_{c\bar{r}}^+(x_j), x_{n+1}^{1-2s})}^2\\
&\qquad \leq C \Vert f \Vert_{H^{-s}(M)}^2 \varepsilon^{2(1-\bar{\alpha})},
\end{aligned}
\end{equation}
where for the first inequality we used Hölder's inequality in the $t$-variable. The last inequality holds by applying Proposition \ref{prop:bbucp} to $\tilde{w} := \tilde{u}_1^f - \tilde{u}_2^f$ in the following way
\begin{equation}\label{eq:norm_lower_integral_PropBBUCP}
\begin{aligned}
\Vert \tilde{u}_1^f& - \tilde{u}_2^f \Vert_{L^2(B_{c\bar{r}}^+, x_{n+1}^{1-2s})}\\
&\leq C \left( \Vert \tilde{u}_1^f(\cdot,0) - \tilde{u}_2^f(\cdot,0) \Vert_{H^s(B'_{3\bar{r}})} + \Vert \lim_{x_{n+1}\to0} x_{n+1}^{1-2s} \partial_{n+1} \left( \tilde{u}_1^f - \tilde{u}_2^f \right) \Vert_{H^{-s}(B'_{3\bar{r}})} \right)^{1-\bar{\alpha}}\\
&\qquad \times \bigg( \Vert \tilde{u}_1^f - \tilde{u}_2^f \Vert_{L^2(B_{\bar{r}}^+,x_{n+1}^{1-2s})} + \Vert \tilde{u}_1^f(\cdot,0) - \tilde{u}_2^f(\cdot,0) \Vert_{H^s(B'_{3\bar{r}})}\\
& \qquad \qquad + \Vert \lim_{x_{n+1}\to0} x_{n+1}^{1-2s} \partial_{n+1} \left( \tilde{u}_1^f - \tilde{u}_2^f \right) \Vert_{H^{-s}(B'_{3\bar{r}})} \bigg)^{\bar{\alpha}}\\
&\leq C \Vert \tilde{u}_1^f(\cdot,0) - \tilde{u}_2^f(\cdot,0) \Vert_{H^s(B_{3\bar{r}}')} + C \Vert \tilde{u}_1^f(\cdot,0) - \tilde{u}_2^f(\cdot,0) \Vert_{H^s(B_{3 \bar{r}}')}^{1-\bar{\alpha}} \Vert \tilde{u}_1^f - \tilde{u}_2^f \Vert_{L^2(B_{\bar{r}}^+,x_{n+1}^{1-2s})}^{\bar{\alpha}}\\
&\leq C \left( \varepsilon \Vert f \Vert_{H^{-s}(M)} \right) + C \left( \varepsilon \Vert f \Vert_{H^{-s}(M)} \right)^{1-\bar{\alpha}} \Vert \tilde{u}_1^f - \tilde{u}_2^f \Vert_{L^2(B_{\bar{r}}^+, x_{n+1}^{1-2s})}^{\bar{\alpha}}\\
&\leq C \Vert f \Vert_{H^{-s}(M)} \varepsilon^{1-\bar{\alpha}},
\end{aligned}
\end{equation}
where the second inequality follows since $(-\Delta_{g_1})^s u_1 = f = (-\Delta_{g_2})^s u_2$ in $M$ (which by the Caffarelli-Silvestre type realization from Section \ref{sec:CS} implies that $\lim_{x_{n+1}\to0} x_{n+1}^{1-2s} \partial_{n+1} \left( \tilde{u}_1^f - \tilde{u}_2^f \right) = 0$ in $M$). For the second to last inequality we used \eqref{eq:smallness_boundary_data_CSExt} and for the last inequality we used \eqref{eq:Poincare} and that $\varepsilon<1$.

\textit{Step 2.3: Estimate in the bulk.} The second term on the right hand side of \eqref{eq:norm_lower_integral_1}, the bulk term, can be estimated by
\begin{equation}\label{eq:norm_lower_integral_bulk_1}
\begin{aligned}
&\int_{O''} \left\vert \int_{\delta_0}^{L(\varepsilon)} t^{1-2s} \left( \tilde{u}_1^f(x,t) - \tilde{u}_2^f(x,t) \right) dt \right\vert^2 dx\\
&\qquad \leq \Vert 1 \Vert_{L^2((0,L(\varepsilon)),x_{n+1}^{1-2s})}^2 \int_{O''} \int_{\delta_0}^{L(\varepsilon)} t^{1-2s} \vert \tilde{u}_1^f(x,t) - \tilde{u}_2^f(x,t) \vert^2 dt dx\\
&\qquad = \frac{1}{2-2s} L(\varepsilon)^{2-2s} \sum_{k=1}^N \Vert \tilde{u}_1^f - \tilde{u}_2^f \Vert_{L^2(O''\times[t_{k-1},t_k),x_{n+1}^{1-2s})}^2\\
&\qquad \leq C \vert \log(\omega(\varepsilon)) \vert^{2-2s} \sum_{k=1}^N \Vert \tilde{u}_1^f - \tilde{u}_2^f \Vert_{L^2(B_{2r}^k,x_{n+1}^{1-2s})}^2,
\end{aligned}
\end{equation}
where for the first inequality in \eqref{eq:norm_lower_integral_bulk_1} we have again used Hölder's inequality in the $t$ variable and for the last inequality we used the relation between the cut-off height $L(\varepsilon)$ and the modulus of continuity $\omega(\varepsilon)$ from \eqref{eq:relation_cutoff_modulus_cont}. We have $N \sim \frac{L(\varepsilon)}{r}$ and since $r$ only depends on $s$, $n$, $O''$ and $O$ we write
\begin{align}\label{eq:bound_on_N}
N \leq C \vert \log(\omega(\varepsilon)) \vert.
\end{align}
Now, using Proposition \ref{prop:3ballsinequ} for $\tilde{w} = \tilde{u}_1^f - \tilde{u}_2^f$ iteratively, we find
\begin{equation}\label{eq:norm_lower_integral_bulk_2}
\begin{aligned}
\sum_{k=1}^N &\Vert \tilde{u}_1^f - \tilde{u}_2^f \Vert_{L^2(B_{2r}^k, x_{n+1}^{1-2s})}^2 \leq \sum_{k=1}^N C \Vert \tilde{u}_1^f - \tilde{u}_2^f \Vert_{L^2(B_{4r}^k, x_{n+1}^{1-2s})}^{2(1-\alpha)} \Vert \tilde{u}_1^f - \tilde{u}_2^f \Vert_{L^2(B_{r}^k, x_{n+1}^{1-2s})}^{2\alpha}\\
&\leq \sum_{k=1}^N C \left( \Vert \tilde{u}_1^f \Vert_{L^2(M \times \R_+, x_{n+1}^{1-2s})} + \Vert \tilde{u}_2^f \Vert_{L^2(M \times \R_+, x_{n+1}^{1-2s})} \right)^{2(1-\alpha)} \Vert \tilde{u}_1^f - \tilde{u}_2^f \Vert_{L^2(B_{2r}^{k-1}, x_{n+1}^{1-2s})}^{2\alpha}\\
&\leq \sum_{k=1}^N C (2 \Vert f \Vert_{H^{-s}(M)})^{2\sum_{l=1}^k (1-\alpha) \alpha^{l-1}} \Vert \tilde{u}_1^f - \tilde{u}_2^f \Vert_{L^2(B_{r}^0, x_{n+1}^{1-2s})}^{2\alpha^k}\\
&\leq \sum_{k=1}^N C \Vert f \Vert_{H^{-s}(M)}^2 \left(\varepsilon^{1-\bar{\alpha}}\right)^{2\alpha^k}\\
&\leq CN \Vert f \Vert_{H^{-s}(M)}^2 \max_{k\in\{1,\dots,N\}} \left\{ \left( \varepsilon^{1-\bar{\alpha}} \right)^{2\alpha^k} \right\}\\
&\leq CN \Vert f \Vert_{H^{-s}(M)}^2 \varepsilon^{(1-\bar{\alpha}) 2\alpha^N}.
\end{aligned}
\end{equation}
We recall that $\bar{\alpha}$ was the constant from \eqref{eq:norm_lower_integral_boundary_1}.
Here, for the third inequality we have used \eqref{eq:Poincare}. The fourth inequality follows by noting that $B_r^0 \subset B_{c\bar{r}}^+(x_j)$ for some $j \in \{1,\dots,\bar{N}_O\}$ and by using the estimate \eqref{eq:norm_lower_integral_PropBBUCP} from above. Additionally, for the fourth inequality we have used that $\sum_{l=1}^k (1-\alpha)\alpha^{l-1} = 1-\alpha^k$. 

Thus, from \eqref{eq:norm_lower_integral_bulk_1}, \eqref{eq:bound_on_N} and \eqref{eq:norm_lower_integral_bulk_2} we deduce
\begin{equation}\label{eq:norm_lower_integral_bulk_3}
\begin{aligned}
&\int_{O''} \left\vert \int_{\delta_0}^{L(\varepsilon)} t^{1-2s} \left( \tilde{u}_1^f(x,t) - \tilde{u}_2^f(x,t) \right) dt \right\vert^2 dx\\
&\qquad \leq C \Vert f \Vert_{H^{-s}(M)}^2 \vert \log(\omega(\varepsilon)) \vert^{2-2s+1} \varepsilon^{2(1-\bar{\alpha}) \alpha^{C \vert \log(\omega(\varepsilon)) \vert}}\\
&\qquad = C \Vert f \Vert_{H^{-s}(M)}^2 \vert \log(\omega(\varepsilon)) \vert^{3-2s} \varepsilon^{2(1-\bar{\alpha}) \omega(\varepsilon)^{-C\log(\alpha)}}.
\end{aligned}
\end{equation}
For the last equality we have used that $\omega(\varepsilon)<1$ and hence, $\log(\omega(\varepsilon)) < 0$.

\textit{Step 2.4: Conclusion.} At this point we now switch back to writing $\omega(\varepsilon) = \vert \log(\varepsilon) \vert^{-\beta}$. In order to conclude our argument we first combine estimates \eqref{eq:norm_lower_integral_1}, \eqref{eq:norm_lower_integral_boundary_1} and \eqref{eq:norm_lower_integral_bulk_3} to get
\begin{equation}\label{eq:norm_lower_integral_2}
\begin{aligned}
&\left\Vert \int_0^{L(\varepsilon)} t^{1-2s} \left( \tilde{u}_1^f (\cdot,t) - \tilde{u}_2^f (\cdot,t) \right) dt \right\Vert_{L^2(O'')}\\
&\qquad \leq C \Vert f \Vert_{H^{-s}(M)} \varepsilon^{1-\bar{\alpha}} + C \Vert f \Vert_{H^{-s}(M)} \log\left( \vert \log(\varepsilon) \vert^{\beta} \right)^{\frac{3}{2}-s} \varepsilon^{(1-\bar{\alpha}) \vert\log(\varepsilon)\vert^{C\beta\log(\alpha)}}.
\end{aligned}
\end{equation}
At this point it is essential that the constant $C$ did not depend on $\beta$. Hence, we can take
\begin{align}\label{eq:constraint_on_beta}
0 < \beta < \frac{1}{C \vert\log(\alpha)\vert}.
\end{align}

We claim that the term on the right hand side of \eqref{eq:norm_lower_integral_2} converges faster to $0$ than $\omega(\varepsilon) = \vert \log(\varepsilon)\vert^{-\beta}$ as $\varepsilon\to0$, giving the existence of some constant $C_2>0$ (which may now additionally depend on $\beta$) such that \eqref{eq:claim_norm_lower_integral} holds for all $\varepsilon\in(0,\frac{1}{2})$.

Indeed, for the first term the faster convergence is clear. On the other hand, we compare $\vert\log(\varepsilon)\vert^{-\beta}$ to the second term on the right hand side of \eqref{eq:norm_lower_integral_2}
\begin{align*}
&\frac{1}{\vert\log(\varepsilon)\vert^{-\beta}} \left( \log\left( \vert \log(\varepsilon) \vert^{\beta} \right)^{\frac{3}{2}-s} \varepsilon^{ (1-\bar{\alpha}) \vert\log(\varepsilon)\vert^{C\beta\log(\alpha)}} \right)\\
&\qquad = \vert \log(\varepsilon) \vert^\beta \log\left(\vert\log(\varepsilon)\vert^\beta\right)^{\frac{3}{2}-s} \exp\left( \log(\varepsilon) (1-\bar{\alpha}) \vert\log(\varepsilon)\vert^{C\beta\log(\alpha)} \right)\\
&\qquad = \vert\log(\varepsilon)\vert^\beta \log\left(\vert\log(\varepsilon)\vert^\beta\right)^{\frac{3}{2}-s} \exp\left[ -(1-\bar{\alpha}) \frac{\vert\log(\varepsilon)\vert}{\vert\log(\varepsilon)\vert^{C\beta\vert\log(\alpha)\vert}} \right]\\
&\qquad \longrightarrow 0
\end{align*}
as $\varepsilon\to0$, since (writing $z=\vert\log(\varepsilon)\vert$)
\begin{align*}
z^\beta \log(z^\beta)^{\frac{3}{2}-s} \exp\left[ -(1-\bar{\alpha}) \frac{z}{z^{C\beta\vert\log(\alpha)\vert}} \right] \longrightarrow 0
\end{align*}
as $z\to\infty$. Thus, there exists $C_2>0$ such that \eqref{eq:claim_norm_lower_integral} holds for all $\varepsilon\in(0,\frac{1}{2})$ with $C_2$ depending on $s$, $n$, $M$, $O''$, $O$, $\theta_1$, $\theta_2$, $\alpha$ and $\beta$. Since $\alpha$ only depends on $r$, which in turn depends on $s$, $n$, $O''$ and $O$, this concludes the proof.
\end{proof}

\begin{rmk}
We remark that with our approach we cannot do substantially better than the logarithmic stability estimate we deduced. Indeed, if we would try to prove some algebraic decay, say $\omega(\varepsilon) = \varepsilon^\gamma$ for some $\gamma>0$, i.e.
\begin{align*}
\Vert L_{1,O}^1(f) - L_{1,O}^2(f) \Vert_{H^1(O')} \leq C \varepsilon^{\gamma} \Vert f \Vert_{H^{-s}(M)},
\end{align*}
then even switching to balls exponentially increasing in size, we would at best get a bound for $N$ of $N \leq C \log( \vert \log(\varepsilon^{\gamma}) \vert)$. However, here we would already have to be careful since large balls would eventually reach out of the set $O \times \R_+$, and the three-balls-inequality, Proposition \ref{prop:3ballsinequ}, cannot be applied any longer. Instead one would rather have to consider ellipses or blocks increasing exponentially in size only in $x_{n+1}$-direction. With this strategy one may be able to derive a modulus of continuity in between algebraic and logarithmic decay, in the form of $\exp\left( -C\vert\log(\varepsilon)\vert^{\overline{\gamma}} \right)$ for $\overline{\gamma} \in (0,1)$.

Nevertheless, assuming the new bound on $N$, for the algebraic modulus of continuity the essential estimate (based on the estimates in \eqref{eq:norm_lower_integral_bulk_2}, \eqref{eq:norm_lower_integral_bulk_3} and ignoring some factors) in Step 2.4 in the previous proof would read
\begin{align*}
\varepsilon^{\alpha^N} &\leq \varepsilon^{\alpha^{C\log(\vert\log(\varepsilon^{\gamma})\vert)}} = \exp \left( \log(\varepsilon) \alpha^{C \log(\vert\log(\varepsilon^\gamma)\vert)} \right)
\end{align*}
which converges more slowly to $0$ than $\varepsilon^\gamma = \exp( \log(\varepsilon) \gamma)$ as $\varepsilon \to 0$, since $\alpha \in (0,1)$ and therefore $\alpha^{C \log(\vert\log(\varepsilon^\gamma)\vert)} \to 0$ as $\varepsilon \to 0$.

We do not know whether this logarithmic loss is necessary or only an artefact of our proof (see also the discussion in Section \ref{sec:instability} below).
\end{rmk}

\subsection{Proof of the main results}

With the above quantitative unique continuation results in hand, we turn to the proofs of our main results. We begin with the setting of the Calder\'on problem on closed, smooth, connected manifolds.
In order to prove Theorem \ref{thm:stability_nonlocal_local_single_meas} it remains to upgrade the deduced estimates to the ones in the natural topologies.

\begin{proof}[Proof of Theorem \ref{thm:stability_nonlocal_local_single_meas}]
On the one hand, in our discussion of Proposition \ref{prop:stability_estimate} above we have shown that
\begin{align*}
\|L_{1,O}^1(f)-L_{1,O}^2(f)\|_{H^{2-s}(O')} \leq C \vert \log(\varepsilon) \vert^{-\beta} \|f\|_{H^{-s}(M)}.
\end{align*}
On the other hand, by ellipticity, we have that
\begin{align*}
\|L_{1,O}^1(f)-L_{1,O}^2(f)\|_{H^{1-s}(O')} \leq \Vert L_{1,O}^1(f) \Vert_{H^{1-s}(M)} + \Vert L_{1,O}^2(f) \Vert_{H^{1-s}(M)} \leq C \Vert f \Vert_{H^{-1-s}(M)}.
\end{align*}
Indeed, this follows from an  estimate as in Lemma \ref{lem:apriori}:
\begin{align*}
& \Vert L_{1,O}^j(f) \Vert_{H^{1-s}(M)} \leq C \left\| \int\limits_{0}^{\infty} t^{1-2s} \tilde{u}_j^f(\cdot,t) dt \right\|_{H^{1-s}(M)} \\
&\quad\leq C \sup_{\substack{h \in H^{s-1}(M),\\ \Vert h \Vert_{H^{s-1}(M)} = 1}} \sum\limits_{k=1}^{\infty} (f,\phi_k)_{L^2(M)} (h,\phi_k)_{L^2(M)} \int\limits_{0}^{\infty} t^{1-2s} K_{s}(\sqrt{\lambda_k} t) \left( \frac{t}{\sqrt{\lambda_k}} \right)^s dt\\
&\quad \leq C \sup_{\substack{h \in H^{s-1}(M),\\ \Vert h \Vert_{H^{s-1}(M)} = 1}} \ \sum_{k=1}^\infty \left(\int\limits_{0}^{\infty} z^{1-s} K_s(z) dz \right) \lambda_k^{-1} \vert (f,\phi_k)_{L^2(M)} \vert \ \vert (h,\phi_k)_{L^2(M)} \vert \\
&\quad \leq C \sup_{\substack{h \in H^{s-1}(M),\\ \Vert h \Vert_{H^{s-1}(M)} = 1}} \ \left(\int\limits_{0}^{\infty} z^{1-s} K_s(z) dz \right) \left( \sum_{k=1}^\infty \lambda_k^{-1-s} \vert (f,\phi_k)_{L^2(M)} \vert^2 \right)^{1/2} \left( \sum_{k=1}^\infty \lambda_k^{s-1} \vert (h,\phi_k)_{L^2(M)} \vert^2 \right)^{1/2}\\
&\quad = C \left(\int\limits_{0}^{\infty} z^{1-s} K_s(z) dz \right) \Vert f \Vert_{H^{-1-s}(M)}.
\end{align*}
Using that since $O'$, $O$, $M$ are assumed to be smooth, the interpolation properties (see e.g. \cite{CWHM15,T95})
\begin{align*}
H^{1}(O')=(H^{2-s}(O'),H^{1-s}(O'))_{s,2} \quad \text{and} \quad  \widetilde{H}^{-1}(O)=(\widetilde{H}^{-s}(O),\widetilde{H}^{-1-s}(O))_{s,2}
\end{align*}
with norm-equivalence up to constants depending only on $\theta_1$ and $\theta_2$ (but otherwise independent of the metric $g$; recall the results from Section \ref{ssec:prel_equivalence_of_norms} above) then imply by real interpolation (see \cite{BL76})
\begin{align*}
\|L_{1,O}^1(f)-L_{1,O}^2(f)\|_{H^{1}(O')} \leq C \vert \log(\varepsilon) \vert^{-s\beta} \|f\|_{H^{-1}(M)}.
\end{align*}
Redefining $\beta$ concludes the proof of Theorem \ref{thm:stability_nonlocal_local_single_meas}.
\end{proof}

With Theorem \ref{thm:stability_nonlocal_local_single_meas} in hand, the result of Theorem \ref{thm:stability_nonlocal_local_infinite_meas} follows immediately.

\begin{proof}[Proof of Theorem \ref{thm:stability_nonlocal_local_infinite_meas}]
We apply Theorem \ref{thm:stability_nonlocal_local_single_meas} for all $f \in \widetilde{H}^{-s}_{\diamond}(O')$ with $\Vert f \Vert_{H^{-s}(M)} = 1$.
\end{proof}

Lastly, we show how the previous reduction results can be applied in order to derive stability estimates for the nonlocal problem from the local problem.

\begin{proof}[Proof of Corollary \ref{cor:transfer_of_stability_manifold_setting}]
From Theorem \ref{thm:stability_nonlocal_local_infinite_meas} we know that we can estimate the norm of the difference of the local source-to-solution maps by the norm difference of the nonlocal maps, i.e.
\begin{align*}
\Vert L_{1,O'}^1 - L_{1,O'}^2 \Vert_{H^{-1}_{\diamond}(O') \to H^1(O')} \leq C \vert \log( \Vert L_{s,O}^1 - L_{s,O}^2 \Vert_{H^{-1}_{\diamond}(O') \to H^1(O')} ) \vert^{-\beta}.
\end{align*}
With the assumed stability estimate for the local problem, \eqref{eq:assump_cor}, we then infer
\begin{align*}
\|g_1 - g_2 \|_{X} \leq \omega \left( C \vert \log( \Vert L_{s,O}^1 - L_{s,O}^2 \Vert_{\widetilde{H}^{-s}_{\diamond}(O) \to H^s(O)}) \vert^{-\beta} \right),
\end{align*}
which finishes the proof.
\end{proof}

\section{Proof of the main results for the spectral fractional Laplacian}\label{sec:proof_spectral}

We next turn to the discussion of the main results for the fractional Calder\'on problem for the spectral fractional Laplacian.

\subsection{Uniqueness}\label{sec:uniqueness__local_StoSCalderon}

We begin by providing the arguments for the qualitative uniqueness result of Theorem \ref{thm:qualitative_spectral}.
Although there is essentially nearly no difference with respect to the observations from \cite{R23}, for the convenience of the reader, we present the full argument for Theorem \ref{thm:qualitative_spectral} (cf. Lemma 5.1 in \cite{R23}).

\begin{proof}[Proof of Theorem \ref{thm:qualitative_spectral}]
As in Lemma A.1 in \cite{R23} (c.f. Lemma \ref{lem:EigenfunctionExpansion} above), here using the Poisson kernel representation from Theorem 2.5 in \cite{CS16}, we have for $f \in C_c^\infty(O)$, $\tilde{c}_s = \frac{2^{1-s}}{\Gamma(s)}$ and $(x',x_{n+1}) \in \R^{n+1}_+$
\begin{align}\label{eq:eigenfunction_expansion_spectral}
\tilde{u}^f(x',x_{n+1}) = \tilde{c}_s \sum_{k=1}^\infty (f,\phi_k)_{L^2(\Omega)} K_s(\sqrt{\lambda_k}x_{n+1}) \left( \frac{x_{n+1}}{\sqrt{\lambda_k}} \right)^s \phi_k(x').
\end{align}
With this identity in hand we deduce for $x'\in \R^n$
\begin{align*}
\int_0^\infty t^{1-2s} \tilde{u}^f(x',t)dt &= \tilde{c}_s \sum_{k=1}^\infty (f,\phi_k)_{L^2(\Omega)} \int_0^\infty t^{1-2s} K_s(\sqrt{\lambda_k}t) \left( \frac{t}{\sqrt{\lambda_k}} \right)^s dt \ \phi_k(x')\\
&= \tilde{c}_s \left( \int_0^\infty z^{1-s} K_s(z) dz \right) \sum_{k=1}^\infty \lambda_k^{-1} (f,\phi_k)_{L^2(\Omega)} \phi_k(x')\\
&= \tilde{c}_s \left( \int_0^\infty z^{1-s} K_s(z) dz \right) (L_a^{-1}f)(x'),
\end{align*}
where we have changed variables and invoked the definition of the spectral Laplacian. By the asymptotics of the modified Bessel function of the second kind (see, for instance, \cite{Olver10}), it follows that for $s\in(0,1)$ the integrals $\int_0^\infty z^{1-s}K_s(z) dz$ are well-defined and finite. More precisely, it holds $\int_0^\infty z^{1-s}K_s(z) dz = 2^{-s}\Gamma(1-s)$. In particular, we find for $c_s = \frac{2^{2s-1}\Gamma(s)}{\Gamma(1-s)}$
\begin{align}\label{eq:ProofUniquenessEq1}
c_s \int_0^\infty t^{1-2s} \tilde{u}^f(\cdot,t) dt = L_a^{-1}f.
\end{align}

We now prove the density of $\widetilde{\mathcal{C}_1}$ in $\mathcal{C}_1$. Given the representation \eqref{eq:ProofUniquenessEq1} this follows directly by the density of $C_c^{\infty}(O)$ in $\widetilde{\mathcal{H}}^{-1}(O)$ and the continuity of the map $L_{a}^{-1}$ as a map from $\widetilde{\mathcal{H}}^{-1}(O)$ to $\mathcal{H}^{1}(\Omega)$.
\end{proof}

Before turning to the proof of Corollary \ref{cor:spectral_uniqueness}, we provide the following density result.

\begin{lem}\label{lem:density_StoS}
Let $\Omega$, $A$, $O$ be open, bounded, Lipschitz such that $A\Subset\Omega$ and $O \subset \Omega \setminus \overline{A}$ and let $a \in L^\infty(\Omega, \R^{n \times n})$ be a symmetric, uniformly elliptic matrix. Define the sets
\begin{align*}
S_1 (A)&:= \{w \in H^1(A): \ -\nabla \cdot a \nabla w = 0 \text{ in } A\},\\
S_2 (A)&:=\{\restr{v}{A} \in H^1(A): \ v \in H^1(\Omega), \ -\nabla \cdot a \nabla v = f \text{ in } \Omega, \ v = 0 \text{ on } \partial \Omega, \ f \in C_c^{\infty}(O) \}.
\end{align*}
Then the set $S_2(A)$ is dense in $S_1(A)$ with respect to the $H^1(A)$-topology.
\end{lem}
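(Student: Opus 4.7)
The plan is to argue by Hahn--Banach duality: since $S_2(A) \subset S_1(A)$ as subsets of $H^1(A)$ (note $f \in C_c^\infty(O)$ is supported away from $A$), density will follow once every $F \in (H^1(A))^*$ that annihilates $S_2(A)$ is shown to annihilate $S_1(A)$ as well.

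First I would extend the given functional $F$ to a functional $\widetilde F$ on $H^1_0(\Omega)$ by setting $\widetilde F(\varphi) := F(\varphi|_A)$, and solve the adjoint Dirichlet problem $-\nabla\cdot a\nabla\psi = \widetilde F$ in $\Omega$, $\psi = 0$ on $\partial\Omega$ via Lax--Milgram (using the symmetry and uniform ellipticity of $a$). For a test element $v = V|_A \in S_2(A)$ with $-\nabla\cdot a\nabla V = f \in C_c^\infty(O)$, symmetry of $a$ together with the vanishing boundary data produces the pairing
\begin{align*}
F(v) \;=\; \widetilde F(V) \;=\; \int_\Omega a\nabla\psi\cdot\nabla V \,dx \;=\; \int_O f\,\psi \,dx.
\end{align*}
The assumption $F\equiv 0$ on $S_2(A)$ then forces $\psi = 0$ on $O$ by varying $f \in C_c^\infty(O)$.

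Next, observing that $\widetilde F$ annihilates every $\varphi \in H^1_0(\Omega)$ with support in $\Omega \setminus \overline{A}$, the function $\psi$ satisfies $-\nabla\cdot a\nabla\psi = 0$ weakly on $\Omega\setminus\overline{A}$. I would then invoke the weak unique continuation property, combined with connectedness of $\Omega \setminus \overline{A}$, to upgrade $\psi|_O = 0$ to $\psi \equiv 0$ on all of $\Omega\setminus\overline{A}$. Because $\psi \in H^1(\Omega)$ vanishes identically outside $\overline{A}$ and $A$ is Lipschitz, its restriction to $A$ has zero trace on $\partial A$, i.e.\ $\psi|_A \in H^1_0(A)$. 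For $w \in S_1(A)$ I then extend arbitrarily to $\tilde w \in H^1_0(\Omega)$ and compute
\begin{align*}
F(w) \;=\; \widetilde F(\tilde w) \;=\; \int_\Omega a\nabla\psi\cdot\nabla\tilde w \,dx \;=\; \int_A a\nabla w\cdot\nabla\psi \,dx \;=\; 0,
\end{align*}
using $\nabla\psi \equiv 0$ on $\Omega\setminus\overline{A}$, the symmetry of $a$, and the weak formulation of $-\nabla\cdot a\nabla w = 0$ tested against $\psi|_A \in H^1_0(A)$.

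The delicate step is the unique continuation: for merely $L^\infty$ coefficients in dimension $n\geq 3$ weak UCP is not automatic. Within the paper this is harmless, since in the intended applications Assumption (A3') gives $a = \Id_{n\times n}$ on $\Omega\setminus A$, so the propagation of vanishing reduces to the classical weak unique continuation property for harmonic functions, and $\Omega\setminus\overline{A}$ is connected by the geometric set-up adopted for the spectral Calder\'on problem.
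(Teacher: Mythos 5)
Your proposal is correct and is essentially the paper's own argument: a Hahn--Banach duality reduction, solving an adjoint Dirichlet problem for the annihilating functional, using the measurements $f\in C_c^\infty(O)$ to force the adjoint solution to vanish on $O$, propagating this by unique continuation through the connected set $\Omega\setminus\overline{A}$ (where, as you rightly note, in the intended application $a=\Id_{n\times n}$ so the UCP is unproblematic), and concluding by integration by parts. The only cosmetic difference is that the paper realizes the functional as $h\in\widetilde{H}^{-1}(A)$ and invokes the vanishing of both the trace and the conormal derivative of the adjoint solution on $\partial A$, whereas you test the weak formulation of $w$ against $\psi|_A\in H^1_0(A)$ after noting $\nabla\psi\equiv0$ outside $\overline{A}$; these are the same computation.
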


\begin{proof}
By definition, $S_2(A)$ is a subset of $S_1(A)$. We seek to prove that $S_2(A) \subset S_1(A)$ is dense with respect to the $H^1(A)$ topology. 

To this end, we define the following adjoint problem:
\begin{equation*}
\begin{cases}
\begin{alignedat}{2}
-\nabla \cdot a \nabla u & = h \chi_{A} \quad &&\text{in } \Omega,\\
u & = 0 \quad &&\text{on } \partial \Omega.
\end{alignedat}
\end{cases}
\end{equation*}
Here $h \in \widetilde{H}^{-1}(A)$. It suffices to prove that if for $h \in \widetilde{H}^{-1}(A)$ it holds $\langle v,h \rangle_{H^1(A),\widetilde{H}^{-1}(A)} =0$ for all $v \in S_2(A)$, then also $\langle w,h \rangle_{H^1(A),\widetilde{H}^{-1}(A)} =0$ for all $w \in S_1(A)$.

Indeed, the condition $\langle v,h \rangle_{H^1(A),\widetilde{H}^{-1}(A)} =0$ for all $v \in S_2(A)$, implies by integration by parts and the boundary conditions on $\partial \Omega$ that 
\begin{align*}
0 &= \langle v,h \rangle_{H^1(A),\widetilde{H}^{-1}(A)} 
= \langle v,h \chi_{A} \rangle_{H_{0}^1(\Omega),H^{-1}(\Omega)}
= \langle v, -\nabla \cdot a \nabla u\rangle_{H_{0}^1(\Omega),H^{-1}(\Omega)}\\
&= -\langle f, u \rangle_{H^{-1}(\Omega),H_0^1(\Omega)}.
\end{align*}
Since this identity is true for any $f \in C_c^\infty(O)$, it holds that $u = 0$ in $O$. By unique continuation we infer $u = 0$ in $\Omega \setminus A$, $h= -\nabla \cdot a \nabla (u|_{A}) $ and $u|_{\partial A} = 0 = \restr{\left(a \nabla u \cdot \nu\right)}{\partial A}$. Hence, again by integration by parts and the equation for $w \in S_1(A)$,
\begin{align*}
\langle w,h \rangle_{H^1(A),\widetilde{H}^{-1}(A)} 
&= \langle w, \nabla \cdot a \nabla (u|_{A}) \rangle_{H^1(A),\widetilde{H}^{-1}(A)} = \langle \nabla \cdot a  \nabla w, u|_{A} \rangle_{H^{-1}(A),H_0^1(A)}  = 0.
\end{align*}
This concludes the density proof.
\end{proof}

The uniqueness result is now an easy consequence of Theorem \ref{thm:qualitative_spectral} and Lemma \ref{lem:density_StoS}.

\begin{proof}[Proof of Corollary \ref{cor:spectral_uniqueness}]
By Theorem \ref{thm:qualitative_spectral}, since $\tilde{L}_{s,O}^1 = \tilde{L}_{s,O}^2$, we have $\tilde{L}_{1,O}^1 = \tilde{L}_{1,O}^2$. We seek to deduce that the local Dirichlet-to-Neumann maps $\Lambda_{\gamma_j}: H^{\frac{1}{2}}(\partial A) \rightarrow H^{-\frac{1}{2}}(\partial A)$, $j\in \{1,2\}$, with $g \mapsto \gamma_j \p_{\nu} w_j|_{\partial \Omega}$ agree. Here, $w_j \in H^1(A)$ is a solution of
\begin{equation*}
\begin{cases}
\begin{alignedat}{2}
\nabla \cdot \gamma_j \nabla w_j & = 0 \quad &&\text{in } A,\\
w_j & = g \quad &&\text{on } \partial A.
\end{alignedat}
\end{cases}
\end{equation*}
We claim that by the density result of Lemma \ref{lem:density_StoS} (with $a_j = \gamma_j \Id_{n \times n}$, $j \in \{1,2\}$) and the fact that $\tilde{L}_{1,O}^1 = \tilde{L}_{1,O}^2$, the equality of the local Dirichlet-to-Neumann maps follows. 

Indeed, firstly, by the density result from Lemma \ref{lem:density_StoS}, we have that for $j\in\{1,2\}$
\begin{align*}
\{v|_{\partial A}: \ v \in S_2^j(A)\} \subset H^{\frac{1}{2}}(A)
\end{align*}
is dense, where $S_2^j(A)$ is defined as the set $S_2(A)$ above with the metric $a_j$, $j\in \{1,2\}$, respectively. Hence, by continuity of the Dirichlet-to-Neumann map, also the inclusion
\begin{align}
\label{eq:dense_aux}
\{(v|_{\partial A}, \gamma_j \p_{\nu} v|_{\partial A}): \ v \in S_2^j(A)\} \subset \{(g, \Lambda_{\gamma_j}(g)): \ g \in H^{\frac{1}{2}}(\partial A)\}
\end{align}
is an inclusion of a dense set with respect to the $H^{\frac{1}{2}}(\partial A) \times H^{-\frac{1}{2}}(\partial A)$ topology for $j\in \{1,2\}$.

Secondly, by the facts that $\tilde{L}_{1,O}^1 = \tilde{L}_{1,O}^2$ and that $a_j = \text{Id}_{n\times n}$ in $\Omega \setminus A$, $j\in \{1,2\}$, by unique continuation, it follows that for any $f \in \widetilde{H}^{-1}(O)$ it holds for the solutions $v_1^f$ and $v_2^f$ of
\begin{equation*}
\begin{cases}
\begin{alignedat}{2}
\nabla \cdot \gamma_j \nabla v_j^f & = f \quad &&\text{in } \Omega,\\
v_j^f & = 0 \quad &&\text{on } \partial \Omega,
\end{alignedat}
\end{cases}
\end{equation*}
for $j \in \{1,2\}$,
that
\begin{align}
\label{eq:equal_ext} 
 v_1^f|_{\Omega \setminus A} =  v_2^f|_{\Omega \setminus A}.
 \end{align}
 This follows as, by definition, $v_1^f|_{O}=\tilde{L}_{1,O}^1(f) = \tilde{L}_{1,O}^2(f) = v_2^f|_{O}$ which then extends to the whole of $\Omega \setminus A$ by the unique continuation property since the metrics coincide outside of $A$, i.e., $a_1 = a_2 = \text{Id}_{n\times n}$ outside of $A$. In particular, from \eqref{eq:equal_ext} combined with \eqref{eq:dense_aux}, we finally deduce the equality of the associated Dirichlet-to-Neumann maps.

By virtue of the equality of the Dirichlet-to-Neumann maps, the uniqueness result then is a consequence of the uniqueness result from Theorem 1.1 in \cite{CR16} (see also \cite{SU87} and \cite{HT13}). 
\end{proof}

\subsection{Stability}

With the qualitative results in hand, we address the quantitative results of Theorem \ref{thm:quantitative_spectral} and of Corollary \ref{cor:spectral_stability}. 

We begin by proving the ``abstract'' statement about the quantitative transfer of uniqueness from the local source-to-solution Calderón problem to the fractional source-to-solution Calderón problem for the spectral fractional Laplacian, Theorem \ref{thm:quantitative_spectral}. In a second step, by making use of the stability of the local Calder\'on problem with source-to-solution data, Theorem \ref{thm:stability_local_StoSCalderon}, we will provide the proof of Corollary \ref{cor:spectral_stability}.

\begin{proof}[Proof of Theorem \ref{thm:quantitative_spectral}]
The proof proceeds as the one of Theorem \ref{thm:stability_nonlocal_local_single_meas}  but uses a combination of the spectral and of the real space Sobolev spaces.

More precisely, let $O''$ be such that $O' \Subset O'' \Subset O$. By the quantitative unique continuation arguments in the Caffarelli-Silvestre extension perspective in Sections \ref{sec:apriori}-\ref{sec:quant_UCP} above, we obtain for some $\beta>0$ small enough (depending on $s$, $n$, $O''$, $O$, $\theta_1$ and $\theta_2$ (from Assumptions (A1') and (A2'))
\begin{align}
\label{eq:spectral1}
\|\tilde{L}_{1,O}^1(f) - \tilde{L}_{1,O}^2(f)\|_{L^2(O'')} \leq C\vert\log(\varepsilon)\vert^{-\beta} \|f\|_{\mathcal{H}^{-s}(\Omega)}.
\end{align}
The constant $C$ depends on $s$, $n$, $O'$, $O$, $\theta_1$, $\theta_2$ and $\beta$.

Indeed, since the determination of the local source-to-solution operator from the nonlocal operator in the setting of the spectral Dirichlet Laplacian is identical to the one in the manifold setting (see Theorem \ref{thm:qualitative_spectral} vs. equation \eqref{eq:from_nonlocal_to_local_manifold}) and since we have analogous eigenfunction expansions for both settings (see equation \eqref{eq:eigenfunction_expansion_spectral} vs. Lemma \ref{lem:EigenfunctionExpansion}), we can apply the same arguments as before to derive an estimate as in Lemma \ref{lem:apriori}
\begin{align*}
&\Vert \int_L^\infty t^{1-2s} \tilde{u}^f(\cdot,t) dt \Vert_{\mathcal{H}^1(O)}\\
&\quad \leq \sup_{\substack{h \in \widetilde{\mathcal{H}}^{-1}(O),\\ \Vert h \Vert_{\mathcal{H}^{-1}(\Omega)} = 1}} \ \tilde{c}_s \left(\int\limits_{\sqrt{\lambda_1} L}^{\infty} z^{1-s} K_s(z) dz \right) \left( \sum_{k=1}^\infty \lambda_k^{-1} \vert (f,\phi_k)_{L^2(\Omega)} \vert^2 \right)^{1/2} \left( \sum_{k=1}^\infty \lambda_k^{-1} \vert (h,\phi_k)_{L^2(\Omega)} \vert^2 \right)^{1/2}\\
&\quad \leq C \left(\int\limits_{\sqrt{\lambda_1}L}^{\infty} z^{1-s} K_s(z) dz \right) \Vert f \Vert_{\mathcal{H}^{-1}(\Omega)}.
\end{align*}
Here, in the last inequality we have used the equivalence of the $\mathcal{H}^s$-norms (independently of the respective metric) as described in Section \ref{ssec:prel_equivalence_of_norms}. This estimate then transforms with the same arguments as in the manifold setting into an estimate like \eqref{eq:norm_upper_integral} for the upper integral with $O' \Subset O'' \Subset O$, i.e.
\begin{align*}
\left\Vert \int_{L(\varepsilon)}^\infty t^{1-2s} \left( \tilde{u}_1^f (\cdot,t) - \tilde{u}_2^f (\cdot,t) \right) dt \right\Vert_{L^2(O'')} \leq C \vert\log(\varepsilon)\vert^{-\beta} \Vert f \Vert_{\mathcal{H}^{-s}(\Omega)}.
\end{align*}
For the estimate of the remaining finite height integral (cf. \eqref{eq:claim_norm_lower_integral}) we first find that, using Poincaré's inequality and elliptic regularity, we are able to derive a similar estimate as in \eqref{eq:Poincare}, i.e.
\begin{align*}
\Vert \tilde{u}_j^f \Vert_{L^2(\Omega \times \R_+, x_{n+1}^{1-2s})} \leq C \Vert \tilde{u}_j^f \Vert_{\dot{H}^1(\Omega\times\R_+, x_{n+1}^{1-2s})} \leq C \Vert f \Vert_{\mathcal{H}^{-s}(\Omega)}.
\end{align*}
Secondly, we observe that the quantitative boundary-bulk unique continuation result, Proposition \ref{prop:bbucp}, and the three-balls-inequality, Proposition \ref{prop:3ballsinequ}, remain valid. Then the same arguments as in the proof of Proposition \ref{prop:stability_estimate} yield the estimate for the finite height integral, i.e.
\begin{align*}
\left\Vert \int_0^{L(\varepsilon)} t^{1-2s} \left( \tilde{u}_1^f (\cdot,t) - \tilde{u}_2^f (\cdot,t) \right) dt \right\Vert_{L^2(O'')} \leq C \vert\log(\varepsilon)\vert^{-\beta} \Vert f \Vert_{\mathcal{H}^{-s}(\Omega)},
\end{align*}

As before, we may combine \eqref{eq:spectral1} with Caccioppoli's inequality. Now using that in Caccioppoli's inequality one works with a zero extension and that hence the norm equivalence from \eqref{eq:norm_equivalence_2} in Section \ref{ssec:prel_equivalence_of_norms} is valid also in the range of exponents $\sigma\in [-2,2]$, we obtain that
\begin{align*}
\Vert \tilde{L}_{1,O}^1(f) - \tilde{L}_{1,O}^2(f) \Vert_{\mathcal{H}^{2-s}(O')} &\leq C \Vert \tilde{L}_{1,O}^1(f) - \tilde{L}_{1,O}^2(f) \Vert_{H^{2-s}(O')} \leq C \Vert \tilde{L}_{1,O}^1(f) - \tilde{L}_{1,O}^2(f) \Vert_{L^2(O'')}\\
&\leq C \vert\log(\varepsilon)\vert^{-\beta} \|f\|_{\mathcal{H}^{-s}(\Omega)}.
\end{align*}
Moreover, the elliptic estimates in the spectral spaces remain valid (with an identical argument in the $\mathcal{H}^\sigma$-spaces, also using the norm equivalences derived in Section \ref{ssec:prel_equivalence_of_norms}), which yields that
\begin{align*}
\|\tilde{L}_{1,O}^1(f) - \tilde{L}_{1,O}^2(f)\|_{\mathcal{H}^{1-s}(O')}
\leq C \|f\|_{\mathcal{H}^{-1-s}(\Omega)}.
\end{align*}
Finally, using that by interpolation it holds (also here, recall the norm equivalence results from Section \ref{ssec:prel_equivalence_of_norms})
\begin{align*}
\mathcal{H}^1(O') = (\mathcal{H}^{2-s}(O'), \mathcal{H}^{1-s}(O'))_{s,2} 
\quad \text{and} \quad \widetilde{\mathcal{H}}^{-1}(O) = (\widetilde{\mathcal{H}}^{-s}(O), \widetilde{\mathcal{H}}^{-1-s}(O))_{s,2},
\end{align*}
as above in the manifold setting, then allows us to infer via interpolation that
\begin{align*}
\|\tilde{L}_{1,O}^1(f) - \tilde{L}_{1,O}^2(f)\|_{\mathcal{H}^{1}(O')}
 \leq C \vert\log(\varepsilon)\vert^{-s\beta} \|f\|_{\mathcal{H}^{-1}(\Omega)}.
\end{align*}
This concludes the argument also in the spectral setting.
\end{proof}

With Theorem \ref{thm:quantitative_spectral} in hand, we turn to the proof of Corollary \ref{cor:spectral_stability}.
As an auxiliary result for this, we will rely on the following stability estimate for the local Calder\'on problem with source-to-solution data. 

\begin{thm}\label{thm:stability_local_StoSCalderon}
Let $n\geq 3$ and let $\Omega \subset \R^n$, $A \Subset \Omega$, $O \Subset \Omega \setminus \overline{A}$ and $\tilde{L}_{1,O}^j$, $j\in\{1,2\}$, be the sets and local source-to-solution operators associated to the metric $a_j = \gamma_j\Id_{n \times n}$ from Section \ref{sec:Intro_Spectral_StoSCalderon}. Assume that $\gamma_j \in H^{t+2}(\Omega)$ for some $t > \frac{n}{2}$ with
\begin{align*}
\Vert \gamma_j \Vert_{H^{t+2}(\Omega)} \leq M < \infty.
\end{align*}
Then there exist $C>0$ and $\sigma>0$ such that
\begin{align*}
\Vert \gamma_1 - \gamma_2 \Vert_{L^\infty(\Omega)} \leq C \vert \log(\Vert \tilde{L}_{1,O}^1 - \tilde{L}_{1,O}^2 \Vert_{\widetilde{H}^{-1}(O) \to H^1(O)}) \vert^{-\sigma}.
\end{align*}
\end{thm}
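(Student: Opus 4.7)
The plan is to chain the quantitative reduction from source-to-solution to Dirichlet-to-Neumann data, provided by Proposition \ref{prop:reduction_StoS_DtN}, with the classical logarithmic stability estimate of Alessandrini for the isotropic Calder\'on problem. Roughly: go from source-to-solution data on $O$ to Dirichlet-to-Neumann data on $\partial A_+$ with an algebraic loss, then apply the known logarithmic stability for the latter; since the algebraic exponent can be absorbed into the logarithm, the overall loss stays of logarithmic type.

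First, I would pick a Lipschitz set $A_+$ with $A \Subset A_+ \Subset \Omega$, $\Omega \setminus \overline{A_+}$ connected, and $O \Subset \Omega \setminus \overline{A_+}$. Since the metrics $a_j = \gamma_j \Id_{n\times n}$ satisfy $a_1 = a_2 = \Id_{n\times n}$ on $\Omega \setminus A$ by Assumption (A3'), Proposition \ref{prop:reduction_StoS_DtN} applies on $A_+$ and yields constants $C>0$ and $\gamma \in (0,1)$ with
\begin{align*}
\Vert \Lambda_{a_1,A_+} - \Lambda_{a_2,A_+} \Vert_{H^{1/2}(\partial A_+) \to H^{-1/2}(\partial A_+)} \leq C \Vert \tilde{L}_{1,O}^1 - \tilde{L}_{1,O}^2 \Vert_{\widetilde{H}^{-1}(O) \to H^1(O)}^{\gamma}.
\end{align*}
Second, the a priori bound $\Vert \gamma_j \Vert_{H^{t+2}(\Omega)} \leq M$ with $t>n/2$ and Sobolev embedding give $\gamma_j \in C^2(\overline{A_+})$, so Alessandrini's classical logarithmic stability estimate for the isotropic Calder\'on problem on $A_+$ with Dirichlet-to-Neumann measurements on $\partial A_+$ provides $C_1>0$ and $\sigma_0>0$ such that
\begin{align*}
\Vert \gamma_1 - \gamma_2 \Vert_{L^\infty(A_+)} \leq C_1 \bigl\vert \log \Vert \Lambda_{a_1,A_+} - \Lambda_{a_2,A_+} \Vert_{H^{1/2}(\partial A_+) \to H^{-1/2}(\partial A_+)} \bigr\vert^{-\sigma_0}.
\end{align*}

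Third, I would use that by (A3') and $A \Subset A_+$ one has $\gamma_1 - \gamma_2 \equiv 0$ on $\Omega \setminus A$, hence $\Vert \gamma_1 - \gamma_2 \Vert_{L^\infty(\Omega)} = \Vert \gamma_1 - \gamma_2 \Vert_{L^\infty(A_+)}$. Chaining the two displayed bounds and exploiting $\vert \log(C\varepsilon^\gamma) \vert \sim \gamma \vert \log \varepsilon \vert$ as $\varepsilon \to 0$, the algebraic exponent $\gamma$ is absorbed into an updated constant, yielding the claim with $\sigma := \sigma_0$.

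The main obstacle for the overall result is \emph{not} this chaining, which is essentially formal once Proposition \ref{prop:reduction_StoS_DtN} is in hand, but rather the proof of that proposition itself: one has to extract a \emph{quantitative} Runge approximation property allowing one to trade source-to-solution measurements in $O$ for Cauchy data on $\partial A_+$ with only an algebraic (rather than logarithmic) loss, which is exactly the content discussed in Section \ref{sec:stability_local_StoSCalderon_Schrödinger}. Given that input, the present theorem follows by a soft combination with a black-box application of Alessandrini's estimate for conductivities in $H^{t+2}(\Omega)$, $t>n/2$, a regularity hypothesis which is precisely the standard one in that classical result.
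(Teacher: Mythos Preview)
Your proposal is correct and follows essentially the same approach as the paper's own proof: pick an intermediate set $A_+$, invoke Proposition \ref{prop:reduction_StoS_DtN} to pass from source-to-solution data to Dirichlet-to-Neumann data with an algebraic loss, apply Alessandrini's logarithmic stability on $A_+$, and use $\gamma_1=\gamma_2$ outside $A$ to extend the $L^\infty$ estimate to all of $\Omega$. Your remark that the real work lies in establishing Proposition \ref{prop:reduction_StoS_DtN} via a quantitative Runge approximation is exactly how the paper structures the argument.
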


Since we were not able to find such a result in the literature, we will prove this stability result for the local source-to-solution Calderón problem in what follows below. However, we postpone the proof of the statement to Section \ref{sec:stability_local_StoSCalderon}. 

With the results of Theorems \ref{thm:quantitative_spectral} and \ref{thm:stability_local_StoSCalderon} in hand, we can now easily conclude the proof of Corollary \ref{cor:spectral_stability}.

\begin{proof}[Proof of Corollary \ref{cor:spectral_stability}]
Let $O' \Subset O$ be open, Lipschitz. It holds that
\begin{align*}
\Vert \gamma_1 - \gamma_2 \Vert_{L^\infty(\Omega)} &\leq C \vert \log(\Vert \tilde{L}_{1,O'}^1 - \tilde{L}_{1,O'}^2 \Vert_{\widetilde{H}^{-1}(O') \to H^1(O')}) \vert^{-\sigma}\\
&\leq C \vert \log( \vert \log (\Vert \tilde{L}_{s,O}^1 - \tilde{L}_{s,O}^2 \Vert_{\widetilde{\mathcal{H}}^{-s}(O) \to \mathcal{H}^s(O)} )\vert) \vert^{-\sigma},
\end{align*}
where the first inequality comes from Theorem \ref{thm:stability_local_StoSCalderon}, and for the second inequality we invoked the result of Theorem \ref{thm:quantitative_spectral} applied to all $f \in \widetilde{\mathcal{H}}^{-s}(O')$ with $\Vert f \Vert_{\mathcal{H}^{-s}(\Omega)} = 1$.
\end{proof}

\subsection{Proof of stability for the local source-to-solution Calderón problem}\label{sec:stability_local_StoSCalderon}

In this subsection we will now provide the proof of Theorem \ref{thm:stability_local_StoSCalderon}. Complementing the uniqueness proof in Section \ref{sec:uniqueness__local_StoSCalderon} by an assocaited stability result, we will reduce the stability question to a Dirichlet-to-Neumann map Calderón problem, for which such stability results are already known. The proof will rely on an $L^2$-based quantitative Runge approximation result (see Proposition \ref{prop:quantitative_Runge_approx}).

We will proceed in two steps.
First, we will prove the quantitative Runge approximation and then following up on that we provide the proof of the stability result, Theorem \ref{thm:stability_local_StoSCalderon}.

In some instances throughout this section we will use a slight abuse of notation. On the one hand, we will write at some points $(\cdot,\cdot)_{L^2(\Omega)}$ or $(\cdot,\cdot)_{L^2(\partial\Omega)}$ in order to denote the $H^1(\Omega)$, $H^{-1}(\Omega)$ or the $H^{1/2}(\partial\Omega)$, $H^{-1/2}(\partial\Omega)$ dual pairing, respectively.
On the other hand, for $w \in L^2(A)$ we write $w \chi_A \in L^2(\Omega)$ to denote the function
\begin{align*}
w \chi_A =
\begin{cases}
w \quad &\text{in } A,\\
0 \quad &\text{in } \Omega \setminus A.
\end{cases}
\end{align*}

Lastly, when writing $\partial_\nu^a w$ on $\partial\Omega$ we mean that $\partial_\nu^a w = \restr{\left(a \nabla w \cdot \nu\right)}{\partial\Omega}$, where $\nu$ is the outward unit normal to $\partial\Omega$.

\subsubsection{Quantitative Runge approximation}\label{sec:quantitative_Runge_approx}

The main ingredient for our proof will be a quantitative Runge approximation, see Proposition \ref{prop:quantitative_Runge_approx}. As this will not create any additional difficulties, for the first part we will work with a slightly more general operator than the one we are considering in Corollary \ref{cor:spectral_stability} and than what is actually needed for our result. Our proof follows the strategy from \cite{RS18a} very closely.

Let $\Omega \subset \R^n$, $A \Subset \Omega$ be open bounded Lipschitz domains such that $\Omega \setminus A$ is connected and $O \Subset \Omega \setminus \overline{A}$ open, bounded, Lipschitz. Consider the operator
\begin{align*}
\mathcal{L} := -\nabla\cdot a \nabla + q,
\end{align*}
where $a \in L^\infty(\Omega,\R^{n \times n}_{sym})$ is a symmetric matrix satisfying $\Vert \nabla a \Vert_{L^\infty(\Omega)} \leq M$ and the ellipticity assumption (A1') from above, and $q \in L^\infty(\Omega)$, with $\Vert q \Vert_{L^\infty(\Omega)} \leq M$ for some $M>0$, is such that $0$ is not a Dirichlet-eigenvalue. We note that $\mathcal{L}$ is formally self-adjoint, i.e. $\mathcal{L}^* = -\nabla\cdot a \nabla + q$.

Moreover, here we define the sets
\begin{align*}
S_1(A) &:= \{w \in H^1(A): \ \mathcal{L} w =0 \text{ in } A\},\\
S_2(A) &:= \{\restr{v^f}{A} \in H^1(A): \ v^f \in H^1(\Omega),\ \mathcal{L} v^f = f \text{ in } \Omega, \ v^f=0 \text{ on } \partial \Omega, \ f \in \widetilde{H}^{-1}(O)\}.
\end{align*}

In order to prove the quantitative Runge approximation, we will need the following preliminary result corresponding to Lemma 4.1 in \cite{RS18a}. Its proof is almost identical to the one in \cite{RS18a}.

\begin{lem}\label{lem:compactness_quantitative_Runge}
Let $\Omega$, $A$, $O$, $\mathcal{L}$, $S_1(A)$, $S_2(A)$ be the sets and operator as above. Denote by $X \subset L^2(A)$ the closure of $S_1(A)$ in $L^2(A)$ and define the operator $T: \widetilde{H}^{-1}(O) \to X$, $f \mapsto \restr{v^f}{A}$, where $v^f \in S_2(A)$ has the source term $f$.
Then, the operator $T$ is compact and has dense range.

The Banach space adjoint operator $T': X^* \to H^1(O)$ is defined by $T'w = \restr{h}{O}$ where $h$ and $w$ are related through
\begin{equation}\label{eq:adjoint_problem}
\begin{cases}
\begin{alignedat}{2}
\mathcal{L}^{*}h &= w \chi_A \quad &&\text{in } \Omega,\\
h &= 0 \quad &&\text{on } \partial \Omega.
\end{alignedat}
\end{cases}
\end{equation}
The Hilbert space adjoint operator is then given as $T^* = R_{H^1(O)} T' R_{X}: X \to \widetilde{H}^{-1}(O)$, where $R_H$ are the Riesz isomorphisms between a Hilbert space $H$ and its dual space. It holds that $\Vert T^*w \Vert_{\widetilde{H}^{-1}(O)} = \Vert T'w \Vert_{H^1(O)}$.

Moreover, there exists an orthonormal system $(\varphi_j)_{j\in\N}$ of $\widetilde{H}^{-1}(O)$ (such that $\widetilde{H}^{-1}(O) = \overline{\operatorname{span}}(\varphi_j)_{j\in\N} \bigoplus N(T)$, where $N(T)$ is the null space of $T$) and an orthonormal basis $(\psi_j)_{j\in\N}$ of $X$, such that $T\varphi_j = \sigma_j \psi_j$, where $\sigma_j>0$ are the positive singular values associated with the operator $T$.
\end{lem}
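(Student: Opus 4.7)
The plan is to verify the four claims in order: compactness of $T$, density of its range, identification of the Banach and Hilbert space adjoints, and the existence of a singular value decomposition. Each piece is fairly standard once the correct setup is in place, so the main care is in the bookkeeping between the three natural function spaces $\widetilde{H}^{-1}(O)$, $X \subset L^2(A)$, and $H^1(O)$.

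\emph{Compactness.} The standard elliptic well-posedness for $\mathcal{L}$ (using that $0$ is not a Dirichlet eigenvalue) yields a continuous solution operator $f \mapsto v^f$ from $\widetilde{H}^{-1}(O) \subset H^{-1}(\Omega)$ into $H^1_0(\Omega)$, with $\|v^f\|_{H^1(\Omega)} \leq C \|f\|_{H^{-1}(\Omega)}$. Restriction to $A$ lands in $H^1(A)$, and by the Rellich--Kondrachov compact embedding $H^1(A) \hookrightarrow L^2(A)$ the composition $T: \widetilde{H}^{-1}(O) \to L^2(A)$ is compact; since $T(\widetilde{H}^{-1}(O)) = S_2(A) \subset S_1(A) \subset X$, the corestriction to $X$ is compact as well.

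\emph{Dense range.} The range $T(\widetilde{H}^{-1}(O))$ equals $S_2(A)$, which the $\mathcal{L}$-analog of Lemma \ref{lem:density_StoS} shows is dense in $S_1(A)$ in the $H^1(A)$-topology; the proof of Lemma \ref{lem:density_StoS} goes through verbatim with $\mathcal{L}^\ast = \mathcal{L}$ in place of $-\nabla \cdot a\nabla$ since weak unique continuation still applies to the Schr\"odinger-type operator $\mathcal{L}$ under our regularity assumptions. Passing from the $H^1(A)$-topology to the weaker $L^2(A)$-topology and recalling that $X = \overline{S_1(A)}^{L^2(A)}$, this shows $T$ has dense range in $X$.

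\emph{Identification of $T'$ and $T^\ast$.} For $f \in \widetilde{H}^{-1}(O)$ and $w \in X^\ast$ I would take the unique $h \in H^1_0(\Omega)$ solving \eqref{eq:adjoint_problem} and compute the pairing $\langle Tf, w\rangle_{X,X^\ast}$ by writing $w$ as $w\chi_A = \mathcal{L}^\ast h$ and integrating by parts twice. The boundary terms on $\partial \Omega$ vanish since both $v^f$ and $h$ are in $H^1_0(\Omega)$, which gives $\langle Tf,w\rangle_{X,X^\ast} = \langle \mathcal{L} v^f, h\rangle_{\Omega} = \langle f, h|_O\rangle_{\widetilde{H}^{-1}(O), H^1(O)}$ where the last identity uses $\supp f \subset \overline{O}$. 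This yields $T'w = h|_O$. The Hilbert space adjoint then follows by the standard identity $T^\ast = R_{H^1(O)} T' R_X$, and the norm equality $\|T^\ast w\|_{\widetilde{H}^{-1}(O)} = \|T'w\|_{H^1(O)}$ is immediate from the fact that the Riesz isomorphisms are isometric.

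\emph{Singular value decomposition.} With $T$ compact between Hilbert spaces, the operator $T^\ast T : \widetilde{H}^{-1}(O) \to \widetilde{H}^{-1}(O)$ is compact, self-adjoint and positive semi-definite, with kernel exactly $N(T)$. The spectral theorem for compact self-adjoint operators supplies an orthonormal system $(\varphi_j)_{j \in \N}$ of $N(T)^\perp$ and strictly positive eigenvalues $\sigma_j^2 > 0$ so that $T^\ast T \varphi_j = \sigma_j^2 \varphi_j$, giving the decomposition $\widetilde{H}^{-1}(O) = \overline{\operatorname{span}}(\varphi_j)_{j \in \N} \oplus N(T)$. Setting $\psi_j := \sigma_j^{-1} T\varphi_j$ defines an orthonormal sequence in $X$ with $T\varphi_j = \sigma_j \psi_j$; that $(\psi_j)$ is a basis of $X$ (and not merely a system) is where the density of the range of $T$ enters, since $\overline{\operatorname{span}}(\psi_j) = \overline{\mathrm{Range}(T)} = X$. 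This completes all four assertions.
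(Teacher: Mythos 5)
Your proposal is correct and follows essentially the same route as the paper: compactness via elliptic well-posedness plus Rellich, dense range via the $\mathcal{L}$-version of the density lemma, the adjoint identified through the dual problem \eqref{eq:adjoint_problem} by integration by parts, and the singular value decomposition from the spectral theorem applied to $T^{*}T$ with completeness of $(\psi_j)_{j\in\N}$ deduced from the dense range. The only cosmetic differences are that you argue compactness through the compact embedding $H^1(A)\hookrightarrow L^2(A)$ rather than a sequential extraction, and you obtain completeness of $(\psi_j)_{j\in\N}$ via $\overline{\operatorname{span}}(\psi_j)=\overline{\mathrm{Range}(T)}$ instead of the paper's orthogonality argument; both are equivalent.
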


\begin{proof}
For compactness, let $(f_j)_{j\in\N}$ be a bounded sequence in $\widetilde{H}^{-1}(O)$. Then $\Vert Tf_j \Vert_{H^1(A)}$ is bounded. Hence, there exists an $H^1(A)$-weakly converging subsequence of $(Tf_j)$ to some $\overline{w} \in H^1(A)$. By Rellich's theorem this subsequence converges strongly in $L^2(A)$. Weak convergence implies that $\overline{w}$ solves $\mathcal{L}\overline{w} = 0$ in the weak sense, proving that $\overline{w} \in X$ and that $T$ is compact. It has dense range by the qualitative Runge approximation (the proof for the operator $\mathcal{L}$ follows exactly as in Lemma \ref{lem:density_StoS}, just replace the operator $\nabla\cdot a \nabla$ by the slightly more general operator $\mathcal{L}$).

We verify that the operator $T'$ as defined above is the Banach space adjoint operator. Let $f \in \widetilde{H}^{-1}(O)$ and let $w \in X$. Then, on the one hand, by definition of the adjoint operator
\begin{align*}
(f, T'w)_{L^2(\Omega)} = (Tf, w)_{L^2(A)} = (v^f, w)_{L^2(A)},
\end{align*}
and, on the other hand,
\begin{align*}
(f, T'w)_{L^2(\Omega)} = (\mathcal{L}v^f, T'w)_{L^2(\Omega)} = (v^f, \mathcal{L}^*(T'w))_{L^2(\Omega)}  + (\p_{\nu}^a v^f, T' w)_{L^2(\partial \Omega)}.
\end{align*}

A solution for this, and hence a characterization for $T'w$, is given by a weak solution to the equation
\begin{equation*}
\begin{cases}
\begin{alignedat}{2}
\mathcal{L}^*(T'w) & = w \quad &&\text{in } \Omega,\\
T' w & = 0 \quad &&\text{on } \partial \Omega.
\end{alignedat}
\end{cases}
\end{equation*}
This proves the claim on the form of the Banach space adjoint.

The statements about the Hilbert space adjoint operator follow from general functional analysis.

As a consequence, the operator $T^*T: \widetilde{H}^{-1}(O) \to \widetilde{H}^{-1}(O)$ is a compact, self-adjoint, positive semi-definite operator. By the spectral theorem, there exists an orthonormal system (not necessarily basis) $(\varphi_j)_{j\in\N}$ of $\widetilde{H}^{-1}(O)$ and a sequence of positive eigenvalues $(\mu_j)_{j\in\N}$ such that
\begin{align*}
T^* T \varphi_j = \mu_j \varphi_j.
\end{align*}
Define $\sigma_j := \mu_j^{1/2}$ and $\psi_j := \sigma_j^{-1} T \varphi_j \in X$. Then the $(\psi_j)_{j\in\N}$ are orthonormal by definition. Moreover, by another application of the qualitative Runge approximation result they are dense in $X$. Indeed, let $w \in X$ be such that $(w,\psi_j)_{L^2(A)} = 0$ for all $j\in\N$. We extend the orthonormal system $(\varphi_j)_{j\in\N}$ by an orthonormal basis of the null space of $T$, $(n^T_j)_{j\in\N} \subset \widetilde{H}^{-1}(O)$, to get an orthonormal basis $(\varphi_j)_{j\in\N} \cup (n^T_j)_{j\in\N}$ of $\widetilde{H}^{-1}(O)$. By density of $\operatorname{span}\left\{ (\varphi_j)_{j\in\N} \cup (n^T_j)_{j\in\N} \right\}$ in $\widetilde{H}^{-1}(O)$, 
it follows that $(w,Tf)_{L^2(A)} = 0$ for all $f \in \widetilde{H}^{-1}(O)$. Since $T$ has dense range in $X$ this in turn implies that $(w,v)_{L^2(A)} = 0$ for all $v \in X$. Choosing $v = w$ then entails that $w=0$. Hence, the functions $(\psi_j)_{j\in\N}$ indeed form an orthonormal basis and the proof is completed.
\end{proof}

Now, we are in the position to state and prove the quantitative Runge approximation result. It corresponds to Theorem 1.3 in \cite{RS18a}. The proof is again very similar to the one given there.

\begin{prop}\label{prop:quantitative_Runge_approx}
Let $\Omega$, $A$, $O$, $\mathcal{L}$, $S_1(A)$ and $S_2(A)$ be defined as above. Let $A_+$ be a bounded Lipschitz domain with $A \Subset A_+ \Subset \Omega$ such that $\Omega \setminus \overline{A_+}$ is connected. There are $C, \mu \geq 1$ (depending on $\Omega$, $A$, $A_+$, $O$, $n$, $\theta_1$, $M$) such that for any $w_+ \in S_1(A_+)$ and $\varepsilon \in (0,1)$ there exists $f \in \widetilde{H}^{-1}(O)$ and an associated function $v_{\varepsilon}^f \in S_2(A_+)$ such that
\begin{align*}
\Vert \restr{w_+}{A} - \restr{v_{\varepsilon}^f}{A} \Vert_{L^2(A)} \leq \varepsilon \Vert w_+ \Vert_{H^1(A_+)}, \qquad \Vert f \Vert_{H^{-1}(\Omega)} \leq C \varepsilon^{-\mu} \Vert \restr{w_+}{A} \Vert_{L^2(A)}.
\end{align*}
\end{prop}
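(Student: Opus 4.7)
The plan is to follow the singular-value-truncation strategy of \cite{RS18a}, combined with the quantitative unique continuation tools already invoked in Section \ref{sec:quant_UCP}. The starting point is the singular system $(\sigma_j, \varphi_j, \psi_j)_{j \in \N}$ for the compact operator $T : \widetilde{H}^{-1}(O) \to X$ from Lemma \ref{lem:compactness_quantitative_Runge}. Given a target $w_+ \in S_1(A_+)$, I expand $w_+|_A = \sum_j \beta_j \psi_j$ in $X$ and, for a truncation level $N \in \N$ to be chosen, define
\begin{align*}
f_N := \sum_{j \leq N} \frac{\beta_j}{\sigma_j}\, \varphi_j \in \widetilde{H}^{-1}(O).
\end{align*}
By the orthonormality from Lemma \ref{lem:compactness_quantitative_Runge}, the associated solution $v^{f_N} \in S_2(A_+)$ satisfies
\begin{align*}
\bigl\|w_+|_A - v^{f_N}|_A\bigr\|_{L^2(A)}^2 = \sum_{j > N}|\beta_j|^2, \qquad \|f_N\|^2_{\widetilde{H}^{-1}(O)} = \sum_{j \leq N}\sigma_j^{-2}|\beta_j|^2,
\end{align*}
and the goal is to balance these contributions by an appropriate choice of $N = N(\varepsilon)$.

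The argument then relies on two quantitative ingredients. First, a quantitative decay of the coefficients $\beta_j$ exploiting the interior regularity of $w_+$: since $\mathcal{L} w_+ = 0$ in $A_+$ and $A \Subset A_+$, iterated Caccioppoli estimates together with the a-priori bounds on $a$ and $q$ yield $\|w_+\|_{H^k(A)} \leq C_k \|w_+\|_{H^1(A_+)}$ for every $k \in \N$. Combined with a counting argument on the basis $(\psi_j)_{j \in \N}$ (which are themselves solutions of $\mathcal{L}\psi_j = 0$ in $A$ and hence smooth in any $A' \Subset A$), this translates into an arbitrarily fast polynomial decay $\sum_{j > N}|\beta_j|^2 \leq C_k N^{-k}\|w_+\|_{H^1(A_+)}^2$.

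Second, I need a lower bound on the singular values $\sigma_j$. By Lemma \ref{lem:compactness_quantitative_Runge}, $\sigma_j = \|T'\psi_j\|_{H^1(O)}$, where $T'\psi_j = h_j|_O$ and $h_j \in H^1_0(\Omega)$ solves the adjoint problem \eqref{eq:adjoint_problem} with data $\psi_j\chi_A$. A quantitative unique continuation argument for the elliptic operator $\mathcal{L}^*$ in the connected open set $\Omega \setminus \overline{A}$ --- via Carleman/three-balls propagation iterated across a chain of balls joining $O$ to a neighbourhood of $\partial A$, in analogy to Propositions \ref{prop:bbucp}--\ref{prop:3ballsinequ} --- transfers smallness of $h_j|_O$ in $H^1(O)$ into smallness of the Cauchy data of $h_j$ on $\partial A$. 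Combined with an a-priori energy bound $\|h_j\|_{H^1(\Omega)} \leq C\|\psi_j\|_{L^2(A)} = C$ and an entropy/covering argument on $\spa\{\psi_j : j \leq N\}$ in the spirit of \cite{RS18a}, this yields a singular-value lower bound of the form $\sigma_j \geq e^{-C j^{\tilde\mu}}$ for some $\tilde\mu \in (0,1)$ uniform in the admissible class of coefficients.

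With these two ingredients, setting $N \sim |\log \varepsilon|^{1/\tilde\mu}$ balances the two sums and produces, after the continuous embedding $\widetilde{H}^{-1}(O) \hookrightarrow H^{-1}(\Omega)$, the desired error and cost estimates with an algebraic modulus $\varepsilon^{-\mu}$ for suitable $\mu > 0$. The main obstacle I anticipate is the singular-value lower bound: while each individual Carleman/three-balls step is standard, packaging them into an entropy-based lower bound for $\sigma_j$ that is uniform in the coefficients (and that captures the $\widetilde{H}^{-1}(O)$-topology on the source side rather than a plain $L^2(O)$-topology) requires careful tracking of the Carleman constants along the iteration, as well as a delicate duality between the coarse-grained test subspace and the cost functional.
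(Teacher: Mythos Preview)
Your proposal shares the singular-value framework with the paper but diverges at the crucial step, and the two ingredients you isolate --- polynomial decay of the coefficients $\beta_j$ and an exponential lower bound $\sigma_j \geq e^{-Cj^{\tilde\mu}}$ --- are both unnecessary and, as stated, not obviously provable. The basis $(\psi_j)_{j\in\N}$ is defined abstractly through the SVD of $T$; it is \emph{not} an eigenbasis of any differential operator on $A$, so there is no Weyl-type correspondence between the index $j$ and a Sobolev scale. Your ``counting argument'' for $\sum_{j>N}|\beta_j|^2 \leq C_k N^{-k}\|w_+\|_{H^1(A_+)}^2$ therefore lacks a mechanism, and the singular-value lower bound you flag as the main obstacle is essentially as hard as the proposition itself.

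The paper sidesteps both issues by truncating at a singular-value \emph{threshold} $\alpha>0$ rather than at an index: set $f = \sum_{\sigma_j \geq \alpha} \sigma_j^{-1}\beta_j\varphi_j$ and $r_\alpha = w - Tf = \sum_{\sigma_j<\alpha}\beta_j\psi_j$. The cost bound $\|f\|_{H^{-1}}\leq \alpha^{-1}\|w\|_{L^2(A)}$ is then immediate. For the error, one does \emph{not} estimate $\sum_{\sigma_j<\alpha}|\beta_j|^2$ directly. Instead, let $h_\alpha$ solve $\mathcal{L}^* h_\alpha = r_\alpha\chi_A$ in $\Omega$ with zero Dirichlet data; integration by parts on $A_+$ using $\mathcal{L}w_+=0$ gives
\[
\|r_\alpha\|_{L^2(A)}^2 = (w_+, \mathcal{L}^* h_\alpha)_{L^2(A_+)} \leq C\|w_+\|_{H^1(A_+)}\|h_\alpha\|_{H^1(\Gamma)},
\]
with $\Gamma$ a neighbourhood of $\partial A_+$. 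Now the SVD structure delivers, for free, $\|h_\alpha\|_{H^1(O)} = \|T' r_\alpha\|_{H^1(O)} = \|T^* r_\alpha\|_{\widetilde H^{-1}(O)} \leq \alpha\|r_\alpha\|_{L^2(A)}$, since $T^*\psi_j=\sigma_j\varphi_j$ and only modes with $\sigma_j<\alpha$ contribute. A single three-balls propagation from $O$ to $\Gamma$ then yields $\|h_\alpha\|_{H^1(\Gamma)}\leq C\alpha^t\|r_\alpha\|_{L^2(A)}$, whence $\|r_\alpha\|_{L^2(A)}\leq C\alpha^t\|w_+\|_{H^1(A_+)}$. Choosing $\alpha\sim\varepsilon^{1/t}$ finishes. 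The point is that the threshold truncation turns the needed smallness of $h_\alpha|_O$ into a tautology of the SVD, so no asymptotics on $\sigma_j$ or $\beta_j$ are ever required.
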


\begin{proof}
Let $X$, $T$, $T'$, $T^*$, $(\sigma_j)_{j\in\N}$, $(\varphi_j)_{j\in\N}$ and $(\psi_j)_{j\in\N}$ be the space, operators, singular values and bases as in Lemma \ref{lem:compactness_quantitative_Runge}.

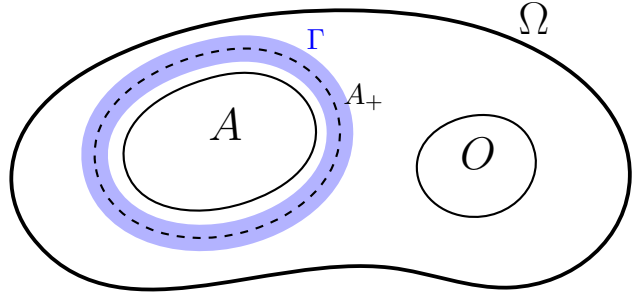
\begin{figure}
    	\begin{tikzpicture}[closed hobby, scale=0.35]
    	\draw[xshift = -3cm, yshift = -2cm, scale=1.5, line width = 0.05cm] plot coordinates {(1,1.5) (0.5,3) (3,6) (9,7.2) (14,6) (16,3) (13,0.2) (10,0.7)};
    	
    	\draw[thick] plot coordinates {(2,3) (4.5,6) (7,6.4) (9.2,4.3) (6.5,1.5)};
    	\draw[line width = 0.35cm, opacity = 0.3, blue] plot coordinates {(0.9,3) (4.5, 7) (7.2, 7.3) (10.1,4.3) (6.5,0.4)};
    	\draw[thick, dashed] plot coordinates {(0.9,3) (4.5, 7) (7.2, 7.3) (10.1,4.3) (6.5,0.4)};
	
    	\draw[thick] plot coordinates {(13,3) (14.6,4.7) (17.3, 3.8) (14.8,1)};
    	
    	\node at (17.4,8.5) {\huge{$\Omega$}};
    	\node[blue] at (9.2,7.7) {\large{$\Gamma$}};
    	\node at (5.8,4.5) {\huge{$A$}};
    	\node[dashed] at (11,5.5) {\large{$A_+$}};
    	\node at (15.3,3.4) {\huge{$O$}};
    	\end{tikzpicture}
\caption{Schematic set-up in the proof of the quantitative Runge approximation, Proposition \ref{prop:quantitative_Runge_approx}. Here, $A \Subset A_+ \Subset \Omega$ and $\Gamma$ is an open neighborhood of $\partial A_+$ with $\Gamma \Subset \Omega \setminus \overline{A}$.}
\label{fig:setting_proof_QRA}
\end{figure}

Take $w_+ \in S_1(A_+)$ and define $w := \restr{w_+}{A}$. Then $w \in S_1(A) \subset X$ and we write $w = \sum_{j=1}^\infty \beta_j \psi_j$. Define
\begin{align*}
f := \sum_{\sigma_j \geq \alpha} \frac{\beta_j}{\sigma_j} \varphi_j,
\end{align*}
where $\alpha > 0$ will be determined later. We will show that $v_{\varepsilon}^f := Tf$ satisfies the desired estimates. By orthonormality we have
\begin{align*}
\Vert f \Vert_{H^{-1}(\Omega)}^2 = \sum_{\sigma_j \geq \alpha} \frac{\beta_j^2}{\sigma_j^2} \leq \frac{1}{\alpha^2} \sum_{\sigma_j\geq \alpha} \beta_j^2 \leq \frac{1}{\alpha^2} \Vert w \Vert_{L^2(A)}^2.
\end{align*}
We set $r_\alpha := w-Tf = \sum_{\sigma_j < \alpha} \beta_j \psi_j$ and define $h_\alpha \in H^1(\Omega)$ as the solution to
\begin{equation*}
\begin{cases}
\begin{alignedat}{2}
\mathcal{L}^* h_\alpha &= r_\alpha \chi_A \quad &&\text{in } \Omega,\\
h_\alpha &= 0 \quad &&\text{on } \partial\Omega.
\end{alignedat}
\end{cases}
\end{equation*}
By orthogonality, integration by parts and trace estimates we infer
\begin{align}
\label{eq:error_bound1}
\begin{split}
\Vert Tf - w \Vert_{L^2(A)}^2 &= \Vert r_\alpha \Vert_{L^2(A)}^2 = (w, r_\alpha)_{L^2(A)} = (w_+, \mathcal{L}^*h_\alpha)_{L^2(A_+)}\\
&= (\mathcal{L}w_+, h_\alpha)_{L^2(A_+)} - (\partial_\nu^a w_+, h_\alpha)_{L^2(\partial A_+)} + (w_+, \partial_\nu^a h_\alpha)_{L^2(\partial A_+)}\\
&\leq \Vert \partial_\nu^a w_+ \Vert_{H^{-1/2}(\partial A_+)} \Vert h_\alpha \Vert_{H^{1/2}(\partial A_+)} + \Vert w_+ \Vert_{H^{1/2}(\partial A_+)} \Vert \partial_\nu^a h_\alpha \Vert_{H^{-1/2}(\partial A_+)}\\
&\leq C \Vert w_+ \Vert_{H^1(A_+)} \Vert h_\alpha \Vert_{H^1(\Gamma)},
\end{split}
\end{align}
where we have used that $\mathcal{L}w_+ = 0$ in $A_+$. Here, $\Gamma$ denotes an open neighborhood of $\partial A_+$ with $\Gamma \Subset \Omega \setminus \overline{A}$ (see Figure \ref{fig:setting_proof_QRA}).

Using the relation between $T^*$ and $T'$ and the fact that for the adjoint operator $T^*$ it holds that $T^*\psi_j = \sigma_j \varphi_j$ together with orthonormality, we derive 
\begin{equation}\label{eq:QRA_eq1}
\begin{aligned}
\Vert h_\alpha \Vert_{H^1(O)}^2 &= \Vert T'r_\alpha \Vert_{H^1(O)}^2 = \Vert T^* r_\alpha \Vert_{H^{-1}(\Omega)}^2 \leq \sum_{\sigma_j < \alpha} \beta_j^2 \Vert T^* \psi_j \Vert_{H^{-1}(\Omega)}^2 = \sum_{\sigma_j < \alpha} \sigma_j^2 \beta_j^2 \Vert \varphi_j \Vert_{H^{-1}(\Omega)}^2\\
&\leq \alpha^2 \Vert r_\alpha \Vert_{L^2(A)}^2.
\end{aligned}
\end{equation}
Now let $\Gamma_+$ be open, Lipschitz such that $\Gamma \Subset \Gamma_+ \subset \Omega \setminus \overline{A}$. By Caccioppoli's inequality and an iterative application of the three-balls-inequality (see e.g. Theorem 4.1 in \cite{ARRV09}) to propagate the smallness of $\Vert h_\alpha \Vert_{L^2(O)}$ from \eqref{eq:QRA_eq1} to $\Gamma_+$ we find that for some $t \in (0,1)$
\begin{equation}\label{eq:QRA_eq2}
\begin{aligned}
\Vert h_\alpha \Vert_{H^1(\Gamma)} \leq C \Vert h_\alpha \Vert_{L^2(\Gamma_+)} \leq C \Vert h_\alpha \Vert_{L^2(O)}^t \Vert h_\alpha \Vert_{L^2(\Omega)}^{1-t} \leq C \alpha^t \Vert r_\alpha \Vert_{L^2(A)}.
\end{aligned}
\end{equation}
Here, for the last inequality we used the elliptic a-priori estimate for $h_\alpha$. Combining \eqref{eq:error_bound1} and \eqref{eq:QRA_eq2}, we arrive at
\begin{align*}
\Vert Tf - w \Vert_{L^2(A)}^2 \leq C \Vert w_+ \Vert_{H^1(A_+)} \Vert r_\alpha \Vert_{L^2(A)} \alpha^t.
\end{align*}
Dividing by $\Vert r_\alpha \Vert_{L^2(A)} = \Vert Tf - w \Vert_{L^2(A)}$ and choosing $\alpha = \left(\varepsilon/C \right)^{\frac{1}{t}}$ then finishes the proof.
\end{proof}

\subsubsection{Proof of Theorem \ref{thm:stability_local_StoSCalderon}}\label{sec:stability_local_StoSCalderon_Schrödinger}

We will reduce the source-to-solution data to a Dirichlet-to-Neumann map Calderón problem, for which stability results are known. In that way we complement the uniqueness proof in Section \ref{sec:uniqueness__local_StoSCalderon} with a stability result.

We shortly recall and motivate the weak definition of the local source-to-solution operator $\tilde{L}_{1,O}: \widetilde{H}^{-1}(O) \to H^1(O)$ and provide some Alessandrini-type identities.

For $f \in \widetilde{H}^{-1}(O)$ let $v_j^f \in H_0^1(\Omega)$ be the unique solution to
\begin{equation}\label{eq:strong_StoS_equation}
\begin{cases}
\begin{alignedat}{2}
- \nabla \cdot a_j \nabla v_j^f &= f \quad &&\text{in } \Omega,\\
v_j^f &= 0 \quad &&\text{on } \partial\Omega.
\end{alignedat}
\end{cases}
\end{equation}
Here the metrics $a_j$, $j\in \{1,2\}$, are as in the formulation of the theorem.
In particular, in the weak formulation, for any $\phi \in H_0^1(\Omega)$ the function $v_j^f$ satisfies
\begin{align*}
\int_{\Omega} \nabla v_j^f \cdot a_j \nabla \phi dx = \int_{\Omega} f \phi dx.
\end{align*}
Formally, the associated source-to-solution operator is defined by
\begin{align}\label{eq:def_StoS_operator_conductivity}
\tilde{L}_{1,O}^j: \widetilde{H}^{-1}(O) \to H^1(O), \qquad f \mapsto \restr{v_j^f}{O},
\end{align}
where $v_j^f$ is a weak solution to \eqref{eq:strong_StoS_equation} with metric $a_j$ and source term $f$. In weak terms, we define $\tilde{L}_{1,O}^j$ for $f_1, f_2 \in \widetilde{H}^{-1}(O)$ by
\begin{align*}
\left( \tilde{L}_{1,O}^j f_1, f_2 \right)_{L^2(O)} := \int_\Omega \nabla v_j^f \cdot \bar{a} \nabla v_{\bar{a}}^{f_2} dx,
\end{align*}
where $v_j^f \in H_0^1(\Omega)$ is a weak solution to \eqref{eq:strong_StoS_equation} with metric $a_j$ and source term $f_1$ and $v_{\bar{a}}^{f_2} \in H_0^1(\Omega)$ is the unique weak solution to \eqref{eq:strong_StoS_equation} with metric $\bar{a}$ and source term $f_2$. Note that this weak definition is independent of the choice of $\bar{a}$ by the weak equation for $v_{\bar{a}}^{f_2}$ and since $v_j^{f_1} \in H_0^1(\Omega)$ is a suitable test function.

By standard arguments, we derive the Alessandrini-type identity
\begin{equation}\label{eq:Alessandrini_identity_StoS_conductivity}
\begin{aligned}
&\left( (\tilde{L}_{1,O}^1 - \tilde{L}_{1,O}^2) f_1, \ f_2 \right)_{L^2(O)} = \left( \tilde{L}_{1,O}^1 f_1, \ f_2 \right)_{L^2(O)} - \left( f_1, \ \tilde{L}_{1,O}^2 f_2 \right)_{L^2(O)}\\
&\qquad = \int_\Omega \nabla v_1^{f_1} \cdot a_2 \nabla v_2^{f_2} dx - \int_\Omega \nabla v_1^{f_1} \cdot a_1 \nabla v_2^{f_2} dx\\
&\qquad = \int_\Omega \nabla v_1^{f_1} \cdot (a_2 - a_1) \nabla v_2^{f_2} dx.
\end{aligned}
\end{equation}
\\

Moreover, let $A \subset \R^n$ be open, bounded, Lipschitz, $g \in H^{\frac{1}{2}}(\partial A)$ and consider the following conductivity equation for $w_j^g \in H^1(A)$
\begin{equation}\label{eq:strong_DtN_equation}
\begin{cases}
\begin{alignedat}{2}
- \nabla \cdot a_j \nabla w_j^g &= 0 \quad &&\text{in } A,\\
w_j^g &= g \quad &&\text{on } \partial A.
\end{alignedat}
\end{cases}
\end{equation}
The associated Dirichlet-to-Neumann map is defined by
\begin{align}\label{eq:def_DN_map_conductivity}
\Lambda_{a_j,A}: H^{\frac{1}{2}}(\partial A) \to H^{-\frac{1}{2}}(\partial A), \quad g \mapsto \partial_\nu^{a_j} \restr{w_j^g}{\partial A},
\end{align}
where $w_j^g \in H^1(A)$ is the unique weak solution to \eqref{eq:strong_DtN_equation}. In weak terms, for $g_1, g_2 \in H^{\frac{1}{2}}(\partial A)$ and $j \in \{1,2\}$, the operator $\Lambda_{a_j,A}$ is defined by
\begin{align*}
\left( \Lambda_{a_j,A} g_1, \ g_2 \right)_{L^2(\partial A)} := \int_A \nabla w_j^{g_1} \cdot a_j \nabla e^{g_2} dx,
\end{align*}
where $w_j^{g_1}$ is a solution to \eqref{eq:strong_DtN_equation} with metric $a_j$ and Dirichlet boundary data $g_1$, and $e^{g_2} \in H^1(\Omega)$ is any extension of $g_2$. Note that the definition is independent of the choice of the extension.
By similar considerations as above, the following Alessandrini-type identity holds
\begin{align}\label{eq:Alessandrini_identity_DN_conductivity}
\left( (\Lambda_{a_1,A}- \Lambda_{a_2,A}) g_1,\  g_2 \right)_{L^2(\partial A)} = \int_A \nabla w_1^{g_1} \cdot (a_1 - a_2) \nabla w_2^{g_2} dx.
\end{align}
The two Alessandrini identities, \eqref{eq:Alessandrini_identity_StoS_conductivity} and \eqref{eq:Alessandrini_identity_DN_conductivity}, show striking similarities, which we are going to exploit for the next proof.\\

Building on this Alessandrini identity, we present the proof of Proposition \ref{prop:reduction_StoS_DtN} and show that stability estimates for the source-to-solution Calderón problem can be reduced to stability estimates for the Dirichlet-to-Neumann map Calderón problem and vice versa.

\begin{proof}[Proof of Proposition \ref{prop:reduction_StoS_DtN}]
For abbreviation we will sometimes write $\Vert \tilde{L}_{1,O}^1 - \tilde{L}_{1,O}^2 \Vert_{\text{Op}}$ and $\Vert \Lambda_{a_1,A_+} - \Lambda_{a_2,A_+} \Vert_{\text{Op}}$ to denote $\Vert \tilde{L}_{1,O}^1 - \tilde{L}_{1,O}^2 \Vert_{\widetilde{H}^{-1}(O) \to H^1(O)}$ and $\Vert \Lambda_{a_1,A_+} - \Lambda_{a_2,A_+} \Vert_{H^{\frac{1}{2}}(\partial A_+) \to H^{-\frac{1}{2}}(\partial A_+)}$, respectively.

Since $a_1 = a_2$ in $\Omega \setminus A$, the Alessandrini-type identities \eqref{eq:Alessandrini_identity_StoS_conductivity} and \eqref{eq:Alessandrini_identity_DN_conductivity} imply that 
\begin{align*}
\left( (\tilde{L}_{1,O}^1 - \tilde{L}_{1,O}^2) f_1, \ f_2 \right)_{L^2(O)} &=
\int_\Omega \nabla v_1^{f_1} \cdot (a_2 - a_1) \nabla v_2^{f_2} dx = \int_{A_+} \nabla v_1^{f_1} \cdot (a_2 - a_1) \nabla v_2^{f_2} dx\\
&= - \left( (\Lambda_{a_1,A_+} - \Lambda_{a_2,A_+}) (\restr{v_1^{f_1}}{\partial A_+}), \ \restr{v_2^{f_2}}{\partial A_+} \right)_{L^2(\partial A_+)}.
\end{align*}
Then, we immediately infer the bound
\begin{align*}
&\Vert \tilde{L}_{1,O}^1 - \tilde{L}_{1,O}^2 \Vert_{\widetilde{H}^{-1}(O) \to H^1(O)}
= \sup_{\substack{f_1,f_2 \in \widetilde{H}^{-1}(O) \\ \Vert f_1 \Vert_{H^{-1}(\Omega)} = \Vert f_2 \Vert_{H^{-1}(\Omega)} = 1}} \left( (\tilde{L}_{1,O}^1 - \tilde{L}_{1,O}^2) f_1, \ f_2 \right)_{L^2(O)}\\
&\qquad = \sup_{\substack{f_1,f_2 \in \widetilde{H}^{-1}(O) \\ \Vert f_1 \Vert_{H^{-1}(\Omega)} = \Vert f_2 \Vert_{H^{-1}(\Omega)} = 1}} - \left( (\Lambda_{a_1,A_+} - \Lambda_{a_2,A_+}) (\restr{v_1^{f_1}}{\partial A_+}), \ \restr{v_2^{f_2}}{\partial A_+} \right)_{L^2(\partial A_+)}\\
&\qquad \leq \sup_{\substack{f_1,f_2 \in \widetilde{H}^{-1}(O) \\ \Vert f_1 \Vert_{H^{-1}(\Omega)} = \Vert f_2 \Vert_{H^{-1}(\Omega)} = 1}} \| \Lambda_{a_1,A_+} - \Lambda_{a_2,A_+}\|_{\text{Op}} \| \restr{v_1^{f_1}}{\partial A_+} \|_{H^{1/2}(\partial A_+)} \| \restr{v_2^{f_2}}{\partial A_+} \|_{H^{1/2}(\partial A_+)}\\
&\qquad \leq C\| \Lambda_{a_1,A_+} - \Lambda_{a_2,A_+}\|_{H^{\frac{1}{2}}(\partial A_+) \to H^{-\frac{1}{2}}(\partial A_+)}, 
\end{align*}
where in the last step, we used the a-priori estimates for $v_1^{f_1}$ and $v_2^{f_2}$, i.e. $\Vert \restr{v_j^{f_j}}{\partial A_+} \Vert_{H^{1/2}(\partial A_+)} \leq C \Vert v_j^{f_j} \Vert_{H^1(A_+)} \leq C \Vert f_j \Vert_{H^{-1}(\Omega)}$.

For the second inequality let $A_+' \subset \R^n$ be open, bounded, Lipschitz such that $A \Subset A_+' \Subset A_+$. We first use the Alessandrini identity \eqref{eq:Alessandrini_identity_DN_conductivity} and that $a_1 = a_2$ in $\Omega \setminus A$ to get
\begin{align}\label{eq:Alessandrini_observation}
\left( (\Lambda_{a_1,A_+} - \Lambda_{a_2,A_+}) g_1, \ g_2 \right)_{L^2(\partial A_+)} = \int_{A_+} \nabla w_1^{g_1} \cdot (a_1 - a_2) \nabla w_2^{g_2} dx = \int_A \nabla w_1^{g_1} \cdot (a_1 - a_2) \nabla w_2^{g_2} dx.
\end{align}
By the quantitative Runge approximation result, Proposition \ref{prop:quantitative_Runge_approx}, for $w_j^{g_j} \in H^1(A_+)$ there exists $f_j \in \widetilde{H}^{-1}(O)$ and $v_{j,\varepsilon}^{f_j} \in H_0^1(\Omega)$ solving \eqref{eq:strong_StoS_equation} with metric $a_j$ and source term $f_j$ such that
\begin{align*}
\Vert \restr{w_j^{g_j}}{A_+'} - \restr{v_{j,\varepsilon}^{f_j}}{A_+'} \Vert_{L^2(A_+')} \leq \varepsilon \Vert w_j^{g_j} \Vert_{H^1(A_+)}, \qquad \Vert f_j \Vert_{H^{-1}(\Omega)} \leq C \varepsilon^{-\mu} \Vert w_j^{g_j} \Vert_{L^2(A_+')}
\end{align*}
for some constants $C, \mu >0$. Since $-\nabla \cdot a_j \nabla (w_j^{g_j} - v_{j,\varepsilon}^{f_j}) = 0$ in $A_+$, we apply Caccioppoli's inequality to infer
\begin{align*}
\Vert \restr{w_j^{g_j}}{A} - \restr{v_{j,\varepsilon}^{f_j}}{A} \Vert_{H^1(A)} \leq C \Vert \restr{w_j^{g_j}}{A_+'} - \restr{v_{j,\varepsilon}^{f_j}}{A_+'} \Vert_{L^2(A_+')} \leq C \varepsilon \Vert w_j^{g_j} \Vert_{H^1(A_+)}.
\end{align*}
Using \eqref{eq:Alessandrini_observation} and the above approximation result we are able to infer
\begin{align*}
&\vert \left( (\Lambda_{a_1,A_+} - \Lambda_{a_2,A_+}) g_1, \ g_2 \right)_{L^2(\partial A_+)} \vert = \vert \left( \nabla w_1^{g_1}, \ (a_1 - a_2) \nabla w_2^{g_2} \right)_{L^2(A)} \vert\\
&\qquad \leq \vert \left( \nabla v_{1,\varepsilon}^{f_1}, \ (a_1 - a_2) \nabla v_{2,\varepsilon}^{f_2} \right)_{L^2(A)} \vert + C \varepsilon \Vert w_1^{g_1} \Vert_{H^1(A_+)} \Vert w_2^{g_2} \Vert_{H^1(A_+)}\\
&\qquad \leq \vert \left( \nabla v_{1,\varepsilon}^{f_1}, \ (a_1 - a_2) \nabla v_{2,\varepsilon}^{f_2} \right)_{L^2(\Omega)} \vert + C \varepsilon \Vert g_1 \Vert_{H^{\frac{1}{2}}(\partial A_+)} \Vert g_2 \Vert_{H^{\frac{1}{2}}(\partial A_+)}\\
&\qquad \leq \vert \left( (\tilde{L}_{1,O}^1 -\tilde{L}_{1,O}^2) f_1 , \ f_2 \right)_{L^2(O)} \vert + C \varepsilon \Vert g_1 \Vert_{H^{\frac{1}{2}}(\partial A_+)} \Vert g_2 \Vert_{H^{\frac{1}{2}}(\partial A_+)}\\
&\qquad \leq \Vert \tilde{L}_{1,O}^1 -\tilde{L}_{1,O}^2 \Vert_{\text{Op}} \Vert f_1 \Vert_{H^{-1}(\Omega)} \Vert f_2 \Vert_{H^{-1}(\Omega)} + C \varepsilon \Vert g_1 \Vert_{H^{\frac{1}{2}}(\partial A_+)} \Vert g_2 \Vert_{H^{\frac{1}{2}}(\partial A_+)}\\
&\qquad \leq C \Vert \tilde{L}_{1,O}^1 -\tilde{L}_{1,O}^2 \Vert_{\text{Op}} \varepsilon^{-2\mu} \Vert w_1^{g_1} \Vert_{L^2(A_+')} \Vert w_2^{g_2} \Vert_{L^2(A_+')} + C \varepsilon \Vert g_1 \Vert_{H^{\frac{1}{2}}(\partial A_+)} \Vert g_2 \Vert_{H^{\frac{1}{2}}(\partial A_+)}\\
&\qquad \leq C \Vert \tilde{L}_{1,O}^1 -\tilde{L}_{1,O}^2 \Vert_{\text{Op}} \varepsilon^{-2\mu} \Vert g_1 \Vert_{H^{\frac{1}{2}}(\partial A_+)} \Vert g_2 \Vert_{H^{\frac{1}{2}}(\partial A_+)} + C \varepsilon \Vert g_1 \Vert_{H^{\frac{1}{2}}(\partial A_+)} \Vert g_2 \Vert_{H^{\frac{1}{2}}(\partial A_+)},
\end{align*}
where we used the a-priori estimates for $w_j^{g_j}$ for several inequalities and the Alessandrini identity \eqref{eq:Alessandrini_identity_StoS_conductivity} for the fourth inequality. We optimize the right hand side in terms of $\Vert \tilde{L}_{1,O}^1 - \tilde{L}_{1,O}^2 \Vert_{\text{Op}}$ by choosing $\varepsilon = \Vert \tilde{L}_{1,O}^1 - \tilde{L}_{1,O}^2 \Vert_{\text{Op}}^{\frac{1}{1+2\mu}}$. Then taking the supremum in $\Vert g_1 \Vert_{H^{\frac{1}{2}}(\partial A_+)}=1$ and $\Vert g_2 \Vert_{H^{\frac{1}{2}}(\partial A_+)}=1$ yields
\begin{align*}
\Vert \Lambda_{a_1,A_+} - \Lambda_{a_2,A_+} \Vert_{H^{\frac{1}{2}}(\partial A_+) \to H^{-\frac{1}{2}}(\partial A_+)} \leq C \Vert \tilde{L}_{1,O}^1 - \tilde{L}_{1,O}^2 \Vert_{\widetilde{H}^{-1}(O) \to H^1(O)}^{\frac{1}{1+2\mu}}.
\end{align*}
This finishes the proof.
\end{proof}

With Proposition \ref{prop:reduction_StoS_DtN} at hand, we can now provide the proof of Theorem \ref{thm:stability_local_StoSCalderon} by combining this result with known stability estimates for the Dirichlet-to-Neumann map Calderón problem.

\begin{proof}[Proof of Theorem \ref{thm:stability_local_StoSCalderon}]
Let $A_+$ be Lipschitz and such that $A \Subset A_+ \Subset \Omega$, $\Omega \setminus \overline{A_+}$ is connected and $O \Subset \Omega \setminus \overline{A_+}$. It is well known, that under the given assumptions (see \cite{A88,Salo_lecturenotes} or \cite{Uhlmann09}) it holds that
\begin{align*}
\Vert \gamma_1 - \gamma_2 \Vert_{L^\infty(A_+)} \leq C \vert \log( \Vert \Lambda_{\gamma_1,A_+} - \Lambda_{\gamma_2,A_+} \Vert_{H^{\frac{1}{2}}(\partial A_+) \to H^{-\frac{1}{2}}(\partial A_+)} ) \vert^{-\sigma}.
\end{align*}
With this at hand, it is an immediate consequence of the fact that $\gamma_1 = \gamma_2$ in $\Omega \setminus A$ and Proposition \ref{prop:reduction_StoS_DtN} that
\begin{align*}
\Vert \gamma_1 - \gamma_2 \Vert_{L^\infty(\Omega)} &= \Vert \gamma_1 - \gamma_2 \Vert_{L^\infty(A_+)} \leq C \vert \log( \Vert \Lambda_{\gamma_1,A_+} - \Lambda_{\gamma_2,A_+} \Vert_{H^{\frac{1}{2}}(\partial A_+) \to H^{-\frac{1}{2}}(\partial A_+)} ) \vert^{-\sigma}\\
&\leq C \vert \log(\Vert \tilde{L}_{1,O}^1 - \tilde{L}_{1,O}^2 \Vert_{\widetilde{H}^{-1}(O) \to H^1(O)}) \vert^{-\sigma},
\end{align*}
which finishes the proof.
\end{proof}

\section{Remarks on optimality}\label{sec:instability}

Since the stability in the reconstruction problem for metrics in the nonlocal setting in the absence of a Liouville transform has not been discussed in the literature, we briefly comment on a lower bound for it in the spectral setting. For the setting of the condutivity equation which can be reduced to the fractional Schrödinger equation and a substantially more restrictive class of admissible geometries we refer to \cite{CRTZ24}. In our instability discussion, we do not consider the optimality of the reconstruction problem for the local source-to-solution map from the nonlocal one. Instead, we directly consider lower bounds for the reconstruction of (possibly) anisotropic metrics in the nonlocal problem from source-to-solution data and prove that this cannot be better than logarithmic. However, it is not clear whether this negative or our above positive results (e.g. from Corollary \ref{cor:spectral_stability}) are sharp, i.e., whether the modulus is not really double logarithmic rather than logarithmic.

Let us now assume that $\Omega \subset \R^n$ and $A \Subset \Omega$ are open, bounded, Lipschitz and connected such that $\Omega \setminus \overline{A}$ is connected. Let $O \Subset \Omega \setminus \overline{A}$ be open and Lipschitz. We consider uniformly elliptic and symmetric matrices $a \in C^1(\Omega, \R^{n \times n}_{sym})$ with $a(x) = \Id_{n \times n}$ for $x\in O_+$ where $O_+$ is an open, Lipschitz set such that $O \Subset O_+ \Subset \Omega \setminus \overline{A}$. In this framework the Caffarelli-Silvestre extension (see the discussion in Section \ref{sec:Intro_Spectral_StoSCalderon}) is given by
\begin{equation*}
\begin{cases}
\begin{alignedat}{2}
- \nabla \cdot x_{n+1}^{1-2s} \tilde{a} \nabla \tilde{u} &= 0 \quad &&\text{in } \Omega \times \R_+,\\
\tilde{u} &= 0 \quad &&\text{on } \partial\Omega \times \R_+,\\
-\bar{c}_s \lim_{x_{n+1}\to0} x_{n+1}^{1-2s} \partial_{n+1} \tilde{u} &= f \quad &&\text{on } \Omega \times \{0\},
\end{alignedat}
\end{cases}
\end{equation*}  
and the source-to-solution measurements then correspond to the Neumann-to-Dirichlet measurements for this problem:
\begin{align}\label{eq:source_to_solution_map_CSextension}
\tilde{L}_{s,O}^a: \widetilde{\mathcal{H}}^{-s}(O) \to \mathcal{H}^s(O), \qquad f \mapsto \restr{\tilde{u}(\cdot,0)}{O}.
\end{align}
Here $\tilde{a} = \begin{pmatrix} a & 0 \\ 0 & 1 \end{pmatrix}$.
In this setting, we will be able to apply the techniques of \cite{M01,KRS21} or \cite{BCR24} to show that the modulus of stability cannot be better than logarithmic. More precisely, the proof of this result will be a combination of the proof of Theorem 1.2 in \cite{BCR24} and a cut-off argument.

At this point we introduce some additional notation (c.f. Section 4 in \cite{BCR24}). For $D \subset \Omega$ open, Lipschitz and $R>0$ we denote $Q_{D,R} := D \times (0,R) \subset \R^{n+1}_+$ and $\Gamma_{D,R} := D \times [0,R) \subset \overline{\R^{n+1}_+}$. Additionally, we define by a slight abuse of notation
\begin{align*}
H_0^1(\Gamma_{D,R}, x_{n+1}^{1-2s}) := \{ \tilde{u} \in H^1(Q_{D,R}, x_{n+1}^{1-2s}): \ \tilde{u} = 0 \text{ on } (\partial D \times (0,R)) \cup (D\times\{R\}) \}.
\end{align*}
In the end, for us, $D \subset \Omega$ will be of the form $D = \Omega \setminus \overline{O}$.

Moreover, at some points in this section we write $\nabla'$ to denote the gradient with respect to the tangential variables in order to highlight the difference between the tangential and normal variables.

\subsection{Preliminary results}

Since it is an essential concept, in this section we first recall the definition and some properties of the entropy numbers $e_k(T)$ for a bounded linear operator $T$. 

\begin{defi}
Let $X$, $Y$ be Banach spaces and let $T: X \to Y$ be a bounded linear operator. We define for any $k\in\N$ the $k$-th entropy number $e_k(T)$ by
\begin{align*}
e_k(T) := \inf \left\{ \varepsilon>0: T(\overline{B}_1^X) \subset \bigcup_{j=1}^{2^{k-1}} (y_j + \varepsilon \overline{B}_1^Y) \text{ for some } y_1, \dots, y_{2^{k-1}} \in Y \right\}.
\end{align*}
\end{defi}

Here, $\overline{B}_r^X$ denotes the closed ball of radius $r$ in $X$.

\begin{lem}\label{lem:entropy_numbers_properties}
Let $X$, $Y$, $Z$ be Banach spaces and let $T,\overline{T}: X \to Y$, $S:Y \to Z$ be bounded linear operators. The following properties hold:
\begin{enumerate}[(i)]
\item (Lemma 3.8(iii) in \cite{KRS21}) For all $j,k \geq 1$ one has
\begin{align*}
e_{j+k-1}(T+\overline{T}) \leq e_j(T) + e_k(\overline{T}).
\end{align*}
\item (Lemma 3.8(iv) in \cite{KRS21}) $T$ is compact if and only if $e_k(T) \to 0$ as $k \to \infty$.
\item (see p.1089 in \cite{DS63}) Assume that $T$ and $S$ are compact and let $\sigma_k(T)$ denote the $k$-th singular value associated with the operator $T$. For all $j,k \geq 1$ one has
\begin{align*}
\sigma_{j+k-1} (S \circ T) \leq \sigma_j(S) \sigma_k(T).
\end{align*}
\end{enumerate}
\end{lem}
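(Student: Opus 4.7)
The plan is to verify each of the three properties using standard entropy number and $s$-number arguments; none of these statements is new, so I would essentially reproduce the classical proofs.

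For part (i), I would start from coverings that nearly achieve the entropy numbers: given $\varepsilon_1 > e_j(T)$ and $\varepsilon_2 > e_k(\overline{T})$, pick centers $y_1,\dots,y_{2^{j-1}} \in Y$ and $z_1,\dots,z_{2^{k-1}} \in Y$ such that
\begin{align*}
T(\overline{B}_1^X) \subset \bigcup_{i=1}^{2^{j-1}}(y_i+\varepsilon_1\overline{B}_1^Y), \qquad \overline{T}(\overline{B}_1^X) \subset \bigcup_{l=1}^{2^{k-1}}(z_l+\varepsilon_2\overline{B}_1^Y).
\end{align*}
Then for any $x \in \overline{B}_1^X$ there exist $i,l$ with $\|Tx-y_i\|_Y \leq \varepsilon_1$ and $\|\overline{T}x-z_l\|_Y \leq \varepsilon_2$, so $\|(T+\overline{T})x - (y_i+z_l)\|_Y \leq \varepsilon_1 + \varepsilon_2$. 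The $2^{j-1}\cdot 2^{k-1} = 2^{(j+k-1)-1}$ points $y_i+z_l$ thus form an $(\varepsilon_1+\varepsilon_2)$-cover, and taking the infimum over $\varepsilon_1,\varepsilon_2$ yields the claim.

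For part (ii), the implication ``$T$ compact $\Rightarrow e_k(T) \to 0$'' follows because a compact operator maps the unit ball to a precompact, hence totally bounded, set; hence for every $\varepsilon>0$ one has a finite $\varepsilon$-net with some number of elements $\leq 2^{k-1}$ for $k$ large, so $e_k(T) \leq \varepsilon$ eventually. Conversely, if $e_k(T)\to 0$, then for every $\varepsilon>0$ there is a finite $\varepsilon$-net covering $T(\overline{B}_1^X)$, so this set is totally bounded in the Banach space $Y$ and therefore relatively compact, proving compactness.

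For part (iii), I would use the approximation-number characterization of singular values, $\sigma_j(T) = \inf\{\|T-R\|: \mathrm{rank}(R) < j\}$. For any $\varepsilon>0$, choose finite-rank operators $R_1,R_2$ with $\mathrm{rank}(R_1) < j$, $\mathrm{rank}(R_2) < k$, $\|S-R_1\| \leq \sigma_j(S)+\varepsilon$ and $\|T-R_2\| \leq \sigma_k(T)+\varepsilon$. Write
\begin{align*}
S\circ T = \bigl(S\circ R_2 + R_1\circ T - R_1\circ R_2\bigr) + (S-R_1)\circ (T-R_2),
\end{align*}
and observe that the bracketed operator has rank at most $\mathrm{rank}(R_1)+\mathrm{rank}(R_2) \leq (j-1)+(k-1) = j+k-2 < j+k-1$. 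Hence
\begin{align*}
\sigma_{j+k-1}(S\circ T) \leq \|(S-R_1)\circ(T-R_2)\| \leq (\sigma_j(S)+\varepsilon)(\sigma_k(T)+\varepsilon),
\end{align*}
and letting $\varepsilon \to 0$ finishes the proof. No real obstacle arises here: everything reduces to combining finite-rank approximations and covering arguments, and the assertions are precisely those cited from \cite{KRS21} and \cite{DS63}.
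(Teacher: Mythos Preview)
Your proofs are correct and are exactly the standard arguments one finds in the cited references; the paper itself does not give a proof of this lemma but simply quotes the results from \cite{KRS21} and \cite{DS63}. One small remark on part (iii): you invoke the identity $\sigma_j(T)=\inf\{\|T-R\|:\mathrm{rank}(R)<j\}$, which is the approximation-number characterization and coincides with the singular values only when the underlying spaces are Hilbert spaces---since singular values are not defined for general Banach spaces, this is implicitly the setting of (iii) (and indeed is how the lemma is applied later in the paper), so your argument is fine.
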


The following result is essentially taken from \cite{KRS21}. However, since at one point in our argument we need to be more precise about the change of the constants for one of the implications, we specify their dependence.

\begin{lem}[Lemma 3.9 in \cite{KRS21}]\label{lem:entropy_numbers_and_singular_values}
Let $X$, $Y$ be Hilbert spaces and let $T: X \to Y$ be compact. Let $\sigma_k(T)$ denote the singular values of $T$. For $\mu>0$ it holds
\begin{align*}
\sigma_k(T) \leq C e^{-ck^\mu} \text{ for some } C,c>0 \quad \Longleftrightarrow \quad e_k(T) \leq \tilde{C} e^{-\tilde{c}k^{\frac{\mu}{1+\mu}}} \text{ for some } \tilde{C},\tilde{c}>0.
\end{align*}
Moreover, if $L>0$ then
\begin{align*}
\sigma_k(T) \leq C e^{-cLk^\mu} \text{ for some } C,c>0 \quad \Longrightarrow \quad e_k(T) \leq \tilde{C} e^{-\tilde{c}L^{\frac{1}{\mu+1}} k^{\frac{\mu}{1+\mu}}} \text{ for some } \tilde{C},\tilde{c}>0.
\end{align*}
In this second statement, the constants $\tilde{C},\tilde{c}$ depend on $\mu, C, c$ but are independent of $L$.
\end{lem}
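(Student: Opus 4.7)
The qualitative equivalence between singular value and entropy number decay is precisely the content of Lemma 3.9 in \cite{KRS21}, so the only new point is to track the dependence on the parameter $L$ in the second, quantitative implication. The plan is therefore to revisit the standard SVD-truncation proof of the implication ``singular value decay $\Rightarrow$ entropy number decay'' and make every constant explicit in $L$.

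The starting point is the singular value decomposition of the compact operator $T\colon X\to Y$: there exist orthonormal systems $(x_j)\subset X$, $(y_j)\subset Y$ and singular values $\sigma_j(T)\searrow 0$ with $Tx = \sum_j \sigma_j(T) (x,x_j)_X y_j$. For each $N\in\N$ we split $T=T_N+R_N$ where $T_N$ is the rank-$N$ truncation keeping the top $N$ singular values. Then $\|R_N\|\leq \sigma_{N+1}(T)$, and the subadditivity of entropy numbers (Lemma~\ref{lem:entropy_numbers_properties}(i)) yields
\begin{equation*}
e_k(T)\;\leq\; e_k(T_N)+\|R_N\|\;\leq\; e_k(T_N)+\sigma_{N+1}(T).
\end{equation*}
Since $T_N$ has rank at most $N$ and norm at most $\sigma_1(T)\leq C$, the classical estimate for entropy numbers of finite-rank operators between Hilbert spaces (Schütt's theorem, see e.g.\ the references in \cite{KRS21}) gives $e_k(T_N)\leq C'\, 2^{-(k-1)/N}$ for all $k\geq 1$, with $C'$ independent of $N$ and $L$. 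Inserting the hypothesis $\sigma_j(T)\leq Ce^{-cLj^\mu}$, we obtain
\begin{equation*}
e_k(T)\;\leq\; C'\,2^{-(k-1)/N}+C\,e^{-cL(N+1)^\mu}.
\end{equation*}

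The key remaining step is to choose $N=N(k)$ to balance the two right-hand contributions. Setting $N:=\lceil (k/(c L))^{1/(\mu+1)}\rceil$ (or any comparable choice), the first exponential becomes $\leq \exp(-\tilde c_1 (cL)^{1/(\mu+1)}k^{\mu/(\mu+1)})$ while the second becomes $\leq \exp(-\tilde c_2 L^{1/(\mu+1)}k^{\mu/(\mu+1)})$, since
\begin{equation*}
L N^\mu \;\sim\; L\cdot\Bigl(\tfrac{k}{cL}\Bigr)^{\mu/(\mu+1)}\;=\;c^{-\mu/(\mu+1)}\,L^{1/(\mu+1)}\,k^{\mu/(\mu+1)}.
\end{equation*}
Choosing $\tilde c>0$ as the smaller of the two exponential rates and $\tilde C>0$ accordingly yields the claimed bound
\begin{equation*}
e_k(T)\;\leq\; \tilde C\, \exp\bigl(-\tilde c\, L^{1/(\mu+1)} k^{\mu/(\mu+1)}\bigr),
\end{equation*}
with $\tilde C,\tilde c$ depending only on $\mu,C,c$ and the absolute constant $C'$, but not on $L$.

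The equivalence statement (first part of the lemma) then follows by running the above argument with $L=1$, together with the converse direction, which one obtains from Lemma~\ref{lem:entropy_numbers_properties}(ii)-(iii) by observing that $\sigma_k(T)\leq 2 e_k(T)$ in the Hilbert space setting (a standard volume-comparison inequality). The only subtle point—and the main obstacle—is the bookkeeping in the balancing step above: one must verify that the estimate for $e_k(T_N)$ carries a constant independent of $N$ and $L$ so that the explicit $L$-dependence in the final bound comes solely from plugging $N$ into the singular value hypothesis. With Schütt's sharp bound this dependence is linear in the exponent, which is precisely what produces the factor $L^{1/(\mu+1)}$.
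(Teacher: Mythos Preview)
Your approach is correct but takes a different route from the paper. The paper does not rebuild the truncation argument; it invokes directly the Carl-type two-sided inequality (3.6) from \cite{KRS21},
\[
\sup_{k\geq 1} e^{-N/(2k)}(\sigma_1\cdots\sigma_k)^{1/k} \;\leq\; e_N(T) \;\leq\; C \sup_{k\geq 1} e^{-c_1 N/k}(\sigma_1\cdots\sigma_k)^{1/k},
\]
bounds the geometric mean via a Riemann sum as $(\sigma_1\cdots\sigma_k)^{1/k}\leq C\exp\bigl(-\tfrac{c}{\mu+1}Lk^\mu\bigr)$, and then optimises by taking $k\sim(N/L)^{1/(\mu+1)}$. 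Your argument reaches the same balancing from more primitive ingredients (finite-rank SVD truncation plus Sch\"utt's bound $e_k(T_N)\lesssim 2^{-k/N}$); the optimisation step and the resulting factor $L^{1/(\mu+1)}$ arise identically. The paper's route is shorter because (3.6) already packages the truncation, while yours is more self-contained.

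Two minor caveats. First, your side remark that the converse in the first equivalence follows from ``$\sigma_k(T)\leq 2e_k(T)$'' is not correct as stated: pointwise domination of singular values by entropy numbers fails in general, and the converse in \cite{KRS21} in fact comes from the \emph{lower} bound in the displayed Carl inequality. Since you already defer the full equivalence to \cite{KRS21} this is harmless, but that justification should be dropped. Second, your choice $N\sim(k/(cL))^{1/(\mu+1)}$ tacitly assumes $N\geq 1$, i.e.\ $k\gtrsim L$; the complementary regime is covered trivially by $e_k(T)\leq\|T\|\leq\sigma_1(T)\leq Ce^{-cL}$.
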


\begin{proof}
The first statement is given in Lemma 3.9 in \cite{KRS21}. The second statement is proven in exactly the same way as in the reference. We are just more precise about how the change of the constants depends on the parameter $L$.

We use the inequality (3.6) in \cite{KRS21}. It states that
\begin{align}\label{eq:entropy_singular_1}
\sup_{k \geq 1} e^{-\frac{N}{2k}} (\sigma_1 \cdots \sigma_k)^{\frac{1}{k}} \leq e_N(T) \leq C \sup_{k \geq 1} e^{-c_1 \frac{N}{k}} (\sigma_1 \cdots \sigma_k)^{\frac{1}{k}}.
\end{align}
If $\sigma_j$ satisfies $\sigma_j \leq C e^{-cL j^\mu}$, then evaluating a Riemann sum yields
\begin{align*}
(\sigma_1 \cdots \sigma_k)^{1/k} \leq C \exp(- \sum_{j=1}^k \frac{C}{k} L j^\mu ) \leq C \exp(-\frac{c}{\mu+1} Lk^\mu).
\end{align*}
Given $N$, the expression $e^{-c_1\frac{N}{k}} e^{-\frac{c}{\mu+1} Lk^\mu}$ is maximal when (in terms of $N$ and $L$) we choose $k \sim (N/L)^{\frac{1}{\mu+1}}$. Choosing this value of $k$ for the right hand side of \eqref{eq:entropy_singular_1} gives $e_N(T) \leq \tilde{C} e^{-\tilde{c} L^{\frac{1}{\mu+1}}N^{\frac{\mu}{\mu+1}}}$, which finishes the proof.
\end{proof}

The proof of the instability estimate will rely on the following abstract result from \cite{KRS21}, which allows us to infer the instability result ``by comparison" with a suitably defined auxiliary operator.

\begin{thm}[Theorem 5.14 in \cite{KRS21}]\label{thm:[KRS21]Thm5.14}
Let $X$ be a Banach space, $Y$ a separable Hilbert space and $F:X \to B(Y,Y^*)$ continuous. Let $X_1 \subset X$ be a closed subspace and let $K = \{ u \in X: \Vert u \Vert_{X_1} \leq r \}$ for some $r>0$. Assume that the embedding $i: X_1 \to X$ is compact with $e_k(i) \gtrsim k^{-m}$ for some $m>0$. Moreover, assume that there exists an orthonormal basis $(\varphi_j)_{j\in\N}$ of $Y$ and constants $C, \rho, \mu >0$ uniform over $u \in K$ such that $F(u)$ and $F(u)^*$ satisfy
\begin{align*}
\Vert F(u) \varphi_k \Vert_{Y^*}, \Vert F(u)^* \varphi_k \Vert_{Y^*} \leq C\exp(-\rho k^\mu), \qquad k\geq1.
\end{align*}
Then there exists $c>0$ such that for any $\varepsilon>0$ small enough there exist $u_1,u_2 \in K$ such that
\begin{align*}
\Vert u_1 - u_2 \Vert_X \geq \varepsilon, \quad \Vert u_j \Vert_{X_1} \leq r, \quad \Vert F(u_1) - F(u_2) \Vert_{B(Y,Y^*)} \leq \exp(-c\varepsilon^{-\frac{\mu}{m(\mu+2)}}).
\end{align*}
In particular, if one has the stability estimate
\begin{align*}
\Vert u_1 - u_2 \Vert_X \leq \omega( \Vert F(u_1) - F(u_2) \Vert_{B(Y,Y^*)}),
\end{align*}
then necessarily $\omega(t) \gtrsim \vert \log(t) \vert^{-\frac{m(\mu+2)}{\mu}}$ for $t>0$ small.
\end{thm}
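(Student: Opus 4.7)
The statement is cited from \cite{KRS21}, but the structure of the argument is a classical Mandache-type pigeon-hole/counting scheme combined with entropy estimates. My plan would be to carry out the following three steps.

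\textbf{Step 1: Covering number estimate for $F(K)$ in $B(Y,Y^*)$.} The key observation is that the decay hypotheses on $F(u)\varphi_k$ and $F(u)^*\varphi_k$ make $F(u)$ very well-approximated by finite-rank operators, uniformly in $u\in K$. Let $P_N$ denote the orthogonal projection of $Y$ onto $\mathrm{span}(\varphi_1,\ldots,\varphi_N)$. Using the exponential decay together with the adjoint estimate, I would show that for $u\in K$
\begin{align*}
\|F(u) - P_N F(u) P_N\|_{B(Y,Y^*)} \leq C\exp(-\rho N^\mu),
\end{align*}
and that the matrix entries $|(F(u)\varphi_l,\varphi_k)_{Y^*,Y}|$ are bounded by $C\exp(-\rho\max(k,l)^\mu)$. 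The finite-dimensional piece $P_N F(u)P_N$ is then encoded by at most $N^2$ entries in a bounded range, so it can be $\eta$-discretized by at most $(C/\eta)^{N^2}$ matrices. Choosing $\eta\sim \exp(-\rho N^\mu)$ and $\delta:= C\exp(-\rho N^\mu)$ and eliminating $N$ gives the covering-number bound
\begin{align*}
N(\delta, F(K), B(Y,Y^*)) \leq \exp\!\Bigl(C(\log(1/\delta))^{(\mu+2)/\mu}\Bigr).
\end{align*}

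\textbf{Step 2: Packing in $K$ from the entropy lower bound.} The assumption $e_k(i)\gtrsim k^{-m}$ for the embedding $i\colon X_1\hookrightarrow X$ says that the $X$-covering number of $K$ at scale $\varepsilon$ satisfies $N(\varepsilon,K,X)\geq \exp(c\,\varepsilon^{-1/m})$. Hence there exists a $2\varepsilon$-separated family $\{u_j\}\subset K$ in the $X$-norm of cardinality $M(\varepsilon)\geq \exp(c\,\varepsilon^{-1/m})$.

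\textbf{Step 3: Pigeon-hole.} If $M(\varepsilon) > N(\delta, F(K))$, two distinct $\varepsilon$-separated points $u_1, u_2\in K$ must have their images $F(u_1),F(u_2)$ in a common $\delta$-ball, yielding $\|F(u_1)-F(u_2)\|_{B(Y,Y^*)}\leq 2\delta$. Matching the two estimates requires
\begin{align*}
c\,\varepsilon^{-1/m} > C(\log(1/\delta))^{(\mu+2)/\mu},
\end{align*}
which is achieved by setting $\delta = \exp(-c'\varepsilon^{-\mu/(m(\mu+2))})$. The conclusion of the theorem, and the resulting lower bound $\omega(t)\gtrsim |\log t|^{-m(\mu+2)/\mu}$ on any modulus of stability, then follow directly.

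The main technical obstacle is Step 1, specifically propagating the decay of $F(u)\varphi_k$ (which a priori only controls columns of the matrix representation) into a joint bound on \emph{both} rows and columns; this is exactly why the adjoint hypothesis $\|F(u)^*\varphi_k\|_{Y^*}\leq C\exp(-\rho k^\mu)$ is imposed. Once that joint decay is secured, the truncation-plus-discretization bookkeeping is routine, and the remaining pigeon-hole argument is standard, paralleling the strategy of \cite{M01,BCR24}.
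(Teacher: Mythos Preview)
The paper does not prove this statement; it is quoted verbatim as Theorem 5.14 from \cite{KRS21} and used as a black box in the instability argument. Your sketch is the correct Mandache-type strategy that underlies the result in \cite{KRS21}: exponential decay of matrix entries (in both row and column indices, via the adjoint hypothesis) gives a covering bound of the form $\exp\bigl(C(\log(1/\delta))^{(\mu+2)/\mu}\bigr)$ for $F(K)$, the entropy lower bound on $i$ gives an $\varepsilon$-separated set in $K$ of size $\exp(c\varepsilon^{-1/m})$, and pigeon-holing the two yields the claimed exponent $\mu/(m(\mu+2))$. Since there is no proof in the present paper to compare against, your proposal is simply a faithful outline of the argument in the cited reference.
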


As in the stability reduction result, we will here again split the integral into a lower and upper contribution and estimate them separately. Thus, as a first step, we prove an estimate for the upper part of the integral.

\begin{lem}\label{lem:instability_estimate_upper_integral}
Let $\Omega \Subset \R^n$ be open, bounded, Lipschitz and let $a$ and $\tilde{a}$ be as in the introduction of this section. Let $f \in \mathcal{H}^{-s}(\Omega)$ and let $\tilde{u}_0 \in H^1(\Omega\times\R_+, x_{n+1}^{1-2s})$ be the solution to
\begin{equation*}
\begin{cases}
\begin{alignedat}{2}
- \nabla\cdot x_{n+1}^{1-2s} \tilde{a} \nabla \tilde{u}_0 &= 0 \quad &&\text{in } \Omega\times\R_+,\\
\tilde{u}_0 &= 0 \quad &&\text{on } \partial\Omega \times \R_+,\\
-\bar{c}_s \lim_{x_{n+1}\to0} x_{n+1}^{1-2s} \partial_{n+1} \tilde{u}_0 &= f \quad &&\text{on } \Omega\times\{0\}.
\end{alignedat}
\end{cases}
\end{equation*}
Let $\lambda_1>0$ be the first eigenvalue value for the elliptic operator $(-\nabla'\cdot a \nabla')$ on $\Omega$ with Dirichlet boundary conditions. Then for $R>0$ large enough it holds
\begin{align*}
\Vert \tilde{u}_0 \Vert_{H^1(\Omega\times(R,\infty), x_{n+1}^{1-2s})} \leq C \Vert f \Vert_{\mathcal{H}^{-s}(\Omega)} e^{-\sqrt{\lambda_1} R}.
\end{align*}
\end{lem}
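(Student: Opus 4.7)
The strategy is to expand $\tilde{u}_0$ in an eigenfunction basis as in Lemma \ref{lem:EigenfunctionExpansion}, and then use the exponential decay of the modified Bessel function $K_s$ to deduce the claimed estimate.

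First I would note that, since $\tilde{a} = \mathrm{diag}(a,1)$ and the tangential operator $(-\nabla'\cdot a \nabla')$ on $\Omega$ with Dirichlet boundary conditions has an $L^2(\Omega)$-orthonormal eigenbasis $(\phi_k)_{k\in\N}$ with eigenvalues $0 < \lambda_1 \leq \lambda_2 \leq \cdots$, the same separation-of-variables argument used in \cite{ST10} (and recorded as Lemma \ref{lem:EigenfunctionExpansion}) yields
\begin{align*}
\tilde{u}_0(x',x_{n+1}) = \tilde{c}_s \sum_{k=1}^{\infty} (f,\phi_k)_{L^2(\Omega)} K_s(\sqrt{\lambda_k}\, x_{n+1}) \left(\frac{x_{n+1}}{\sqrt{\lambda_k}}\right)^{s} \phi_k(x'),
\end{align*}
and the homogeneous Dirichlet condition on $\partial\Omega \times \R_+$ is automatically satisfied.

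Next I would compute the three pieces of the weighted $H^1$-norm on $\Omega \times (R,\infty)$ separately. By $L^2(\Omega)$-orthonormality of $(\phi_k)_{k \in \N}$, and after changing variables via $z = \sqrt{\lambda_k}\, x_{n+1}$, the $L^2$-piece becomes
\begin{align*}
\|\tilde{u}_0\|_{L^2(\Omega\times(R,\infty),\, x_{n+1}^{1-2s})}^2
= \tilde{c}_s^{2} \sum_{k=1}^{\infty} |(f,\phi_k)_{L^2(\Omega)}|^{2}\, \lambda_k^{-s-1} \int_{\sqrt{\lambda_k}\,R}^{\infty} z\, K_s(z)^{2}\, dz.
\end{align*}
For the tangential gradient, using $\|\nabla' \phi_k\|_{L^2(\Omega)}^{2} \leq C \lambda_k$ from the eigenvalue equation and assumption (A1'), I get an additional factor of $\lambda_k$. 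For the normal derivative, I would use the identity $\tfrac{d}{dz}[z^{s} K_s(z)] = -z^{s} K_{s-1}(z)$, which yields an analogous expression with $K_{s-1}$ in place of $K_s$ and an additional factor of $\lambda_k$. Thus
\begin{align*}
\|\tilde{u}_0\|_{H^1(\Omega\times(R,\infty),\, x_{n+1}^{1-2s})}^2
\leq C \sum_{k=1}^{\infty} |(f,\phi_k)_{L^2(\Omega)}|^{2}\, \lambda_k^{-s} \int_{\sqrt{\lambda_k}\,R}^{\infty} z\, \bigl(K_s(z)^{2}+K_{s-1}(z)^{2}\bigr)\, dz.
\end{align*}

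The decisive step is then to exploit the asymptotics $K_\sigma(z) \sim z^{-1/2} e^{-z}$ as $z \to \infty$ (see \cite{Olver10}, cf.\ the proof of Lemma \ref{lem:estimate_integral_modified_Bessel_function}), from which, for $R$ large enough so that $\sqrt{\lambda_1}\, R \geq 1$, we have
\begin{align*}
\int_{\sqrt{\lambda_k}\,R}^{\infty} z\, \bigl(K_s(z)^{2}+K_{s-1}(z)^{2}\bigr)\, dz \leq C e^{-2\sqrt{\lambda_k}\, R} \leq C e^{-2\sqrt{\lambda_1}\, R},
\end{align*}
where the second inequality uses $\lambda_k \geq \lambda_1$ for all $k \geq 1$. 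Plugging this in and recalling that $\|f\|_{\mathcal{H}^{-s}(\Omega)}^2 = \sum_k \lambda_k^{-s} |(f,\phi_k)_{L^2(\Omega)}|^2$ gives
\begin{align*}
\|\tilde{u}_0\|_{H^1(\Omega\times(R,\infty),\, x_{n+1}^{1-2s})}^{2} \leq C e^{-2\sqrt{\lambda_1}\, R}\, \|f\|_{\mathcal{H}^{-s}(\Omega)}^{2},
\end{align*}
from which the claim follows by taking square roots. The only mildly subtle point is keeping track of constants for the derivative term so that the factor of $\lambda_k$ gained from differentiating is absorbed by the $\lambda_k^{-s-1}$ factor in front (respectively $\lambda_k^{-s}$ for the normal derivative), but this is straightforward given the uniform bound $\lambda_k \geq \lambda_1$.
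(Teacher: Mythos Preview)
Your proof is correct and follows essentially the same approach as the paper: expand $\tilde{u}_0$ in the Dirichlet eigenbasis, use the Bessel identity $\frac{d}{dz}[z^s K_s(z)] = -z^s K_{s-1}(z)$ (equivalently the paper's $\partial_t(K_s(\sqrt{\lambda_k}t)(t/\sqrt{\lambda_k})^s) = \lambda_k^{(1-s)/2} K_{1-s}(\sqrt{\lambda_k}t) t^s$, since $K_{s-1}=K_{1-s}$), and then invoke the large-argument asymptotics $K_\sigma(z)\sim z^{-1/2}e^{-z}$. The only cosmetic difference is that you compute the weighted $L^2$ piece directly (picking up $\lambda_k^{-s-1}\leq \lambda_1^{-1}\lambda_k^{-s}$), whereas the paper handles it via Poincar\'e's inequality in the tangential variables at the very end; both routes lead to the same bound.
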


\begin{proof}
The proof follows similarly as the proofs of Lemma \ref{lem:apriori} and Lemma \ref{lem:estimate_integral_modified_Bessel_function}. Indeed, we use the representation in terms of the eigenfunction expansion (compare Lemma \ref{lem:EigenfunctionExpansion} or see the proof of Theorem \ref{thm:qualitative_spectral}), i.e. for $(x',x_{n+1}) \in \R^{n+1}_+$
\begin{align*}
\tilde{u}^f(x',x_{n+1}) = \tilde{c}_s \sum_{k=1}^\infty (f,\phi_k)_{L^2(\Omega)} K_s(\sqrt{\lambda_k}x_{n+1}) \left( \frac{x_{n+1}}{\sqrt{\lambda_k}} \right)^s \phi_k(x').
\end{align*}
On the one hand, we then have for the tangential derivative $\nabla'$ by changing variables $z = \sqrt{\lambda_k} t$
\begin{align*}
\int_R^\infty \int_\Omega &t^{1-2s} \vert \nabla' \tilde{u}_0(x',t) \vert^2 dx' dt \leq C \int_R^\infty t^{1-2s} \sum_{k=1}^\infty \vert (f, \phi_k)_{L^2(\Omega)} \vert^2 K_s^2(\sqrt{\lambda_k}t) \left( \frac{t}{\sqrt{\lambda_k}} \right)^{2s} \lambda_k dt\\
&\leq C \left( \int_{\sqrt{\lambda_1}R}^\infty z K_s^2(z) dz \right) \sum_{k=1}^\infty \lambda_k^{-s} \vert (f, \phi_k)_{L^2(\Omega)} \vert^2 \leq C \left( \int_{\lambda_1 R}^\infty z K_s^2(z) dz \right) \Vert f \Vert_{\mathcal{H}^{-s}(\Omega)}^{2}.
\end{align*}
{Here, for the first inequality we have used the uniform ellipticity condition on $\tilde{a}$.} On the other hand, for the normal derivative we use that by 10.29.2 in \cite{Olver10} it holds that
\begin{align*}
K_s'(z) = K_{1-s}(z) - \frac{s}{z} K_s(z),
\end{align*}
and thus
\begin{align*}
\partial_t \left( K_s(\sqrt{\lambda_k}t) \left( \frac{t}{\sqrt{\lambda_k}} \right)^s \right) = \lambda_k^{\frac{1-s}{2}} K_{1-s}(\sqrt{\lambda_k}t) t^s.
\end{align*}
As a consequence, we infer
\begin{align*}
\int_R^\infty \int_\Omega t^{1-2s} \vert \partial_t \tilde{u}_0 (x',t) \vert^2 dx' dt &= \tilde{c}_s^2 \int_R^\infty t^{1-2s} \sum	_{k=1}^\infty \vert (f, \phi_k)_{L^2(\Omega)} \vert^2 \left\vert \partial_t \left( K_s(\sqrt{\lambda_k}t) \left( \frac{t}{\sqrt{\lambda_k}} \right)^s \right) \right\vert^2 dt\\
&= \tilde{c}_s^2 \int_R^\infty t \sum_{k=1}^\infty \vert (f, \phi_k)_{L^2(\Omega)} \vert^2 \lambda_k^{1-s} K_{1-s}^2(\sqrt{\lambda_k}t) dt\\
&\leq \tilde{c}_s^2 \left( \int_{\sqrt{\lambda_1}R}^\infty z K_{1-s}^2(z) dz \right) \sum_{k=1}^\infty \lambda_k^{-s} \vert (f,\phi_k)_{L^2(\Omega)} \vert^2\\
&= \tilde{c}_s^2 \left( \int_{\sqrt{\lambda_1}R}^\infty z K_{1-s}^2(z) dz \right) \Vert f \Vert_{\mathcal{H}^{-s}(\Omega)}^{2}.
\end{align*}
Using, as before, that $K_s(z), K_{1-s}(z) \sim z^{-1/2} e^{-z}$ as $z\to\infty$ (see 10.25.3 in \cite{Olver10}), we find
\begin{align*}
\int_{\sqrt{\lambda_1} R}^\infty z K_s^2(z) dz \sim \int_{\sqrt{\lambda_1} R}^\infty e^{-2z} dz = \frac{1}{2}e^{-2\sqrt{\lambda_1} R}.
\end{align*}
Finally, we apply Poincaré's inequality (in tangential direction) to deduce
\begin{align*}
\Vert \tilde{u}_0 \Vert_{H^1(\Omega\times(R,\infty),x_{n+1}^{1-2s})} \leq C \Vert \nabla \tilde{u}_0 \Vert_{L^2(\Omega\times(R,\infty),x_{n+1}^{1-2s})} \leq \Vert f \Vert_{\mathcal{H}^{-s}(\Omega)} e^{-\sqrt{\lambda_1} R},
\end{align*}
which finishes the proof.
\end{proof}

We now recall an eigenvalue characterization for the Caffarelli-Silvestre extension. The proposition follows exactly like Proposition 2.1 in \cite{FF20} with the obvious modifications (compare also Proposition 4.1 in \cite{BCR24}).

\begin{prop}\label{prop:EigenfunctionForCSExtension}
Let $s\in(0,1)$, $R>0$ and $a \in C^1(\Omega, \R^{n \times n}_{sym})$ be as above. Let $D \subset \R^n$ be open, bounded and Lipschitz. Define for $(x',x_{n+1}) \in \R^{n+1}_+ $
\begin{align*}
\tilde{e}_{l,m}(x',x_{n+1}) := \gamma_m x_{n+1}^s J_{-s} \left( \frac{j_{-s,m}}{R}x_{n+1} \right) e_l(x') \quad \text{for any } l,m \in \N
\end{align*}
and
\begin{align*}
\lambda_{l,m} := \mu_l + \frac{j_{-s,m}^2}{R^2} \quad \text{for any } l,m \in \N,
\end{align*}
where $J_{-s}$ denotes the Bessel function of the first kind with order $-s$, $0<j_{-s,1}<j_{-s,2}<\dots<j_{-s,m}<\cdots$ are the zeros of $J_{-s}$, $\gamma_m := \left(\int_0^R x_{n+1} \left[J_{-s}(\frac{j_{-s,m}}{R}x_{n+1})\right]^2 dx_{n+1} \right)^{-\frac{1}{2}}$ is a normalizing constant, $(e_l)_{l\geq1}$ denotes a complete orthonormal system of eigenfunctions of $\left( -\nabla'\cdot a \nabla' \right)$ in $D$ with homogeneous Dirichlet boundary data and $\mu_1<\mu_2\leq\dots\leq\mu_l\leq\cdots$ the corresponding eigenvalues.\\
Then for any $l,m \in \N$, $\tilde e_{l,m}$ is an eigenfunction of
\begin{equation}\label{EqDefiningEigenfunction}
\begin{cases}
\begin{alignedat}{2}
-\nabla\cdot x_{n+1}^{1-2s} \tilde{a} \nabla \tilde{e}_{l,m} &= x_{n+1}^{1-2s} \lambda_{l,m} \tilde{e}_{l,m} \quad &&\text{in } Q_{D,R},\\
\tilde{e}_{l,m} &= 0 \quad &&\text{on } (\partial D \times (0,R)) \cup (D \times \{R\}),\\
\lim_{x_{n+1}\to0} x_{n+1}^{1-2s} \partial_{n+1} \tilde{e}_{l,m} &= 0 \quad &&\text{on } D \times \{0\},
\end{alignedat}
\end{cases}
\end{equation}
with corresponding eigenvalues $\lambda_{l,m}$ (interpreted in a weak sense). Moreover, the set $\{\tilde{e}_{l,m}: l,m \in \N\}$ is a complete orthonormal system for $L^2(Q_{D,R},x_{n+1}^{1-2s})$.
\end{prop}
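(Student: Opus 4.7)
The strategy is separation of variables, exploiting the fact that the matrix $\tilde{a} = \begin{pmatrix} a(x') & 0 \\ 0 & 1 \end{pmatrix}$ has no coupling between tangential and normal directions and that the weight $x_{n+1}^{1-2s}$ depends only on the normal variable. I would first make the ansatz $\tilde{e}(x',x_{n+1}) = e(x') h(x_{n+1})$. Plugging into the PDE in \eqref{EqDefiningEigenfunction} and dividing by $x_{n+1}^{1-2s} e(x') h(x_{n+1})$ formally decouples the equation into a tangential spectral problem
\begin{equation*}
-\nabla' \cdot a \nabla' e = \mu e \text{ in } D, \quad e = 0 \text{ on } \partial D,
\end{equation*}
and a one-dimensional weighted Sturm–Liouville problem on $(0,R)$:
\begin{equation*}
-(x_{n+1}^{1-2s} h')' = \nu \, x_{n+1}^{1-2s} h, \quad h(R) = 0, \quad \lim_{x_{n+1} \to 0} x_{n+1}^{1-2s} h'(x_{n+1}) = 0,
\end{equation*}
with $\lambda = \mu + \nu$.

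Next, for the tangential problem, the pairs $(e_l, \mu_l)$ are exactly those provided by the usual spectral theorem for the self-adjoint elliptic operator $-\nabla'\cdot a\nabla'$ with Dirichlet data on the bounded Lipschitz domain $D$ (with compact resolvent on $L^2(D)$), yielding a complete orthonormal system in $L^2(D)$. For the one-dimensional problem, I would make the substitution $h(t) = t^s \psi(t)$, which transforms the ODE into a standard Bessel equation for $\psi$. The regular solution is $\psi(t) = J_{-s}(\sqrt{\nu} \, t)$, so $h(t) = t^s J_{-s}(\sqrt{\nu}\, t)$. A short direct computation using the recurrence $J_{-s}'(z) = -J_{1-s}(z) - \tfrac{s}{z}J_{-s}(z)$ gives $t^{1-2s} h'(t) = -\sqrt{\nu}\, t^{1-s} J_{1-s}(\sqrt{\nu}\, t)$; combined with the small-argument asymptotics $J_{1-s}(z) \sim c z^{1-s}$, this verifies the Neumann-type condition at $t = 0$ automatically. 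The Dirichlet condition at $t = R$ then reads $J_{-s}(\sqrt{\nu}\, R) = 0$, forcing $\nu = j_{-s,m}^2/R^2$. Combining yields $\lambda_{l,m} = \mu_l + j_{-s,m}^2/R^2$, with corresponding eigenfunctions as claimed.

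For orthogonality and completeness in $L^2(Q_{D,R}, x_{n+1}^{1-2s})$, I would argue by the tensor product structure: the tangential eigenfunctions $(e_l)_l$ form an ONB of $L^2(D)$, and after the substitution $\psi(t) = t^{-s} h(t)$ the normal eigenfunctions become $c_m J_{-s}\bigl(\tfrac{j_{-s,m}}{R} t\bigr)$, which form a classical Fourier–Bessel basis of $L^2((0,R), t\, dt)$. The normalizing constants $\gamma_m$ are chosen precisely to make the normal family orthonormal in $L^2((0,R), t^{1-2s}dt)$. The product family then gives an ONB of $L^2(Q_{D,R}, x_{n+1}^{1-2s}) \cong L^2(D) \otimes L^2((0,R), t^{1-2s}dt)$.

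The only subtle step I anticipate is making the separation-of-variables argument fully rigorous at the weak-formulation level and at the degenerate boundary $\{x_{n+1} = 0\}$: one should verify that the candidate $\tilde{e}_{l,m}$ lies in the appropriate weighted energy space $H^1_0(\Gamma_{D,R}, x_{n+1}^{1-2s})$ (which follows from the $x_{n+1}^s$ prefactor giving enough decay of $x_{n+1}^{1-2s}\vert h'\vert^2$ near $0$) and that the co-normal condition $\lim_{x_{n+1}\to 0} x_{n+1}^{1-2s}\partial_{n+1}\tilde{e}_{l,m} = 0$ is realized in the natural weak sense used throughout the paper. Once these points are handled, the completeness of the product system and the identification of the spectrum $\{\lambda_{l,m}\}$ follow from the general spectral theory of self-adjoint operators with compact resolvent, exactly as in the argument of Proposition~2.1 of \cite{FF20} adapted to the present variable-coefficient, Lipschitz setting.
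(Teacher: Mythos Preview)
Your proposal is correct and is precisely the separation-of-variables/Bessel argument that the paper has in mind: the paper does not give a self-contained proof but simply states that the result ``follows exactly like Proposition~2.1 in \cite{FF20} with the obvious modifications (compare also Proposition~4.1 in \cite{BCR24}).'' Your outline---decoupling into the tangential Dirichlet eigenproblem and the weighted Sturm--Liouville problem, the substitution $h(t)=t^s\psi(t)$ reducing to Bessel's equation, the verification of the weighted Neumann condition via $t^{1-2s}h'(t)=-\sqrt{\nu}\,t^{1-s}J_{1-s}(\sqrt{\nu}t)\to 0$, and completeness via the Fourier--Bessel expansion in $L^2((0,R),t\,dt)$ tensored with the Dirichlet eigenbasis of $L^2(D)$---is exactly that argument spelled out.
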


As in Proposition 4.2 in \cite{BCR24} or Proposition 2.7 in \cite{KRS21}, we seek to identify the weighted Sobolev spaces with corresponding weighted sequence spaces.

To this end, we relabel the eigenvalues and eigenfunctions from above and define these as $(\lambda_k, \tilde{\varphi}_k) \subset \R_+ \times L^2(Q_{D,R}, x_{n+1}^{1-2s})$ to denote the ordered pairs of eigenvalues and eigenfunctions $(\lambda_{l,m}, \tilde{e}_{l,m})$ from Proposition \ref{prop:EigenfunctionForCSExtension} such that $0 < \lambda_1 \leq \lambda_2 \leq \dots \leq \lambda_k \leq \cdots$. We have that $\tilde{\varphi}_k \in H_0^1(\Gamma_{D,R}, x_{n+1}^{1-2s})$. They are orthonormal in $L^2(Q_{D,R}, x_{n+1}^{1-2s})$, and by their defining equation they are also orthogonal in $H^1(Q_{D,R}, x_{n+1}^{1-2s})$. In what follows below, we will work with this basis.

The following result coincides with Proposition 4.2 from \cite{BCR24}. We are just more precise about the dependence of the asymptotic behaviour of the eigenvalues $\lambda_k$ on the cylinder height $R$.

\begin{prop}
Let $D\subset\R^n$ be open, bounded, Lipschitz and $R>0$. Let $(\lambda_k, \tilde{\varphi}_k) \subset \R_+ \times H_0^1(\Gamma_{D,R}, x_{n+1}^{1-2s})$ be as above. Then for any $\tilde{u} \in H_0^1(\Gamma_{D,R}, x_{n+1}^{1-2s})$ we have the norm equivalence
\begin{align*}
\Vert \tilde{u} \Vert_{H^1(\Gamma_{D,R},x_{n+1}^{1-2s})}^2 \sim \sum_{k=1}^\infty R^{-\frac{2}{n+1}} k^{\frac{2}{n+1}} \vert (\tilde{u}, \varphi_k)_{L^2(Q_{D,R},x_{n+1}^{1-2s})} \vert^2.
\end{align*}
Moreover,
\begin{align*}
H_0^1(\Gamma_{D,R}, x_{n+1}^{1-2s}) = \left\{ \tilde{u} \in L^2(Q_{D,R},x_{n+1}^{1-2s}) : \ \sum_{k=1}^\infty R^{-\frac{2}{n+1}} k^{\frac{2}{n+1}} \vert (\tilde{u}, \tilde{\varphi}_k)_{L^2(Q_{D,R},x_{n+1}^{1-2s})} \vert^2 < \infty \right\}.
\end{align*}
In particular, there exists an isomorphism between $H_0^1(\Gamma_{D,R}, x_{n+1}^{1-2s})$ and the sequence space
\begin{align*}
h^1 := \left\{ (a_k)_{k\in\N} \in \ell^2 : \ \sum_{k=1}^\infty R^{-\frac{2}{n+1}} k^{\frac{2}{n+1}} \vert a_k \vert^2 < \infty \right\},
\end{align*}
given by $\tilde{u} \mapsto ( (\tilde{u}, \tilde{\varphi}_k)_{L^2(Q_{D,R},x_{n+1}^{1-2s})} )_{k\in\N}$.
\end{prop}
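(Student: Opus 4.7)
The plan is to reduce the desired norm equivalence to a Weyl-type asymptotics for the eigenvalues $\lambda_k$ from Proposition \ref{prop:EigenfunctionForCSExtension}, with careful tracking of the $R$-dependence. The proof splits into three ingredients which I would combine at the end: (i) diagonalization of both the $L^2$-inner product and the energy form in the basis $\{\tilde{\varphi}_k\}$; (ii) equivalence of the $H^1(\Gamma_{D,R}, x_{n+1}^{1-2s})$-norm with the (weighted) Dirichlet energy; and (iii) the asymptotic behaviour $\lambda_k \sim R^{-2/(n+1)} k^{2/(n+1)}$.

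First, I would expand $\tilde{u} \in H_0^1(\Gamma_{D,R}, x_{n+1}^{1-2s})$ as $\tilde{u} = \sum_{k} c_k \tilde{\varphi}_k$ in $L^2(Q_{D,R}, x_{n+1}^{1-2s})$, with $c_k := (\tilde u,\tilde\varphi_k)_{L^2(Q_{D,R},x_{n+1}^{1-2s})}$. By Proposition \ref{prop:EigenfunctionForCSExtension} the $\tilde\varphi_k$ are an ONB of $L^2(Q_{D,R}, x_{n+1}^{1-2s})$, giving $\|\tilde u\|^2_{L^2(Q_{D,R}, x_{n+1}^{1-2s})} = \sum_k |c_k|^2$. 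Testing the weak form of \eqref{EqDefiningEigenfunction} for $\tilde\varphi_k$ against $\tilde\varphi_l$ (the boundary terms vanish because of the Dirichlet condition on $(\partial D\times(0,R))\cup(D\times\{R\})$ and the Neumann condition on $D \times \{0\}$), one gets
\[
\int_{Q_{D,R}} x_{n+1}^{1-2s}\, \tilde a\, \nabla \tilde\varphi_k\cdot \nabla \tilde\varphi_l\, dx = \lambda_k \delta_{kl},
\]
so that $B(\tilde u,\tilde u):=\int_{Q_{D,R}} x_{n+1}^{1-2s}\, \tilde a \nabla \tilde u \cdot \nabla \tilde u\, dx = \sum_k \lambda_k |c_k|^2$. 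Using the uniform ellipticity and boundedness of $\tilde a$ (from (A1')), $B(\tilde u,\tilde u) \sim \|x_{n+1}^{(1-2s)/2} \nabla \tilde u\|_{L^2(Q_{D,R})}^2$, and then invoking the Poincaré inequality on $\Gamma_{D,R}$ (applicable since $\tilde u$ vanishes on the positive-measure Dirichlet part of the boundary), one concludes $\|\tilde u\|_{H^1(\Gamma_{D,R},x_{n+1}^{1-2s})}^2 \sim \sum_k \lambda_k |c_k|^2$. The constants in these equivalences can be chosen independently of $R$ once one uses a scaled Poincaré inequality, or more directly by adding the term $\|\tilde u\|^2_{L^2}$ which is dominated by $\sum_k\lambda_k |c_k|^2$ as soon as $\lambda_1 \gtrsim 1$ — which is ensured because of the Dirichlet cap at height $R$ forcing $\lambda_1 \geq j_{-s,1}^2/R^2$; the correct way to make this estimate uniform in $R$ is to first argue on $\sum_k \lambda_k |c_k|^2$ and then add $\|\tilde u\|_{L^2}^2$ absorbed into the energy via the full spectral sum.

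Second, I would derive the asymptotics $\lambda_k \sim R^{-2/(n+1)} k^{2/(n+1)}$ by a direct counting argument. By Weyl's law applied to $-\nabla'\cdot a \nabla'$ with Dirichlet data on $D$, $\mu_l \sim c_{n,D,a}\, l^{2/n}$, and by standard asymptotics for the zeros of Bessel functions (see \cite{Olver10}), $j_{-s,m}\sim \pi m$ as $m\to \infty$. Hence, setting $N(\lambda):=\#\{(l,m)\in\N^2 : \mu_l + j_{-s,m}^2 R^{-2} \leq \lambda\}$, a comparison with the area of the region $\{(x,y)\in\R_+^2:\ c\, x^{2/n} + c'\, y^2/R^2\leq \lambda\}$ yields
\[
N(\lambda) \sim C R \lambda^{(n+1)/2} \quad\text{as } \lambda\to\infty,
\]
and therefore the ordered eigenvalues satisfy $\lambda_k \sim R^{-2/(n+1)} k^{2/(n+1)}$, with constants depending only on $n$, $s$, $D$, and the ellipticity bounds of $a$.

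Finally, combining the two equivalences gives the claimed norm identity
\[
\|\tilde u\|_{H^1(\Gamma_{D,R}, x_{n+1}^{1-2s})}^2 \sim \sum_{k=1}^{\infty} R^{-2/(n+1)} k^{2/(n+1)} |c_k|^2.
\]
The characterization of $H_0^1(\Gamma_{D,R}, x_{n+1}^{1-2s})$ and the isomorphism with $h^1$ then follow by standard functional-analytic arguments: any $\tilde u\in L^2(Q_{D,R}, x_{n+1}^{1-2s})$ whose coefficients lie in $h^1$ can be obtained as a limit of the partial sums $\sum_{k\leq N} c_k \tilde\varphi_k$, which by the equivalence form a Cauchy sequence in $H_0^1(\Gamma_{D,R}, x_{n+1}^{1-2s})$. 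The main technical obstacle is point (iii): establishing the Weyl asymptotics with explicit, $R$-uniform constants, since the sharpness of the $R^{-2/(n+1)}$ factor is crucial for the quantitative applications later on.
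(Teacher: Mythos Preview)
Your proposal is correct and follows essentially the same approach as the paper: first establish the Weyl asymptotics $\lambda_k \sim R^{-2/(n+1)} k^{2/(n+1)}$ via the counting argument for $\mu_l + j_{-s,m}^2/R^2$, then combine this with the diagonalization of the energy form to obtain the norm equivalence and the sequence-space identification (the paper defers the latter details to \cite{BCR24}). One small clarification: your discussion of $R$-uniformity is slightly tangled --- the bound $\lambda_1 \geq j_{-s,1}^2/R^2$ does not give a uniform-in-$R$ lower bound; the correct observation is simply that $\lambda_{l,m} = \mu_l + j_{-s,m}^2/R^2 \geq \mu_1 > 0$, which comes from the tangential Dirichlet condition on $\partial D \times (0,R)$ and is independent of $R$.
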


The proof is almost identical to the one of Proposition 4.2 in \cite{BCR24}. We provide it for the convenience of the reader.

\begin{proof}
\textit{Step 1: Weyl type asymptotic of the eigenvalues.} We first deduce the asymptotic behaviour of the eigenvalues $\lambda_k$. We recall that by Proposition \ref{prop:EigenfunctionForCSExtension} the eigenvalues $\lambda_{l,m}$ were given by
\begin{align*}
\lambda_{l,m} = \mu_l + \frac{j_{-s,m}^2}{R^2}.
\end{align*}
By Weyl asymptotics (cf. Chapter 8 Corollary 3.5 in \cite{Taylor2010}) and McMahon's asymptotic expansion (cf. Section 10.21 (vi) in \cite{Olver10}) we have that $\mu_l \sim l^{\frac{2}{n}}$ and $j_{-s,m}^2 \sim m^2$, respectively, which implies
\begin{align*}
\lambda_{l,m} \sim l^{\frac{2}{n}} + \frac{m^2}{R^2}.
\end{align*}
Now, fix $N\in\N$ sufficiently large. For fixed $l \leq N^{\frac{n}{2}}$, there are $R (N-l^{2/n})^{1/2}$ elements such that $l^{2/n} + \frac{m^2}{R^2} \leq N$. On the one hand, summing over $l$ we find that the number of pairs $(l,m)$ with $l^{2/n} + \frac{m^2}{R^2} \leq N$ is bounded from above by
\begin{align*}
\sum_{l=1}^{N^{n/2}} R (N-l^{2/n})^{1/2} \leq R \sum_{l=1}^{N^{n/2}} N^{1/2} \leq R N^{\frac{n+1}{2}},
\end{align*}
and, on the other hand, it is bounded from below by
\begin{align*}
\sum_{l=1}^{N^{n/2}} R (N-l^{2/n})^{1/2} \geq R \sum_{l=1}^{\frac{1}{2}N^{n/2}} (N - \frac{1}{2^{2/n}}N)^{1/2} \geq C R N^{\frac{n+1}{2}}. 
\end{align*}
Thus, there are approximately $RN^{\frac{n+1}{2}}$ pairs $(l,m)$ such that $l^{2/n} + \frac{m^2}{R^2} \leq N$, which in turn implies that
\begin{align*}
\lambda_k \sim R^{-\frac{2}{n+1}}k^{\frac{2}{n+1}}.
\end{align*}
Recall that $\lambda_k$ were the reordered eigenvalues.

\textit{Step 2: Characterization as a sequence space.} This step follows exactly as in \cite[Proposition 4.2]{BCR24}. We just briefly summarize it. Based on Step 1, we first deduce the norm equivalence. Then we show the identification of spaces using Step 1 and the norm equivalence.
\end{proof}

As in Section 4.1 in \cite{BCR24}, we next derive the following singular value bounds for the embedding $i: H^1(Q_{D,R}, x_{n+1}^{1-2s}) \to L^2(Q_{D,R}, x_{n+1}^{1-2s})$. Compare this statement to Proposition 4.4 in \cite{BCR24}.

\begin{prop}\label{prop:instability_singular_value_bounds}
Let $D \subset \R^n$ be open, bounded, Lipschitz and $R>0$. The embedding $i: H^1(Q_{D,R}, x_{n+1}^{1-2s}) \to L^2(Q_{D,R}, x_{n+1}^{1-2s})$ satisfies the singular value bounds
\begin{align*}
\sigma_k(i) \leq CR^{\frac{1}{n+1}} k^{-\frac{1}{n+1}}
\end{align*}
for some constant $C>0$.
\end{prop}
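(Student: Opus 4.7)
The plan is to exploit the spectral characterization of the preceding proposition, which only yields a diagonalization on the subspace $H_0^1(\Gamma_{D,R}, x_{n+1}^{1-2s})$ with Dirichlet conditions on $\partial D \times (0,R) \cup D \times \{R\}$. To handle the full space $H^1(Q_{D,R}, x_{n+1}^{1-2s})$ (which has no boundary conditions), I would factor the embedding through a slightly enlarged cylinder. Concretely, I would fix a Lipschitz domain $D' \Subset \R^n$ with $D \Subset D'$ and set $R' := 2R$, and write
\[
i \;=\; r \,\circ\, i_0 \,\circ\, E,
\]
where $E: H^1(Q_{D,R}, x_{n+1}^{1-2s}) \to H_0^1(\Gamma_{D',R'}, x_{n+1}^{1-2s})$ is a bounded extension operator, $i_0$ is the natural compact embedding into $L^2(Q_{D',R'}, x_{n+1}^{1-2s})$, and $r$ is the restriction to $Q_{D,R}$ (which has norm at most one).

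For the middle factor $i_0$, the Hilbert space adjoint $i_0^* i_0$ is diagonalized by the orthonormal basis $\{\tilde\varphi_k^{D',R'}\}$ provided by Proposition \ref{prop:EigenfunctionForCSExtension} on $Q_{D',R'}$. The preceding norm equivalence gives
\[
\|i_0 \tilde u\|_{L^2}^2 \;=\; \sum_{k=1}^\infty |a_k|^2, \qquad \|\tilde u\|_{H^1}^2 \;\sim\; \sum_{k=1}^\infty (R')^{-\frac{2}{n+1}} k^{\frac{2}{n+1}} |a_k|^2,
\]
where $a_k := (\tilde u, \tilde \varphi_k^{D',R'})_{L^2(Q_{D',R'}, x_{n+1}^{1-2s})}$. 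The singular values are thus controlled by
\[
\sigma_k(i_0) \;\leq\; C\, (R')^{\frac{1}{n+1}} k^{-\frac{1}{n+1}}.
\]
Combining this with the ideal property of singular values, $\sigma_k(r \circ i_0 \circ E) \leq \|r\| \, \sigma_k(i_0) \, \|E\|$, and using $R' \sim R$, yields the asserted bound.

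The main obstacle is the construction of $E$ with operator norm bounded \emph{uniformly in $R$} (for $R$ large), since the cylinder degenerates as $R \to \infty$. I would build $E$ as the composition of three standard steps: (i) a Stein-type extension in the tangential variables from $D$ to $D' \Supset D$, applied fiberwise in $x_{n+1}$ (which preserves the weight and has $R$-independent norm); (ii) a smooth cut-off $\chi(x')$ vanishing near $\partial D'$; (iii) a cut-off $\eta(x_{n+1}/R)$ in the normal direction that equals $1$ on $[0,R]$ and vanishes on $[3R/2,\infty)$. The only $R$-dependence arises in $|\eta'(x_{n+1}/R)|^2 R^{-2}$, but this term is supported in $[R, 3R/2]$, where $x_{n+1}^{1-2s}$ is comparable to $R^{1-2s}$; an elementary integration then shows that the resulting gradient contribution is controlled by $\|\tilde u\|_{L^2(Q_{D,R}, x_{n+1}^{1-2s})}^2$ with a constant independent of $R$, so that $\|E\|$ is $R$-uniform. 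With this in hand, the three-factor composition bound closes the argument.
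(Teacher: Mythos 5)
Your overall route --- bounding the singular values on the Dirichlet subspace via the sequence-space characterization from Proposition \ref{prop:EigenfunctionForCSExtension} and the norm equivalence, and then transferring to the full space through a factorization $i = r\circ i_0\circ E$ together with the multiplicativity of singular values --- is exactly the argument the paper sketches (it describes the second step as ``an extension and restriction argument'', following \cite{BCR24}). Your bound $\sigma_k(i_0)\leq C (R')^{\frac{1}{n+1}}k^{-\frac{1}{n+1}}$ is correct (min--max applied to the coordinate subspaces of the eigenbasis only needs the norm equivalence, not exact diagonalization), and so is the reduction $\sigma_k(i)\leq \|r\|\,\sigma_k(i_0)\,\|E\|$.

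There is, however, a genuine gap in your construction of $E$: steps (i)--(iii) never assign values to the extended function on the slab $D'\times(R,R')$. The fiberwise Stein extension acts only in $x'$ and yields a function on $D'\times(0,R)$; multiplying by $\eta(x_{n+1}/R)$, which equals $1$ at $x_{n+1}=R$, does not extend it upwards, and if one tacitly extends by zero above height $R$ the product jumps across $D'\times\{R\}$ and hence does not lie in $H^1(Q_{D',R'},x_{n+1}^{1-2s})$, let alone in $H^1_0(\Gamma_{D',R'},x_{n+1}^{1-2s})$. The missing ingredient is a vertical extension across $\{x_{n+1}=R\}$ performed \emph{before} the cutoff $\eta$, e.g.\ even reflection $\tilde u(x',2R-x_{n+1})$ for $R<x_{n+1}<2R$ (any bounded one-dimensional extension works). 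With this insertion your estimate goes through essentially verbatim: the only new term, $R^{-2}\vert\eta'\vert^2$ times the reflected function, is supported in $D'\times[R,\tfrac{3R}{2}]$, where the weight $x_{n+1}^{1-2s}$ is comparable (with constants independent of $R$) to its values on $[\tfrac{R}{2},R]$, so it is bounded by $C R^{-2}\Vert \tilde u\Vert^2_{L^2(Q_{D,R},x_{n+1}^{1-2s})}$. This gives an $R$-uniform bound on $\Vert E\Vert$ for $R$ bounded away from zero, which is the regime in which the proposition is used (and indeed the only regime in which the stated bound can hold with an $R$-independent constant, since $\sigma_1(i)$ does not tend to $0$ as $R\to0$).
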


\begin{proof}
Since the proof is almost identical to the one of Proposition 4.3 and 4.4 in \cite{BCR24}, we rather just give a short outline of how to prove this. In a first step we show that the embedding $i': H_0^1(\Gamma_{D,R},x_{n+1}^{1-2s}) \to L^2(Q_{D,R},x_{n+1}^{1-2s})$ satisfies the singular value bounds $\sigma_k(i') \sim R^{\frac{1}{n+1}} k^{-\frac{1}{n+1}}$. We do this by using the isomorphism of $H_0^1(\Gamma_{D,R},x_{n+1}^{1-2s})$ to $h^1$ from the previous proposition and the isomorphism of $L^2(Q_{D,R},x_{n+1}^{1-2s})$ to $\ell^2$ and by using the diagonal structure of their identification. In a second step we then transfer the singular value bound for $i'$ to a singular value bound for $i$ by an extension and restriction argument.
\end{proof}

\subsection{Compression estimate for an auxiliary operator}

Before we state and prove the instability result for the fractional source-to-solution Calderón problem, we first derive a compression estimate for an auxiliary comparison operator. Our strategy will be to use a similar cut-off argument as in the stability reduction result. We first consider bounded domains and then use the exponential decay from Lemma \ref{lem:instability_estimate_upper_integral} to obtain an estimate for unbounded domains.

\begin{prop}\label{prop:instability_compression_bounded_domain}
Let $\Omega$ be open, bounded, Lipschitz. Let $O, O_+ \Subset \Omega$ be open, Lipschitz such that $O \Subset O_+$ and let $R>0$ be sufficiently large. Let $\widetilde{T}: \widetilde{\mathcal{H}}^{-s}(O) \to H^1((\Omega \setminus \overline{O_+}) \times (0,R), x_{n+1}^{1-2s})$ be defined by
\begin{align*}
\widetilde{T}(f) = \restr{\tilde{u}_0}{(\Omega \setminus \overline{O_+})\times(0,R)}
\end{align*}
where $\tilde{u}_0 \in H^1(\Omega \times \R_+, x_{n+1}^{1-2s})$ is the solution to
\begin{equation*}
\begin{cases}
\begin{alignedat}{2}
-\nabla\cdot x_{n+1}^{1-2s} \nabla \tilde{u}_0 &= 0 \quad &&\text{in } \Omega \times \R_+,\\
\tilde{u}_0 &= 0 \quad &&\text{on } \partial\Omega \times \R_+,\\
-\bar{c}_s \lim_{x_{n+1}\to0} x_{n+1}^{1-2s} \partial_{n+1} \tilde{u}_0 &= f \quad &&\text{on } \Omega \times \{0\}.
\end{alignedat}
\end{cases}
\end{equation*}
There exist constants $c,C>0$ such that
\begin{align*}
\sigma_k(\widetilde{T}) \leq C \exp \left( -c \left(\frac{k}{R}\right)^{\frac{1}{n+2}} \right).
\end{align*}
Moreover, there exists $\overline{c},\overline{C}>0$ such that
\begin{align*}
e_k(\widetilde{T}) \leq \overline{C}  \exp \left( - \overline{c} R^{-\frac{1}{n+3}} k^{\frac{1}{n+3}} \right).
\end{align*}
\end{prop}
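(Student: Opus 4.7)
The operator $\widetilde{T}$ involves solving the constant-coefficient Caffarelli-Silvestre extension problem with source $f$ supported in $O\times\{0\}$ and restricting the solution to $(\Omega\setminus\overline{O_+})\times(0,R)$, i.e., to a region strictly separated from the support of the data. The overall strategy is classical in compression estimates (cf.\ the arguments in \cite{KRS21, BCR24}): exploit the fact that away from the support of $f$ the solution $\tilde{u}_0$ is \emph{analytic} (up to the Neumann boundary $\{x_{n+1}=0\}$, by even reflection) and factor $\widetilde{T}$ through a very smooth space whose embedding into $H^1(\cdot, x_{n+1}^{1-2s})$ has quantitatively good singular-value decay.

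First, I would establish the following analytic-type a priori estimate: for any $\ell\in\N$,
\begin{align*}
\|\tilde{u}_0\|_{H^\ell((\Omega\setminus\overline{O_+})\times(0,R), x_{n+1}^{1-2s})} \leq C^{\ell}\ell^{\ell}\, \|f\|_{\widetilde{\mathcal{H}}^{-s}(O)},
\end{align*}
with $C$ depending on $\Omega,O,O_+,s,n$ but independent of $\ell$ and $R$. This follows from iterated Caccioppoli-type estimates for $\tilde{u}_0$, which solves the homogeneous equation $-\nabla\cdot x_{n+1}^{1-2s}\nabla \tilde{u}_0=0$ on $(\Omega\setminus\overline{O})\times\R_+$ with vanishing Neumann data on $(\Omega\setminus\overline{O})\times\{0\}$. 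An even reflection across $\{x_{n+1}=0\}$ promotes $\tilde{u}_0$ to a weak solution of a uniformly elliptic $A_2$-weighted equation on a neighbourhood of $(\Omega\setminus\overline{O_+})\times\R$, to which the standard nested-Caccioppoli scheme applies. The $R$-independence on the right-hand side will come from a preliminary cut-off argument combined with Lemma \ref{lem:instability_estimate_upper_integral} to handle any dependence beyond height $R$; alternatively, one works on slightly shrinking cylinders of radius $r/\ell$ at each Caccioppoli step, giving the Stirling-consistent $\ell^\ell$ growth.

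Second, I would factor $\widetilde{T}=j_{\ell}\circ \widetilde{T}_{\ell}$ where $\widetilde{T}_{\ell}: \widetilde{\mathcal{H}}^{-s}(O)\to H^{\ell}((\Omega\setminus\overline{O_+})\times(0,R), x_{n+1}^{1-2s})$ is bounded with operator norm $\|\widetilde{T}_{\ell}\|\leq C^{\ell}\ell^{\ell}$ by the previous step, and $j_{\ell}$ is the embedding into $H^1$. Extending Proposition \ref{prop:instability_singular_value_bounds} from first-order to $\ell$-th order regularity (via the same spectral identification with $h^\ell$ and Weyl asymptotics, which here yield $\lambda_k\sim R^{-2/(n+1)}k^{2/(n+1)}$), one obtains $\sigma_k(j_\ell)\leq C^{\ell}R^{\ell/(n+1)}k^{-(\ell-1)/(n+1)}$. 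Using the submultiplicativity $\sigma_k(\widetilde{T})\leq \|\widetilde{T}_{\ell}\|\sigma_k(j_\ell)$ and optimizing the free parameter $\ell$ to balance the factorial growth against the algebraic gain in $k$ (optimal choice $\ell\sim (k/R)^{1/(n+2)}$, as an optimization of $\ell\log\ell+\frac{\ell}{n+1}\log(R/k)$ suggests) will yield the desired bound $\sigma_k(\widetilde{T})\leq C\exp(-c(k/R)^{1/(n+2)})$. The entropy number estimate then follows immediately by invoking the second implication in Lemma \ref{lem:entropy_numbers_and_singular_values} with $\mu=1/(n+2)$ and $L\sim R^{-1/(n+2)}$, which produces the exponent $\frac{\mu}{1+\mu}=\frac{1}{n+3}$ and the correct $R$-prefactor.

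The main obstacle will be tracking the constants in the iterated Caccioppoli step: one must verify that the $A_2$-weighted interior regularity constants can indeed be iterated $\ell$ times while keeping the geometric loss (from shrinking cylinders) under the factorial $\ell^{\ell}$. A secondary technical point is ensuring that the singular-value bounds of the higher-order embedding $j_\ell$ carry the correct $R^{\ell/(n+1)}$ scaling uniformly in $\ell$, which requires extending the spectral/sequence-space identification to the $H^\ell$-scale via the eigenbasis $(\tilde{\varphi}_k)$ of Proposition \ref{prop:EigenfunctionForCSExtension}. Once these two ingredients are in place, the optimization in $\ell$ is routine and the entropy estimate is immediate from the abstract Lemma.
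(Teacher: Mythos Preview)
Your proposal and the paper share the same underlying mechanism---iterated Caccioppoli smoothing combined with Weyl-type singular value decay for the compact embedding---but the two organize the iteration differently. Rather than factoring $\widetilde T=j_\ell\circ\widetilde T_\ell$ through a single high-order space $H^\ell$, the paper writes $\widetilde T=T_N\circ\cdots\circ T_1\circ T_{\text{in}}$, where each $T_j$ is one step of $H^1\hookrightarrow L^2$ on a cylinder $Q_{D_{j-1},R_{j-1}}$ followed by a single Caccioppoli estimate back to $H^1$ on the slightly smaller $Q_{D_j,R_j}$ (with $D_j=\Omega\setminus\overline{O_j}$ and $R_j$ chosen to shrink by $\sim d/N$). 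One then invokes the multiplicativity $\sigma_k(\prod T_j)\le\prod_j\sigma_{k/N}(T_j)$ together with the already-established Proposition~\ref{prop:instability_singular_value_bounds}, and optimizes in $N\sim(k/R)^{1/(n+2)}$.

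This buys the paper two concrete simplifications over your route. First, it never needs to define or spectrally characterize higher-order weighted spaces $H^\ell(\cdot,x_{n+1}^{1-2s})$; only the first-order embedding is used, $N$ times. Second, and more substantively for your plan: each domain $D_j$ carries the \emph{Lipschitz} boundary $\partial\Omega$, where $\tilde u_0$ satisfies Dirichlet conditions. A single Caccioppoli step up to $\partial\Omega$ is unproblematic, but iterating to genuine $H^\ell$ regularity up to a merely Lipschitz boundary is not---tangential derivatives of $\tilde u_0$ do not vanish on $\partial\Omega$, so the usual differentiate-and-repeat scheme fails there. Your even reflection handles $\{x_{n+1}=0\}$ but not $\partial\Omega$. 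The paper's organization sidesteps this entirely. A side remark: with the bounds you state, the optimization in $\ell$ actually yields $\ell\sim(k/R)^{1/(n+1)}$ and hence an exponent $1/(n+1)$ rather than $1/(n+2)$; the paper's extra $1$ in the denominator comes precisely from the $N^{1/(n+1)}$ loss in distributing $k$ across the $N$ factors via $\sigma_{k/N}$, which your single-embedding factorization would avoid if the $H^\ell$ step were made rigorous.
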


\begin{proof}
The strategy here is along the same lines as the argument in the proof of Theorem 1.2 in \cite{BCR24}.

We set $d := \dist(O, \partial O_+)$. For $N\in\N$ suitably determined below we consider a sequence of nested sets $O_j$ such that $O=O_0 \subset O_1 \subset \dots \subset O_{N} =O_+$ and $\dist(\overline{O_j}, \partial O_{j+1}) \geq \frac{d}{2N}$ and a sequence of $R_j$ such that $R_j = R+\frac{d}{N}(N-j)$ for $j\in\{0,1,\dots,N\}$. Define $D_j := \Omega \setminus \overline{O_j}$. In this way we have $Q_{D_{j+1},R_{j+1}} \subset Q_{D_j,R_j}$ with $\dist(Q_{D_{j+1},R_{j+1}}, \partial Q_{D_j,R_j}) \geq \frac{d}{2N}$.

We then set
\begin{align*}
\widetilde{T} = T_N \circ T_{N-1} \circ \dots \circ T_1 \circ T_{\text{in}},
\end{align*}
where the operators in the factorization are defined as follows:
\begin{itemize}
\item The initial operator $T_{\text{in}}$ is given by
\begin{align*}
T_{\text{in}}: \widetilde{\mathcal{H}}^{-s}(O) \to H^1(Q_{D_0,R_0}, x_{n+1}^{1-2s}), \qquad f \mapsto \restr{\tilde{u}_0}{Q_{D_0,R_0}}.
\end{align*}
\item The iteration operators $T_j$ for $j\in\{1,\dots,N\}$ are defined by
\begin{align*}
T_j: H^1(Q&_{D_{j-1},R_{j-1}}, x_{n+1}^{1-2s}) \to L^2(Q_{D_{j-1},R_{j-1}}, x_{n+1}^{1-2s}) \to H^1(Q_{D_j,R_j}, x_{n+1}^{1-2s}),\\
&\restr{\tilde{u}_0}{Q_{D_{j-1},R_{j-1}}} \mapsto \restr{\tilde{u}_0}{Q_{D_{j-1},R_{j-1}}} \mapsto \restr{\tilde{u}_0}{Q_{D_j,R_j}}.
\end{align*}
For this we applied Caccioppoli's inequality (see Proposition 4.5 in \cite{BCR24} with $q=0$) which yields that
\begin{align*}
\Vert \tilde{u}_0 \Vert_{H^1(Q_{D_j,R_j}, x_{n+1}^{1-2s})} \leq C \frac{2N}{d} \Vert \tilde{u}_0 \Vert_{L^2(Q_{D_{j-1},R_{j-1}}, x_{n+1}^{1-2s})}.
\end{align*}
Relying on the singular value bounds from Proposition \ref{prop:instability_singular_value_bounds} we infer
\begin{align*}
\sigma_k(T_j) \leq C \frac{2N}{d} R^{\frac{1}{n+1}} k^{-\frac{1}{n+1}}.
\end{align*}
Here we used that $R_j \sim R$ for all $j \in \{0,1,\dots,N\}$.
\end{itemize}
Knowing that the singular values for $T_{\text{in}}$ are bounded by a constant, we use the multiplicative property of the singular values (recall Lemma \ref{lem:entropy_numbers_properties} (i)) to obtain the overall entropy estimate for the operator $\widetilde{T}$
\begin{align*}
\sigma_k(\widetilde{T}) &\leq \sigma_1(T_{\text{in}}) \prod_{j=1}^N \sigma_{k/N}(T_j) \leq C \left( CR^{\frac{1}{n+1}} \left(\frac{2N}{d}\right) \left( \frac{k}{N} \right)^{-\frac{1}{n+1}} \right)^N \leq C \left( \frac{2C}{d} R^{\frac{1}{n+1}} \frac{N^{\frac{n+2}{n+1}}}{k^{\frac{1}{n+1}}} \right)^N.
\end{align*}
Optimizing $N$ in terms of $k$ and $R$, we choose for some $\rho>0$ small enough
\begin{align*}
N = \rho R^{-\frac{1}{n+2}} k^{\frac{1}{n+2}}.
\end{align*}
This indeed yields that
\begin{align*}
\sigma_k(\widetilde{T}) \leq C \left( C\rho^{\frac{n+2}{n+1}} \right)^{\rho R^{-\frac{1}{n+2}}k^{\frac{1}{n+2}}} \leq C \exp\left( -c \left( \frac{k}{R} \right)^{\frac{1}{n+2}} \right)
\end{align*}
for some appropriately chosen constants $C,c>0$.

The statement about the entropy numbers $e_k(\widetilde{T})$ then follows by an application of the second implication of Lemma \ref{lem:entropy_numbers_and_singular_values} with $L=R^{-\frac{1}{n+2}}$ and $\mu=\frac{1}{n+2}$ noting that the operator $\widetilde{T}$ is compact.
\end{proof}

With the above results for the upper and lower integral bounds in hand, we now combine these by optimizing in the parameter $R>0$.

\begin{prop}\label{prop:instability_compression_unbounded_domain}
Let $\Omega$, $O$, $O_+$ and $\tilde{u}_0$ be as in Proposition \ref{prop:instability_compression_bounded_domain}. Define $T: \widetilde{\mathcal{H}}^{-s}(O) \to H^1((\Omega \setminus \overline{O_+}) \times \R_+, x_{n+1}^{1-2s})$ by
\begin{align*}
T(f) = \restr{\tilde{u}_0}{(\Omega \setminus \overline{O_+})\times\R_+}.
\end{align*}
There exist constants $c,C>0$ such that
\begin{align}\label{eq:instability_singular_value_bound}
\sigma_k(T) \leq C \exp \left( -c k^{\frac{1}{n+3}} \right).
\end{align}
\end{prop}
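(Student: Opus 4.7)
The plan is to split $T$ into a low-part and high-part with respect to the vertical variable and combine the bounded-height compression estimate from Proposition \ref{prop:instability_compression_bounded_domain} with the exponential decay at infinity from Lemma \ref{lem:instability_estimate_upper_integral} by optimizing in the cut-off height $R$.

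More concretely, I would fix a smooth cut-off $\eta \in C^\infty(\R)$ with $\eta \equiv 1$ on $(-\infty,0]$, $\eta \equiv 0$ on $[1,\infty)$ and $\vert \eta' \vert \leq C$, set $\eta_R(t) := \eta(t-R)$, and split
\begin{align*}
T(f) \;=\; \eta_R \cdot \tilde{u}_0\big|_{(\Omega\setminus\overline{O_+})\times\R_+} \;+\; (1-\eta_R) \cdot \tilde{u}_0\big|_{(\Omega\setminus\overline{O_+})\times\R_+} \;=:\; T_R(f) + T_R^c(f).
\end{align*}
For $T_R$, the output is supported in $(\Omega \setminus \overline{O_+})\times (0,R+1)$ and the $H^1$-norm of $\eta_R \tilde{u}_0$ is controlled, via the product rule, by the $H^1((\Omega \setminus \overline{O_+})\times (0,R+1), x_{n+1}^{1-2s})$-norm of $\tilde{u}_0$ (with a constant independent of $R$). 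Hence $T_R$ factors through the operator $\widetilde T$ from Proposition \ref{prop:instability_compression_bounded_domain} (applied with $R+1$ in place of $R$), so that
\begin{align*}
\sigma_k(T_R) \;\leq\; C \exp\!\left(-c (k/R)^{1/(n+2)}\right).
\end{align*}
For $T_R^c$, the function $(1-\eta_R)\tilde{u}_0$ is supported in $x_{n+1}\geq R$ and Lemma \ref{lem:instability_estimate_upper_integral} (together with a Caccioppoli-type control of the derivative of the cut-off) yields the operator norm bound
\begin{align*}
\Vert T_R^c \Vert_{\widetilde{\mathcal{H}}^{-s}(O)\to H^1((\Omega\setminus\overline{O_+})\times\R_+, x_{n+1}^{1-2s})} \;\leq\; C e^{-\sqrt{\lambda_1}\,R},
\end{align*}
which in particular gives $\sigma_k(T_R^c) \leq C e^{-\sqrt{\lambda_1} R}$ for all $k\geq 1$.

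I would then invoke the standard subadditivity of singular values in Hilbert spaces, $\sigma_{2k-1}(A+B) \leq \sigma_k(A) + \sigma_k(B)$, to obtain
\begin{align*}
\sigma_{2k-1}(T) \;\leq\; \sigma_k(T_R) + \sigma_k(T_R^c) \;\leq\; C\Big( e^{-c(k/R)^{1/(n+2)}} + e^{-\sqrt{\lambda_1}R} \Big).
\end{align*}
Equating the two exponents requires $R \sim k^{1/(n+3)}$: indeed with this choice $(k/R)^{1/(n+2)} \sim k^{(n+2)/((n+2)(n+3))} = k^{1/(n+3)}$ and $\sqrt{\lambda_1}R \sim k^{1/(n+3)}$, which yields $\sigma_{2k-1}(T) \leq C e^{-ck^{1/(n+3)}}$ and hence \eqref{eq:instability_singular_value_bound} after relabelling the index and absorbing constants.

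The main obstacle I anticipate is making the cut-off splitting clean enough so that both the $H^1$-norm of $T_R$ and of $T_R^c$ are controlled uniformly in $R$ (especially for $T_R^c$, where the derivative $\eta_R'$ lives precisely in the transition layer $x_{n+1} \in (R,R+1)$ and one must verify that Lemma \ref{lem:instability_estimate_upper_integral} applies there to give the exponential decay). All other steps — invoking Proposition \ref{prop:instability_compression_bounded_domain}, the singular value subadditivity, and the optimization in $R$ — are essentially bookkeeping once this is in place.
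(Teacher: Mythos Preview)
Your approach is correct and in fact slightly more direct than the paper's. The paper splits $T$ via sharp restriction to $(0,R)$ and $(R,\infty)$ (interpreting the target as a direct sum), works with \emph{entropy numbers} throughout---using the entropy bound $e_k(\widetilde{T}) \leq \overline{C}\exp(-\overline{c}R^{-1/(n+3)}k^{1/(n+3)})$ from Proposition~\ref{prop:instability_compression_bounded_domain} together with the subadditivity $e_{j+k-1}(\widetilde{T}+\bar{T}) \leq e_j(\widetilde{T}) + e_k(\bar{T})$---optimizes with $R = k^{1/(n+4)}$ to obtain $e_k(T) \leq C\exp(-ck^{1/(n+4)})$, and only at the end converts back to singular values via Lemma~\ref{lem:entropy_numbers_and_singular_values} to recover the exponent $1/(n+3)$. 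Your route stays with singular values the whole way: the bound $\sigma_k(\widetilde{T}) \leq C\exp(-c(k/R)^{1/(n+2)})$ from Proposition~\ref{prop:instability_compression_bounded_domain}, Weyl subadditivity $\sigma_{2k-1}(A+B)\leq\sigma_k(A)+\sigma_k(B)$, and the optimization $R\sim k^{1/(n+3)}$ give the claim in one shot, avoiding the round trip through entropy numbers. Your smooth cutoff also makes the decomposition $T = T_R + T_R^c$ a literal sum of operators into the same target space, which is a bit cleaner than the paper's implicit direct-sum identification. The obstacle you flag is not serious: the cutoff-derivative term is bounded by $\|\tilde{u}_0\|_{L^2(\Omega\times(R,R+1),\,x_{n+1}^{1-2s})}^2$, which is already part of the quantity controlled by Lemma~\ref{lem:instability_estimate_upper_integral}.
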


\begin{proof}
Let $R>0$ to be determined later. We know by Lemma \ref{lem:instability_estimate_upper_integral} that
\begin{align}\label{eq:instabilty_estimate_upper_integral}
\Vert \tilde{u}_0 \Vert_{H^1(\Omega\times(R,\infty), x_{n+1}^{1-2s})} \leq C \Vert f \Vert_{\mathcal{H}^{-s}(\Omega)} e^{-cR}.
\end{align}
In particular, by \cite[Lemma 3.8(i)]{KRS21} this entails that the operator $\bar{T}:\widetilde{\mathcal{H}}^{-s}(O) \to H^1((\Omega \setminus \overline{O_+})\times(R,\infty), x_{n+1}^{1-2s})$ satisfies the entropy bound
\begin{align*}
e_k(\bar{T}) \leq e_1(\bar{T})  \leq \|\bar{T}\|_{Op} \leq C e^{-c R},
\end{align*}
where $\|\bar{T}\|_{Op} := \|\bar{T}\|_{\widetilde{\mathcal{H}}^{-s}(O) \to H^1((\Omega \setminus \overline{O_+})\times(R,\infty), x_{n+1}^{1-2s})}$.

Next, let $\widetilde{T}: \widetilde{\mathcal{H}}^{-s}(O) \to H^1((\Omega \setminus \overline{O_+})\times(0,R), x_{n+1}^{1-2s})$ be defined as in the previous proposition. It holds that
\begin{align*}
e_k(\widetilde{T}) \leq \overline{C} \exp \left(-\overline{c}R^{-\frac{1}{n+3}}k^{\frac{1}{n+3}}\right) =: \delta_{R,k}.
\end{align*}

By the additive properties of entropy numbers, see e.g. \cite[Lemma 3.8(iii)]{KRS21}, we obtain that for $T= \widetilde{T} + \bar{T}$ it holds
\begin{align*}
e_k(T) = e_k(\widetilde{T} + \bar{T}) \leq e_{[k/2]}(\widetilde{T}) + e_{[k/2]}(\bar{T}) \leq \delta_{R,[\frac{k}{2}]} + C e^{-c R} ,
\end{align*}
where $[\cdot]$ denotes the floor operation.

We optimize $R$ in terms of $k$ by choosing $R = k^{\frac{1}{n+4}}$ (observe that for $k$ large, $R$ will also be large). Thus, in total we get
\begin{align*}
e_k(T) \leq C \exp \left( -c k^{\frac{1}{n+4}} \right).
\end{align*}
Since $e_k(T) \to 0$ as $k\to\infty$, by Lemma \ref{lem:entropy_numbers_properties}, $T$ is compact and, thus, by Lemma \ref{lem:entropy_numbers_and_singular_values}, we are able to infer
\begin{align*}
\sigma_k(T) \leq C \exp \left( -c k^{\frac{1}{n+3}} \right).
\end{align*}
This finishes the proof.
\end{proof}

\subsection{Instability result for spectral fractional source-to-solution Calderón problem}

With the preliminary work in our hands, we derive the following instability estimate.

\begin{prop}
Let $\Omega, A, O \subset \R^n$ be smooth, open, bounded, connected sets such that $A \Subset \Omega$, $\Omega \setminus \overline{A}$ is connected and $O \Subset \Omega \setminus \overline{A}$. Let $r_0 >0$ and $\theta \in (0,1)$ and define $K:= \{ a \in C^1(\Omega, \R^{n \times n}): \Vert a - \Id_{n \times n} \Vert_{C^1(\Omega)} \leq r_0, \ \theta \leq a \leq \theta^{-1}, \ a = \Id_{n \times n} \text{ in } \Omega \setminus A\}$. Let $\tilde{L}_{s,O}^a$ be the source-to-solution maps as in \eqref{eq:def_nonlocal_StoSOperator_spectral}.

There exists $c>0$ such that for any $\varepsilon \in (0,r_0)$ small enough, there exists $a_1, a_2 \in K$ with
\begin{align*}
\Vert \tilde{L}_{s,O}^{a_1} - \tilde{L}_{s,O}^{a_2} \Vert_{\widetilde{\mathcal{H}}^{-s}(O) \to \mathcal{H}^s(O)} &\leq \exp\left( -c\varepsilon^{-\frac{1}{2+\frac{7}{n}}} \right),\\
\Vert a_1 - a_2 \Vert_{L^\infty(\Omega)} &\geq \varepsilon.
\end{align*}
In particular, if one has the stability property
\begin{align*}
\Vert a_1 - a_2 \Vert_{L^\infty(\Omega)} \leq \omega( \Vert \tilde{L}_{s,O}^{a_1} - \tilde{L}_{s,O}^{a_2} \Vert_{\widetilde{\mathcal{H}}^{-s}(O) \to \mathcal{H}^s(O)} ),
\end{align*}
then necessarily $\omega(t) \gtrsim \vert \log t \vert^{-(2+\frac{7}{n})}$ for $t$ small.
\end{prop}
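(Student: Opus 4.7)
The plan is to apply the abstract instability theorem \ref{thm:[KRS21]Thm5.14} to $F(a) = \tilde{L}_{s,O}^a$, using the exponential compression from Proposition \ref{prop:instability_compression_unbounded_domain} to verify its decay hypothesis and the classical entropy bound $e_k(C^1(\Omega)\hookrightarrow L^\infty(\Omega)) \gtrsim k^{-1/n}$ on $n$-dimensional Lipschitz domains to verify its domain hypothesis.

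First, I would fix an open, Lipschitz set $O_+$ with $O \Subset O_+ \Subset \Omega \setminus \overline{A}$. Since every $a \in K$ agrees with $\Id_{n\times n}$ on $O_+$, the Caccioppoli iteration driving Proposition \ref{prop:instability_compression_bounded_domain} lives in nested subsets of $O_+$ where the metric is the identity, while Lemma \ref{lem:instability_estimate_upper_integral} only requires the uniform lower bound on the first Dirichlet eigenvalue of $-\nabla\cdot a \nabla$ on $\Omega$ (guaranteed by (A1') and the class $K$). Hence Proposition \ref{prop:instability_compression_unbounded_domain} extends uniformly: the source-to-CS-extension operator $\mathcal{T}^a: \widetilde{\mathcal{H}}^{-s}(O) \to H^1(A \times \R_+, x_{n+1}^{1-2s})$, $f \mapsto \restr{\tilde{u}_a^f}{A \times \R_+}$ (obtained by restricting the $T$ of that proposition from $(\Omega \setminus \overline{O_+}) \times \R_+$ to the smaller region $A \times \R_+$), satisfies $\sigma_k(\mathcal{T}^a) \leq C \exp(-c k^{1/(n+3)})$ with $C, c$ independent of $a \in K$.

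Next, I would record the standard Alessandrini-type bilinear identity for the fractional source-to-solution maps,
\begin{align*}
\langle (\tilde{L}_{s,O}^{a_1} - \tilde{L}_{s,O}^{a_2}) f_1, f_2 \rangle = \bar{c}_s \int_{A \times \R_+} x_{n+1}^{1-2s} \, \nabla \tilde{u}_{a_1}^{f_1} \cdot (\tilde{a}_2 - \tilde{a}_1) \nabla \tilde{u}_{a_2}^{f_2} \, dx,
\end{align*}
obtained by subtracting the two weak CS-extension formulations and exploiting that $\tilde{a}_1 - \tilde{a}_2$ is supported in $\overline{A} \times \R_+$. Combined with the uniform singular value decay of $\mathcal{T}^{a_1}$ and $\mathcal{T}^{a_2}$, this identity controls the singular values of $F(a_1) - F(a_2)$ by $C\Vert a_1 - a_2\Vert_{L^\infty} \exp(-c k^{1/(n+3)})$. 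A common-ONB realization is then obtained either via Lemma \ref{lem:entropy_numbers_and_singular_values} (working with entropy numbers of $F(a)$ in place of singular values on a fixed basis) or, equivalently, by taking $(\varphi_k)$ to be the SVD basis of $\mathcal{T}^{\Id}$ and perturbing around it using $\Vert a - \Id\Vert_{C^1} \leq r_0$; either way, $\Vert F(a)\varphi_k\Vert,\ \Vert F(a)^*\varphi_k\Vert \leq C e^{-\rho k^{1/(n+3)}}$ uniformly in $a\in K$, which is the required decay with $\mu = 1/(n+3)$.

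Finally, I would invoke Theorem \ref{thm:[KRS21]Thm5.14} with $X = L^\infty(\Omega)$, $X_1 = C^1(\Omega)$ (intersected with the constraints defining $K$), $Y = \widetilde{\mathcal{H}}^{-s}(O)$, $\mu = 1/(n+3)$, and entropy exponent $m = 1/n$ (the classical Birman--Solomyak-type estimate for the $C^1 \hookrightarrow L^\infty$ embedding). The resulting instability exponent computes to
\begin{align*}
\frac{\mu}{m(\mu+2)} = \frac{1/(n+3)}{(1/n)\bigl(1/(n+3)+2\bigr)} = \frac{n}{2n+7} = \frac{1}{2+7/n},
\end{align*}
matching the claim exactly, and the logarithmic lower bound on $\omega$ follows directly from the second assertion of Theorem \ref{thm:[KRS21]Thm5.14}. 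The main obstacle is the transfer from uniform singular value decay of the auxiliary operators $\mathcal{T}^a$ to the common-basis formulation required by Theorem \ref{thm:[KRS21]Thm5.14}: this is resolved by the entropy-number restatement of the hypothesis, but requires care to verify that the constants in the compression iteration (Caccioppoli in Proposition \ref{prop:instability_compression_bounded_domain}) are truly uniform in $a$, which uses crucially that the iteration takes place in the sub-annulus inside $O_+$ where $a = \Id$.
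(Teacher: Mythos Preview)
Your overall plan and the final parameter computation are correct, but there is a genuine gap in how you set up the application of Theorem~\ref{thm:[KRS21]Thm5.14}. You apply the theorem with $F(a) = \tilde{L}_{s,O}^a$ and claim $\|F(a)\varphi_k\|_{\mathcal{H}^s(O)} \leq C e^{-\rho k^{1/(n+3)}}$. This is false: for any orthonormal basis $(\varphi_k)$ of $\widetilde{\mathcal{H}}^{-s}(O)$ one has $\|\tilde{L}_{s,O}^a \varphi_k\|_{\mathcal{H}^s(O)} \sim 1$, since $\tilde{L}_{s,O}^a$ restricted to $O$ contains the diagonal part of $(L_a)^{-s}$ and is not smoothing there. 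The hypothesis of the theorem cannot be verified for $F = \tilde{L}_{s,O}^a$ itself.

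The fix, which is precisely what the paper does, is to work with the \emph{difference} $\Gamma(a) := \tilde{L}_{s,O}^a - \tilde{L}_{s,O}^{\Id}$. Since $\Gamma(a_1) - \Gamma(a_2) = \tilde{L}_{s,O}^{a_1} - \tilde{L}_{s,O}^{a_2}$, the conclusion is unchanged, but now the decay hypothesis is verifiable. The paper obtains it by a direct linear comparison rather than your bilinear Alessandrini route: writing $\tilde{w} = \tilde{u}_a^f - \tilde{u}_0^f$, the function $\tilde{w}$ solves $-\nabla\cdot x_{n+1}^{1-2s}\tilde{a}\nabla\tilde{w} = -\nabla\cdot x_{n+1}^{1-2s}(\Id - \tilde{a})\nabla\tilde{u}_0^f$ with vanishing Neumann data, and the energy estimate together with the support condition $\supp(\tilde{a}-\Id) \subset A \subset \Omega\setminus\overline{O_+}$ gives
\[
\|\Gamma(a)f\|_{\mathcal{H}^s(O)} \leq C\,\|\tilde{u}_0^f\|_{H^1((\Omega\setminus\overline{O_+})\times\R_+,\,x_{n+1}^{1-2s})} = C\,\|Tf\|,
\]
where $T$ is the \emph{fixed} identity-metric operator of Proposition~\ref{prop:instability_compression_unbounded_domain}. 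Taking $(\varphi_k)$ to be the SVD basis of $T$ then yields $\|\Gamma(a)\varphi_k\| \leq C\sigma_k(T) \leq C e^{-ck^{1/(n+3)}}$ uniformly in $a\in K$, and self-adjointness of $\Gamma(a)$ handles the adjoint. This bypasses your detour through the $a$-dependent operators $\mathcal{T}^a$ and the vague ``perturbing around'' step: only the identity-metric compression is ever needed, because the comparison has already reduced everything to $\tilde{u}_0^f$.
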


The proof relies on the by now standard comparison technique already used in, e.g., \cite{M01, KRS21, BCR24}.

\begin{proof}
We seek to apply Theorem \ref{thm:[KRS21]Thm5.14}. It is known that the embedding $i: C^1(\Omega, \R^{n \times n}) \to L^\infty(\Omega, \R^{n \times n})$ satisfies $e_k(i) \gtrsim k^{-1/n}$ (see for example Lemma 2 in \cite{M01} with slight adaptations).

We define the operator $\Gamma(a) := \tilde{L}_{s,O}^a - \tilde{L}_{s,O}^{\Id_{n \times n}}: \widetilde{\mathcal{H}}^{-s}(O) \to \mathcal{H}^s(O)$. Let $\tilde{u}^f$ be a solution to
\begin{equation*}
\begin{cases}
\begin{alignedat}{2}
-\nabla\cdot x_{n+1}^{1-2s} \tilde{a} \nabla \tilde{u}^f &= 0 \quad &&\text{in } \Omega \times \R_+,\\
\tilde{u}^f &= 0 \quad &&\text{on } \partial\Omega\times\R_+,\\
-\bar{c}_s \lim_{x_{n+1}\to0} x_{n+1}^{1-2s} \partial_{n+1} \tilde{u}^f &= f \quad &&\text{on } \Omega \times \{0\}.
\end{alignedat}
\end{cases}
\end{equation*}
Recall that $\tilde{a} = \begin{pmatrix} a & 0 \\ 0 & 1 \end{pmatrix}$. We let $\tilde{u}^f_0$ be the solution with $a=\Id_{n \times n}$.  Consequently, $\tilde{w}:= \tilde{u}^f-\tilde{u}_0^f$ solves
\begin{equation*}
\begin{cases}
\begin{alignedat}{2}
-\nabla\cdot x_{n+1}^{1-2s} \tilde{a} \nabla \tilde{w} &= - \nabla \cdot x_{n+1}^{1-2s} (\Id_{(n+1) \times (n+1)} - \tilde{a}) \nabla \tilde{u}_0^f \quad &&\text{in } \Omega \times \R_+,\\
\tilde{w} &= 0 \quad &&\text{on } \partial\Omega\times\R_+,\\
\lim_{x_{n+1}\to0} x_{n+1}^{1-2s} \partial_{n+1} \tilde{w} &= 0 \quad &&\text{on } \Omega \times \{0\}.
\end{alignedat}
\end{cases}
\end{equation*}
We note that $\Gamma(a)f := (\tilde{L}_{s,O}^a - \tilde{L}_{s,O}^{\Id_{n \times n}})f = \restr{\left(\tilde{u}^f(\cdot,0) - \tilde{u}_0^f(\cdot,0)\right)}{O} = \restr{\tilde{w}(\cdot,0)}{O}$. Let $O_+$ be open, Lipschitz such that $O \Subset O_+ \Subset \Omega \setminus A$. Then we have the following sequence of inequalities
\begin{equation}\label{eq:instability_comparison_estimate}
\begin{aligned}
\Vert \tilde{w} (\cdot,0) &\Vert_{\mathcal{H}^s(O)} \leq \Vert \tilde{w}(\cdot,0) \Vert_{\mathcal{H}^s(\Omega)} \leq C \Vert \tilde{w} \Vert_{H^1(\Omega\times\R_+, x_{n+1}^{1-2s})}\\
&\leq C \Vert \nabla \cdot x_{n+1}^{1-2s} (\Id_{(n+1) \times (n+1)} - \tilde{a}) \nabla \tilde{u}_0^f \Vert_{H^{-1}(\Omega\times\R_+,x_{n+1}^{1-2s})}\\
&\leq C \Vert (\Id_{(n+1) \times (n+1)} - \tilde{a}) \nabla \tilde{u}_0^f \Vert_{L^2(\Omega\times\R_+, x_{n+1}^{1-2s})}\\
&\leq C \Vert (\Id_{(n+1) \times (n+1)} - \tilde{a}) \nabla \tilde{u}_0^f \Vert_{L^2((\Omega \setminus \overline{O_+}) \times \R_+,x_{n+1}^{1-2s})}\\
&\leq C \Vert \tilde{a}-\Id_{(n+1) \times (n+1)} \Vert_{L^\infty((\Omega \setminus \overline{O_+}) \times \R_+)} \Vert \tilde{u}_0^f \Vert_{H^1((\Omega \setminus \overline{O_+}) \times \R_+, x_{n+1}^{1-2s})}\\
&\leq C \Vert \tilde{u}_0^f \Vert_{H^1((\Omega \setminus \overline{O_+}) \times \R_+, x_{n+1}^{1-2s})},
\end{aligned}
\end{equation}
where we first have used trace estimates (see Theorem 2.5 in \cite{ST10}), the well-posedness estimate for $\tilde{w}$ and then the support condition on $a-\Id_{n \times n}$. Here the constant $C$ is uniform over $K$.

Let $T: \widetilde{\mathcal{H}}^{-s}(O) \to H^1((\Omega \setminus \overline{O_+})\times\R_+, x_{n+1}^{1-2s})$, $f \mapsto \restr{\tilde{u}_0^f}{(\Omega \setminus \overline{O_+})\times\R_+}$ be the operator as in Proposition \ref{prop:instability_compression_unbounded_domain} and let $(\phi_k)_{k \in \N}$ be the singular value basis for $T$, of which the corresponding eigenvalues satisfy the estimate \eqref{eq:instability_singular_value_bound}. By the comparison argument from \eqref{eq:instability_comparison_estimate} and by Proposition \ref{prop:instability_compression_unbounded_domain} we have
\begin{align*}
\Vert \Gamma(a) \phi_k \Vert_{\mathcal{H}^s(O)} \leq C \Vert \tilde{u}_0^{\phi_k} \Vert_{H^1((\Omega \setminus \overline{O_+})\times\R_+, x_{n+1}^{1-2s})} \leq C \exp\left(-\overline{c}k^{\frac{1}{n+3}}\right).
\end{align*}
Since $\Gamma(a)$ is self-adjoint, the same bound holds for the adjoint operator $\Gamma(a)^*$. Then, applying Theorem \ref{thm:[KRS21]Thm5.14} with $F = \Gamma$, $K$ as above, $m = \frac{1}{n}$, $\rho = \overline{c}$ and $\mu = \frac{1}{n+3}$ yields the existence of some $c>0$ such that for any $0 < \varepsilon <r_0$ small enough there are metrics $a_1, a_2 \in K$ with
\begin{align*}
\Vert L_{s,O}^{a_1} - L_{s,O}^{a_2} \Vert_{\widetilde{\mathcal{H}}^{-s}(O) \to \mathcal{H}^s(O)} &\leq \exp\left( -c\varepsilon^{-\frac{1}{2+\frac{7}{n}}} \right),\\
\Vert a_1 - a_2 \Vert_{L^\infty(\Omega)} &\geq \varepsilon.
\end{align*}
This concludes the proof.
\end{proof}

\begin{rmk}
We expect that with the same strategy one can also prove a corresponding instability result for the source-to-solution Calderón problem in the closed manifold setting but, due to the length of the article, we do not discuss this further at this point.
\end{rmk}

\section*{Acknowledgements}
Both authors gratefully acknowledge funding by the Deutsche Forschungsgemeinschaft (DFG, German Research Foundation) under Germany's Excellence Strategy -- EXC-2047/1 -- 390685813 and through the CRC 1720.

\bibliographystyle{alpha}
\bibliography{bibliography}

\newcommand{\etalchar}[1]{$^{#1}$}
\begin{thebibliography}{CWHM15}

\bibitem[AD17]{AD16}
Nicola Abatangelo and Louis Dupaigne.
\newblock Nonhomogeneous boundary conditions for the spectral fractional
  laplacian.
\newblock {\em Annales de l'Institut Henri Poincare (C) Non Linear Analysis},
  34(2):439--467, 2017.

\bibitem[Ale88]{A88}
Giovanni Alessandrini.
\newblock Stable determination of conductivity by boundary measurements.
\newblock {\em Applicable Analysis}, 27(1-3):153--172, 1988.

\bibitem[ARRV09]{ARRV09}
Giovanni Alessandrini, Luca Rondi, Edi Rosset, and Sergio Vessella.
\newblock The stability for the {Cauchy} problem for elliptic equations.
\newblock {\em Inverse Problems}, 25(12):123004, nov 2009.

\bibitem[Bak13]{B13}
Laurent Bakri.
\newblock Carleman estimates for the {Schr{\"o}dinger} operator. {A}pplications
  to quantitative uniqueness.
\newblock {\em Communications in Partial Differential Equations}, 38(1):69--91,
  2013.

\bibitem[BCR25]{BCR24}
Hendrik Baers, Giovanni Covi, and Angkana R\"{u}land.
\newblock On instability properties of the fractional calderón problem.
\newblock {\em SIAM Journal on Mathematical Analysis}, 57(5):5454--5493, 2025.

\bibitem[BGU21]{BGU21}
Sombuddha Bhattacharyya, Tuhin Ghosh, and Gunther Uhlmann.
\newblock Inverse problems for the fractional-{Laplacian} with lower order
  non-local perturbations.
\newblock {\em Transactions of the American Mathematical Society},
  374(5):3053--3075, 2021.

\bibitem[BL76]{BL76}
J\"oran Bergh and J\"orgen L\"ofstr\"om.
\newblock {\em Interpolation spaces. {A}n introduction}, volume No. 223 of {\em
  Grundlehren der Mathematischen Wissenschaften}.
\newblock Springer-Verlag, Berlin-New York, 1976.

\bibitem[BSV15]{BSV15}
Matteo Bonforte, Yannick Sire, and Juan~Luis V{\'a}zquez.
\newblock Existence, uniqueness and asymptotic behaviour for fractional porous
  medium equations on bounded domains.
\newblock {\em Discrete and Continuous Dynamical Systems}, 35(12):5725--5767,
  2015.

\bibitem[CdHS0]{CdHS24}
Giovanni Covi, Maarten de~Hoop, and Mikko Salo.
\newblock Geometrical optics for the fractional {H}elmholtz equation and
  applications to inverse problems.
\newblock {\em Mathematical Models and Methods in Applied Sciences},
  0(0):1--38, 0.

\bibitem[CdHS23]{CdHS23}
Giovanni Covi, Maarten de~Hoop, and Mikko Salo.
\newblock Uniqueness in an inverse problem of fractional elasticity.
\newblock {\em Proceedings of the Royal Society A}, 479(2278):20230474, 2023.

\bibitem[CFSS24]{CFSS24}
Michele Caselli, Enric Florit-Simon, and Joaquim Serra.
\newblock Fractional {S}obolev spaces on {R}iemannian manifolds.
\newblock {\em Math. Ann.}, 390(4):6249--6314, 2024.

\bibitem[CGRU23]{CGRU23}
Giovanni Covi, Tuhin Ghosh, Angkana Rüland, and Gunther Uhlmann.
\newblock A reduction of the fractional {Calder\'on} problem to the local
  {Calder\'on} problem by means of the {Caffarelli}-{Silvestre} extension.
\newblock {\em arXiv preprint arXiv:2305.04227}, 2023.

\bibitem[Chi23]{C23}
Chun-Kai~Kevin Chien.
\newblock An inverse problem for fractional connection {Laplacians}.
\newblock {\em The Journal of Geometric Analysis}, 33(12):375, 2023.

\bibitem[CLR20]{CLR20}
Mihajlo Ceki{\'c}, Yi-Hsuan Lin, and Angkana R{\"u}land.
\newblock The {Calder{\'o}n} problem for the fractional {Schr\"{o}dinger}
  equation with drift.
\newblock {\em Calculus of Variations and Partial Differential Equations},
  59(3):91, 2020.

\bibitem[Cov20]{C20}
Giovanni Covi.
\newblock Inverse problems for a fractional conductivity equation.
\newblock {\em Nonlinear Analysis}, 193:111418, 2020.

\bibitem[CR16]{CR16}
Pedro Caro and Keith~M. Rogers.
\newblock Global uniqueness for the {Calderón} problem with {Lipschitz}
  conductivities.
\newblock {\em Forum of Mathematics, Pi}, 4:e2, 2016.

\bibitem[CR21]{CR21}
Giovanni Covi and Angkana R{\"u}land.
\newblock On some partial data {Calder{\'o}n} type problems with mixed boundary
  conditions.
\newblock {\em Journal of Differential Equations}, 288:141--203, 2021.

\bibitem[CR25]{CR25}
Xiaopeng Cheng and Angkana R{\"u}land.
\newblock Boundary reconstruction for the anisotropic fractional {C}alder{\'o}n
  problem.
\newblock {\em arXiv preprint arXiv:2502.12287}, 2025.

\bibitem[CRTZ24]{CRTZ24}
Giovanni Covi, Jesse Railo, Teemu Tyni, and Philipp Zimmermann.
\newblock Stability estimates for the inverse fractional conductivity problem.
\newblock {\em SIAM Journal on Mathematical Analysis}, 56(2):2456--2487, 2024.

\bibitem[CS07]{CS07}
Luis Caffarelli and Luis Silvestre.
\newblock An extension problem related to the fractional {L}aplacian.
\newblock {\em Communications in partial differential equations},
  32(8):1245--1260, 2007.

\bibitem[CS16]{CS16}
Luis~A. Caffarelli and Pablo~Raúl Stinga.
\newblock Fractional elliptic equations, caccioppoli estimates and regularity.
\newblock {\em Annales de l'Institut Henri Poincaré C, Analyse non linéaire},
  33(3):767--807, 2016.

\bibitem[CWHM15]{CWHM15}
Simon~N Chandler-Wilde, David~P Hewett, and Andrea Moiola.
\newblock Interpolation of {H}ilbert and {S}obolev spaces: {Q}uantitative
  estimates and counterexamples.
\newblock {\em Mathematika}, 61(2):414--443, 2015.

\bibitem[DS63]{DS63}
Nelson Dunford and Jacob~T. Schwartz.
\newblock {\em Linear operators. {P}art {II}: {S}pectral theory. {S}elf adjoint
  operators in {H}ilbert space}.
\newblock Interscience Publishers John Wiley \& Sons, New York-London, 1963.
\newblock With the assistance of William G. Bade and Robert G. Bartle.

\bibitem[Fei24]{F21}
Ali Feizmohammadi.
\newblock Fractional {C}alder\'on problem on a closed {R}iemannian manifold.
\newblock {\em Trans. Amer. Math. Soc.}, 377(4):2991--3013, 2024.

\bibitem[FF20]{FF20}
Veronica Felli and Alberto Ferrero.
\newblock Unique continuation principles for a higher order fractional
  {Laplace} equation.
\newblock {\em Nonlinearity}, 33(8):4133, 7 2020.

\bibitem[FGK{\etalchar{+}}25]{FGKRSU25}
Ali Feizmohammadi, Tuhin Ghosh, Katya Krupchyk, Angkana R{\"u}land, Johannes
  Sj{\"o}strand, and Gunther Uhlmann.
\newblock Fractional anisotropic {C}alder{\'o}n problem with external data.
\newblock {\em arXiv preprint arXiv:2502.00710}, 2025.

\bibitem[FGKU25]{FGKU21}
Ali Feizmohammadi, Tuhin Ghosh, Katya Krupchyk, and Gunther Uhlmann.
\newblock Fractional anisotropic {C}alder\'on problem on closed {R}iemannian
  manifolds.
\newblock {\em J. Differential Geom.}, 131(2):--, 2025.

\bibitem[FKU24]{FKU24}
Ali Feizmohammadi, Katya Krupchyk, and Gunther Uhlmann.
\newblock {Calder{\'o}n} problem for fractional {Schr{\"o}dinger} operators on
  closed {Riemannian} manifolds.
\newblock {\em arXiv preprint arXiv:2407.16866}, 2024.

\bibitem[FL24]{FL24}
Ali Feizmohammadi and Yi-Hsuan Lin.
\newblock Entanglement principle for the fractional {L}aplacian with
  applications to inverse problems.
\newblock {\em arXiv preprint arXiv:2412.13118}, 2024.

\bibitem[GLX17]{GLX17}
Tuhin Ghosh, Yi-Hsuan Lin, and Jingni Xiao.
\newblock The {Calder{\'o}n} problem for variable coefficients nonlocal
  elliptic operators.
\newblock {\em Communications in Partial Differential Equations},
  42(12):1923--1961, 2017.

\bibitem[GRSU20]{GRSU20}
Tuhin Ghosh, Angkana R{\"u}land, Mikko Salo, and Gunther Uhlmann.
\newblock Uniqueness and reconstruction for the fractional {Calder{\'o}n}
  problem with a single measurement.
\newblock {\em Journal of Functional Analysis}, 279(1):108505, 2020.

\bibitem[Gru16]{G16}
Gerd Grubb.
\newblock Regularity of spectral fractional {D}irichlet and {N}eumann problems.
\newblock {\em Mathematische Nachrichten}, 289(7):831--844, 2016.

\bibitem[GSU20]{GSU20}
Tuhin Ghosh, Mikko Salo, and Gunther Uhlmann.
\newblock The {Calder{\'o}n} problem for the fractional {Schr\"{o}dinger}
  equation.
\newblock {\em Analysis \& PDE}, 13(2):455--475, 2020.

\bibitem[GU21]{GU21}
Tuhin Ghosh and Gunther Uhlmann.
\newblock The {Calder{\'o}n} problem for nonlocal operators.
\newblock {\em arXiv preprint arXiv:2110.09265}, 2021.

\bibitem[HL19]{HL19}
Bastian Harrach and Yi-Hsuan Lin.
\newblock Monotonicity-based inversion of the fractional {Schr\"{o}dinger}
  equation {I.} {Positive} potentials.
\newblock {\em SIAM Journal on Mathematical Analysis}, 51(4):3092--3111, 2019.

\bibitem[HL20]{HL20}
Bastian Harrach and Yi-Hsuan Lin.
\newblock Monotonicity-based inversion of the fractional {Schr\"{o}dinger}
  equation {II.} {General} potentials and stability.
\newblock {\em SIAM Journal on Mathematical Analysis}, 52(1):402--436, 2020.

\bibitem[HT13]{HT13}
Boaz Haberman and Daniel Tataru.
\newblock {Uniqueness in {Calderón’s} problem with {Lipschitz}
  conductivities}.
\newblock {\em Duke Mathematical Journal}, 162(3):497 -- 516, 2013.

\bibitem[KRS21]{KRS21}
Herbert Koch, Angkana R\"uland, and Mikko Salo.
\newblock On instability mechanisms for inverse problems.
\newblock {\em Ars Inven. Anal.}, pages Paper No. 7, 93, 2021.

\bibitem[KRZ23]{KRZ23}
Manas Kar, Jesse Railo, and Philipp Zimmermann.
\newblock The fractional p-biharmonic systems: optimal {Poincar{\'e}}
  constants, unique continuation and inverse problems.
\newblock {\em Calculus of Variations and Partial Differential Equations},
  62(4):130, 2023.

\bibitem[KT01]{KT01}
Herbert Koch and Daniel Tataru.
\newblock Carleman estimates and unique continuation for second order elliptic
  equations with nonsmooth coefficients.
\newblock {\em Communications on Pure and Applied Mathematics}, 54, 03 2001.

\bibitem[LL22]{LL22}
Ru-Yu Lai and Yi-Hsuan Lin.
\newblock Inverse problems for fractional semilinear elliptic equations.
\newblock {\em Nonlinear Analysis}, 216:112699, 2022.

\bibitem[LM12]{LM12}
Jacques~Louis Lions and Enrico Magenes.
\newblock {\em Non-homogeneous boundary value problems and applications: Vol.
  1}, volume 181.
\newblock Springer Science \& Business Media, 2012.

\bibitem[LNZ24]{LNZ24}
Yi-Hsuan Lin, Gen Nakamura, and Philipp Zimmermann.
\newblock The {Calder{\'o}n} problem for the {Schr{\"o}dinger} equation in
  transversally anisotropic geometries with partial data.
\newblock {\em arXiv preprint arXiv:2408.08298}, 2024.

\bibitem[Man01]{M01}
Niculae Mandache.
\newblock Exponential instability in an inverse problem for the
  {S}chr{\"o}dinger equation.
\newblock {\em Inverse Problems}, 17(5):1435, 2001.

\bibitem[McL00]{McLean}
William Charles~Hector McLean.
\newblock {\em Strongly elliptic systems and boundary integral equations}.
\newblock Cambridge university press, 2000.

\bibitem[OLBC10]{Olver10}
Frank W.~J. Olver, Daniel~W. Lozier, Ronald~F. Boisvert, and Charles~W. Clark.
\newblock {\em The {NIST} Handbook of Mathematical Functions}.
\newblock Cambridge Univ. Press, 2010.

\bibitem[QU24]{QU22}
Hadrian Quan and Gunther Uhlmann.
\newblock The {Calder{\'o}n} problem for the fractional {Dirac} operator.
\newblock {\em Math. Research Letters}, 31:279--302, 2024.

\bibitem[RS18a]{RS18}
Angkana R{\"u}land and Mikko Salo.
\newblock Exponential instability in the fractional {Calder{\'o}n} problem.
\newblock {\em Inverse Problems}, 34(4):045003, 2018.

\bibitem[RS18b]{RS18a}
Angkana Rüland and Mikko Salo.
\newblock Quantitative {Runge} approximation and inverse problems.
\newblock {\em International Mathematics Research Notices},
  2019(20):6216--6234, 01 2018.

\bibitem[RS20a]{RS20}
Angkana R{\"u}land and Mikko Salo.
\newblock The fractional {Calderón} problem: Low regularity and stability.
\newblock {\em Nonlinear Analysis}, 193:111529, 2020.
\newblock Nonlocal and Fractional Phenomena.

\bibitem[RS20b]{RS20a}
Angkana R{\"u}land and Mikko Salo.
\newblock Quantitative approximation properties for the fractional heat
  equation.
\newblock {\em Mathematical Control and Related Fields}, 10(1):1--26, 2020.

\bibitem[R{\"u}l21]{R21}
Angkana R{\"u}land.
\newblock On single measurement stability for the fractional {Calder{\'o}n}
  problem.
\newblock {\em SIAM Journal on Mathematical Analysis}, 53(5):5094--5113, 2021.

\bibitem[Ryc99]{Rychkov99}
Vyacheslav~S. Rychkov.
\newblock On restrictions and extensions of the besov and triebel–lizorkin
  spaces with respect to lipschitz domains.
\newblock {\em Journal of the London Mathematical Society}, 60(1):237–257,
  1999.

\bibitem[Rü25]{R23}
Angkana Rüland.
\newblock Revisiting the anisotropic fractional calderón problem.
\newblock {\em International Mathematics Research Notices}, 2025(5):rnaf036, 03
  2025.

\bibitem[Sal08]{Salo_lecturenotes}
Mikko Salo.
\newblock Calder{\'o}n problem.
\newblock {\em Lecture Notes}, 2008.

\bibitem[ST10]{ST10}
Pablo~Ra\'{u}l Stinga and Jos\'{e}~Luis Torrea.
\newblock Extension problem and {H}arnack's inequality for some fractional
  operators.
\newblock {\em Comm. Partial Differential Equations}, 35(11):2092--2122, 2010.

\bibitem[SU87]{SU87}
John Sylvester and Gunther Uhlmann.
\newblock A global uniqueness theorem for an inverse boundary value problem.
\newblock {\em Annals of Mathematics}, 125(1):153--169, 1987.

\bibitem[Tay10]{Taylor2010}
M.E. Taylor.
\newblock {\em Partial Differential Equations II: Qualitative Studies of Linear
  Equations}.
\newblock Applied Mathematical Sciences. Springer New York, 2010.

\bibitem[Tri95]{T95}
Hans Triebel.
\newblock {\em Interpolation theory, function spaces, differential operators}.
\newblock Johann Ambrosius Barth, Heidelberg, second edition, 1995.

\bibitem[{Uhl}09]{Uhlmann09}
G.~{Uhlmann}.
\newblock {Electrical impedance tomography and Calder{\'o}n's problem}.
\newblock {\em Inverse Problems}, 25(12):123011, 2009.

\end{thebibliography}

\end{document}